\newcommand{\CH}{\mathcal{H}}
\newcommand{\CG}{\mathcal{G}}
\newcommand{\CalB}{\mathcal{B}}
\newcommand{\CalD}{\mathcal{D}}
\newcommand{\CalE}{\mathcal{E}}
\newcommand{\CalW}{\mathcal{W}}
\newcommand{\CalA}{\mathcal{A}}
\newcommand{\CalG}{\mathcal{G}}
\newcommand{\E}{{\mathbb E}}
\newcommand{\eps}{{\varepsilon}}
\newcommand{\C}{{\mathbb C}}
\newcommand{\R}{{\mathbb R}}
\newcommand{\RR}{{\mathbb R}}
\newcommand{\NN}{{\mathbb N}}
\newcommand{\ZZ}{{\mathbb Z}}
\newcommand{\TT}{{\mathbb T}}
\newcommand{\CC}{{\mathbb C}}
\newcommand{\EE }{{\mathbb E}}
\newcommand{\II}{{\mathbb I}}
\newcommand{\DD}{{\mathbb D}}
\newcommand{\QQ}{{\mathbb Q}}
\newcommand{\MM}{{\mathbb M}}
\newcommand\cA{{\cal  A}}
\newcommand\cB{{\cal  B}}
\newcommand\cD{{\cal  D}}
\newcommand\cU{{\cal  U}}
\newcommand\cH{{\cal  H}}
\newcommand\cI{{\cal  I}}
\newcommand\cG{{\cal  G}}
\newcommand\cL{{\cal  L}}
\newcommand\cE{{\cal  E}}
\newcommand\cF{{\cal  F}}
\newcommand\cP{{\cal  P}}
\newcommand\cO{{\cal O}}
\newcommand\cM{{\mathcal M}}
\newcommand{\at}{{\alpha_t }}
\newcommand{\ar}{{\alpha_r }}
\newcommand{\bR}{\mathbb{R}}
\newcommand{\bC}{\mathbb{C}}
\newcommand{\bZ}{\mathbb{Z}}
\def\eps{\epsilon }
\newcommand\adots{\mathinner{\mkern2mu\raise1pt\hbox{.}
\mkern3mu\raise4pt\hbox{.}\mkern1mu\raise7pt\hbox{.}}}
\newtheorem{theo}{Theorem}[section]
\newtheorem{prop}[theo]{Proposition}
\newtheorem{lem}[theo]{Lemma}
\newtheorem{defn}[theo]{Definition}
\newtheorem{ass}[theo]{Assumption}
\newtheorem{rem}[theo]{Remark}
\newtheorem{nota}[theo]{Notations}
\newtheorem{assumption}[theo]{Assumption}
\newtheorem{definition}[theo]{Definition}
\numberwithin{equation}{section}
\begin{document}

\title{\textbf{Hyperbolic boundary problems  with large oscillatory coefficients:  multiple amplification}}
\author{Mark Williams}

\author{ 
Mark {\sc Williams}\thanks{University of North Carolina, Mathematics Department, CB 3250, Phillips Hall, 
Chapel Hill, NC 27599. USA. Email: {\tt williams@email.unc.edu}.}}

\maketitle
\begin{abstract}
 
 We study weakly stable hyperbolic boundary problems with highly oscillatory coefficients that are large, $O(1)$,  compared to the small wavelength $\eps$ of oscillations.  Such problems arise, for example, in the study of classical questions concerning the stability of  Mach stems and compressible vortex sheets.  For such applications one seeks to prove energy estimates that are in an appropriate sense ``uniform" with respect to the small wavelength $\eps$, but the large oscillatory coefficients  are a formidable obstacle to obtaining such estimates.   In this paper we analyze  a simplified form of the linearized problems that are relevant to the above stability questions, and obtain results that are both positive and negative.  On the one hand we identify favorable structural conditions under which it is possible to prove uniform estimates, and then do so by a new approach.  We also construct examples showing that  large oscillatory coefficients can give rise to an instantaneous \emph{multiple} amplification of the amplitude of solutions relative to data; for example, boundary data of a given amplitude $O(1)$ can \emph{immediately} give rise to a solution of amplitude $O(\frac{1}{\eps^K})$, where  $K>1$.\footnote{Examples of first-order amplification, where $K=1$, are well-known \cite{MA,CG}.}   We use the examples of multiple amplification  to confirm the optimality of our uniform estimates
 when the favorable structural conditions hold.   When those conditions do not hold,   we explain how  multiple amplification of infinite order may rule out useful estimates.

\end{abstract}

\tableofcontents

\section{Introduction}\label{intro}

\qquad We study linear hyperbolic boundary problems on $\Omega:=\RR_t\times \RR_{x_1}\times \{x_2:x_2\geq 0\}$ of the form
\begin{align}\label{i1}
 \begin{split}
 &L(\partial)u+\cD\left(\frac{\phi_{in}}{\eps}\right)u:=\partial_t u+B_1\partial_{x_1}u+B_2\partial_{x_2}u+\cD\left(\frac{\phi_{in}}{\eps}\right)u=F\left(t,x,\frac{\phi_0}{\eps}\right)\text{ in }x_2>0\\
 &Bu=G\left(t,x_1,\frac{\phi_0}{\eps}\right)\text{ on }x_2=0\\
 &u=0 \text{ in }t<0,
 \end{split}
 \end{align}
where the $B_j$  are  $N\times N$ constant matrices, $B_2$ is invertible, and the highly oscillatory matrix coefficient $\cD\left(\frac{\phi_{in}}{\eps}\right)$ is large, $O(1)$, compared to the small wavelength $\eps\in (0,\eps_0]$.\footnote{It is sometimes necessary to replace $(F,G)$ in \eqref{i1} by functions $(F^\eps,G^\eps)$ of the same arguments.}  We take the problem to be weakly stable in the sense that $(L(\partial),B)$ fails to satisfy the uniform Lopatinski condition (Definition \ref{ustable}) in a specific way (Assumption \ref{assumption3}).\footnote{Such problems are referred to as ``weakly real" (WR) problems in \cite{BRSZ}.}  The boundary matrix $B$ is a constant $p\times N$ matrix of appropriate rank $p$,  and the functions $F(t,x,\theta)$, $G(t,x_1,\theta)$ and $\cD(\theta_{in})$ are respectively periodic of period $2\pi$ in $\theta$ and $\theta_{in}$.     The boundary phase is taken to be $\phi_0(t,x_1)=\beta_l\cdot (t,x_1)$, where $\beta_l=(\sigma_l,\eta_l)\in\RR^2\setminus 0$ is one of the directions where the uniform Lopatinski condition fails.   The interior phase 
  \begin{align}\label{i2}
  \phi_{in}(t,x)=\phi_0(t,x_1)+\omega_N(\beta_l) x_2
  \end{align}
  is one of the incoming phases  (Definition \ref{def2}); its restriction to $x_2=0$ is $\phi_0$. 
  For convenience we take $F$ and $G$ to be zero in $t<0$.   

The problem \eqref{i1} is a simplified form of linearized problems that arise, for example, in the study of stability of Mach stems and vortex sheets.   In trying to rigorously justify the classical mechanisms for Mach stem formation and vortex sheet roll-up proposed in \cite{MR} and \cite{AM}, one is led to study weakly stable linearized problems with large oscillatory coefficients of the form
\begin{align}\label{i3}
\begin{split}
&\partial_t u+\sum^2_{i=1}B_i\left(\eps v(t,x,\frac{\phi_{in}}{\eps})\right)\partial_{x_i}u+D\left(v(t,x,\frac{\phi_{in}}{\eps})\right)u=F\left(t,x,\frac{\phi_0}{\eps}\right)\text{ in }x_2>0\\
&B\left(\eps v(t,x,\frac{\phi_{0}}{\eps})\right)u=G\left(t,x_1,\frac{\phi_0}{\eps}\right) \text{ on }x_2=0\\
&v=0\text{ in }t<0,
\end{split}
\end{align}
where $\eps\in (0,\eps_0]$ and  $v,F,G$ are periodic in their third arguments.  The mechanisms in question are exhibited by approximate WKB  (or geometric optics) solutions to the original quasilinear equations.  One approach to showing that such approximate solutions are close to true exact solutions (i.e., ``justifying" the geometric optics solutions) depends essentially on first proving energy estimates for \eqref{i3} that are in an appropriate sense  \emph{uniform}   with respect to small $\eps$ (Remark \ref{unif}).   Such estimates do not exist at present.  Methods that have been used successfully on other hyperbolic boundary problems fail to yield uniform estimates for  \eqref{i3}.\footnote{Here we have in mind: (a) problems where the uniform Lopatinski condition holds \cite{K,CGW1};  (b) weakly stable problems like \eqref{i3} but with non-oscillatory coefficients \cite{C1,C,CS};  and  (c) weakly  stable problems like \eqref{i3} where the oscillatory function $v$  is replaced by $\eps v$ in the arguments of $B_i$, $D$, and $B$ \cite{CGW3}.}  In fact, those methods fail even when applied to the simplified problem \eqref{i1}; however, the approach of \cite{CGW3} works  if $\cD$ is replaced by $\eps \cD$ in \eqref{i1}.   The main obstacle, roughly, is that the estimates exhibit a loss of one derivative from solution $u$ to data $(F,G)$,  and the available methods produce error terms too large when $\eps$ is small to be absorbed by the left sides of the estimates.    It is not yet clear whether the obstacle can be overcome with better methods or is truly insurmountable. 

We note that even when $\cD=0$ in \eqref{i1}, one observes both a loss of derivatives in the energy estimates and an associated phenomenon of (first-order) \emph{amplification};  data $(F,G)$ of size $O(1)$ generally gives rise to a solution of size $O(\frac{1}{\eps})$ \cite{CG}.


It is partly in order to assess the feasibility of proving uniform estimates for problems like \eqref{i3},  estimates sufficiently strong to justify geometric optics solutions of the original nonlinear equations,  that we study here the problem of  proving uniform estimates for \eqref{i1}.\footnote{Section \ref{disc} gives a preliminary assessment.}   Also, we regard this question as a natural question in the linear hyperbolic theory worth studying in its own right.  

In \cite{W3} we studied this question for the problem \eqref{i1} in the special case where $\cD(\theta_{in})=e^{i\theta_{in}}I_{N\times N}$.    This choice leads to the simplest one-sided cascades (defined below).  In this paper we consider general (sufficiently regular) oscillatory $N\times N$ coefficients 
\begin{align}\label{i4}
\cD(\theta_{in})=(d_{i,j}(\theta_{in}))_{i,j=1,\dots,N},
\end{align}
which produce one-sided cascades when $\cD(\theta_{in})$ has  only positive Fourier spectrum, and two-sided cascades when  $\cD(\theta_{in})$  has both positive and negative Fourier spectrum.   
Here, in addition to proving optimal estimates for certain problems with two-sided cascades, we substantially refine and extend the methods introduced in \cite{W3} to obtain estimates that are optimal in the one-sided case.   Consequently,  the  ``amplification exponent" $\EE$ that appears in the estimates of Theorem \ref{tt26} is generally much smaller than the corresponding exponent in the estimates of \cite{W3}.  The optimality of $\EE$ is verified in section \ref{multiple} by the construction (and justification) of approximate geometric optics solutions whose amplitudes  exhibit exactly the maximum order of  multiple amplification relative to the data  permitted by the estimate (Remark \ref{opt}).

\subsection{Iteration estimate}
\qquad  Here we describe the first main step in the analysis, which is to prove an \emph{iteration estimate} for an associated singular system.  The same estimate is used for the cases of one and two-sided cascades.  The only restriction on $\cD(\theta_{in})$ for the moment is that its Fourier spectrum is contained in $\ZZ\setminus 0$.\footnote{See Remark \ref{ts1}(4) for the case where $\cD$ has nonzero mean.     Also, part (5) of that remark considers what happens when $\phi_{out}$ is used instead of $\phi_{in}$ in \eqref{i1}.}

 Let us first rewrite \eqref{i1} with slightly modified $F$ as
\begin{align}\label{i5}
\begin{split}
&D_{x_2} u+A_0 D_tu+A_1D_{x_1}u-iB_2^{-1}\cD\left(\frac{\phi_{in}}{\eps}\right)u=F(t,x,\frac{\phi_0}{\eps})\\
&Bu= G(t,x_1,\frac{\phi_0}{\eps})\text{ on }x_2=0\\
&u=0\text{ in }t<0,
\end{split}
\end{align}
where $A_0=B_2^{-1}$, $A_1=B_2^{-1}B_1$, and $D_{x_i}=\frac{1}{i}\partial_{x_i}$.  We study \eqref{i5} by   looking for a  solution  of the form $u(t,x)=U(t,x,\frac{\phi_0}{\eps})$, where $U(t,x,\theta)$ is periodic in $\theta$.   This yields in the obvious way what we refer to as the corresponding  \emph{singular system} for $U$: \footnote{Clearly, $u=u^\eps$ and $U=U^\eps$ depend on $\eps$, but we usually suppress  $\eps$-dependence in the notation for these and other functions.      Singular systems were used in \cite{JMR1} for initial value problems in free space.}
\begin{align}\label{i6}
\begin{split}
&D_{x_2}U+A_0(D_t+\frac{\sigma_l}{\eps}D_\theta)U+A_1(D_{x_1}+\frac{\eta_l}{\eps}D_\theta)U-iB_2^{-1}\cD\left(\frac{\omega_N(\beta_l)}{\eps}x_2+\theta\right)U=F(t,x,\theta)\\
&BU=G(t,x_1,\theta)\text{ on }x_2=0\\
&U=0\text{ in }t<0.
\end{split}
\end{align}
The matrix $\cD(\theta_{in})$ can be written 
\begin{align}\label{i7}
\cD(\theta_{in})=\sum^N_{i,j=1}d_{i,j}(\theta_{in})M_{i,j}, \text{ where }d_{i,j}(\theta_{in})=\sum_{r\in\ZZ\setminus 0}\alpha^{i,j}_re^{ir\theta_{in}}
\end{align}
and $M_{i,j}$ is the $N\times N$ matrix with $(i,j)$ entry equal to 1 and all other entries $0$.   Using \eqref{i7} we will see (Remark \ref{reduction}) that we can reduce the study of \eqref{i6} to the study of the same problem with $\cD(\theta_{in})$ replaced by $d(\theta_{in})M$, where $M$ is any constant $N\times N$ matrix and $d(\theta_{in})$ is a scalar periodic function
\begin{align}\label{i7a}
d(\theta_{in})=\sum_{r\in\ZZ\setminus 0}\alpha_re^{ir\theta_{in}}.
\end{align}
The first equation of  \eqref{i6}  with  $\cD(\theta_{in})$ replaced by $d(\theta_{in})M$ can then be written
\begin{align}
D_{x_2}U+A_0(D_t+\frac{\sigma_l}{\eps}D_\theta)U+A_1(D_{x_1}+\frac{\eta_l}{\eps}D_\theta)U-i\left(\sum_{r\in\ZZ\setminus 0} \ar e^{i\left(r\frac{\omega_N(\beta_l)}{\eps}x_2+r\theta\right)}\right)B_2^{-1}MU=F(t,x,\theta).
\end{align}

Next we consider the Laplace-Fourier transform in $(t,x_1,\theta)$ of the singular system.  
Expand $U(t,x,\theta)=\sum_{k\in\bZ}U_k(t,x)e^{ik\theta}$, expand $F$ and $G$ similarly, set
\begin{align}\label{i8}
\zeta:=(\tau,\eta):=(\sigma-i\gamma,\eta), \text{ where }(\sigma,\eta)\in\RR^2,\; \gamma\geq 0,
\end{align}
and define  $V_k(x_2,\zeta):=\widehat{U_k}(\zeta,x_2)$, the Laplace-Fourier transform in $(t,x_1)$ of $U_k(t,x)$.  If we 
define 
\begin{align}
X_k:=\zeta+\frac{k\beta_l}{\eps}\text{ and }\cA(\zeta)=-(A_0\tau+A_1\eta),
\end{align}
we can write the transformed singular problem for the $V_k$ as : 
\begin{align}\label{i9}
\begin{split}
&D_{x_2}V_k-\mathcal{A}(X_k)V_k=i\sum_{r\in\ZZ\setminus 0}  \ar e^{ir\frac{\omega_N(\beta_l)}{\eps}x_2}B_2^{-1}MV_{k-r}+\widehat{F_k}(x_2,\zeta)\\
&BV_k=\widehat{G_k}(\zeta)\text{ on }x_2=0.
\end{split}
\end{align}

The \emph{iteration estimate}, proved in Proposition \ref{tt30},  is an estimate valid for $\gamma\geq \gamma_0>0$ of the form
\begin{align}\label{i10}
\|V_k\|\leq  \frac{C}{\gamma}\sum_{r\in\ZZ\setminus 0}\sum_{t\in\ZZ}\|\ar\at \DD(\eps,k,k-r)V_{k-r-t}\|+\frac{C}{\gamma^2}\left|\widehat{F_k}|X_k|\right|_{L^2(x_2,\sigma,\eta)}+\frac{C}{\gamma^{3/2}}\left|\widehat{G_k}|X_k|\right|_{L^2(\sigma,\eta)}.
\end{align}
Here $\|V_k\|$, defined in \eqref{tu30},  is a modified $L^2(x_2,\sigma,\eta)$ norm,  the constants $C$ and $\gamma_0$ are independent of $(\eps,\zeta,k)$, and the $\alpha_r$ are as in \eqref{i9} (we redefine $\alpha_0$ to be $1$ in \eqref{i10}).

It is far from clear at this point that an estimate of the form \eqref{i10} is useful.   That depends on the behavior of the \emph{global amplification factors} $\DD(\eps,k,k-r)$, Definition \ref{t29}.\footnote{Of course, this also depends on having the coefficients $\alpha_r$ decay sufficiently rapidly with respect to $r$. That is the case provided $d(\theta_{in})$ is sufficiently regular, which we always assume.}   One expects these factors, which are functions of $\zeta$, to depend on $(\eps,k,r)$ and to be \emph{large} sometimes, that is $\gtrsim \frac{1}{\eps}$, because of the failure of the uniform Lopatinski condition.   
In sections \ref{afactors} and \ref{ptools} we show that the $\DD(\eps,k,k-r)$ can be chosen so that for each $\zeta$:
\begin{align}\label{i11}
 \DD(\eps,k,k-r)(\zeta) \text{ takes one of three values: }1, C_5r^2, \text{ or } \frac{C_5r^2}{\eps\gamma},
\end{align}
where $C_5$ is a fixed positive constant.\footnote{As $\zeta$ varies, the value taken can vary.}  In particular, the $\DD(\eps,k,k-r)$ are independent of $k$!    The factors $r^2$ in \eqref{i11} are harmless; they are killed by the decay of the $\alpha_r$.   One must understand for a given choice of $(\eps,\zeta)$ how many of these factors can be large;   we say more about this below.

The norm $\|V_k\|$ satisfies 
\begin{align}\label{i12}
|(\|V_k\|)|_{\ell^2(k)}\gtrsim |e^{-\gamma t}U|_{L^2(t,x,\theta)}+\left|\frac{e^{-\gamma t}U(0)}{\sqrt{\gamma}}\right|_{L^2(t,x_1,\theta)}, 
\end{align}
for $U(t,x,\theta)$ as in \eqref{i6}. The second main step in the analysis is to use the iteration estimate to estimate $|(\|V_k\|)|_{\ell^2(k)}$ in terms of the data $(F,G)$, and this leads us to examine the cascades produced by iterating \eqref{i10}.

\subsection{One and two-sided cascades}
\qquad We first discuss the one-sided cascades that arise when the coefficients  $\alpha_r$ in \eqref{i9} vanish for $r\leq 0$, that is, when $d(\theta_{in})$ has only positive Fourier spectrum.\footnote{There is an exactly parallel theory when $d(\theta_{in})$ has  only negative Fourier spectrum.}
Let us assume for now that  $F=0$ and $G=\sum^\infty_{k=N^*}G_k(t,x_1)e^{ik\theta}$ for some $N^*\in \ZZ$; in fact, take $N^*=1$ for the moment.   Since $G_k=0$ for $k<1$, it follows from \eqref{i9} that $V_k=0$ for $k<1$, so the iteration estimate \eqref{i10} now reduces to
\begin{align}\label{i13}
\|V_k\|\leq \frac{C}{\gamma}\sum^{k-1}_{r=1}\sum_{t=0}^{k-r-1}\|\ar\alpha_t\DD(\eps,k,k-r)V_{k-r-t}\|+|\CalG_k|_{L^2(\sigma,\eta)}, \;k\geq 1,
\end{align}
where $\mathcal{G}_k=\frac{C}{\sqrt{\gamma}}\frac{\widehat{G_k}|X_k|}{\gamma}.$   For any $k\geq 1$ the indices $k-r-t$ that appear in nonzero terms on the right side of \eqref{i13}  satisfy $k-r-t<k$. 


We can  estimate the $V_k$ for $k\geq 1$ in terms of the $G_j$, $1\leq j\leq k$ by iterating the estimate \eqref{i13}.     
The iterations quickly become tedious to write down in detail, but let us do the first few.     Writing $|\CalG_j|_{L^2(\sigma,\eta)}$ simply as $|\CalG_j|$ 
we obtain:\footnote{When iterating the estimate \eqref{i13}, we repeatedly make use of the fact that functions of $\zeta$ like $\DD(\eps,p,p-r)$ commute right through the problem \eqref{i9}.}
\begin{align}\label{b001}
\begin{split}
&\|V_1\|\leq |\CalG_1|\\
&\|V_2\|\leq \frac{C}{\gamma}\|\alpha_1\DD(\eps,2,1)V_1\|+|\CalG_2|\leq \frac{C}{\gamma}|\alpha_1\DD(\eps,2,1)\CalG_1|+|\CalG_2|\\
&\|V_3\|\leq \frac{C}{\gamma}\|\alpha_1\DD(\eps,3,2)V_2\|+\frac{C}{\gamma}\|\alpha_1\DD(\eps,3,2)V_1\|+\frac{C}{\gamma}\|\alpha_2\DD(\eps,3,1)V_1\|+|\CalG_3|\leq\\
&\frac{C}{\gamma}\left[\frac{C}{\gamma}|\alpha_1\DD(\eps,3,2)\alpha_1\DD(\eps,2,1)\CalG_1|+|\alpha_1\DD(\eps,3,2)\CalG_2|\right]+\frac{C}{\gamma}|\alpha_1\DD(\eps,3,2)\CalG_1|+\frac{C}{\gamma}|\alpha_2\DD(\eps,3,1)\CalG_1|+|\CalG_3|.
\end{split}
\end{align}

It turns out that all the essential information about the  estimate of $V_k$  is contained in the cascade that arises in the obvious way by iteration of \eqref{i13}.    For example,  the cascade corresponding to the estimate of $V_3$ in \eqref{b001} is the following four-stage cascade:
\begin{align}\label{b002}
[(V_3)]\to[(V_2,V_1,V_1,\CalG_3)]\to [(V_1,\cG_2)(\cG_1)(\cG_1)\cG_3]\to [(\cG_1),\cG_2,\cG_1,\cG_1,\cG_3].
\end{align}
Here brackets $[\cdot ]$ mark the individual stages, and a $V_j$ or $\cG_j$ term is included in parentheses only when it makes its very first appearance in the cascade.

 When $k=5$ the following six-stage cascade can easily be written down \emph{without} first doing a detailed estimate of $V_5$ as in \eqref{b001}:
\begin{align}\label{b01}
\begin{split}
&[(V_5)]\to [(V_4,V_3,V_2,V_1,V_3,V_2,V_1,V_2,V_1,V_1,\cG_5)]\to\\
& [(V_3,V_2,V_1,V_2,V_1,V_1,\cG_4),(V_2,V_1,V_1,\cG_3),(V_1,\cG_2),(\cG_1),(V_2,V_1,V_1,\cG_3),\\
&\qquad (V_1,\cG_2),(\cG_1),(V_1,\cG_2),(\cG_1),(\cG_1),\cG_5]\to\\
& [(V_2,V_1,V_1,\cG_3),(V_1,\cG_2),(\cG_1),(V_1,\cG_2),(\cG_1),(\cG_1),\cG_4,(V_1,\cG_2),(\cG_1),\\
&\qquad (\cG_1),\cG_3,(\cG_1),\cG_2,(V_1,\cG_2),(\cG_1),(\cG_1),\cG_3,(\cG_1),\cG_2,\cG_1,(\cG_1),\cG_2,\cG_1,\cG_1,\cG_5]\to\\
&[(V_1,\cG_2),(\cG_1),(\cG_1),\cG_3,(\cG_1),\cG_2,\cG_1,(\cG_1),\cG_2,\cG_1,\cG_1,\cG_4,(\cG_1),\cG_2,\cG_1,\\
&\qquad \cG_1,\cG_3,\cG_1,\cG_2,(\cG_1),\cG_1,\cG_2,\cG_1,\cG_1,\cG_3,\cG_1,\cG_2,\cG_1,\cG_1,\cG_2,\cG_1,\cG_1,\cG_5]\to \\
& [(\cG_1),\cG_2,\cG_1,\cG_1,\cG_3,\cG_1,\cG_2,\cG_1,\cG_1,\cG_2,\cG_1,\cG_1,\cG_4,\cG_1,\cG_2,\cG_1,\\
&\qquad \cG_1,\cG_3,\cG_1,\cG_2,\cG_1,\cG_1,\cG_2,\cG_1,\cG_1,\cG_3,\cG_1,\cG_2,\cG_1,\cG_1,\cG_2,\cG_1,\cG_1,\cG_5]
\end{split}
\end{align}
In \eqref{b001} and \eqref{b01} the indices  that appear on the $V_j$ in the second and subsequent stages are less than the index that appears in the first stage (one-sided cascades).

In section \ref{2s} we provide a more efficient way of constructing cascades like \eqref{b01} and show how to reconstruct the estimate of $\|V_k\|$ in terms of the $|\cG_j|$ from the cascade.
As in \eqref{b001}, in the estimate of $\|V_k\|$ terms will occur on the right side in which products of up to $k-1$ global amplification factors appear:
\begin{align}\label{i14}
\DD(\eps,k,k_1)\DD(\eps,k_2,k_3)\DD(\eps,k_4,k_5)\dots\text{ where }k>k_1\geq k_2>k_3\geq k_4>k_5\dots.
\end{align}
It appears that one can obtain a useful final energy estimate of $|(\|V_k\|)|_{\ell^2(k)}$ only in problems where one can show that for any given $(\eps,\zeta)$, the number of large (that is $\frac{C_5r^2}{\eps\gamma}$) factors in products like \eqref{i14} is bounded above by a number $\EE$ that is \emph{independent} of $(\eps,\zeta,k)$.  In our first main result, Theorem \ref{tt26}, we identify classes of problems for which such an $\EE$ exists.   For these problems  simple formulas for $\EE$ \eqref{tt26d},\eqref{E} are given in terms of the numbers of incoming and outgoing modes $|\cO|$, $|\cI|$ and the number of directions $\beta_j\in \Upsilon^0$ where the uniform Lopatinski condition fails.\footnote{Here  $\Upsilon_0$ is the set of directions where the uniform Lopatinski condition fails.  In 2D problems like \eqref{i1}, $\Upsilon_0$ has the form $\{\pm \beta_1,\dots,\pm\beta_L\}$ for some unit vectors $\beta_j\in \RR^2$ (Proposition \ref{s7fj}).  The sets $\cI$, $\cO$ are the index sets for the incoming and outgoing modes (Definition \ref{def2}).}  An efficient procedure for estimating $|(\|V_k\|)|_{\ell^2(k)}$ on the basis of estimates like \eqref{b001} is given in the proof of Proposition \ref{bb6}.  This leads to the following estimate for $U(t,x,\theta)$ as in \eqref{i6}:  there exist $\eps_0>0$ and positive constants $\EE\in\NN$,  $K$,  $\gamma_0$ independent of $\eps$  such that for $0<\eps<\eps_0$ and $\gamma\geq \gamma_0$ we have\footnote{The argument with $F=0$ goes through unchanged if $G=\sum_{k\geq N^*}G_ke^{ik\theta}$ instead of  $G=\sum_{k\geq 1}G_ke^{ik\theta}$.     The case $G=0$, $F=\sum_{k\geq N^*}F_ke^{ik\theta}$ can be treated in the same way. The constants $K,\gamma_0, \eps_0$ in \eqref{est} are independent of $N^*$, so one can take a limit as $N^*\to  -\infty$ to handle general data $(F,G).$}
\begin{align}\label{est}
\begin{split}
&|e^{-\gamma t}U|_{L^2(t,x,\theta)}+\left|\frac{e^{-\gamma t}U(0)}{\sqrt{\gamma}}\right|_{L^2(t,x_1,\theta)}\leq \\
&\qquad \frac{K}{(\eps\gamma)^\mathbb{E}}\left[\frac{1}{\gamma^2}\left(\sum_{k\in\mathbb{Z}}\left||X_k|\widehat{F_k}\right|_{L^2(x_2,\sigma,\eta)}^2\right)^{1/2}+\frac{1}{\gamma^{3/2}}\left(\sum_{k\in\mathbb{Z}}\left|\widehat{G_k}|X_k|\right|^2_{L^2(\sigma,\eta)}\right)^{1/2}\right].
\end{split}
\end{align}
We call  the process of passing from the iteration estimate to an estimate like \eqref{est}  \emph{the cascade estimates}. 

\begin{rem}\label{unif}
(1) We refer to an estimate like \eqref{est} as a \emph{uniform} estimate because the constants $\EE$, $K$, and $\gamma_0$ are independent of $\eps$.  In section \ref{multiple} we use such estimates to justify high order geometric optics expansions for some weakly stable systems like \eqref{i1}.  If, for example,  the estimate held only for $\gamma_0\gtrsim \frac{1}{\eps}$, it would be useless for this purpose (Remark \ref{opt}, part (3)).   Such a condition on $\gamma_0$ is needed for estimates obtained by applying the approach of \cite{C1,C} or \cite{CGW3} to problems with large oscillatory coefficients like \eqref{i1}.

(2) When the uniform Lopatinski condition (Definition \ref{ustable}) holds, the estimate \eqref{est} holds with $\EE=0$ and without the large $|X_k|$ factors on the right.

\end{rem}

\subsubsection{Two-sided cascades.}  \label{2sc} \qquad Two-sided cascades appear whenever $d(\theta_{in})$ as in \eqref{i7a} has both positive and negative Fourier spectrum.  
First we consider the simplest choice of $d(\theta_{in})$ that gives rise to a two-sided cascade: 
\begin{align}
d(\theta_{in})=e^{i\theta_{in}}+e^{-i\theta_{in}}.
\end{align}
With $F=0$ for now the transformed singular problem \eqref{i9} reduces to 
\begin{align}\label{c2}
\begin{split}
&D_{x_2}V_k-\mathcal{A}(X_k)V_k=i e^{i\frac{\omega_N(\beta_l)}{\eps}x_2}B_2^{-1}MV_{k-1}+i e^{-i\frac{\omega_N(\beta_l)}{\eps}x_2}B_2^{-1}MV_{k+1}\\
&BV_k=\widehat{G_k}(\zeta)\text{ on }x_2=0.
\end{split}
\end{align}
and the iteration estimate \eqref{i10} reduces to
\begin{align}\label{c4}
\|V_k\|\leq \frac{C}{\gamma}\sum_{r\in\{1,-1\}}\sum_{t\in\{0,1,-1\}}\|\DD(\eps,k,k-r)V_{k-r-t}\|+|\CalG_k|_{L^2(\sigma,\eta)}, \;k\in\ZZ,
\end{align}

Suppose we take $G(t,x_1,\theta)=G_0(t,x_1)$ and attempt to estimate $\|V_0\|$ in terms of $|\cG_0|$ by iterations of \eqref{c4} parallel to \eqref{b002}, \eqref{b01}.
We now obtain  an \emph{infinite} cascade in which terms $V_j$ for \emph{all} $j\in\ZZ$ (a two-sided cascade) eventually appear on the right:\footnote{In the step $[(V_0)]\to [(V_{-2},V_{-1},V_0,V_0,V_1,V_2,\cG_0)]$ the terms $V_{-1}$,$V_1$  correspond to the terms in \eqref{c4}$_{k=0}$ where $r=+1,-1$ respectively, and $t=0$.}
\begin{align}\label{c5}
\begin{split}
&[(V_0)]\to [(V_{-2},V_{-1},V_0,V_0,V_1,V_2,\cG_0)]\to [(V_{-4},V_{-3},V_{-2},V_{-2}, V_{-1},V_0), (V_{-3},V_{-2},V_{-1},V_{-1},V_{0},V_1), \\
&(V_{-2},V_{-1},V_{0},V_{0}, V_{1},V_2,\cG_0),(V_{-2},V_{-1},V_{0},V_{0}, V_{1},V_2,\cG_0),(V_{-1},V_{0},V_{1},V_{1}, V_{2},V_3),(V_{0},V_{1},V_{2},V_{2}, V_{3},V_4),\cG_0]\\
&\to ...
\end{split}
\end{align}
 From \eqref{c5} we see that the cascade will never terminate in a stage where only entries $\cG_0$ appear. 
Moreover, it is not hard to check that if one writes out the iterated estimates corresponding to \eqref{c5}, the following situation will obtain: 
for any particular  $(\eps,\zeta,p)$  and for \emph{any} $\EE\in\NN$,   after enough iterations terms will eventually appear on the right hand side of the estimate in which products of $\EE$ copies of $\DD(\eps,p,p-1)(\zeta)$ occur.\footnote{The same applies to $\DD(\eps,p,p+1)$ of course.}   If $\DD(\eps,p,p-1)(\zeta)=\frac{C_5}{\eps\gamma}$ ($r^2=1$ now), then factors $\left(\frac{1}{\eps\gamma}\right)^\EE$ will occur for all $\EE\in\NN$.   Thus, it appears that one can obtain a sensible estimate only in problems where  all factors $\DD(\eps,p,p\pm 1)$, $p\in\ZZ$ are $\leq C_5$ for all $\zeta$ (or almost every $\zeta$). 
 In that case 
 one can estimate $|(\|V_k\|)|_{\ell^2(k)}$ by summing (the square of) \eqref{c4} over $k$ and choosing $\gamma$ large enough to absorb the $\DD$ terms on the left.  This argument does \emph{not} involve iteration of \eqref{c4}, and thus \emph{avoids}  two-sided cascades.

 Our second main result, Theorem \ref{tvv29}, identifies a class of problems like \eqref{i1} with oscillatory coefficients having both positive and negative spectrum and for which the global amplification factors  satisfy 
 \begin{align}\label{i15}
 \DD(\eps,k,k-r)\leq C_5r^2 \text{ for all }(\eps,k,r,\zeta).
 \end{align}
 The property \eqref{i15} allows us to prove an estimate like \eqref{est} with $\EE=0$. The proof shows that \eqref{i15} holds whenever 
 \begin{align}\label{i16}
 \Upsilon_0=\{\beta_l,-\beta_l\} \text{ and }|\cI|=1.
 \end{align}
Theorem \ref{multiamp}, discussed below, provides an example where \eqref{i15} fails when $\Upsilon_0=\{\beta_l,-\beta_l\}$, but $|\cI|=2$.     
 
 In Remark \ref{ts1} we discuss how Theorems \ref{tt26} and \ref{tvv29}  can be applied to problems derived from the linearized compressible 2D Euler equations.

\subsection{Multiple amplification and optimality of the estimates}
\qquad The literature on geometric optics for weakly stable hyperbolic boundary problems contains several examples of \emph{first-order} amplification: roughly, oscillatory data $(F,G)$ of a given amplitude and wavelength $\eps$  yields a solution whose amplitude is larger by a factor of $\frac{1}{\eps}$ (for example, \cite{MA, MR, AM, CG, CGW3, CW6}.  The estimate \eqref{est} in cases where $\EE\geq 1$ suggests that multiple amplification by a factor $\frac{1}{\eps^{\EE+1}}$ may occur.\footnote{Recall that the factors $|X_k|$ on the right in \eqref{est} already produce one order of amplification.}

In section \ref{multiple} we study a weakly stable $3\times 3$ problem \eqref{d1} for which $\Upsilon_0=\{\beta_l,-\beta_l\}$.  There are three phases 
\begin{align}
\phi_i(t,x)=\beta_l\cdot (t,x_1)+\omega_i(\beta_l)x_2, \;i=1,2,3,
\end{align}
with $\phi_2$, $\phi_3$ incoming and $\phi_1$ outgoing, and it is assumed that they exhibit a single \emph{resonance} (defined in \eqref{tt7}):
\begin{align}\label{i16a}
\phi_2+\phi_3=2\phi_1.
\end{align}
The oscillatory coefficient in the problem is given by $d(\theta_{in})=e^{i\theta_3}$.   In our third main result, Theorem \ref{multiamp}, we construct and rigorously justify high order approximate solutions which exhibit instantaneous double amplification, that is, amplification by a factor of $\frac{1}{\eps^2}$ evident at any time $t>0$.   
The amplified solution consists of a wave in the boundary that travels along a characteristic of the Lopatinski determinant and which ``radiates" doubly amplified waves into the interior which travel along characteristics corresponding to the two incoming phases.\footnote{The boundary amplitude equation governing propagation of the wave in the boundary is a transport equation with a nonlocal zero-order term \eqref{f00} involving a Fourier multiplier $m(D_{\theta_0})$.}

In Remark \ref{opt} we explain that successively adding terms 
$e^{i2\theta_3}$, $e^{i4\theta_3}$, $e^{i8\theta_3}$, $\dots$  to $d(\theta_{in})$ yields problems that should successively exhibit instantaneous 3rd order, 4th order, 5th order, $\dots$ amplification.
Each such problem would exhibit the maximum order of amplification permitted by the estimate \eqref{est} when $\EE$ is computed by the formula \eqref{E} of Theorem \ref{tt26}, and would thus further demonstrate the optimality of that estimate.

Proposition \ref{tt8} considers problems with resonances  in the ``bad cases" for which the quantity $\Omega_{i,j}$ (defined below in \eqref{i22a})  is rational and lies in $(0,\infty)$ or $(-\infty,-1)$.\footnote{For the resonance \eqref{i16a} we have $\Omega_{1,2}=1$.}   This proposition, taken together with our examples of multiple amplification, indicates that for such problems there is no hope of proving an estimate like \eqref{est} with finite $\EE$ when the Fourier spectrum of $d(\theta_{in})$ is an arbitrary infinite subset of $\ZZ$.   That remains true even when the spectrum of $d(\theta_{in})$ is restricted to be  purely positive.

Our example of double amplification confirms that are situations in which some of the global amplifications $\DD(\eps,k,k-r)(\zeta)$ are indeed large, that is, equal to $\frac{C_5r^2}{\eps\gamma}$, on $\zeta-$sets of positive measure.    We do not see how to be sure of this without such examples.  Section \ref{disc} contains more discussion related to this point.


Finally,  observe that any estimate like \eqref{est} for which $\EE=0$, such as  the estimate \eqref{tv31} in Theorem \ref{tvv29},  is optimal.   Simple examples show that  the amplifying factors $|X_k|$ on the right of \eqref{est} are unavoidable in weakly stable  problems like \eqref{i1} even when $\cD=0$ \cite{CG}.

\begin{rem}\label{indep}
1) The results of section \ref{1s}   show that the factors $\DD(\eps,k,k-r)$ are determined just by our assumptions on $(L(\partial),B)$; they are \emph{independent} of the choice of the oscillatory factor $\cD(\theta_{in})$.   

2)  If $\cD$ is replaced by $\eps\cD$ in \eqref{i1}, then $V_{k-r-t}$ is replaced by $\eps V_{k-r-t}$ on the right in the iteration estimate
\eqref{i10}, and this leads (by a simple argument \emph{not} involving iteration of \eqref{i10}) to a final  estimate like \eqref{est} where $\EE=0$; multiple amplification does not occur.\footnote{Such an estimate in this case already follows by  different methods from the results of \cite{CGW3}.}     The properties of the spectrum of $\cD$, the assumptions involving the quantity $\Omega_{i,j}$ made in Theorems \ref{tt26} and \ref{tvv29}, the presence or absence of resonances, and the choice of $\phi_{in}$ or $\phi_{out}$ in the argument of $\eps\cD$ are all irrelevant to the estimate in this case.   Multiple instantaneous amplification is a phenomenon associated with large oscillatory coefficients.

\end{rem}

\subsection{Remarks on the proofs. }\label{remarks}\qquad We conclude with some informal remarks intended to give more insight into the proofs. \\

\textbf{Iteration and cascade estimates. }Consider first the formulas \eqref{a5}, \eqref{a6} for solutions
$w^+_k(x_2,\zeta)$, $w^-_k(x_2,\zeta)$ to the system obtained by diagonalizing the singular transformed problem \eqref{i9} near one of the bad directions $\beta\in\Upsilon^0$. 
For a given $(\eps,k)$ these formulas are valid for $\zeta$ such that $X_k:=\zeta+ k\frac{\beta_l}{\eps}$ lies in a small enough conic neighborhood $\Gamma_\delta(\beta)$ of $\beta$.\footnote{Here $\beta\in \Upsilon^0$ may or may not equal $\beta_l$, where  $\beta_l\in\Upsilon^0$ is the particular bad direction that appears in \eqref{i2}; $\beta_l$ is fixed throughout the paper.   This same $\beta_l$ appears in the definition $X_k:=\zeta+k\frac{\beta_l}{\eps}$.  When $\zeta$ is large compared to $k\frac{\beta_l}{\eps}$, $X_k$ can of course lie far from the $\beta_l$ direction.}
The functions  of $\zeta$ denoted by $\omega_j(\eps,k;\beta)$ that appear in \eqref{a3b} are related to the eigenvalues $\omega_j(X_k)$ of $\cA(X_k)$ ($\cA$ as in \eqref{i9}) by the slightly abusive equation
\begin{align}\label{i17}
\omega_j(\eps,k;\beta)(\zeta)=\omega_j(X_k) \text{ for }X_k\in\Gamma_\delta(\beta).
\end{align} 
The ``most dangerous" terms in the expressions for $w^\pm_k$ are the terms in the sum appearing in the second line of the expression for $w^-_k(x_2,\zeta)$,  \eqref{a6}. The matrix  $[Br_-(\eps,k)(\zeta)]^{-1}$ that appears there is large for $X_k$ near $\beta$ and satisfies\footnote{The function $\Delta(\zeta):=\det Br_-(\zeta)$ (Definition \ref{s7b}) is the Lopatinski determinant.  The uniform Lopatinski condition fails at $\beta$ precisely when $\Delta(\beta)=0$.}
\begin{align}\label{i18}
|[Br_-(\eps,k)(\zeta)]^{-1}|\lesssim |\Delta(X_k)|^{-1}\lesssim \frac{|X_k|}{\gamma} \text{ for }\gamma>0.
\end{align}
With the normalizations chosen in the next section (which imply \eqref{a7}, e.g.), one sees from  \eqref{a6} that to control the $r-$th term in the sum one must control quantities like
\begin{align}\label{i19}
\frac{1}{\Delta(X_k)}\int^\infty_0e^{i\omega_i(X_k)(-s)+ir\frac{\omega_N(\beta_l)}{\eps}s}\ar w^-_{k-r,j}(s,\zeta)ds,  \;i\in\cO,\;\;  j,N\in\cI
\end{align}
where $w^-_{k-r,j}$ is the $j-$th component of $w^-_{k-r}$.   

Consider the case when $X_k$ lies in a small conic neighborhood of $\beta_l$,  $\Gamma_\delta(\beta_l)$.  
We can try to control the factor $|\Delta(X_k)|^{-1}$ in \eqref{i19} by doing an integration by parts in the oscillatory integral.  An $s-$derivative will fall on $w^-_{k-r,j}$, and we  suppose for these remarks that $X_{k-r}\in\Gamma_\delta(\beta_l)$ in order to use the equation for $w^-_{k-r,j}$ given by \eqref{a3a}$_{k-r}$.  Since $\partial_s w^-_{k-r,j}$ can grow like $|X_k|$, there is no sure gain if we do the integration by parts directly on \eqref{i19}.  Instead, we set
\begin{align}\label{i20a}
w^{*,-}_{k-r,j}(x_2,\zeta):=e^{-i\omega_j(X_{k-r})x_2}w^-_{k-r,j},\text{ noting that }\partial_{x_2}w^{*,-}_{k-r,j}=e^{-i\omega_j(X_{k-r})x_2}h^-_{k-r,j},
\end{align}
where $h^-_{k-r,j}$ is the $j$-component of the right side of \eqref{a3a}$_{k-r}$.    We then rewrite \eqref{i19} as 
\begin{align}\label{i20}
\begin{split}
&\frac{1}{\Delta(X_k)}\int^\infty_0e^{i\omega_i(X_k)(-s)+ir\frac{\omega_N(\beta_l)}{\eps}s}e^{i\omega_j(X_{k-r})s}\ar w^{*,-}_{k-r,j}(s,\zeta)ds=\\
&\frac{1}{\Delta(X_k)}\int^\infty_0e^{-iE_{i,j}(\eps,k,k-r)s}\ar w^{*,-}_{k-r,j}(s,\zeta)ds, \text{ where }E_{i,j}(\eps,k,k-r)=\omega_i(X_k)-r\frac{\omega_N(\beta_l)}{\eps}-\omega_j(X_{k-r}).
\end{split}
\end{align}
We show in Proposition \ref{t18} and its refinements, Propositions \ref{t18aa} and \ref{goodcase},  that it ``frequently" happens that 
\begin{align}\label{i21}
|E_{i,j}(\eps,k,k-r)(\zeta)|\geq  C_3\frac{|X_k|}{|r|} \text{ or } |E_{i,j}(\eps,k,k-r)|\geq C_3 |X_{k-r}|, \text{ for a }C_3 \text{ independent of }(\eps,\zeta,k,r).
\end{align}
It is clear from \eqref{i18} that when the first possibility holds, an integration by parts can be used to control the factor $|\Delta(X_k)|^{-1}$ in \eqref{i20}.  It is less clear but true that when the second possibility holds, one can also control $|\Delta(X_k)|^{-1}$; see step \text{7} of the proof of Proposition \ref{tt30}.  If for some $(i,j)\in\cO\times\cI$ and $(\eps,\zeta,k,r)$ the alternative \eqref{i21} fails to hold, then we must define 
the global amplification factor $\DD(\eps,k,k-r)(\zeta)$ to be $\frac{C_5r^2}{\eps\gamma}$ for that particular choice of $(\eps,\zeta,k,r)$.\footnote{In the iteration estimate \eqref{i10}, for a given $r\in\ZZ\setminus 0$ most of the terms in the inner sum $\sum_{t\in\ZZ}$ come from terms like $e^{-i\omega_j(X_{k-r})x_2}h^-_{k-r,j}$ as in \eqref{i20a}, which result from the integration by parts.}    

One must also consider cases where $X_k\in\Gamma_\delta(\beta)$ for  $\beta \in\Upsilon^0\setminus \{\pm\beta_l\}$.  In these cases the quantity $\frac{\Delta(X_{k-r})}{\Delta(X_k)}$ is useful for controlling the large factor $|\Delta(X_k)|^{-1}$.  In Proposition \ref{keyt} we show that it ``frequently" happens that 
\begin{align}\label{i22}
\left|\frac{\Delta(X_{k-r})}{\Delta(X_k)}\right|\leq C_1|r|, \text{ for a }C_1 \text{ independent of }(\eps,\zeta,k,r).
\end{align}
If for some $(\eps,\zeta,k,r)$ the estimate \eqref{i22} fails to hold, then we must define 
the global amplification factor $\DD(\eps,k,k-r)(\zeta)=\frac{C_5r^2}{\eps\gamma}$ for that particular choice of $(\eps,\zeta,k,r)$.     We restrict attention to two space dimensions partly in order to control quotients like the one in \eqref{i22} (Remark \ref{2d}).\footnote{See also Remark \ref{factor} for more about the restriction to two space dimensions.} 

 In order to carry out the cascade estimates, we must understand  how ``often" \eqref{i21} and \eqref{i22} can fail. This work is done in the Propositions listed in the above two paragraphs, as well as in Proposition \ref{large}.   For example, Proposition \ref{t18aa} shows that when
\begin{align}\label{i22a}
\Omega_{i,j}:= \frac{\omega_i(\beta_l)-\omega_N(\beta_l)}{\omega_j(\beta_l)-\omega_i(\beta_l)}\in (-1,0),
\end{align}
then \eqref{i21} holds for ``most" $(k,r)$.    Proposition \ref{large} counts for any given $(\eps,\zeta)$ how many of the  amplification factors in products like \eqref{i14} can be large.  It turns out that under the assumptions of Theorems \ref{tt26} and \ref{tvv29}, no more than $\EE$ can be large, where $\EE$ is specified in those theorems.

\textbf{Construction of approximate solutions. }In section \ref{multiple} we construct approximate solutions exhibiting double amplification to the problem \eqref{d1}.    The approximate solutions have   the form
\begin{align}\label{i23}
u^\eps_a(t,x)=\sum^J_{k=-1}\eps^k U_k(t,x,\frac{\Phi}{\eps}),\;  \;\; U_k(t,x,\theta)=\sum_{\alpha\in\ZZ^3} U_{k,\alpha}(t,x)e^{i\alpha\theta},
\end{align}
where $\Phi(t,x)=(\phi_1,\phi_2,\phi_3)$ is a triple of resonant phases, and the profiles $U_k(t,x,\theta)$ are $2\pi-$periodic with respect to $\theta=(\theta_1,\theta_2,\theta_3)$.  
An extra difficulty in the construction of approximate solutions exhibiting multiple (as opposed to single) amplification is the higher degree of coupling among the profile equations.    For example, to determine the trace of the leading order profile $U_{-1}$, we must now solve two coupled boundary amplitude equations, \eqref{d17}(a),(b), instead of just one boundary amplitude equation as in the case of single amplification.
The  equations \eqref{d17}(a),(b)  involve in turn  higher unknown amplitudes (respectively, $U_0$, $U_1$).

To cope with this high degree of coupling, we introduce the following device.   We  make a list in section \ref{table} of the possible modes that we \emph{expect} to appear in the various profiles; that is, for each $U_k$ in \eqref{i23}, we specify the $\alpha$ for which $U_{k,\alpha}$ might possibly be nonzero.\footnote{In section \ref{table} we don't actually list the various $\alpha$, but the list we give is equivalent to a specification of the $\alpha$.} 
The list is at first just a reasonable guess that takes into account the boundary data in \eqref{d1}, the single resonance \eqref{d2}, the profile equations, and what we already know about the exact solution.  We then make two assumptions on which we base the construction of the profiles: (a) profiles $U_k$ exist which satisfy all the profile equations; (b)  the only    possible nonzero modes of these profiles are those which appear in our list.    It is not clear at first that these  assumptions are \emph{consistent}.  However, by making these assumptions we are able to construct profiles that  satisfy the profile equations and whose nonzero modes manifestly lie in our list.  Thus, the construction itself verifies the consistency and correctness of the two assumptions.   A key advantage of this approach is that it allows us to decouple the equations by solving for \emph{individual} modes of profiles in the appropriate order, starting with the low modes (that is, modes for which $|\alpha|$ is small.)     We are not concerned about uniqueness of the profiles, because  we know that the exact solution is unique, and we show in Theorem \ref{multiamp} that the approximate solution is $O(\eps^\infty)$ close in $L^\infty$ to the exact solution.


\begin{nota}
1) If $f(\tau,\eta,\eps,k,\gamma)$ is a function of $(\tau,\eta,\eps,k,\gamma)\in \CalD$ for some domain $\CalD$,  the statement $ f\sim 1$ means that there exist positive constants $A_1$, $A_2$ independent of $(\tau,\eta,\eps,k,\gamma)\in \CalD$ such that 
$$A_1\leq |f(\tau,\eta,\eps,k,\gamma)|\leq A_2 \text{ on }\CalD.$$

2) If $g$ is another such function, the statement $f\lesssim g$  means that there exists a positive constant $C$ independent of $(\tau,\eta,\eps,k,\gamma)\in \CalD$ such that 
$$ |f| \leq C|g| \text{ on }\CalD.$$

3) The constants $C$, $C_1$, $C_2$, $K$, $M$, etc. that appear frequently in the estimates below are always independent of the important parameters $(\tau,\eta,\eps,k,\gamma)$.  

4) If $S$ is a finite set, we denote the cardinality of $S$ by $|S|$. 

5) For $n\in\NN$ we denote the $n\times n$ identity matrix by $I_n$.

6) We sometimes denote the norm on $L^2(\Omega\times \TT)$ by $|U|_{L^2(t,x,\theta)}$ (and use similar notation for other spaces), when the domain of the variables $(t,x,\theta)$ is clear from the context.
\end{nota}

\textbf{Acknowledgment.} We thank Jean-Francois Coulombel for stimulating discussions over a number of years related to the topic of this paper.

\section{Assumptions and main results}\label{assumptions}

 \qquad We consider the problem 
 \begin{align}\label{s1}
 \begin{split}
 &\partial_t u+B_1\partial_{x_1}u+B_2\partial_{x_2}u+\cD\left(\frac{\phi_{in}}{\eps}\right)u=F(t,x,\frac{\phi_0}{\eps})\text{ in }x_2>0\\
 &Bu=G(t,x_1,\frac{\phi_0}{\eps})\text{ on }x_2=0\\
 &u=0 \text{ in }t<0.
 \end{split}
 \end{align}
  Here the $B_j$  are constant $N\times N$ matrices, $B_2$ is invertible, and the boundary phase is taken to be $\phi_0(t,x_1)=\beta_l\cdot (t,x_1)$, where $\beta_l=(\sigma_l,\eta_l)\in\RR^2\setminus 0$ is one of the directions where the uniform Lopatinski condition (Definition \ref{ustable})  fails.   The matrix $\cD(\theta_{in})$ and functions $F(t,x,\theta)$, $G(t,x_1,\theta)$  are respectively $2\pi-$periodic in $\theta_{in}$ and $\theta$.
  Also, 
  \begin{align}\label{s1a}
  \phi_{in}(t,x)=\phi_0(t,x_1)+\omega_N(\beta_l) x_2
  \end{align}
  is one of the incoming phases (see Definition \ref{def2}).  
    For convenience we take $F$ and $G$ to be zero in $t<0$. 

\begin{assumption}[Strict hyperbolicity]\label{assumption1}
The $B_j$ are real matrices, and there exist real valued functions $\lambda_j(\eta,\xi)$, $j=1,\dots,N$ that are analytic on $\RR^2\setminus 0$ and homogeneous of degree one such that 
\begin{align}
\det(\sigma I+ B_1\eta+B_2\xi)=\prod_{k=1}^N (\sigma+\lambda_k(\eta,\xi))\text{ for all }(\eta,\xi)\in \RR^2\setminus 0.
\end{align}
Moreover, we have
\begin{align}
\lambda_1(\eta,\xi)<\lambda_2(\eta,\xi)<\dots <\lambda_N(\eta,\xi) \text{ for all }(\eta,\xi)\in \RR^2\setminus 0.
\end{align}

\end{assumption}

We rewrite \eqref{s1} as 
\begin{align}\label{s2}
\begin{split}
&D_{x_2} u+A_0 D_tu+A_1D_{x_1}u-iB_2^{-1}\cD\left(\frac{\phi_{in}}{\eps}\right)u=F(t,x,\frac{\phi_0}{\eps})\\
&Bu= G(t,x_1,\frac{\phi_0}{\eps})\text{ on }x_2=0\\
&u=0\text{ in }t<0,
\end{split}
\end{align}
where $A_0=B_2^{-1}$, $A_1=B_2^{-1}B_1$, and $F$ has been modified in an unimportant  way.  
Let us  introduce the matrix 
\begin{align}\label{s3b}
\mathcal{A}(\tau,\eta)=-(A_0\tau+A_1\eta),\;\;(\tau,\eta)=(\sigma-i\gamma,\eta)\in\CC\times \RR, 
\end{align}
and define the following sets of frequencies: 

\begin{align*}
& \Xi := \Big\{ \zeta:=(\sigma-i\gamma,\eta) \in \C \times \R \setminus (0,0) : \gamma \ge 0 \Big\} \, ,
& \Sigma := \Big\{ \zeta \in \Xi : \sigma^2 +\gamma^2 +\eta^2 =1 \Big\} \, ,\\
& \Xi_0 := \Big\{ (\sigma,\eta) \in \R \times \R \setminus (0,0) \Big\} = \Xi \cap \{ \gamma = 0 \} \, ,
& \Sigma_0 := \Sigma \cap \Xi_0 \, .
\end{align*}

The hyperbolic region and the glancing set are defined as follows.
 
\begin{definition}
\label{defhyp}
a) The hyperbolic region ${\mathcal H}$ is the set of all $(\sigma,\eta) \in \Xi_0$ such that the matrix
       ${\mathcal A}(\sigma,\eta)$ is diagonalizable with real eigenvalues.

b)  Let ${\bf G}$ denote the set of all $(\sigma,\eta,\xi) \in \R \times \R^2$ such that $(\eta,\xi) \neq 0$ and there exists
       an integer $k \in \{1,\dots,N\}$ satisfying:
\begin{equation*}
\sigma + \lambda_k(\eta,\xi) = \dfrac{\partial \lambda_k}{\partial \xi} (\eta,\xi) = 0 \, .
\end{equation*}
If $\pi ({\bf G})$ denotes the projection of ${\bf G}$ on the  first $2$  coordinates (in other words $\pi (\sigma,\eta,\xi)
=(\sigma,\eta)$ for all $(\sigma,\eta,\xi)$), the glancing set ${\mathcal G}$ is ${\mathcal G} :=
\pi ({\bf G}) \subset \Xi_0$.

\end{definition}

\begin{assumption}\label{assumption2}
The matrix $B_2$ is invertible and has $p$ positive eigenvalues, where $1\leq p\leq N-1$.  The boundary matrix $B$ is $p\times N$, real,  and of rank $p$. 
\end{assumption}

\begin{prop}\cite{K}
\label{stable}
Let Assumptions \ref{assumption1} and \ref{assumption2} be satisfied. Then for all $\zeta \in \Xi \setminus
\Xi_0$, the matrix $i{\mathcal A}(\zeta)$ has no purely imaginary eigenvalue and its stable subspace $\E^s
(\zeta)$ has dimension $p$. Furthermore, $\E^s$ defines an analytic vector bundle over $\Xi \setminus \Xi_0$
that can be extended as a continuous vector bundle over $\Xi$.
\end{prop}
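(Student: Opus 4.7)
The plan is to reduce the spectral question to a statement about the characteristic variety of $L(\partial)$ via Assumption \ref{assumption1}, and then carry out the standard three steps: (i) rule out purely imaginary eigenvalues on $\Xi\setminus\Xi_0$, (ii) identify $\dim \E^s=p$ by a limiting argument, (iii) obtain analyticity via a Dunford projector, and (iv) invoke Kreiss' block reduction to extend continuously to $\Xi_0$.

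First I would compute the characteristic polynomial of $i\mathcal{A}(\zeta)$. Since $\mathcal{A}(\tau,\eta)=-B_2^{-1}(\tau I + B_1\eta)$, for any $\xi\in\CC$ one has
\begin{equation*}
\det\bigl(\xi I - i\mathcal{A}(\zeta)\bigr)=(-i)^N\det(B_2^{-1})\,\det\bigl(\tau I + B_1\eta - i\xi B_2\bigr).
\end{equation*}
By Assumption \ref{assumption1}, the factorization $\det(\sigma I+B_1\eta+B_2\xi')=\prod_k(\sigma+\lambda_k(\eta,\xi'))$ on $\R^2\setminus 0$ extends to all complex values of the last argument by polynomial identity, so $\xi$ is an eigenvalue of $i\mathcal{A}(\zeta)$ iff $\tau=-\lambda_k(\eta,-i\xi)$ for some $k$. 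If $\xi=i\rho$ with $\rho\in\R$ then $-i\xi=\rho\in\R$ and $\lambda_k(\eta,\rho)\in\R$, forcing $\tau\in\R$ and hence $\gamma=0$; this contradicts $\zeta\in\Xi\setminus\Xi_0$. Thus $i\mathcal{A}(\zeta)$ has no purely imaginary eigenvalue on $\Xi\setminus\Xi_0$.

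Next I would show $\dim\E^s(\zeta)=p$. Because the eigenvalues of $i\mathcal{A}(\zeta)$ stay off the imaginary axis, $\dim\E^s$ is locally constant, hence constant on each connected component of $\Xi\setminus\Xi_0$. Taking $\eta=0$ and $\tau=-i\gamma$ with $\gamma>0$ gives $i\mathcal{A}(\zeta)=-\gamma B_2^{-1}$, whose eigenvalues are $-\gamma/\mu_j$ for the eigenvalues $\mu_j$ of $B_2$; by Assumption \ref{assumption2} exactly $p$ of them have negative real part, so $\dim\E^s=p$ on this component. Connectedness of $\Xi\setminus\Xi_0$ (upper half space minus $\{0\}\times\R$, intersected with $\{\gamma\geq 0\}\setminus\{\gamma=0\}$, which is $\{\gamma>0\}$ — obviously connected in $(\sigma,\gamma,\eta)\in\R^3$) yields the claim everywhere.

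Analyticity of $\E^s$ as a vector bundle follows from the standard Dunford projector: choose a contour $\Gamma$ in the open left half-plane enclosing all eigenvalues with $\re\xi<0$, and set
\begin{equation*}
P^s(\zeta)=\frac{1}{2\pi i}\oint_{\Gamma}\bigl(\lambda I - i\mathcal{A}(\zeta)\bigr)^{-1}\,d\lambda.
\end{equation*}
Since no eigenvalue crosses the imaginary axis, $\Gamma$ can be chosen locally independent of $\zeta$, and $P^s(\zeta)$ is analytic in $\zeta$; then $\E^s(\zeta)=\range P^s(\zeta)$ is an analytic bundle.

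The hard step is the continuous extension to boundary points $\zeta\in\Xi_0$, where eigenvalues of $i\mathcal{A}$ may coalesce on the imaginary axis. I would invoke Kreiss' block-structure reduction (as carried out by Kreiss and refined by M\'etivier): near any $\underline\zeta\in\Xi_0$, strict hyperbolicity (Assumption \ref{assumption1}) lets one analytically conjugate $i\mathcal{A}(\zeta)$ into a block-diagonal form in which each block either stays hyperbolic (giving a smooth stable subspace) or is a Jordan block associated with a glancing point, in which case Puiseux expansions around the glancing frequencies still produce a limiting stable subspace of the correct dimension. The main delicacy is at glancing points in $\mathcal{G}$ where the perturbation expansion is non-smooth; however, the purely continuous extension statement relies only on the fact that, in each block, the continuous choice of eigenprojectors survives passage to the boundary. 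Combining these local constructions via a partition of unity on $\Sigma$ yields the continuous vector bundle over $\Xi$, completing the proof.
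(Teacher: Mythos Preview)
The paper does not give its own proof of this proposition; it is simply quoted from \cite{K} (Kreiss) as a known result, so there is nothing to compare against. Your sketch is a reasonable outline of the classical argument: steps (i)--(iii) are correct and standard, and in step (iv) you rightly identify that the genuine content (the continuous extension across $\Xi_0$, especially at glancing points) is precisely Kreiss' block-structure theorem, which in this strictly hyperbolic setting one may cite rather than reprove.
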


\noindent For all $(\sigma,\eta) \in \Xi_0$, we let $\E^s(\sigma,\eta)$ denote the continuous extension of $\E^s(\zeta)$ to the
point $(\sigma,\eta)$.  The analysis in \cite{Met} shows that away from the glancing set ${\mathcal G} \subset \Xi_0$,
$\E^s(\zeta)$ depends analytically on $\zeta$, and the hyperbolic region ${\mathcal H}$ does not contain any
glancing point.

\begin{defn}\cite{K}
\label{ustable}
As before let $p$ be the number of positive eigenvalues of $B_2$, and let 
$$L(\partial)=\partial_t+B_1\partial_{x_1}+B_2\partial_{x_2}.$$ 
The problem $(L(\partial),B)$ is said to
be \emph{uniformly stable} or to satisfy the \emph{uniform Lopatinski condition} (ULC) if
\begin{equation}\label{us}
B \, : \, \E^s (\zeta) \longrightarrow \bC^p
\end{equation}
is an isomorphism for all $\zeta\in \Sigma$.
Similarly, we say $(L(\partial),B)$ satisfies the $ULC$ on $\Xi$,  (respectively, on a closed conic subset $\Gamma\subset \Xi$), if the map \eqref{us} is an isomorphism on $\Sigma$ (respectively, on  the  subset of $\Sigma$ corresponding to $\Gamma$). 

\end{defn}

We now fix a choice of $\beta=(\underline \sigma,\underline \eta)\in \mathcal{H}$.   As a consequence of strict hyperbolicity there is a closed conic neighborhood $\Gamma^+_\delta(\beta)$ of $\beta$ in $\Xi$ with opening angle $\delta>0$, $$ \Gamma_\delta^+(\beta)=\left\{\zeta\in\Xi:\left|\frac{\zeta}{|\zeta|}-\frac{\beta}{|\beta|}\right|\leq \delta\right\},$$ such that  $\CalA(\zeta)$ has $N$ distinct eigenvalues $\omega_j(\zeta)$ and corresponding eigenvectors $R_j(\zeta)$ satisfying
\begin{align}\label{s4}
\CalA(\zeta)R_j(\zeta)=\omega_j(\zeta)R_j(\zeta), \; j=1,\dots,N \text{ on }\Gamma^+_\delta(\beta).
\end{align}
 The functions $\omega_j$, $R_j$ map $\Gamma_\delta^+(\beta)$ into $\CC$, $\CC^N$ respectively, are homogeneous of degree one, and are analytic in $\tau$, $C^\infty$ (in fact, real-analytic) in $\eta$.    We also define normalized vectors 
 \begin{align}\label{s4a}
 r_j(\zeta):=R_j(\zeta)/|R_j(\zeta)|,
 \end{align}
 which are merely $C^\infty$ in $(\sigma,\gamma,\eta)$. 
 
 To each root ${\omega}_j(\beta)=\underline{\omega}_j$ there corresponds a unique integer $k_j \in \{ 1,\dots,N\}$ such that
$\underline{\sigma} + \lambda_{k_j} (\underline{\eta},\underline{\omega}_j)=0$. We can then define the following
real 
phases $\phi_j$ and their associated group velocities:
\begin{equation}
\label{phases}
\forall \, j =1,\dots,N \, ,\quad \phi_j (x):= \phi_0(t,y)+\underline{\omega}_j \, x_2 \, ,\quad
{\bf v}_j := \nabla \lambda_{k_j} (\underline{\eta},\underline{\omega}_j) \, .
\end{equation}
Let us observe that each group velocity ${\bf v}_j$ is either incoming or outgoing with respect to the space
domain $\R^2_+$: the last coordinate of ${\bf v}_j$ is nonzero. This property holds because $\beta$
does not belong to the glancing set ${\mathcal G}$.  For any $\beta\in\cH$ there are exactly $N-p$ outgoing phases and (after relabelling if necessary) we denote the corresponding set of 
indices by $\mathcal{O}=\{1,\dots,N-p\}$.  The set of incoming indices is $\mathcal{I}=\{N-p+1,\dots,N\}$. 

We can therefore adopt the following classification:

\begin{definition}
\label{def2}
The phase $\phi_j$ is incoming if the group velocity ${\bf v}_j$ is incoming (that is, when $\partial_{\xi_2}
\lambda_{k_j} (\underline{\eta},\underline{\omega}_j)>0$), and it is outgoing if the group velocity ${\bf v}_j$
is outgoing ($\partial_{\xi_2} \lambda_{k_j} (\underline{\eta},\underline{\omega}_j) <0$).  If the phase $\phi_j$ is incoming (resp., outgoing), we shall also refer to the corresponding frequency $\underline{\omega}_j$ as incoming (resp., outgoing).

\end{definition}

   The $\omega_j$ are real-valued for $\gamma=0$ and can be  divided into two groups according as $j\in \mathcal O$ or $\mathcal I$.     There exists  a constant $c>0$ such that for $\zeta\in\Gamma^+_\delta(\beta)$
 \begin{align}\label{s5}
 \begin{split}
 &\mathrm{Im}\;\omega_j(\zeta)\leq -c\gamma \text{ for }j\in\mathcal{O}\\
 &\mathrm{Im}\;\omega_j(\zeta)\geq c\gamma \text{ for }j\in \mathcal{I}.
\end{split}
\end{align}

We define the $N\times (N-p)$ matrix $r_+(\zeta)$ and the $N\times p$ matrix $r_-(\zeta)$ by 
\begin{align}\label{s6}
r_+=\begin{pmatrix}r_1&r_2&\dots&r_{N-p}\end{pmatrix}, \qquad r_-=\begin{pmatrix}r_{N-p+1}&\dots&r_{N}\end{pmatrix} \text{ on }\Gamma^+_\delta(\beta).
\end{align}
Similarly, define $R_\pm(\zeta)$ using the unnormalized eigenvectors $R_j(\zeta)$. 

Next we  introduce the $N\times N$ matrix
\begin{align}\label{s6a}
S(\zeta)=\begin{pmatrix}r_+(\zeta)&r_-(\zeta)\end{pmatrix}\text{ on }\Gamma^+_\delta(\beta).
\end{align}
Having fixed the column vectors $r_j(\zeta)$, we define an $(N-p)\times N$ matrix $\ell_+(\zeta)$ and a $p\times N$ matrix $\ell_-(\zeta)$  such that 
\begin{align}\label{s7} 
S^{-1}(\zeta)=\begin{pmatrix}\ell_+(\zeta)\\\ell_-(\zeta)\end{pmatrix} \text{ on }\Gamma^+_\delta(\beta).
\end{align}
The rows of $S^{-1}(\zeta)$ are given by row vectors $\ell_j(\zeta)$, $j=1,\dots,N$, and these satisfy $\ell_j(\zeta)\sim 1$.

The following well-known proposition, proved in \cite{CG}, gives a useful decomposition of $\EE^s(\zeta)$.
\begin{prop}\label{s7a}
For $\zeta\in\Gamma^+_\delta(\beta)$ the stable subspace $\EE^s(\zeta)$ admits the decomposition 
\begin{align}
\EE^s(\zeta)=\bigoplus_{j\in \mathcal{I}}\mathrm{span}\;r_j(\zeta),
\end{align}
and the vectors $r_j(\beta)$ can be (and are) taken to be real vectors. 
\end{prop}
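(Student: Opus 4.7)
The plan is to identify the $r_j(\zeta)$ for $j\in\CalI$ as stable eigenvectors of $i\CalA(\zeta)$ when $\gamma>0$, match dimensions, and then extend by continuity to $\gamma=0$. First, for $\zeta\in\Gamma^+_\delta(\beta)$ with $\gamma>0$, the eigenvalue equation $\CalA(\zeta)r_j(\zeta)=\omega_j(\zeta)r_j(\zeta)$ gives $i\CalA(\zeta)r_j(\zeta)=i\omega_j(\zeta)r_j(\zeta)$, so $r_j(\zeta)$ is a stable eigenvector of $i\CalA(\zeta)$ precisely when $\re(i\omega_j(\zeta))=-\im\omega_j(\zeta)<0$. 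By \eqref{s5}, $\im\omega_j(\zeta)\geq c\gamma>0$ for $j\in\CalI$, while $\im\omega_j(\zeta)\leq -c\gamma<0$ for $j\in\CalO$. Hence each $r_j(\zeta)$ with $j\in\CalI$ belongs to $\EE^s(\zeta)$, and no $r_j(\zeta)$ with $j\in\CalO$ does.

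Next, by strict hyperbolicity and the construction in \eqref{s4}, the eigenvalues $\omega_j(\zeta)$ are distinct on $\Gamma^+_\delta(\beta)$, so the eigenvectors $\{r_j(\zeta)\}_{j=1}^N$ form a basis of $\CC^N$; in particular the $p$ vectors $\{r_j(\zeta)\}_{j\in\CalI}$ are linearly independent. By Proposition \ref{stable} we have $\dim\EE^s(\zeta)=p=|\CalI|$, so
\begin{equation*}
\EE^s(\zeta)=\bigoplus_{j\in\CalI}\Span\, r_j(\zeta)\qquad\text{for all }\zeta\in\Gamma^+_\delta(\beta)\text{ with }\gamma>0.
\end{equation*}

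To extend the identity to $\gamma=0$, note that both sides vary continuously in $\zeta$ on $\Gamma^+_\delta(\beta)$: the right side is continuous because the $r_j(\zeta)$ are $C^\infty$ and linearly independent there, while the left side is continuous by the extension statement in Proposition \ref{stable}. Since the two $p$-dimensional subspaces agree on the open dense subset $\{\gamma>0\}\cap\Gamma^+_\delta(\beta)$, they agree on all of $\Gamma^+_\delta(\beta)$.

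Finally, at $\zeta=\beta\in\mathcal{H}$ we have $\gamma=0$, so $\CalA(\beta)=-(A_0\underline{\sigma}+A_1\underline{\eta})$ is a \emph{real} $N\times N$ matrix (since $A_0=B_2^{-1}$ and $A_1=B_2^{-1}B_1$ are real by Assumption \ref{assumption1}), and its eigenvalues $\omega_j(\beta)$ are real by Assumption \ref{assumption1} together with the definition of $\mathcal{H}$. Consequently each eigenspace $\ker(\CalA(\beta)-\omega_j(\beta)I)$ is the complexification of a one-dimensional real subspace, so we may and do replace $r_j(\beta)$ by a real unit vector in that line without affecting \eqref{s4a}. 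The only minor point to verify is that this real choice is compatible with the continuous/smooth choice made on $\Gamma^+_\delta(\beta)$, but since the eigenvalues remain simple in a neighborhood of $\beta$, the $r_j$ are determined up to a scalar of modulus one, and we can fix the scalar so that $r_j(\beta)$ is real. I expect no serious obstacle: all substantive content is contained in \eqref{s5} and Proposition \ref{stable}.
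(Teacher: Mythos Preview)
Your proof is correct and follows the standard argument. The paper does not supply its own proof of this proposition; it is stated as a well-known result and attributed to \cite{CG}. Your reasoning---identifying the incoming eigenvectors as stable via the sign condition \eqref{s5}, matching dimensions with Proposition \ref{stable}, extending by continuity to $\gamma=0$, and noting that $\CalA(\beta)$ is a real matrix with real simple eigenvalues---is exactly the expected route and would be the natural way to fill in the citation.
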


Next we define the  Lopatinski determinant and recall some facts from \cite{W3}.
\begin{definition}\label{s7b}
 For $\zeta\in \Gamma^+_\delta(\beta)$ define the  analytic Lopatinski determinant
\begin{align}
\Delta_a(\zeta)=\det BR_-(\zeta).
\end{align}
 and the normalized  Lopatinski determinant
\begin{align}
\Delta(\zeta)=\det Br_-(\zeta).
\end{align}
\end{definition}
Observe that $\Delta(\zeta)$ is $C^\infty$  in 
$\zeta$ and positively homogeneous of degree $0$ on $\Gamma^+_\delta(\beta)$.   Moreover,   the map \eqref{us} fails to be an isomorphism at $\zeta\in\Gamma^+_\delta(\beta)$ if and only if $\Delta(\zeta)=0$.

We can now formulate our weak stability assumption on the problem $(L(\partial),B)$.

\begin{assumption}
\label{assumption3}
\begin{itemize}
 \item For all $\zeta \in \Xi \setminus \Xi_0$, $\text{\rm Ker} \;B \cap \E^s (\zeta) = \{ 0\}$.

 \item The set $\Upsilon_0 := \{ \zeta \in \Sigma_0 : \text{\rm Ker}\; B \cap \E^s (\zeta) \neq \{ 0\} \}$ is
       nonempty and included in the hyperbolic region ${\mathcal H}$. 

 \item For all $\beta\in \Upsilon_0$ there exists a neighborhood $\Gamma^+_\delta(\beta)$ as above on which functions $\omega_j$, $R_j$, $\Delta_a$ are defined and we have
 \begin{align}\label{firstorder}
\Delta_a (\beta)=0\text{ and } \partial_\tau\Delta_a (\beta)\neq 0.
 \end{align}
 
\end{itemize}
\end{assumption}

\begin{rem}\label{factor}
1). Using \eqref{firstorder} and the implicit function theorem, and after reducing $\delta>0$ if necessary, we may write
\begin{align}\label{factor1}
\Delta_a(\tau,\eta)=(\tau-g(\eta))H_+(\tau,\eta) \text{ on }\Gamma^+_\delta(\beta),
\end{align}
where $g$ and $H_+$ inherit the obvious regularity from $\Delta_a$, $g$ is homogeneous of degree one, and $H_+$ is homogeneous of degree $p-1$ and nonvanishing on $\Gamma^+_\delta(\beta)$. 
Since $d=2$ the function $g$ is in fact linear
\begin{align}\label{s7c}
g(\eta)=c_+\eta \text{ for some }c_+=c_+(\beta)\in \RR.
\end{align}

2).  Since $\Delta(\zeta)=\frac{ \Delta_a(\zeta)}{\prod^N_{j=N-p+1} |R_j(\zeta)|}$, we obtain from \eqref{factor1}, \eqref{s7c}:
\begin{align}\label{s7d}
\Delta(\tau,\eta)=(\tau-c_+\eta))h_+(\tau,\eta)\text{ on }\Gamma^+_\delta(\beta),
\end{align}
where  $h_+(\tau,\eta)$ is homogenous of degree $-1$ and nonvanishing on $\Gamma^+_\delta(\beta)$.

3.)Using Assumption \ref{assumption3}, the compactness of $\Upsilon_0$, and the analyticity of $\Delta_a$ one can deduce as in \cite{BS}, section 8.3, that $\Upsilon_0$ is a finite set
in the case $d=2$ that we are considering:
\begin{align}\label{s7e}
\Upsilon_0=\{\beta_j, \;j=1,\dots,M_0\}.
\end{align}
Proposition \ref{s7fj} below shows  that $M_0$ is even.

\end{rem}

We can now state our main result for problems with oscillatory coefficients having only positive spectrum, the case of one-sided cascades.   There is, of course, an exactly parallel result for coefficients with only negative spectrum.   On a first reading the reader might wish to focus on parts (b) and (c) of  the following theorem.

Consider the singular problem \eqref{i6} for $U(t,x,\theta)$ or the equivalent transformed problem for $V_k(x_2,\zeta):=\widehat{U_k}(\zeta,x_2)$:
\begin{align}\label{tt26a}
\begin{split}
&(a) D_{x_2}V_k-\mathcal{A}(X_k)V_k=i\sum_{r\in \ZZ\setminus 0}   e^{ir\frac{\omega_N(\beta_l)}{\eps}x_2}B_2^{-1}\widehat{\cD}(r)V_{k-r}+\widehat{F_k}(x_2,\zeta)\\
&(b) BV_k=\widehat{G_k}(\zeta)\text{ on }x_2=0.
\end{split}
\end{align}

\begin{theo}[Energy estimate:  $\cD$ has positive spectrum only]\label{tt26}
\emph{}

(a). Consider solutions $U(t,x,\theta)$ of the singular system \eqref{i6} with forcing terms $F=\sum_{k\in\mathbb{Z}}F_k(t,x)e^{ik\theta}$, $G=\sum_{k\in\mathbb{Z}}G_k(t,x_1)e^{ik\theta}$ in $H^1(\Omega\times\TT)$, $H^1(\RR^2\times \TT)$ respectively, under Assumptions \ref{assumption1}, \ref{assumption2}, \ref{assumption3}.    Assume  the $N\times N$ matrices $\widehat{\cD}(r)$ in \eqref{tt26a} satisfy 
\begin{align}\label{b00}
 \widehat{\cD}(r)=0\text{ for }r\leq 0,\;\;|\widehat{\cD}(r)|\lesssim |r|^{-(M+2)} \text{ for some }M\geq 2.
\end{align}

For $i\in\cO$, $j\in\cI\setminus \{N\}$ let \footnote{Here the function $\omega_j$ is defined as in \eqref{s4}, but where $\beta=\beta_l$.}
\begin{align}\label{b000}
\Omega_{i,j}:=\frac{\omega_i(\beta_l)-\omega_N(\beta_l)}{\omega_j(\beta_l)-\omega_i(\beta_l)}.
\end{align}
Suppose there exist positive constants $\eps_0$, $\delta_0$  such that for $0<\eps\leq \eps_0$, $0<\delta\leq \delta_0$, $\zeta\in \Xi$ and any strictly increasing sequence of integers $(k_p)$,  there exist numbers $\MM_{i,j}\geq 0$, $\lambda_{i,j}>0$ independent of $(\zeta,\eps,\delta)$ and $(k_p)$,  such that  the set 
\begin{align}\label{tt26b}
\cM_{i,j}(\zeta,\eps,\delta):=\{p\in\ZZ:X_{k_p}\in\Gamma_{\frac{\delta}{r_p}}(\beta_l), X_{k_{p-1}}\in \Gamma_{\frac{\delta}{r_p}}(\beta_l), |t(p)-r_p\Omega_{i,j}|<\lambda_{i,j}\}
\end{align}
has finite cardinality $|\cM_{i,j}(\zeta,\eps,\delta)|\leq\MM_{i,j}$.\footnote{Here we use the notation $\Gamma_\delta(\beta)=\Gamma_\delta^+(\beta)\cup\Gamma_\delta^-(\beta)$, where $\Gamma^-_\delta(\beta):=\{(\sigma-i\gamma,\eta):(-\sigma-i\gamma,-\eta)\in\Gamma^+_\delta(\beta)\}$;   see \eqref{ss7}.     Moreover, $r_p:=k_p-k_{p-1}$ and $t(p)$ is given by $\tilde X_{k_{p-1}}=t(p)\frac{\beta_l}{\eps}$, where $\tilde X_{k_{p-1}}$ is the orthogonal projection of $X_{k_{p-1}}$ on $\beta_l$.}

Define the natural number $\mathbb{E}$ by
\begin{align}\label{tt27}
\mathbb{E}=\left(\frac{|\Upsilon_0|}{2}-1\right)+\sum_{i\in\cO,j\in\cI\setminus \{N\}}\MM_{i,j},
\end{align}
where we set $\sum\MM_{i,j}=0$ in case $\cI=\{N\}$.
Then there exist positive constants $\gamma_0$, $K$ such that for $0<\eps<\eps_0$ and $\gamma\geq \gamma_0$ we have 
\begin{align}\label{tt28}
|U^\gamma|_{L^2(t,x,\theta)}+\left|\frac{U^\gamma(0)}{\sqrt{\gamma}}\right|_{L^2(t,x_1,\theta)}\leq \frac{K}{(\eps\gamma)^\mathbb{E}}\left[\frac{1}{\gamma^2}\left(\sum_{k\in\mathbb{Z}}\left||X_k|\widehat{F_k}\right|_{L^2(x_2,\sigma,\eta)}^2\right)^{1/2}+\frac{1}{\gamma^{3/2}}\left(\sum_{k\in\mathbb{Z}}\left|\widehat{G_k}|X_k|\right|^2_{L^2(\sigma,\eta)}\right)^{1/2}\right].
\end{align}
Equivalently,
\begin{align}\label{tt28a}
|U^\gamma|_{L^2(t,x,\theta)}+\left|\frac{U^\gamma(0)}{\sqrt{\gamma}}\right|_{L^2(t,x_1,\theta)}\leq \frac{K}{(\eps\gamma)^\mathbb{E}}\left[\frac{1}{\gamma^2}|\Lambda_D F^\gamma|_{L^2(t,x,\theta)}+\frac{1}{\gamma^{3/2}}|\Lambda_D G^\gamma|_{L^2(t,x_1,\theta)}\right],
\end{align}
where $U^\gamma:=e^{-\gamma t}U$, $U^\gamma(0)$ is the trace on $x_2=0$,  and $\Lambda_D$ is the singular operator associated to $|X_k|$.\footnote{We define $\Lambda_DG^\gamma:= \sum_k\int e^{i(\sigma t+\eta x_1+k\theta)}|X_k|\;\widehat{G_k}(\tau-i\gamma,\eta)d\sigma d\eta.$}

(b) Under Assumptions \ref{assumption1}, \ref{assumption2}, \ref{assumption3} suppose 
\begin{align}
\begin{split}
&\Omega_{i,j}\in (-1,0) \text{ for } i\in\cO, j\in (\cI\setminus \{N\}).
\end{split}
\end{align}
Then the hypotheses of part (a) are satisfied with 
\begin{align}\label{tt26c}
\begin{split}
&  \lambda_{i,j}=\frac{1}{3}\mathrm{min}\:\{|\Omega_{i,j}-(-1)|, |\Omega_{i,j}-0|\}>0,\\
& \MM_{i,j}=1,  i\in \cO, j\in (\cI\setminus \{N\}), 
\end{split}
\end{align}
and the estimate \eqref{tt28} holds where $\EE$ as in \eqref{tt27} has the value
\begin{align}\label{tt26d}
\EE=|\cO|(|\cI|-1)+\left(\frac{|\Upsilon_0|}{2}-1\right).
\end{align}

(c) Under Assumptions \ref{assumption1}, \ref{assumption2}, \ref{assumption3} suppose now that the oscillatory coefficient in \eqref{i6} has \emph{finite} positive spectrum, that is,  suppose there exists $P\in\NN$ such that $\widehat{\cD}(r)=0$ for all but $P$ distinct choices of $r\in\NN$.   We now make \emph{no assumption} on the numbers $\Omega_{i,j}$.  
Then the estimate \eqref{tt28} holds with 
\begin{align}\label{E}
\EE=P|\cO|(|\cI|-1)+\left(\frac{|\Upsilon_0|}{2}-1\right).
\end{align}

\end{theo}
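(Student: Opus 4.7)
The plan is to prove part (a) directly by iterating the iteration estimate of Proposition \ref{tt30}, and then to deduce parts (b) and (c) as corollaries --- (b) by verifying the abstract hypothesis \eqref{tt26b} and (c) by a parallel counting argument suited to finite spectrum. By Remark \ref{reduction} and linearity in $M_{i,j}$, it suffices to work with a scalar oscillatory coefficient $d(\theta_{in}) M$ as in \eqref{i7a}. By the footnote to \eqref{est} it also suffices to treat the case $F=0$, $G = \sum_{k \geq N^*} G_k e^{ik\theta}$ (and symmetrically the pure interior case) with constants $K, \gamma_0, \eps_0$ independent of $N^*$; general data is then recovered by letting $N^* \to -\infty$.

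For part (a), I would iterate \eqref{i10} as in the examples \eqref{b001}--\eqref{b01}, expressing $\|V_k\|$ as a sum over cascade branches of products of Fourier weights $\prod_s \alpha_{r_s} \alpha_{t_s}$ multiplied by products of global amplification factors $\DD(\eps,k,k_1)\DD(\eps,k_2,k_3)\cdots$ of the shape \eqref{i14}, each branch terminating in a data norm $|\cG_j|$. By \eqref{i11} each $\DD$-factor takes one of three values $1$, $C_5 r^2$, or $C_5 r^2 /(\eps\gamma)$; the polynomial $r^2$ is absorbed by the rapid decay $|\alpha_r| \lesssim |r|^{-(M+2)}$ coming from smoothness of $\cD$, so the entire argument reduces to controlling how many \emph{large} factors of the third type can appear in a single product, uniformly in $(\eps,\zeta,k)$ and in the choice of branch.

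The central combinatorial step, to be carried out via Proposition \ref{large} together with the two alternatives \eqref{i21} and \eqref{i22}, is that the number of large $\DD$-factors in any cascade-generated product is bounded by the $\EE$ of \eqref{tt27}. The contribution $|\Upsilon_0|/2 - 1$ counts unavoidable failures of \eqref{i22} as $X_{k_p}$ transits between conic neighborhoods of the distinct bad directions $\beta \in \Upsilon_0 \setminus \{\pm \beta_l\}$ (whose number is even by Proposition \ref{s7fj}), while $\sum_{i,j} \MM_{i,j}$ counts failures of the non-resonance alternative \eqref{i21} in the regime $X_{k_p} \in \Gamma_\delta(\beta_l)$, and this count is exactly what is postulated in \eqref{tt26b}. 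Once this uniform bound on large factors is established, the cascade estimates are assembled as in the proof of Proposition \ref{bb6}, summed in $\ell^2(k)$ by Plancherel, and converted to an estimate for $U$ via \eqref{i12}, producing \eqref{tt28}.

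Part (b) then follows from part (a) by verifying \eqref{tt26b} with $\MM_{i,j} = 1$: the choice \eqref{tt26c} of $\lambda_{i,j}$ isolates the interval around $r_p \Omega_{i,j}$ strictly inside $(r_p \cdot (-1), r_p \cdot 0)$, and Proposition \ref{t18aa} shows that the joint constraints $X_{k_p}, X_{k_{p-1}} \in \Gamma_{\delta/r_p}(\beta_l)$ and $|t(p) - r_p \Omega_{i,j}| < \lambda_{i,j}$ can be satisfied by at most one $p$; summing over $(i,j) \in \cO \times (\cI \setminus \{N\})$ yields \eqref{tt26d}. For part (c) the hypothesis on $\Omega_{i,j}$ is dropped in exchange for finite spectrum: since only $P$ values of $r$ appear in $d$, along any cascade the large $\DD(\eps,k_p, k_p - r)$ factors are indexed by at most $P$ choices of $r$ for each pair $(i,j) \in \cO \times (\cI \setminus \{N\})$; a variant of the argument of Proposition \ref{t18aa} shows that each such $(i,j,r)$ combination contributes at most one large factor along the sequence, giving the total $P|\cO|(|\cI|-1) + (|\Upsilon_0|/2 - 1)$ of \eqref{E}. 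I expect the main obstacle to be the combinatorial bookkeeping of the cascade iteration: one must verify that the uniform bound on large factors survives the unbounded iteration needed for the final $\ell^2(k)$ summation, and it is precisely this uniformity in $(\eps,\zeta,k)$ that distinguishes \eqref{tt28} from an estimate that degenerates as $\eps \to 0$.
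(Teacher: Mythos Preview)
Your proposal is correct and follows essentially the same route as the paper: reduce to a scalar oscillatory coefficient via Remark~\ref{reduction}, truncate the data to $k\ge N^*$ with constants independent of $N^*$, iterate the estimate of Proposition~\ref{tt30}, count large $\DD$-factors via Proposition~\ref{large}, and sum the cascades via Proposition~\ref{bb6}; part~(b) is then exactly Proposition~\ref{t18aa}, and part~(c) is a direct counting argument exploiting the finite spectrum.

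Two small points of precision. First, the contribution $|\Upsilon_0|/2-1$ comes from Proposition~\ref{keyt}: for each $\beta\in\Upsilon_0^+\setminus\{\beta_l\}$ (not $\Upsilon_0\setminus\{\pm\beta_l\}$) there is at most one index where \eqref{t11za} fails, giving $|\Upsilon_0^+|-1=|\Upsilon_0|/2-1$ large factors from those directions. Second, in part~(c) the paper does not invoke Proposition~\ref{t18aa}; the mechanism is more elementary. Since the $t(p_l)=s+k_{p_l-1}$ differ from one another by nonzero integers, they are separated by at least~$1$; taking $\lambda=1/2$, each of the $P$ possible values $r\Omega_{i,j}$ (one per $r\in\cP$) can trap at most one $t(p_l)$ in its half-interval, so $|\cM_{i,j}|\le P$ and one reruns Proposition~\ref{large} with $\MM_{i,j}=P$.
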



Next we state our main result for problems with oscillatory coefficients having both positive and negative spectrum, the case of two-sided cascades.     

 \begin{theo}[Energy estimate: $\cD$ has positive and negative spectrum]\label{tvv29}
  
   Consider solutions $U(t,x,\theta)$ of the singular system \eqref{i6} with forcing terms $F=\sum_{k\in\mathbb{Z}}F_k(t,x)e^{ik\theta}$, $G=\sum_{k\in\mathbb{Z}}G_k(t,x_1)e^{ik\theta}$ in $H^1(\Omega\times\TT)$, $H^1(\RR^2\times \TT)$ respectively.   Suppose  the coefficients $\widehat{\cD}(r)$ in \eqref{tt26a} satisfy 
   $\widehat{\cD}(0)=0$ and 
   $|\widehat{\cD}(r)|\lesssim |r|^{-(M+1)}$ for some $M\geq 2$ .

As before we make the structural  Assumptions \ref{assumption1}, \ref{assumption2}, \ref{assumption3}, but now we \emph{add} the assumption that
\begin{align}\label{tv30} 
{\Upsilon_0}=\{\beta_l,-\beta_l\}\text{ and } \cI=\{N\} \;(\text{thus}, \cO=\{1,\dots,N-1\}).
\end{align}
Then there exist positive constants $\eps_0$, $\gamma_0$, $K$   such that for $0<\eps\leq \eps_0$ and $\gamma\geq \gamma_0$ we have
\begin{align}\label{tv31}
|U^\gamma|_{L^2(t,x,\theta)}+\left|\frac{U^\gamma(0)}{\sqrt{\gamma}}\right|_{L^2(t,x_1,\theta)}\leq K\left[\frac{1}{\gamma^2}\left(\sum_{k\in\mathbb{Z}}\left||X_k|\widehat{F_k}\right|_{L^2}^2\right)^{1/2}+\frac{1}{\gamma^{3/2}}\left(\sum_{k\in\mathbb{Z}}\left|\widehat{G_k}|X_k|\right|^2_{L^2(\sigma,\eta)}\right)^{1/2}\right].
\end{align}

  \end{theo}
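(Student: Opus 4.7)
The central idea is to bypass iteration of the estimate \eqref{i10} altogether. Iterating \eqref{i10} in the two-sided setting produces the non-terminating cascade illustrated in \eqref{c5}, with products of unboundedly many amplification factors, and is therefore useless here. Instead we will take the $\ell^2(k)$ norm of \eqref{i10} directly and absorb for large $\gamma$. The whole argument hinges on a uniform structural bound $\DD \leq C_5 r^2$ on the global amplification factors, which is the content of Step 1 and which is where the hypothesis \eqref{tv30} enters.

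\textbf{Step 1 (uniform bound $\DD(\eps,k,k-r)(\zeta) \leq C_5 r^2$).} By \eqref{i11} and the discussion in section \ref{remarks}, $\DD$ equals the ``dangerous'' value $C_5 r^2/(\eps\gamma)$ precisely when either the phase-separation alternative \eqref{i21} fails for some pair $(i,j) \in \cO \times \cI$, or the Lopatinski ratio bound \eqref{i22} fails. Under $\cI = \{N\}$, the only pair to check for \eqref{i21} is $(i,N)$ with $i \in \cO$. Decomposing $X_k$ into components parallel and orthogonal to $\beta_l$, using degree-one homogeneity of $\omega_N$, and Taylor-expanding at the projection $c_k \beta_l$ of $X_k$ onto the $\beta_l$-ray, one finds when $X_k, X_{k-r} \in \Gamma_\delta(\beta_l)$
\begin{align*}
E_{i,N}(\eps,k,k-r) \;=\; \omega_i(X_k) - (r/\eps)\omega_N(\beta_l) - \omega_N(X_{k-r}) \;=\; \omega_i(X_k) - \omega_N(X_k) + O(1),
\end{align*}
so strict hyperbolicity (Assumption \ref{assumption1}) gives $|E_{i,N}| \sim |X_k|$, which is stronger than \eqref{i21}; the configurations involving $\Gamma_\delta(-\beta_l)$ are handled symmetrically. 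For the ratio bound \eqref{i22}, since $\Upsilon_0=\{\beta_l,-\beta_l\}$ the function $|\Delta|$ is bounded below outside conic neighborhoods of just two rays. Inside those neighborhoods, the factorization \eqref{s7d} and its $-\beta_l$ analogue, combined with the fact that the translation $-r\beta_l/\eps$ relating $X_{k-r}$ and $X_k$ is parallel to one of the zero rays, yield $|\Delta(X_{k-r})/\Delta(X_k)| \leq C_1 |r|$ after a case analysis. The main obstacle is the mixed case in which $X_k$ lies near $+\beta_l$ while $X_{k-r}$ lies near $-\beta_l$ (or vice versa): here one must combine both factorizations and exploit the quantitative geometry of the translation.

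\textbf{Step 2 (summation in $k$ and absorption).} Substituting the uniform bound $\DD \leq C_5 r^2$ into \eqref{i10} and taking the $\ell^2(k)$-norm, the double sum on the right is a convolution in $k$, so Young's inequality gives
\begin{align*}
\bigl|(\|V_k\|)\bigr|_{\ell^2(k)} \;\leq\; \frac{CC_5}{\gamma}\Bigl(\sum_{r \in \ZZ\setminus 0} |\alpha_r|\, r^2\Bigr)\Bigl(\sum_{t \in \ZZ} |\alpha_t|\Bigr) \bigl|(\|V_k\|)\bigr|_{\ell^2(k)} + (\text{data terms}),
\end{align*}
where the data terms are exactly those appearing on the right of \eqref{tv31}. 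The decay hypothesis $|\widehat{\cD}(r)| \lesssim |r|^{-(M+1)}$ with $M\geq 2$ makes both sums over $r$ and $t$ finite. Choosing $\gamma_0$ large enough that the prefactor multiplying $|(\|V_k\|)|_{\ell^2(k)}$ is at most $1/2$, the left-hand contribution on the right is absorbed, and the norm equivalence \eqref{i12} then converts the resulting bound into the physical estimate \eqref{tv31}. Because no iteration was performed, no factor of $(\eps\gamma)^{-1}$ is ever introduced, giving $\EE=0$.
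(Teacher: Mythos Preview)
Your overall architecture is the paper's: avoid iteration, take the $\ell^2(k)$ norm of the iteration estimate directly, absorb (this is Proposition~\ref{2sided}), and your $E_{i,N}$ calculation is essentially Proposition~\ref{goodcase}. But there are two genuine problems.

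\medskip

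\textbf{The bound $\DD\leq C_5 r^2$ is too weak.} With the stated decay $|\alpha_r|\lesssim |r|^{-(M+1)}$ at the borderline $M=2$, we get $|\alpha_r|\,r^2\lesssim |r|^{-1}$, and $\sum_{r\neq 0}|\alpha_r|\,r^2$ diverges; your absorption in Step~2 then fails. The paper fixes this via Remark~\ref{ty30}: when $\Upsilon_0^+=\{\beta_l\}$, the $r^2$ in Definition~\ref{t29} may be replaced by $|r|$, so that $\DD\leq C_5|r|$ and Proposition~\ref{2sided} applies. The extra power of $r$ in \eqref{ty29} is there \emph{only} to accommodate case~(II) at some $\beta\neq\beta_l$ (see \eqref{tc28} and Definition~\ref{t28}); that case is vacuous under \eqref{tv30}.

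\medskip

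\textbf{The Lopatinski-ratio discussion is both unnecessary and misdirected.} Since $\Upsilon_0^+=\{\beta_l\}$, the only microlocal factor in play is $D(\eps,k,k-r;\beta_l)$. By Definition~\ref{t28}, for $\beta=\beta_l$ cases~(II) and~(III) always give $C_5|r|$ --- case~(II) is handled by the matrix $E_i$ and Lemma~\ref{c9a} (Step~6 of the proof of Proposition~\ref{tt30}), \emph{not} by the ratio $\Delta(X_{k-r})/\Delta(X_k)$. The ratio bound \eqref{i22} is used only for $\beta\neq\beta_l$, which does not occur here. So your ``main obstacle'' (the mixed $\pm\beta_l$ ratio case) is not an obstacle at all: the only thing to check is that case~(Ib) never occurs, i.e., that \eqref{tt5} holds for every $(i,j)\in\cO\times\cI$. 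With $\cI=\{N\}$ this reduces to $(i,N)$, and Proposition~\ref{goodcase} gives it. One caution on your computation there: the error $E_{i,N}-\tilde E_{i,N}$ is $O(\delta|X_k|/|r|)$ (see \eqref{t22},\eqref{t22a}), not $O(1)$; the conclusion $|E_{i,N}|\gtrsim |X_k|$ still follows for small $\delta$, but the stated identity is not correct as written.
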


\begin{rem}\label{ts1}
1) 
Suppose that   $\Omega_{i,j}$ defined by \eqref{b000} lies in $(-\infty,-1)$.      Then by interchanging $\omega_N$ and $\omega_j$ (recall $j,N\in \mathcal{I}$) we obtain a quotient that lies in $(-1,0)$.

2).   In Appendix B of \cite{CGW3} we consider the linearized compressible Euler equations in two space dimensions (a $3\times 3$ system) obtained by linearizing  at a given
 specific volume $v>0$ and a subsonic incoming velocity $(0,u)$, $0<u<c$.    We choose a frequency $\beta_l$ in the hyperbolic region which yields distinct eigenvalues $\omega_i(\beta_l)$, $i=1,2,3$ such that $\omega_2$, $\omega_3$ are incoming and $\omega_1$ is outgoing, so $|\cI|=2$.   Taking the  boundary matrix $B=\begin{pmatrix}0&v&0\\u&0&v\end{pmatrix}$, we check that the weak stability assumptions  \ref{assumption1}, \ref{assumption2}, \ref{assumption3} are satisfied with $\Upsilon^0=\{\beta_l,-\beta_l\}$.  Using the formulas for $\omega_j$ given there, one can verify
 \begin{align}
 \frac{\omega_1-\omega_2}{\omega_3-\omega_1}\in (-1,0).
 \end{align}
Thus, if we add an oscillatory zero-order term $\cD(\phi_2/\eps)$ to the linearized operator, we obtain a problem like \eqref{s1} to which part (b) of Theorem \ref{tt26} applies.

3) Similarly,  section 5.3 of \cite{CG} considers the linearized compressible Euler equations obtained by linearizing at $v>0$ and a subsonic outgoing velocity $(0,u)$, $-c<u<0$.  The choices made there of $\beta_l$ and a $1\times 3$ boundary matrix $B$ allow Theorem \ref{tvv29} to be applied; in this case  $|\cI|=1$, $\Upsilon^0=\{\beta_l,-\beta_l\}$.

4) In Theorem \ref{tvv29} or in any case of Theorem \ref{tt26} where $\EE=0$,  we can allow the oscillatory coefficient $\cD$ to have nonzero mean, $\widehat{\cD}(0)\neq 0$. In that case  equation  \eqref{tt26a} is modified to be 
\begin{align}\label{tz1}
D_{x_2}V_k-\mathcal{A}(X_k)V_k-iB_2^{-1}\widehat{\cD}(0)V_{k}=i\sum_{r\in \ZZ\setminus 0}  \ar e^{ir\frac{\omega_N(\beta_l)}{\eps}x_2}B_2^{-1}\widehat{\cD}(r)V_{k-r}+\widehat{F_k}(x_2,\zeta).
\end{align}
One can then use an argument in the proof of \cite{CGW3}, Proposition 2.4 to simultaneously diagonalize the second and third terms on the left of \eqref{tz1}.  This produces an error term 
$r_{-1}(X_k)V_k$, where $r_{-1}$ is homogeneous of degree $-1$, which can be treated as part of the interior forcing and absorbed in the final estimate by taking $\gamma$ large.  The simultaneous diagonalization process replaces $\xi_\pm(X_k)$ in \eqref{a3a}, \eqref{a3b} by $\xi_\pm(X_k)+r_{0_\pm}(X_k)$, where $r_{0_\pm}$ is homogeneous of degree zero.   
This change  necessitates some straightforward, minor changes in the proof of the iteration estimate.  
When $\EE\neq 0$ this argument does not work; the error term $r_{-1}(X_k)V_k$ cannot be treated as forcing and absorbed.

5) For the problem \eqref{i1} with an oscillatory coefficient  $\cD(\frac{\phi_{out}}{\eps})$ instead of $\cD(\frac{\phi_{in}}{\eps})$, one can prove an analogue of Theorem \ref{tt26}, but with \emph{larger} amplification exponents $\EE$.\footnote{For example, Proposition \ref{goodcase} no longer holds in this case.}  Such a result is given in \cite{W3}.  We do not see how to prove an analogue of Theorem \ref{tvv29} in this case, since the application of Proposition \ref{2sided} requires that all $\DD(\eps,k,k-r)$ be ``small".  
Another reason for our focus on the $\cD(\frac{\phi_{in}}{\eps})$ case is its greater relevance to the Mach stem problem; this is explained in \cite{CW6}. 





\end{rem}

 \qquad Consider now the $3\times 3$, strictly hyperbolic WR problem
 \begin{align}\label{d1}
 \begin{split}
 &\partial_t u+B_1\partial_{x_1}u+B_2\partial_{x_2}u+e^{i\frac{\phi_{3}}{\eps}}Mu=0\text{ in }x_2>0\\
 &Bu=\eps G(t,x_1,\frac{\phi_0}{\eps}):=\eps g_1(t,x_1) e^{i\frac{\phi_0}{\eps}}\text{ on }x_2=0\\
 &u=0 \text{ in }t<0.
 \end{split}
 \end{align}
  Here the $B_j$ and $M$ are constant $3\times 3$ matrices, the $B_j$ are real, $B_2$ is invertible, and $\phi_0(t,x_1)=\beta_l\cdot (t,x_1)$, where $\beta_l=(\sigma_l,\eta_l)\in \Upsilon_0=\{\beta_l,-\beta_l\}$.   The $2\times 3$ matrix $B$ is  real and of rank $2$.   
%

The system has characteristic phases $\phi_m(t,x)=\beta_l\cdot (t,x_1)+\omega_m(\beta_l)x_2$, $m=1,2,3$, where $\phi_2$, $\phi_3$ are incoming and $\phi_1$ outgoing.
Let us assume that  the \emph{only} resonance is 
\begin{align}\label{d2}
\phi_2+\phi_3=2\phi_1\Leftrightarrow \omega_2(\beta_l)+\omega_3(\beta_l)=2\omega_1(\beta_l). 
\end{align}
In section \ref{multiple} we construct high order approximate solutions to \eqref{d1} of the form
\begin{align}\label{d3}
u^\eps_a(t,x)=\sum_{k=-1}^J\eps^k U_k\left(t,x,\frac{\Phi}{\eps}\right),
\end{align}
where $\Phi=(\phi_1,\phi_2,\phi_3)$ and the profiles $U_k(t,x,\theta)$ are $2\pi$-periodic with respect to $\theta=(\theta_1,\theta_2,\theta_3)$.\footnote{In stating Theorem \ref{multiamp} we use $\theta$ as a placeholder for $\frac{\Phi}{\eps}$ and $\theta_0$ as a placeholder for $\frac{\phi_0}{\eps}$.} The construction requires the  following  small divisor assumption:\footnote{By results of \cite{JMR2}, the small divisor assumption \ref{sd} is generically satisfied.} 

\begin{ass}\label{d2z}
There exist constants $C>0$ and $a\in\RR$ such that for all $(k,l)\in\NN\times\NN$ with $k\neq l$ we have
\begin{align}
|\det L(kd\phi_2+ld\phi_3)|\geq C|(k,l)|^{a}.
\end{align}
\end{ass}

\begin{theo}[Instantaneous double amplification]\label{multiamp}
For any $T>0$  let $\Omega_T=(-\infty,T]\times \{(x_1,x_2):x_2\geq 0\}$, and consider the problem \eqref{d1} with resonance \eqref{d2}  on $\Omega_T$ under assumptions \ref{assumption1}, \ref{assumption2}, \ref{assumption3}, \ref{d2z}, where $g_1(t,x_1)\in H^\infty((-\infty,T]\times \RR)$ and vanishes in $t<0$.

a) The problem has a unique exact solution $u^\eps\in H^\infty(\Omega_T)$.  Moreover,   $u^\eps=U^\eps(t,x,\theta_0)|_{\theta_0=\frac{\phi_0}{\eps}}$, where $U^\eps(t,x,\theta_0)$ is the  solution given by Theorem \ref{tt26}(c) to the  singular problem \eqref{i6}  corresponding to \eqref{d1}.

b) For any $Q\in \NN$,  the problem has a high order approximate solution $u^\eps_a(t,x)\in H^\infty(\Omega_T)$ of the form \eqref{d3} with $J=J(Q)$,  which satisfies
\begin{align}\label{Q}
|u^\eps(t,x)-u^\eps_a(t,x)|_{L^\infty(\Omega_T)}=O(\eps^Q).
\end{align}

c) The leading profile $U_{-1}(t,x,\theta)$ is generally nonzero (and independent of $\eps$) for arbitrarily small $t>0$.\footnote{The proof of Theorem \ref{multiamp} will clarify the sense in which ``generally" here means ``for most choices of $M$ and $g_1$" in \eqref{d1}.}   Thus, the exact solution $u^\eps$ exhibits instantaneous double amplification, that is, amplification by a factor of $\frac{1}{\eps^2}$ relative to the boundary data.

\end{theo}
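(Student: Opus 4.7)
For part (a), since $\epsilon>0$ is fixed in \eqref{d1}, existence and $H^\infty$ regularity of $u^\epsilon$ on $\Omega_T$ is a classical result for weakly stable linear hyperbolic boundary problems with smooth bounded coefficients (Assumption \ref{assumption3} gives the Kreiss symmetrizer away from $\Upsilon_0$, and the $t<0$ vanishing prevents propagation from infinity). The identification with $U^\epsilon(t,x,\phi_0/\epsilon)$ uses uniqueness: the singular system \eqref{i6} associated to \eqref{d1} has a unique solution $U^\epsilon\in H^\infty$ by Theorem \ref{tt26}(c) applied to a duality or regularized problem (with $P=1$, $|\cO|=1$, $|\cI|=2$, $|\Upsilon_0|=2$, so $\EE=1$), and evaluating at $\theta_0=\phi_0/\epsilon$ yields a solution of \eqref{d1}.

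\textbf{Construction of the approximate solution for part (b).} Insert the ansatz \eqref{d3} into \eqref{d1}, use the Fourier expansion $U_k(t,x,\theta)=\sum_\alpha U_{k,\alpha}(t,x)e^{i\alpha\cdot\theta}$, and note that the prefactor $e^{i\phi_3/\epsilon}$ induces a unit shift in the $\theta_3$-slot. Equating powers of $\epsilon^k$ produces interior profile equations $L(\partial_\alpha)U_{k,\alpha}=\cdots$ coupled through the resonance \eqref{d2}, together with boundary relations $BU_{k,\alpha}(0)|_{x_2=0}=\cdots$ linking the trace of $U_k$ to traces of $U_{k+1}$ and $U_{k+2}$ on the Lopatinski-degenerate mode $(1,0,0)$. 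I would then follow the \emph{table-of-modes} device of section \ref{table}: posit a finite list of admissible Fourier multi-indices $\alpha$ for each profile, solve for the $U_{k,\alpha}$ in an order that respects the causality of the cascade (lowest $|\alpha|$ first, outgoing modes before incoming, at fixed $k$), and verify the list is self-consistent by inspecting the right-hand sides produced during the construction. For $U_{-1}$, solvability forces two coupled boundary amplitude equations \eqref{d17}(a),(b) on the $(0,1,0)$ and $(0,0,1)$ traces; eliminating one against the other via the resonance reduces them to a single transport equation along the Lopatinski characteristic $\tau=c_+\eta$ with a nonlocal zero-order term $m(D_{\theta_0})$ driven by $g_1$. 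Higher-order profiles are constructed similarly, using Assumption \ref{d2z} to bound the Fourier multipliers encountered in inverting $L(id(\alpha\cdot\Phi))$ on non-characteristic modes.

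\textbf{From profiles to error bound.} Substituting $u^\epsilon_a$ of \eqref{d3} into \eqref{d1} produces interior and boundary residuals of size $O(\epsilon^{J-c})$ in appropriate $H^s$-norms of the singular system, where $c$ is a fixed loss coming from the $\Lambda_D$ factors on the right-hand side of \eqref{tt28}. The error $w^\epsilon:=u^\epsilon-u^\epsilon_a$ corresponds to a solution $W^\epsilon$ of the singular system \eqref{i6} with these residuals as forcing. Applying the uniform estimate \eqref{tt28} of Theorem \ref{tt26}(c) (with $\EE=1$, absorbing an $\epsilon^{-1}$ loss) to $W^\epsilon$ and enough of its derivatives, then using Sobolev embedding in $(t,x,\theta)$ and evaluating at $\theta_0=\phi_0/\epsilon$, gives $|w^\epsilon|_{L^\infty(\Omega_T)}=O(\epsilon^{J-c'})$; choosing $J=J(Q)$ large enough yields \eqref{Q}. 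For part (c), the boundary transport equation with source $g_1$ has a nonzero solution for small $t>0$ whenever the nonlocal multiplier $m$ and the source are generic, so $U_{-1}\not\equiv 0$; since the data in \eqref{d1} is $\epsilon g_1$ and $u^\epsilon_a$ leads with $\epsilon^{-1}U_{-1}$, this is instantaneous amplification by $\epsilon^{-2}$.

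\textbf{Main obstacle.} The technical heart is the profile construction. With double amplification the modes $U_k$ couple to $U_{k+2}$ through the boundary, not just to $U_{k+1}$ as in the single-amplification case, and the resonance \eqref{d2} fuses pairs of incoming modes into the outgoing Lopatinski-degenerate mode. The difficulty is showing that the table of expected modes is self-consistent: each mode posited in the table must actually be producible by the previously constructed ones, and the construction must close without forcing new modes outside the table. Controlling the nonlocal Fourier multipliers that arise in the boundary amplitude equations using the small-divisor Assumption \ref{d2z} is the other delicate point.
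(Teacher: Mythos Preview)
Your overall strategy matches the paper's proof: parts (a), (b), (c) are handled by classical existence for fixed $\epsilon$, the profile construction with table of modes, and an explicit nonvanishing chain, respectively; the error is controlled via the uniform estimate of Theorem \ref{tt26}(c) with $\EE=1$ plus Sobolev embedding.

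There is, however, a genuine imprecision in your description of the profile construction that would cause trouble in execution. You write that one should solve for the $U_{k,\alpha}$ ``lowest $|\alpha|$ first \dots\ at fixed $k$,'' and that for $U_{-1}$ the two coupled boundary equations \eqref{d17}(a),(b) are on the $(0,1,0)$ and $(0,0,1)$ traces and can be reduced by elimination via the resonance. Neither is how the paper's construction works, and the difference matters. The coupling in \eqref{d17}(a),(b) is not between two components of $U_{-1}$ but across different $k$-levels: \eqref{d17}(a) ties the trace of $U_{-1}$ to $E_{P_1}U_0$, while \eqref{d17}(b) ties the trace of $U_0$ to $E_{P_1}U_1$. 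The paper does \emph{not} eliminate; it interleaves the $k$-levels. Concretely: first the $1$-mode of $a_0$ is read off from \eqref{d17}(b) (the $E_{P_1}U_1$ and interaction terms have no $1$-mode); then the resonance $\theta_3+\theta_2=2\theta_1$ in the interior equation produces the $2$-mode of $E_{P_1}U_0$; only then does \eqref{d17}(a) yield the $2$-mode of $a_{-1}$, and so on. The full $a_{-1}$ comes out only after $E_{P_1}U_0$ is complete, which requires going back to \eqref{d17}(b). So the order is ``low modes first across \emph{all} relevant $k$,'' not ``all modes at fixed $k$.'' If you try to work at fixed $k$ you cannot close: the amplitude equation for $a_{-1}$ has $E_{P_1}U_0$ as a genuine unknown source.

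For part (c) your statement is too vague to count as a proof. The paper traces an explicit chain: $b\cdot g_1\neq 0$ forces $a_{0,1}\neq 0$ via \eqref{e1aa}; incoming transport gives $\sigma^2_{0,1}\neq 0$; the resonance equation \eqref{ee2} then gives $\sigma^1_{0,2}\neq 0$ provided $\ell_1 M r_2\neq 0$; \eqref{e2a} gives $a_{-1,2}\neq 0$; and finally incoming transport gives $\sigma^m_{-1,2}\neq 0$. The ``generic'' conditions are precisely $b\cdot g_1\neq 0$ and $\ell_1 M r_2\neq 0$, and these should be named.
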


\section{Iteration estimate}\label{1s}


\qquad 
In this section we prove the iteration estimate (Proposition \ref{tt30}) after presenting some definitions and tools needed for its statement and proof.     
Our first task is to choose suitable extensions to $\Xi$ of the functions $R_j(\zeta)$, $\omega_j(\zeta)$, $r_j(\zeta)$, $\Delta_a(\zeta)$, and $\Delta(\zeta)$ defined in section \ref{assumptions}.  

\subsection{Extensions to $\Gamma_\delta(\beta)$ and then to $\Xi$.}\label{extension}

\qquad Now fix $\beta\in\Upsilon_0$ and a conic neighborhood $\Gamma^+_\delta(\beta)\ni\beta$ as before.
We first extend the vectors $R_j$ from $\Gamma^+_\delta(\beta)$ to the conic set $\Gamma_\delta(\beta)=\Gamma^+_\delta(\beta)\cup \Gamma^-_\delta(\beta)\subset \Xi$, where 
\begin{align}\label{ss7}
\Gamma^-_\delta(\beta):=\{(\sigma-i\gamma,\eta):(-\sigma-i\gamma,-\eta)\in\Gamma^+_\delta(\beta)\}, 
\end{align}
by setting 
\begin{align}\label{s7ff}
R_j(\sigma-i\gamma,\eta)=\overline{R}_j(-\sigma-i\gamma,-\eta) \text{ for }(\sigma-i\gamma,\eta)\in\Gamma^-_\delta(\beta).
\end{align}
The extended $R_j$ are analytic in $\tau$, $C^\infty$ in $\eta$,   and homogeneous of degree $1$ in $\Gamma_\delta(\beta)$.      Next we take   $C^\infty$ extensions of these vectors to the rest of $\Xi$ with the property that $S(\zeta)$, defined as in \eqref{s6a} but using the extended $R_j$ to define
$r_j=R_j/|R_j|$, is invertible with a uniformly bounded inverse $S^{-1}(\zeta)$ on $\Xi$.   As before we denote the rows of $S^{-1}(\zeta)$ by $\ell_j(\zeta)$, $j=1,\dots,N$.  

We extend the eigenvalues $\omega_j(\zeta)$ from $\Gamma^+_\delta(\beta)$ to $\Gamma_\delta(\beta)$ in essentially the same way:
\begin{align}\label{s7fh}
\omega_j(\sigma-i\gamma,\eta)=-\overline{\omega}_j(-\sigma-i\gamma,-\eta) \text{ for }(\sigma-i\gamma,\eta)\in\Gamma^-_\delta(\beta).
\end{align}
We then further extend the $\omega_j$ to the rest of $\Xi$ as $C^\infty$ functions homogeneous of degree $1$.

As with $R_j$, each $\omega_j$ is analytic in $\tau$ (and smooth in $\eta$) on $\Gamma_\delta(\beta)$.  Moreover, since the matrices $A_j$ in \eqref{s3b} are real, we have
\begin{align}\label{s7fk}
\mathcal{A}(\zeta)R_j(\zeta)=\omega_j(\zeta)R_j(\zeta)\text{ on }\Gamma_\delta(\beta).
\end{align}
It follows directly from \eqref{s7fh} that we now have on $\Gamma_\delta(\beta)$:
 \begin{align}\label{s7fi}
 \begin{split}
 &\mathrm{Im}\;\omega_j(\zeta)\leq -c\gamma \text{ for }j\in\mathcal{O}\\
 &\mathrm{Im}\;\omega_j(\zeta)\geq c\gamma \text{ for }j\in \mathcal{I},
\end{split}
\end{align}
for $c$ as in \eqref{s5}.    Thus, we see that the extended $R_j(\zeta)\in \mathbb{E}^s(\zeta)$ for $j\in \mathcal{I}$  on $\Gamma_\delta(\beta)$.   Noting also that since the $R_j(\beta)$ are real 
we have $\det BR_-(-\beta)=0$, we obtain:\footnote{The above extension of the $\omega_j$ and $R_j$ from $\Gamma^+_\delta(\beta)$ to $\Gamma_\delta(\beta)$ satisfying \eqref{s7fk} and \eqref{s7fi} works near any $\beta\in\mathcal{H}$, not just $\beta\in\Upsilon_0$.}

\begin{prop}\label{s7fj}
Under Assumptions \ref{assumption1}, \ref{assumption2}, and \ref{assumption3}, 
if $\beta\in\Upsilon_0$ then $-\beta\in \Upsilon_0$. Thus, $|\Upsilon_0|=M_0$ is even, and there exist vectors $\beta_1, \dots, \beta_{\frac{M_0}{2}}$ in $\Sigma_0$ such that 
\begin{align}
\Upsilon_0=\{\pm\beta_1,\dots,\pm\beta_{\frac{M_0}{2}}\}.
\end{align}
\end{prop}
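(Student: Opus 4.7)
The plan is to exhibit an involution $\beta \mapsto -\beta$ on $\Upsilon_0$ that has no fixed points. Since $\Upsilon_0 \subset \Sigma_0$ and $0 \notin \Sigma_0$, any such involution immediately forces $|\Upsilon_0|$ to be even, and pairing off the elements gives the claimed form of $\Upsilon_0$.

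Let $\beta \in \Upsilon_0 \subset \mathcal{H}$. First, $-\beta \in \Sigma_0$ is obvious from the definition of $\Sigma_0$, and $-\beta \in \mathcal{H}$ because $\mathcal{A}(-\beta) = -\mathcal{A}(\beta)$ has the same diagonalizability and real spectrum as $\mathcal{A}(\beta)$. It remains to show that $\ker B \cap \mathbb{E}^s(-\beta) \neq \{0\}$. The crucial point is that, with the extensions of $\omega_j$ and $R_j$ set up in \eqref{s7ff} and \eqref{s7fh}, we have at a point $(\sigma-i\gamma,\eta) \in \Gamma_\delta^-(\beta)$ with $\gamma > 0$ the identity
\begin{equation*}
\mathrm{Im}\,\omega_j(\sigma-i\gamma,\eta) = -\mathrm{Im}\,\overline{\omega_j(-\sigma-i\gamma,-\eta)} = \mathrm{Im}\,\omega_j(-\sigma-i\gamma,-\eta).
\end{equation*}
Thus the sign of $\mathrm{Im}\,\omega_j$ in $\Gamma_\delta^-(\beta)$ (for $\gamma > 0$) matches its sign at the corresponding point of $\Gamma_\delta^+(\beta)$, so the indices $j \in \mathcal{I}$ continue to select stable eigenvectors near $-\beta$ as well. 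Passing to the boundary $\gamma \downarrow 0$ and using Proposition \ref{stable}, we conclude that $\mathbb{E}^s(-\beta)$ is spanned by $R_j(-\beta)$, $j \in \mathcal{I}$.

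By Proposition \ref{s7a} the vectors $R_j(\beta)$, $j\in\mathcal{I}$, are real, so the extension \eqref{s7ff} gives
\begin{equation*}
R_j(-\beta) = \overline{R_j(\beta)} = R_j(\beta), \qquad j\in\mathcal{I}.
\end{equation*}
Consequently $\mathbb{E}^s(-\beta) = \mathrm{span}\{R_j(\beta):j\in\mathcal{I}\} = \mathbb{E}^s(\beta)$. Therefore
\begin{equation*}
\ker B \cap \mathbb{E}^s(-\beta) = \ker B \cap \mathbb{E}^s(\beta) \neq \{0\},
\end{equation*}
so $-\beta \in \Upsilon_0$. Equivalently, one can argue directly from determinants: since $R_-(-\beta) = R_-(\beta)$ and $B$ is real, $\Delta_a(-\beta) = \det B R_-(-\beta) = \det B R_-(\beta) = \Delta_a(\beta) = 0$.

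The map $\beta \mapsto -\beta$ is an involution of $\Upsilon_0$ with no fixed point (a fixed point would lie in $\{0\} \notin \Sigma_0$), so $\Upsilon_0$ decomposes into pairs $\{\pm\beta_1\},\dots,\{\pm\beta_{M_0/2}\}$, proving $M_0$ is even and yielding the claimed description. The only step requiring any care is verifying that the chosen extensions of $\omega_j$ and $R_j$ from $\Gamma_\delta^+(\beta)$ to $\Gamma_\delta^-(\beta)$ actually produce the stable eigenvectors at $-\beta$; this is the content of the imaginary-part computation above and the reality of $R_j(\beta)$ for $j \in \mathcal{I}$, both of which are already in hand.
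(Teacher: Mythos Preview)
Your proof is correct and follows essentially the same approach as the paper: both use the extensions \eqref{s7ff}, \eqref{s7fh} to show that the $R_j$, $j\in\mathcal{I}$, continue to span $\E^s$ on $\Gamma_\delta^-(\beta)$ (via the preservation of the sign of $\mathrm{Im}\,\omega_j$, which is exactly \eqref{s7fi}), and then invoke the reality of $R_j(\beta)$ to conclude $\det BR_-(-\beta)=0$. You supply a few more details than the paper's one-line remark (the explicit $\mathrm{Im}\,\omega_j$ computation, the no-fixed-point observation), but the argument is the same.
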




The extensions of the $R_j$ similarly give us extensions of $\Delta_a$ and $\Delta$ to $\Xi$, with $\Delta_a$ (resp., $\Delta$) having the same regularity as the $R_j$ (resp., as the $r_j$).   The extension $\Delta(\zeta)$ clearly satisfies (recall \eqref{s7d})
\begin{align}\label{s7fg}
|\Delta(\zeta)|\sim \frac{|\tau-c_+\eta|}{|\zeta|}\text{ on }\Gamma_\delta(\beta),
\end{align}
and  we choose the extensions of the $R_j$ so that 
\begin{align}\label{s7f}
|\Delta(\zeta)|\sim 1 \text{ on }\Xi\setminus \Gamma_\delta(\beta).
\end{align}

\begin{rem}\label{s7g}

1) The constant $c_+$ in \eqref{s7c}, \eqref{s7fg} depends on the choice of $\beta\in\Upsilon_0$, so we will \emph{sometimes} write $c_+(\beta)$  to indicate this.

2)  The fully extended functions  $\omega_j$, $R_j(\zeta)$, $r_j(\zeta)$, $\Delta(\zeta)$, $S(\zeta)$ are defined on $\Xi$, but were first defined on $\Gamma_\delta(\beta)$, so they all depend on the choice of $\beta \in\Upsilon_0$.  
Thus,  we will {sometimes} write $\omega_j(\zeta;\beta)$, 
$R(\zeta;\beta)$, $\Delta(\zeta;\beta)$, etc., when it is important to recall this.    Observe that generally $\omega_j(\zeta;\beta_k)\neq \omega_j(\zeta;\beta_m)$ if $k\neq m$.

3) It follows from our construction that there exists a constant $C$ independent of $\zeta\in\Xi$ and $j\in \{1,\dots,M_0\}$ such that 
\begin{align}\label{s7h}
|S(\zeta;\beta_j)|\leq C\text{ and }|S^{-1}(\zeta;\beta_j)|\leq C\text{ for all }\zeta\in \Xi, \;j\in\{1,\dots,M_0\}.
\end{align}

3)It will be convenient to set $\Upsilon_0^+=\{\beta_1,\dots,\beta_{\frac{M_0}{2}}\}$, where these $\beta_j=(\sigma_j,\eta_j)$ are the (possibly relabeled) elements of $\Upsilon_0$ such that
$\sigma_j\geq 0$.  In the ambiguous case where $\sigma_j=0$ we take $\beta_j=(0,1)$.  It is no restriction to (and we do now) suppose that $\beta_l$ (as in \eqref{s1a}) is in $\Upsilon^+_0$.  Moreover,  we shall henceforth always take the $\beta$ that appears in expressions like $R(\zeta;\beta)$, $\Delta(\zeta;\beta)$, etc., to be an element of $\Upsilon^+_0$.

\end{rem}




The following proposition is a simple consequence of \eqref{s7fg}.

\begin{prop}\label{s9}


 The $p\times p$ matrix $Br_-(\zeta)$ satisfies
\begin{align}\label{s10a}
\left|[Br_-(\zeta)]^{-1}\right|\lesssim |\Delta(\zeta)|^{-1}\sim \frac{|\zeta|}{|\tau-c_+(\beta)\eta|}\text{ on }\Gamma_\delta(\beta).
\end{align}

\end{prop}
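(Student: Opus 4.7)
The plan is to obtain the bound by combining Cramer's rule for the inverse of a $p\times p$ matrix with the equivalence \eqref{s7fg}. By definition, $\Delta(\zeta) = \det Br_-(\zeta)$, and by Cramer's rule
\[
[Br_-(\zeta)]^{-1} = \frac{1}{\Delta(\zeta)}\,\text{adj}\bigl(Br_-(\zeta)\bigr),
\]
so the task reduces to controlling the adjugate and then invoking \eqref{s7fg}.

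For the adjugate, I would use that the columns $r_j(\zeta) = R_j(\zeta)/|R_j(\zeta)|$ are unit vectors by construction in \eqref{s4a}, so the entries of $Br_-(\zeta)$ are uniformly bounded on all of $\Xi$ (in particular on $\Gamma_\delta(\beta)$). Since each entry of the adjugate is a signed $(p-1)\times(p-1)$ minor of $Br_-(\zeta)$, a polynomial of fixed degree in these uniformly bounded entries, the operator norm of $\text{adj}(Br_-(\zeta))$ is $\lesssim 1$, with a constant depending only on $p$, $B$, and the uniform bound on the $r_j$.

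Combining these two facts yields
\[
\bigl|[Br_-(\zeta)]^{-1}\bigr| \;\leq\; \frac{\bigl|\text{adj}(Br_-(\zeta))\bigr|}{|\Delta(\zeta)|} \;\lesssim\; \frac{1}{|\Delta(\zeta)|},
\]
and the second equivalence in the statement is then just a rewriting of \eqref{s7fg}, namely $|\Delta(\zeta)|^{-1} \sim |\zeta|/|\tau - c_+(\beta)\eta|$ on $\Gamma_\delta(\beta)$. There is no real obstacle here; the only minor point to verify is that the uniform bound on the $r_j$ indeed holds on the relevant set, which is built into the construction in Section \ref{extension} (in particular \eqref{s7h}). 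The proof should be only a few lines.
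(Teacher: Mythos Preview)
Your proposal is correct and is exactly the standard argument the paper has in mind; the paper does not give a proof at all, merely stating that the proposition ``is a simple consequence of \eqref{s7fg}.'' Your Cramer's rule computation together with the uniform boundedness of the normalized eigenvectors $r_j$ is the natural way to unpack that one-line justification.
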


     \begin{nota}\label{Xk}
     
     1.  For $\zeta=(\tau,\eta)\in\Xi$ we set $X_k:=\zeta+\frac{k\beta_l}{\eps}$ for $\beta_l\in \Upsilon^+_0$ as in \eqref{i9}.\footnote{Unlike $\beta$, the frequency $\beta_l$ appearing in \eqref{i9} and \eqref{s1a}  is fixed once and for all.} 
     
     2.  Given any function $f(\zeta)$ defined for $\zeta\in \Xi$, we denote by $f(\eps,k)$ (with slight abuse) the function of $\zeta$ given by  $f(X_k)$ for that particular choice of $(\eps,k)$.   This defines the functions
     $r_\pm(\eps,k;\beta)$, $\ell_\pm(\eps,k;\beta)$, $S(\eps,k;\beta)$, $\Delta(\eps,k;\beta)$, etc. that we use below.   These functions change, of course, as $\beta$ varies.   Observe, for example, that \footnote{This notation is intended to emphasize that we are viewing the function in \eqref{s10aa} as a function of $\zeta$ for fixed $(\eps,k)$.}
     \begin{align}\label{s10aa}
     \omega_j(\eps,k;\beta)(\zeta)=\omega_j(X_k;\beta).
     \end{align}
     
   3.  Let $\chi_b(\zeta;\beta)$ be the characteristic function of $\Gamma_\delta(\beta)$.  Thus, $\zeta\in\mathrm{supp}\;\chi_b(\eps,k;\beta)$ if and only if $X_k\in\Gamma_\delta(\beta)$.

  \end{nota}

\subsection{Tools for the iteration estimate}\label{tools1s}

\quad In the proof of the iteration estimate we will repeatedly use the following simple proposition, where $|f|_{L^2(x_2,\sigma,\eta)}$ denotes the $L^2$ norm over $\bR^+_{x_2}\times \bR_\sigma\times \bR_\eta$.   The proof, given in \cite{W3}, is almost immediate.
\begin{prop}\label{g1z}
For $\gamma>0$, $\tau=\sigma-i\gamma$ we  have on $x_2\geq 0$:
\begin{align}\label{g1}
\begin{split}
&(a) \left|\int^{x_2}_0e^{-\gamma (x_2-s)}f(s,\tau,\eta)ds\right|_{L^2(x_2,\sigma,\eta)}\leq \frac{1}{\gamma}|f|_{L^2(x_2,\sigma,\eta)}\\
&(b) \left|\int^{\infty}_{x_2}e^{\gamma (x_2-s)}f(s,\tau,\eta)ds\right|_{L^2(x_2,\sigma,\eta)}\leq \frac{1}{\gamma}|f|_{L^2(x_2,\sigma,\eta)}\\
&(c)   \left|\int^{\infty}_0e^{-\gamma s}f(s,\tau,\eta)ds\right|_{L^2(\sigma,\eta)}\leq \frac{1}{\sqrt{2\gamma}}|f|_{L^2(x_2,\sigma,\eta)}\\
&(d) \left|e^{-\gamma x_2}g(\tau,\eta)\right|_{L^2(x_2,\sigma,\eta)} = \frac{1}{\sqrt{2\gamma}}|g|_{L^2(\sigma,\eta)}.
\end{split}
\end{align}

\end{prop}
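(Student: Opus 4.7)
All four estimates are essentially one-dimensional bounds in the variable $x_2$ (or $s$), uniform in the parameters $(\sigma,\eta)$, after which one simply integrates out those parameters in $L^2$. So the plan is: for each assertion, freeze $(\sigma,\eta)$, prove the corresponding scalar inequality in $x_2$ (or on the trace), and then apply Fubini/Minkowski to pass to $L^2(\sigma,\eta)$.

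For (a), I recognize the right-hand side as the convolution on the half-line of $f(\cdot,\tau,\eta)$ with the kernel $e^{-\gamma x_2}\mathbf{1}_{\{x_2\ge 0\}}$, whose $L^1(\mathbb{R}_+)$ norm equals $1/\gamma$. Young's inequality (or Minkowski's integral inequality applied to the representation $\int_0^{x_2} e^{-\gamma(x_2-s)} f(s)\,ds$) then gives the $L^2(x_2)$ bound with constant $1/\gamma$, and Fubini transfers this to $L^2(x_2,\sigma,\eta)$. Estimate (b) is the mirror image: here the kernel $e^{\gamma(x_2-s)}\mathbf{1}_{\{s\ge x_2\}}$ again has $L^1$ norm $1/\gamma$, and the same Young-type argument applies.

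For (c), which is really a $1/2$-order gain encoding a trace estimate at $x_2=0$, I would simply apply Cauchy--Schwarz pointwise in $(\sigma,\eta)$:
\begin{align*}
\left|\int_0^\infty e^{-\gamma s} f(s,\tau,\eta)\,ds\right|^2
\le \left(\int_0^\infty e^{-2\gamma s}\,ds\right)\left(\int_0^\infty |f(s,\tau,\eta)|^2 ds\right)
= \frac{1}{2\gamma}\,|f(\cdot,\tau,\eta)|_{L^2(x_2)}^2,
\end{align*}
then integrate in $(\sigma,\eta)$ and take square roots. Estimate (d) is a direct computation: $\int_0^\infty e^{-2\gamma x_2}\,dx_2 = 1/(2\gamma)$, so $|e^{-\gamma x_2}g(\tau,\eta)|_{L^2(x_2)}^2 = |g(\tau,\eta)|^2/(2\gamma)$ for each fixed $(\sigma,\eta)$, and once more one integrates in $(\sigma,\eta)$.

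There is essentially no obstacle here; the only thing to watch is that the kernels have the claimed $L^1$ norm on $\mathbb{R}_+$ (this requires $\gamma>0$, which is assumed) and that Fubini is legitimate, which holds as soon as $f\in L^2(x_2,\sigma,\eta)$ and $g\in L^2(\sigma,\eta)$. The constants $1/\gamma$, $1/\sqrt{2\gamma}$, and $1/\sqrt{2\gamma}$ appearing in (a)--(b), (c), and (d) are exactly those produced by the three elementary computations above, so the bookkeeping is tight.
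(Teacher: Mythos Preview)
Your proposal is correct and is exactly the standard argument; the paper itself does not give a proof but simply says ``The proof, given in \cite{W3}, is almost immediate,'' so there is nothing further to compare.
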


\qquad We turn now to the problem of estimating $V_k(x_2,\zeta)$ as in \eqref{i9}.   For $X_k$ outside a conic neighborhood of the set of bad directions $\Upsilon_0$ we can 
 expect to have a Kreiss-type estimate.  The main new problem is to estimate $V_k$ for $X_k$ in a neighborhood $\Gamma_\delta(\beta)$ for any given $\beta\in\Upsilon_0^+$; for such 
 $X_k$ the quantity  $|\Delta(X_k;\beta)|^{-1}$ can blow up as $\eps\to 0$.

The next lemma is an easy consequence of the estimates on $\Delta(\zeta;\beta)$ in section \ref{extension} and definitions given there.

\begin{lem}\label{t6}
Fix $\beta\in\Upsilon^+_0$.  Recall that $X_k:=\zeta+k\frac{\beta_l}{\eps}$ and that $\zeta\in \mathrm{supp}\;\chi_b(\eps,k;\beta)\Leftrightarrow X_k\in \Gamma_\delta(\beta)$. 
For $k\in \ZZ$ the following estimates hold:\footnote{Here and below we use Notation \ref{Xk}, and suppress much of the $\beta$ and $\zeta$ dependence; thus, for example,  $\Delta(\eps,k)=\Delta(\eps,k;\beta)$.   In \eqref{t6a}(b), for example, $\Delta(\eps,k)$ is evaluated at the same $\zeta$ that appears in the definition of $X_k$.}
\begin{align}\label{t6a}
\begin{split}
&(a)\;|\Delta(\eps,k)|\lesssim 1\text{ on }\Xi \\
&(b)\; \left|[Br_-(\eps,k)]^{-1}\right|\lesssim |\Delta(\eps,k)|^{-1}\sim \frac{|X_k|}{|\tau-c_+(\beta)\eta|}\text{ on }\mathrm{supp}\;\chi_b(\eps,k;\beta)\\
&(c)\; \left| [Br_-(\eps,k)]^{-1}\right|\lesssim |\Delta(\eps,k)|^{-1} \leq C(\delta)\text{ on }\Xi\setminus  \mathrm{supp}\;\chi_b(\eps,k;\beta) \\
&(d)\; |Br_{\pm}(\eps,k)|\lesssim 1 \text{ on }\Xi,\\
& (e)\; |\omega_i(\eps,k)-\omega_j(\eps,k)|\sim |X_k| \text{ for }i\neq j\text{ on }\mathrm{supp}\;\chi_b(\eps,k;\beta)\\
&(f)\; \mathrm{Im}\;\omega_j(\eps,k)\leq -c\gamma\text{ for }j\in \mathcal{O},\;\;\;\mathrm{Im}\;\omega_j(\eps,k)\geq c\gamma\text{ for }j\in \mathcal{I}, \text{ on }\mathrm{supp}\;\chi_b(\eps,k;\beta).
\end{split}
\end{align}

(g) Let $r\in\ZZ\setminus 0$.  When $X_k \notin  \Gamma_{\frac{\delta}{|r|}}(\beta)$ we have  $|\Delta(\eps,k)|^{-1}\leq C(\delta)|r|$.

\end{lem}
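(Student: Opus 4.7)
The plan is to verify each of the seven claims (a)–(g) by directly invoking the extensions constructed in Section 3.1 and the estimates already recorded there, evaluated at $\zeta = X_k$. The overall strategy is that, since $\Delta$, $r_\pm$, $\ell_\pm$, $S$, $\omega_j$ have all been given global $C^\infty$ extensions to $\Xi$ that are homogeneous (degrees $0$, $0$, $0$, $0$, $1$ respectively), every inequality in the lemma is a pointwise consequence obtained by plugging $\zeta = X_k$ into the corresponding $\zeta$-estimate. I would therefore split the seven items into three natural groups: the bookkeeping items (a), (d), (f); the ``bad direction'' items (b), (c), (e) which are immediate restatements of \eqref{s7fg}, \eqref{s7f}, and strict hyperbolicity; and the angular item (g), which requires a small argument.

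For (a) I would note that $\Delta(\zeta)$ is continuous on $\Xi$ and homogeneous of degree $0$, hence bounded on $\Xi\setminus\{0\}$; evaluating at $\zeta=X_k$ gives $|\Delta(\eps,k)|\lesssim 1$. For (d) the columns of $r_\pm$ are unit vectors in the construction, so $|Br_\pm(\zeta)|\lesssim |B|$ uniformly on $\Xi$, and again this evaluates at $X_k$. For (f) the inequalities are exactly \eqref{s7fi} pulled back to the variable $\zeta$ and evaluated at $X_k$, using that $X_k\in \Gamma_\delta(\beta)$ on $\mathrm{supp}\,\chi_b(\eps,k;\beta)$ and that $\mathrm{Im}\,X_k = (-\gamma,0)$, so the imaginary part of $\omega_j$ at $X_k$ coincides with the imaginary part one would compute at $\zeta$ itself.

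For (b) I would combine \eqref{s7fg} with Cramer's rule: the entries of $[Br_-(\zeta)]^{-1}$ are cofactors of $Br_-(\zeta)$ (which are bounded by (d)) divided by $\Delta(\zeta) = \det Br_-(\zeta)$, so $|[Br_-]^{-1}|\lesssim |\Delta|^{-1}$, and \eqref{s7fg} then gives the comparison $|\Delta(X_k)|^{-1} \sim |X_k|/|\tau_k - c_+\eta_k|$ after reading off the $\tau$ and $\eta$ components of $X_k$. Item (c) is exactly the same Cramer's rule argument combined with \eqref{s7f} which gives $|\Delta(X_k)|\sim 1$ off $\Gamma_\delta(\beta)$, hence a constant depending only on $\delta$. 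For (e), on $\Gamma_\delta(\beta)$ the eigenvalues $\omega_j(\zeta)$ are distinct by strict hyperbolicity and homogeneous of degree $1$; by compactness of $\Sigma\cap \Gamma_\delta(\beta)$ the quotient $|\omega_i(\zeta)-\omega_j(\zeta)|/|\zeta|$ is bounded above and below, and evaluating at $\zeta=X_k$ yields the claimed equivalence.

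The one item that is not pure substitution is (g), and this is where I expect the only genuine work. The idea is that $\zeta\mapsto \Delta(\zeta)$ vanishes on $\Gamma_\delta(\beta)$ only on the ray $\{\tau = c_+\eta\}$, which by \eqref{s7d} corresponds (up to sign) to the direction $\beta$. I would split the case $X_k\notin \Gamma_{\delta/|r|}(\beta)$ into two subcases. If $X_k\notin \Gamma_\delta(\beta)$, then (c) already gives $|\Delta(\eps,k)|^{-1}\leq C(\delta)\leq C(\delta)|r|$. If instead $X_k\in \Gamma_\delta(\beta)\setminus\Gamma_{\delta/|r|}(\beta)$, then by definition of the conic neighborhood the angle between $X_k/|X_k|$ and $\beta/|\beta|$ is at least $\delta/|r|$; combined with the factorization \eqref{s7d} (which says $|\Delta(\zeta)|\sim |\tau-c_+\eta|/|\zeta|$ is comparable to this angular distance from the zero ray on $\Gamma_\delta(\beta)$), this gives $|\Delta(X_k)|\gtrsim \delta/|r|$, hence $|\Delta(\eps,k)|^{-1}\lesssim |r|/\delta$. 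The uniform angular lower bound on $|\tau-c_+\eta|/|X_k|$ in terms of $\delta/|r|$ is the only nontrivial geometric point and is what makes $|r|$ (rather than $|r|^2$ or worse) appear on the right-hand side.
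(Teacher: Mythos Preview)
Your proposal is correct and follows exactly the approach the paper indicates: the paper itself gives no detailed proof of this lemma, stating only that it ``is an easy consequence of the estimates on $\Delta(\zeta;\beta)$ in section~\ref{extension} and definitions given there,'' and you have filled in precisely those details by evaluating the relevant $\zeta$-estimates at $\zeta=X_k$. Your treatment of (g) via the two-case split (outside $\Gamma_\delta$ versus in the annulus $\Gamma_\delta\setminus\Gamma_{\delta/|r|}$) and the angular lower bound from \eqref{s7fg} is the intended argument.

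One minor point on (b): you read $\tau,\eta$ as the components of $X_k$, which makes (b) a literal restatement of Proposition~\ref{s9}. The paper's later use of (b) in Proposition~\ref{keyt} (step~3) and Remark~\ref{2d} suggests the formula is intended with $\tau,\eta$ the components of $\zeta$ itself; the linearity of $g(\eta)=c_+(\beta)\eta$ is what is invoked there to pass between the two readings when taking ratios. This is a notational subtlety rather than a gap in your argument, and your version of (b) is in any case a correct statement that suffices for all subsequent applications.
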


\begin{rem}\label{2d}
The estimate \eqref{t6a}(b) uses the \emph{linearity} of the function $g(\eta)=c_+(\beta)\eta$ appearing in \eqref{factor1}, a consequence of the assumption that we are working in two space dimensions.   That estimate is used in the proof of Proposition \ref{keyt} and in step \textbf{8} of the proof of Proposition \ref{tt30}.
\end{rem}

The functions $E_{i,j}(\eps,k,k-r)$ in the next definition arise as exponents in the oscillatory integrals of step \textbf{7} of the iteration estimate (Proposition \ref{tt30}).   They are used to control the large factors $[Br_-(\eps,k)]^{-1}$ appearing in \eqref{a6}.   Recall that for a given $(\zeta,\eps)$, we have $\omega_i(\eps,k;\beta)(\zeta)=\omega_i(\zeta;\beta)|_{\zeta=X_k}$, and, moreover, $\omega_N(\beta_l)=\omega_N(\zeta;\beta_l)|_{\zeta=\beta_l}$.

\begin{defn}
Let $k\in\ZZ$, $r\in\ZZ\setminus 0$.    
For $\zeta=(\tau,\eta)\in\Xi$ we define the function of $\zeta$:
\begin{align}\label{t17}
\begin{split}
&E_{i,j}(\eps,k,k-r):=\omega_i(\eps,k;\beta_l)-\frac{r\omega_N(\beta_l)}{\eps}-\omega_j(\eps,k-r;\beta_l), \text{ where }i\in\mathcal{O}, \;j\in\mathcal{I}\\
\end{split}
\end{align}
\end{defn}


The next two propositions, which are proved in section \ref{ptools},  address important technical issues that arise in the proof of the iteration and cascade estimates.   In particular, they are needed for defining and controlling the amplification factors $\DD(\eps,k,k-r)$. 
In these propositions 
we fix an \emph{admissible sequence} of integers.

\begin{defn}\label{admissible}
The sequence $(k_j)_{j\in\ZZ}$ is \emph{admissible} if $k_j\in\ZZ$ for all $j$ and $(k_j)$ is either strictly increasing or strictly decreasing.
We define the ``step size" $r_j=k_j-k_{j-1}\in\ZZ\setminus 0$. 

\end{defn}

\begin{prop}\label{keyt}
Let $(k_j)_{j\in\ZZ}$ be an admissible sequence.  
We continue to work with a fixed $\beta\in\Upsilon^+_0$, which may or may not equal $\beta_l$ as in the oscillatory term of  \eqref{i1}.   
   Consider any given $(\zeta,\eps)\in \Xi\times (0,\eps_0]$.
Provided $\delta$ and $\eps_0$ are small enough,
there exist positive constants $C_1,C_2$ independent of $(\zeta,\eps,j)$
such that for
all but at most one $j\in \ZZ$   such that   $X_{k_j}\in\Gamma_\delta(\beta)$,  we have:
\begin{align}\label{t11za}
\begin{split}
\left|\frac{\Delta(\eps,k_{j-1})}{\Delta(\eps,k_j)}\right|\leq C_1|r_j|.
\end{split}
\end{align}
If the exceptional case $j=m_1(\zeta,\eps;\beta)$ occurs (that is, if \eqref{t11za} fails for $j=m_1(\zeta,\eps;\beta)$), we have
\begin{align}\label{t11zb}
\begin{split}
&\left|\frac{\Delta(\eps,k_{m_1-1})}{\Delta(\eps,k_{m_1})}\right|\leq \frac{C_2|r_{m_1}|}{\eps\gamma}.
%
\end{split}
\end{align}


\end{prop}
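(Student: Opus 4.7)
The plan is to exploit the factorization \eqref{s7d}, which gives $|\Delta(X_k)|\sim |u_k|/|X_k|$ on the cone $\Gamma_\delta(\beta)$ with $u_k:=\tau_{X_k}-c_+(\beta)\eta_{X_k}$, together with $|\Delta(X_k)|\sim 1$ off the cone by \eqref{s7f}. Observe that $u_k$ is affine in $k$ with step $q/\eps$, where $q:=\sigma_l-c_+(\beta)\eta_l$ (so $q=0$ precisely when $\beta=\beta_l$), and that $|u_k|\geq \gamma$ since $\mathrm{Im}\,u_k=-\gamma$. When both $X_{k_j}, X_{k_{j-1}}\in\Gamma_\delta(\beta)$, the factorization yields
\[
\left|\frac{\Delta(\eps,k_{j-1})}{\Delta(\eps,k_j)}\right|\;\sim\; \frac{|u_{k_{j-1}}|}{|u_{k_j}|}\cdot\frac{|X_{k_j}|}{|X_{k_{j-1}}|}.
\]

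The workhorse is an abstract estimate for an affine sequence $a_k=a_0+ks$ ($s\neq 0$) satisfying an a priori lower bound $|a_k|\geq \gamma_0$. By the triangle inequality, $|a_{k_{j-1}}/a_{k_j}|\leq 1+|r_j s|/|a_{k_j}|$, so the bound $|a_{k_{j-1}}/a_{k_j}|\leq C|r_j|$ holds whenever $|a_{k_j}|\geq |s|/(C-1)$. The failing set $\{j:|a_{k_j}|<|s|/(C-1)\}$ lies in a subset of $\mathbb{C}$ (or $\mathbb{R}$) of diameter $\lesssim |s|$, while consecutive values $a_{k_j},a_{k_{j-1}}$ differ by exactly $|r_j s|\geq |s|$ (since $|r_j|\geq 1$); hence for $C$ fixed large enough, this subset contains at most one of the $a_{k_j}$, giving at most one exceptional index. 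At that single exception, the cruder bound $|a_{k_{j-1}}/a_{k_j}|\leq 1+|r_j s|/\gamma_0$ holds.

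I would apply this lemma to each of the two factors. For the $u$-ratio: take $a_k=u_k$, $s=q/\eps$, $\gamma_0=\gamma$, yielding a $C_1|r_j|$ bound off a unique exceptional index and $\lesssim |r_j|/(\eps\gamma)$ there. For the $|X|$-ratio: use the cone identification $|X_{k_j}|\sim |T_{k_j}|/|\beta|$ where $T_k:=\beta\cdot\mathrm{Re}(X_k)$ is real-affine with step $(\beta\cdot\beta_l)/\eps$ and a cone-induced lower bound $|T_k|\gtrsim \gamma|\beta|/\delta$; apply the lemma again. The two exceptional indices coincide because both arise from the same near-zero of $\Delta$ along the affine family $k\mapsto X_k$: on $\Gamma_\delta(\beta)$ the zero-locus of $\Delta$ is the single line through $\beta$, so ``$|u_k|$ small'' and ``$|T_k|$ small'' both occur at the unique integer $k_j$ for which $X_{k_j}/|X_{k_j}|$ comes closest to $\pm\beta/|\beta|$. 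The remaining case $X_{k_{j-1}}\notin\Gamma_\delta(\beta)$, $X_{k_j}\in\Gamma_\delta(\beta)$, is handled via $|\Delta(X_{k_{j-1}})|\lesssim 1$ giving $\lesssim |X_{k_j}|/|u_{k_j}|\leq |X_{k_j}|/\gamma$; a short geometric check shows that if $|X_{k_j}|$ exceeds a constant multiple of $|r_j||\beta_l|/\eps$ then the small displacement $X_{k_j}-X_{k_{j-1}}=r_j\beta_l/\eps$ keeps $X_{k_{j-1}}$ inside the cone, contradicting the hypothesis, so $|X_{k_j}|\lesssim |r_j|/\eps$ and the bound is $\lesssim |r_j|/(\eps\gamma)$, of exceptional type.

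The main obstacle is the delicate bookkeeping to show that the potential $u$- and $X$-factor exceptions collapse to the single index $m_1(\zeta,\eps;\beta)$, so that the product bound at a single $m_1$ is $C_2|r_j|/(\eps\gamma)$ rather than $|r_j|^2/(\eps\gamma)$. This uses crucially that on $\Gamma_\delta(\beta)$ the zero set of $\Delta$ is a single line (a consequence of \eqref{s7d} and the 2D geometry), so the near-vanishings of $u_{k_j}$ and $T_{k_j}$ along the admissible sequence are slaved to the same parameter; a parallel but slightly different projection argument handles the degenerate subcase $\beta\cdot\beta_l=0$.
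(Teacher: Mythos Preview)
Your two-factor decomposition and the affine-sequence lemma are correct, and for $\beta=\beta_l$ the argument goes through cleanly: then $q=0$, the $u$-ratio is identically $1$, and the $|X|$-ratio alone gives a single exceptional index with the required bounds.

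For $\beta\neq\beta_l$ there is a genuine gap. You correctly flag the collapse of the two exceptional indices as the crux, but the heuristic you offer does not hold. The $u$-exception is the $j$ for which $X_{k_j}$ is closest to the $\beta$-axis (the zero line of $\Delta$), while the $|X|$-exception is the $j$ for which $X_{k_{j-1}}$ is closest to the \emph{origin}. These are distinct loci: a point far out along the $\beta$-axis has $|u|=\gamma$ small but $|T|$ large. Writing $\beta$ as the vertical axis and $\beta_l$ at angle $\alpha$, the two conditions pick out $k$-values $k_{j}\approx -\mathrm{Re}(u_0)\eps/q$ and $k_{j-1}\approx -T_0\eps/(\beta\cdot\beta_l)$, whose difference depends on $\zeta$ and is generically not $r_j$; so the exceptions need not coincide. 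There is a second difficulty: off both exceptions your lemma bounds \emph{each} factor only by $C|r_j|$, so the product is $\leq C^2 r_j^2$ rather than $C_1|r_j|$; and at an exception the product of one crude and one good bound gives $\lesssim r_j^2/(\eps\gamma)$, not $|r_j|/(\eps\gamma)$. The two-factor route therefore yields at best ``at most two exceptions, with $r_j^2$ losses,'' which is strictly weaker than the stated proposition.

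The paper's route is different and avoids the two-factor product. It introduces a narrower cone $\Gamma_{\delta/N_0}(\beta)$: whenever $X_{k_j}\notin\Gamma_{\delta/N_0}$, one has $|\Delta(\eps,k_j)|^{-1}\leq c_2(\delta,N_0)$ directly, so the ratio is bounded by a constant $c_3$. A bad transition thus forces $X_{k_j}\in\Gamma_{\delta/N_0}$, and (when $X_{k_{j-1}}\in\Gamma_\delta$) a direct estimate via $|X_{k_j}|/|X_{k_{j-1}}|$ forces $|\tilde X_{k_{j-1}}|<\tfrac{1}{8\eps}$ as well. The paper then tracks the vertical and horizontal displacements $r_j\cos\alpha/\eps$ and $r_j\sin\alpha/\eps$ through a case analysis on $\alpha\gtrless\delta$ and on whether $X_{k_{j-1}}\in\Gamma_\delta$, showing that once a bad transition occurs at $j=m$ the projections $\tilde X_{k_j}$ march monotonically away from the origin, so no second bad transition is possible.
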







\begin{prop}\label{t18}
Let $\beta\in\Upsilon_0^+$ and 
suppose  $i\in\mathcal{O}$, $j\in\mathcal{I}$, 
There exist positive constants $\eps_0$, $\delta_0$ and positive constants $C_3$, $C_4$ \emph{independent of} $(\zeta,\eps,p)\in \Xi\times (0,\eps_0]\times  \ZZ$ such that the following situation holds:

1) If $\beta\neq \beta_l$, then for any given $(\zeta,\eps,\delta)\in\Xi\times(0,\eps_0]\times (0,\delta_0]$, at least one of $X_{k_p}, X_{k_{p-1}}$ does not  lie in $\in\Gamma_{\frac{\delta}{|r_p|}}(\beta)$.

2) Suppose $\beta=\beta_l$, and for any given $(\zeta,\eps,\delta)\in\Xi\times(0,\eps_0]\times (0,\delta_0]$ let the set $M_{i,j}(\zeta,\eps,\delta;C_3)\subset \ZZ$ be characterized by the condition that   $p\notin M_{i,j}(\zeta,\eps,\delta;C_3)$ if and only if  \emph{either} 

a) at least one of $X_{k_p}, X_{k_{p-1}}$ does not  lie in $\in\Gamma_{\frac{\delta}{|r_p|}}(\beta_l)$, \emph{or} 

b) both points  lie in $\Gamma_{\frac{\delta}{|r_p|}}(\beta_l)$ and 
\begin{align}\label{t18w}
\quad |E_{i,j}(\eps,k_p,k_{p-1})|\geq C_3\frac{|X_{k_p}|}{|r_p|} \text{ or }|E_{i,j}(\eps,k_p,k_{p-1})|\geq C_3|X_{k_{p-1}}|.
\end{align}
Then for every $p\in M_{i,j}(\zeta,\eps,\delta;C_3)$ we have 
\begin{align}\label{t18z}
|X_{k_{p}}|\leq \frac{C_4|r_p|}{\eps}.
\end{align}

\end{prop}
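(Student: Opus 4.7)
The plan is to prove each part via a geometric–algebraic analysis exploiting the identity $X_{k_p}-X_{k_{p-1}} = r_p\beta_l/\eps$ together with the homogeneity and smoothness of the $\omega_i$ near $\hat\beta_l$ established in section \ref{extension}. The unifying idea is that cone membership $X_{k_p}\in\Gamma_{\delta/|r_p|}(\beta)$ is an angular constraint, while the relation above is a Euclidean constraint, and confronting the two leads to rigid bounds on the magnitudes $|X_{k_p}|$.

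For part 1, fix $\beta\in\Upsilon_0^+\setminus\{\beta_l\}$. Since both $\beta,\beta_l\in\Upsilon_0^+$, the case $\beta_l=-\beta$ is excluded (only the ambiguous $\sigma_j=0$ representative could be antipodal, and we have normalized it away), so $\beta_l$ has a nonzero component $c=c(\beta,\beta_l)$ along $\hat\beta^\perp$. Suppose toward a contradiction both $X_{k_p},X_{k_{p-1}}\in\Gamma_{\delta/|r_p|}(\beta)$. Writing $X_{k_p}=a_p\hat\beta+b_p\hat\beta^\perp$, the cone condition gives $|b_p|\lesssim(\delta/|r_p|)|X_{k_p}|$, while projecting $X_{k_p}-X_{k_{p-1}}=r_p\beta_l/\eps$ onto $\hat\beta^\perp$ yields $b_p-b_{p-1}=r_pc/\eps$. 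These combine to force $|X_{k_p}|+|X_{k_{p-1}}|\geq |r_p|^2|c|/(\delta\eps)$. Using $X_{k_p}=\zeta+k_p\beta_l/\eps$, the resulting lower bound on $|X_{k_p}|$ drives $X_{k_p}/|X_{k_p}|$ arbitrarily close to $\sgn(k_p)\hat\beta_l$, which lies at a fixed positive distance $\min_{\beta\neq\beta_l\in\Upsilon_0^+}|\hat\beta_l\mp\hat\beta|$ from $\hat\beta$; taking $\delta_0$ smaller than this distance yields the contradiction.

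For part 2, fix $\beta=\beta_l$ and take $p\in\cM_{i,j}(\zeta,\eps,\delta;C_3)$. Both $X_{k_p},X_{k_{p-1}}$ lie in $\Gamma_{\delta/|r_p|}(\beta_l)$; assume both are in $\Gamma^+$ (the $\Gamma^-$ case is handled identically via the symmetry \eqref{s7fh}). Writing $X_{k_p}=t_p\hat\beta_l+v_p$ with $v_p\perp\beta_l$ and $|v_p|\lesssim(\delta/|r_p|)t_p$, the identity $X_{k_p}-X_{k_{p-1}}=r_p\beta_l/\eps$ forces $v_p=v_{p-1}$ and $t_p-t_{p-1}=r_p|\beta_l|/\eps$. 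By homogeneity of degree one and smoothness of $\omega_i(\cdot;\beta_l)$ near $\hat\beta_l$,
\begin{align*}
\omega_i(X_{k_p};\beta_l)=t_p\,\omega_i(\hat\beta_l;\beta_l)+O(\delta t_p/|r_p|),
\end{align*}
and substituting (together with the analogous expansion for $\omega_j$) into the definition of $E_{i,j}$ and using $t_p=t_{p-1}+r_p/\eps$ gives
\begin{align*}
E_{i,j}(\eps,k_p,k_{p-1}) = A\,t_{p-1}+B\,\frac{r_p}{\eps}+\mathrm{err}_p,
\end{align*}
where $A=\omega_i(\beta_l)-\omega_j(\beta_l)$, $B=\omega_i(\beta_l)-\omega_N(\beta_l)$ and $|\mathrm{err}_p|\leq C\delta(t_p+t_{p-1})/|r_p|$. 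Choose $C_3:=\tfrac12\min_{i\in\cO,\,j\in\cI,\,i\neq j}|\omega_i(\beta_l)-\omega_j(\beta_l)|>0$ (strict hyperbolicity). The membership in $\cM_{i,j}$ requires in particular $|E_{i,j}|<C_3 t_{p-1}$, so the reverse triangle inequality and absorption of $\mathrm{err}_p$ for $\delta_0$ small yield $(|A|-C_3-O(\delta))\,t_{p-1}\leq |B|\,|r_p|/\eps$; since $|A|\geq 2C_3$ the prefactor is $\geq C_3$, giving $t_{p-1}\leq(|B|/C_3)\,|r_p|/\eps$. Then $t_p=t_{p-1}+|r_p|/\eps\leq C_4\,|r_p|/\eps$, as desired, with $C_4$ uniform in $(\zeta,\eps,p)$.

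The main obstacle I expect is the uniform absorption of the error terms $\mathrm{err}_p$ when $t_{p-1}$ and $|r_p|/\eps$ are of comparable size (so the main terms in $A t_{p-1}+B r_p/\eps$ might partially cancel); this is where the use of \emph{both} alternatives in condition (b) of the membership definition is needed, and where the precise smallness of $\delta_0$ must be tuned against the Lipschitz constant of $\omega_i$ on $\Sigma$. A secondary subtlety is the $\Gamma^-$ branch in part 2, which requires invoking \eqref{s7fh} so that the same algebraic identity for $E_{i,j}$ holds up to complex conjugations that do not affect the moduli entering the estimates.
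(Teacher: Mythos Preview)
Your argument for Part 2 is correct and follows essentially the same route as the paper: project $X_{k_p}, X_{k_{p-1}}$ onto the $\beta_l$-axis, use degree-one homogeneity of the $\omega_i$ to obtain $E_{i,j}=A\,t_{p-1}+B\,r_p/\eps$ modulo an error of size $O\bigl((\delta/|r_p|)(t_p+t_{p-1})\bigr)$, and conclude that failure of \eqref{t18w} forces $t_{p-1}\lesssim |r_p|/\eps$, hence \eqref{t18z}. The paper packages the same computation via a free parameter $\lambda$ and the inclusion $M_{i,j}\subset\{p:|t(p)-r_p\Omega_{i,j}|<\lambda\}$ (see \eqref{t25a}), which is then reused in Proposition \ref{t18aa}; your explicit choice $C_3=\tfrac12\min_{i\neq j}|\omega_i(\beta_l)-\omega_j(\beta_l)|$ achieves the same end. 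In fact you only invoke failure of the \emph{second} alternative in \eqref{t18w}, and that already suffices for \eqref{t18z}.

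Your argument for Part 1 has a gap. The lower bound $|X_{k_p}|+|X_{k_{p-1}}|\gtrsim |r_p|^2|c|/(\delta\eps)$ is correct, but the inference that this ``drives $X_{k_p}/|X_{k_p}|$ arbitrarily close to $\sgn(k_p)\hat\beta_l$'' is not valid uniformly in $\zeta\in\Xi$. Since $\zeta$ is unrestricted, one may have $|\zeta|\gg |k_p|/\eps$, and then the direction of $X_{k_p}=\zeta+k_p\beta_l/\eps$ is governed by $\zeta$, not by $\pm\beta_l$. Concretely, take $\zeta=T\beta\in\Xi_0$, $k_{p-1}=0$, $k_p=1$: for $T\gtrsim\sin\alpha/(\delta\eps)$ both $X_0=\zeta$ and $X_1=\zeta+\beta_l/\eps$ lie in $\Gamma_\delta(\beta)$ with directions close to $\beta$, so the contradiction you seek never materializes. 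The paper's Step 1 argues differently, comparing the horizontal displacement $r_p\sin\alpha/\eps$ directly to the width $\lesssim 2(\delta/r_p)(|t(p)|+r_p)/\eps$ of the cone slice between the two points; note that this comparison is itself delicate precisely in the regime where $|t(p)|$ is large.
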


Thus, $M_{i,j}(\zeta,\eps,\delta;C_3)$ is a set of ``bad" $p$ values: $p$ lies in this set if and only if both $X_{k_p}, X_{k_{p-1}}$  lie in $\in\Gamma_{\frac{\delta}{|r_p|}}(\beta_l)$ and \eqref{t18w} fails to hold.  We are able to prove energy estimates like \eqref{tt28} only in those cases where, for any given $(i,j,\delta)$ as above,  the set $M_{i,j}(\zeta,\eps,\delta;C_3)$ has a finite cardinality bounded above by numbers $\MM_{i,j}$ \emph{independent of} $(\zeta,\eps,\delta)$ and the choice of sequence $(k_p)$; see Theorem \ref{tt26}.   The exponent $\EE$ in \eqref{tt28} is determined in part by the size of these upper bounds.



\begin{rem}[Uniformity with respect to sequences $(k_j)_{j\in\ZZ}$]\label{uniform}
(1) The previous two propositions were stated with respect to a given admissible sequence  $(k_j)_{j\in\ZZ}$.     The statements involve 
certain constants: $\eps_0$, $\delta_0$ and $C_i$, $i=1,\dots,4$. It is important for our application to the cascade estimates that these constants can be chosen independently of the sequence $(k_j)_{j\in\mathbb{Z}}$.  Indeed, the proofs show that dependence on this sequence occurs only in the explicit  step-size factors $r_j$ that appear in estimates like \eqref{t11za} or \eqref{t18w}.   

(2)  We will sometimes apply Propositions \ref{keyt} and \ref{t18} to a fixed pair $k$, $k-r$ instead of to a given admissible sequence $(k_j)$. 
To do this we can  imagine the pair $k-r,k$ as embedded in a strictly increasing sequence as $k-r=k_{j-1}$, $k=k_j$ with $r=r_j$ if $r>0$,  and in a strictly decreasing sequence if $r<0$.
Part (1) of this remark shows that the constants $C_1,\dots,C_4$ do not depend on the admissible sequence chosen for the embedding.    In the same way we can apply Proposition \ref{t18aa} (a refinement of Proposition \ref{t18} proved in section \ref{ptools}) to fixed pairs $k$, $k-r$. 

(3) Although admissible sequences are needed only for the one-sided cascade estimates (for example, in Proposition \ref{large}), part (2) of this remark allows us to use the above two propositions in the two-sided case as well.  

\end{rem}

The following lemma is used in defining amplication factors and in the proof of the iteration estimate. 

 \begin{lem}\label{c9a}
 Let $X_{k}=\zeta+k\frac{\beta_l}{\eps}$, $X_{k-r}=\zeta+(k-r)\frac{\beta_l}{\eps}$, where $r\in\ZZ\setminus 0$.    For $\delta\in (0,\delta_0]$ and $N_1\in \NN$ sufficiently large, 
 assume that 
 \begin{align}\label{c10}
 X_{k}\in\Gamma_{\frac{\delta}{N_1|r|}}(\beta_l), \text{ but } X_{k-r}\notin\Gamma_{\frac{\delta}{|r|}}(\beta_l).
 \end{align}
  Then 
 $|X_{k-r}|\lesssim \frac{1}{N_1}|X_k|$.
 \end{lem}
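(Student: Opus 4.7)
\medskip

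\noindent\textbf{Proof plan for Lemma \ref{c9a}.}  The strategy is to exploit the elementary geometric fact that the difference $X_k - X_{k-r} = r\beta_l/\eps$ is \emph{real and parallel to} $\beta_l$; consequently the components of $X_k$ and $X_{k-r}$ that are orthogonal to $\beta_l$ (together with the imaginary parts) agree, and it is precisely this common ``transverse part'' that measures both conic distances.

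First I would set up coordinates.  Write $\hat\beta_l := \beta_l/|\beta_l|$ and decompose the real part of $X_k$ as $\mathrm{Re}\,X_k = \alpha_k\hat\beta_l + w_k$ with $w_k\in\R^2$ orthogonal to $\hat\beta_l$, and let $\gamma\ge 0$ denote the imaginary part.  Since $X_k - X_{k-r} = r\beta_l/\eps$ lies in $\R\hat\beta_l$, we have $w_{k-r}=w_k$ and the $\gamma$'s coincide.  Set
\[
\rho^2 := |w_k|^2 + \gamma^2,\qquad |X_k|^2=\alpha_k^2+\rho^2,\qquad |X_{k-r}|^2=\alpha_{k-r}^2+\rho^2.
\]

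Second, I would translate the conic conditions into bounds on $\rho$.  A short computation gives the chordal identity
\[
\left|\frac{X_m}{|X_m|}\mp\hat\beta_l\right|^2 \;=\; 2\Bigl(1\mp\frac{\alpha_m}{|X_m|}\Bigr),
\]
so membership $X_m\in\Gamma_{\delta'}(\beta_l)$ is equivalent to $1-|\alpha_m|/|X_m|\le \delta'^{\,2}/2$.  Applying this with $\delta'=\delta/(N_1|r|)$ for $X_k$ yields $\alpha_k^2/|X_k|^2\ge (1-\delta^2/(2N_1^2r^2))^2$ and hence
\[
\rho^2 \;=\; |X_k|^2-\alpha_k^2 \;\le\; \frac{\delta^2}{N_1^2 r^2}\,|X_k|^2.
\]
Conversely, the negation $X_{k-r}\notin\Gamma_{\delta/|r|}(\beta_l)$ gives $1-|\alpha_{k-r}|/|X_{k-r}|>\delta^2/(2r^2)$, and using the elementary inequality $1-(1-x)^2\ge x$ for $x\in[0,1]$ we obtain
\[
\rho^2 \;=\; |X_{k-r}|^2-\alpha_{k-r}^2 \;>\; \frac{\delta^2}{2r^2}\,|X_{k-r}|^2.
\]

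Finally, chaining the two bounds on $\rho^2$ yields
\[
\frac{\delta^2}{2r^2}\,|X_{k-r}|^2 \;<\; \rho^2 \;\le\; \frac{\delta^2}{N_1^2 r^2}\,|X_k|^2,
\]
i.e.\ $|X_{k-r}|<\sqrt{2}\,|X_k|/N_1$, which is the claimed estimate.  The hypothesis ``$\delta\in(0,\delta_0]$ and $N_1$ sufficiently large'' only enters to validate the use of the inequality $1-(1-x)^2\ge x$ (requiring $\delta^2/(2r^2)\le 1$, so $\delta_0\le\sqrt 2$ suffices) and to make the conclusion genuinely small; there is no obstacle.  The argument is essentially a coordinate calculation, so I do not anticipate any hard step — the only thing to verify carefully is that the orthogonal complement of $\hat\beta_l$ in $\Xi$ used to define $\rho$ really does contain both $w_k$ and the imaginary direction, which follows because $\hat\beta_l$ has zero imaginary part.
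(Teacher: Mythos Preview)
Your proof is correct and rests on the same observation as the paper's: since $X_k-X_{k-r}=r\beta_l/\eps$ is real and parallel to $\beta_l$, the transverse part (your $\rho$, the paper's $\ell$) is common to both points, and the two conic conditions bound it above by $\sim\tfrac{\delta}{N_1|r|}|X_k|$ and below by $\sim\tfrac{\delta}{|r|}|X_{k-r}|$. Your chordal-distance computation is slightly more direct than the paper's version, which instead works with the projections $\tilde X_m=t\beta_l/\eps$ onto the $\beta_l$-axis and along the way also extracts the absolute scales $|X_k|\sim|r|/\eps$ and $|X_{k-r}|\lesssim|r|/(N_1\eps)$ before taking the ratio.
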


\begin{proof}
\textbf{1. }Let $\tilde X_{k-r}=t\frac{\beta_l}{\eps}$ and $\tilde X_{k}=(t+r)\frac{\beta_l}{\eps}$ be the orthogonal projections of $X_{k-r}$ and $X_{k}$ on the $\beta_l$ axis, and let $\ell$ denote the common distance of  $X_{k-r}$ and $X_{k}$ from that axis.    By \eqref{c10} we have
\begin{align}\label{c11}
\frac{\ell}{|\tilde X_{k}|}\lesssim \frac{\delta}{N_1|r|}, \;\;\;\; \frac{\ell}{|\tilde X_{k-r}|}\gtrsim \frac{\delta}{|r|},
\end{align}
and thus
\begin{align}
\frac{\delta}{|r|}\frac{|t|}{\eps}\lesssim \ell\lesssim \frac{\delta}{N_1|r|}\frac{|t+r|}{\eps}, \text{ which implies }\frac{|t+r|}{N_1}\gtrsim |t|; \text{ hence }|t|\lesssim \frac{|r|}{N_1-1}.
\end{align}

\textbf{2. } Now 
\begin{align}
\ell\sim \frac{\delta}{N_1|r|}|\tilde X_{k}|\sim \frac{\delta}{N_1|r|}\frac{|t+r|}{\eps}\lesssim \frac{\delta}{N_1|r|\eps}\left(\frac{|r|}{N_1-1}+|r|\right)\lesssim \frac{\delta}{N_1\eps}, \text{ so }
\end{align}
\begin{align}
|X_{k-r}|\lesssim \ell+\frac{|t|}{\eps}\lesssim \frac{|r|}{N_1\eps},\text{ while }|X_k|\sim |\tilde X_{k}|\sim \frac{|t+r|}{\eps}\sim \frac{|r|}{\eps},
\end{align} 
establishing the lemma.

\end{proof}

\subsection{Amplification factors}\label{afactors}
\quad In this section we define the global amplification factors $\DD(\eps,k,k-r)$ that appear in the iteration estimate \eqref{tt30}.   
Each global factor $\DD(\eps,k,k-r)$ is constructed out of microlocal factors $D(\eps,k,k-r;\beta)$ that we now proceed to define.   In this discussion 
the constants $\eps_0$, $\delta_0$, $C_i$, $i=1,\dots,4$,  are those from Propositions \ref{keyt} and \ref{t18}.   We take $(\zeta,\eps,\delta,k,r)\in \Xi\times (0,\eps_0]\times (0,\delta_0]\times \ZZ\times (\ZZ\setminus 0)$ and $\beta\in\Upsilon_0^+$.


First, for $\delta$ and $N_1$ as in Lemma \ref{c9a} we distinguish three cases for the pair $X_k=\zeta+k\frac{\beta_l}{\eps}$, $X_{k-r}=\zeta+(k-r)\frac{\beta_l}{\eps}$:
\begin{align}\label{cases}
\begin{split}
& (I) \;X_k\in\Gamma_{\frac{\delta}{N_1|r|}}(\beta),  X_{k-r}\in\Gamma_{\frac{\delta}{|r|}}(\beta)\\
& (II) \;X_k\in\Gamma_{\frac{\delta}{N_1|r|}}(\beta),  X_{k-r}\notin\Gamma_{\frac{\delta}{|r|}}(\beta)\\
& (III) \;X_k\in\Gamma_\delta(\beta)\setminus \Gamma_{\frac{\delta}{N_1|r|}}(\beta)
\end{split}
\end{align}

The source of amplification is the factor $[Br_-(\eps,k)]^{-1}$ that appears (in two places) in \eqref{a6}.  
  From Lemma \ref{t6} we have
\begin{align}\label{ta28}
\begin{split}
&\;(a) \left|[Br_-(\eps,k)]^{-1}\right|\lesssim |\Delta(\eps,k)|^{-1}\lesssim \frac{|X_k|}{\gamma}\text{ for }X_k\in\Gamma_\delta(\beta)\\
&\;(b) |\Delta(\eps,k)|^{-1}\leq C(\delta)|r|\text{ for }X_k\notin \Gamma_{\frac{\delta}{|r|}}(\beta).
\end{split}
\end{align}
Using \eqref{ta28}(b) we obtain
\begin{align}\label{tb28}
 |\Delta(\eps,k)|^{-1}\leq C(\delta)N_1|r| \text{ in case }(III).
 \end{align}
In case $(II)$ we have using Proposition \ref{keyt} and Remark \ref{uniform}:
\begin{align}\label{tc28}
\frac{1}{|\Delta(\eps,k)|}=\frac{1}{|\Delta(\eps,k-r)|}\frac{|\Delta(\eps,k-r)|}{|\Delta(\eps,k)|}\leq \begin{cases}(C(\delta)|r|)(C_1|r|)=C(\delta)r^2 \text{ if \eqref{t11za} holds}\\(C(\delta)|r|)(\frac{C_2|r|}{\eps\gamma})=\frac{C(\delta)r^2}{\eps\gamma}\text{ if \eqref{t11za} fails}\end{cases}.
\end{align}
In  case $(I)$, which by Proposition \ref{t18} can only happen when $\beta=\beta_l$, we will use the functions $E_{i,j}(\eps,k,k-r)$ to control $|\Delta(\eps,k)|^{-1}$.    For each $(i,j)\in \cO\times\cI$ we define case $(Ia)$ to be the subcase of case $(I)$ where \eqref{t18w} holds and $(Ib)$  the subcase where \eqref{t18w} fails.






\begin{defn}\label{t28}[Microlocal amplification factors]
Let $C_5\geq 1$ be a constant depending  on $C(\delta)$ as in  \eqref{tb28}, \eqref{tc28}, $N_1$ as in \eqref{cases}, and the constants $C_1,\dots,C_4$ appearing in Propositions \ref{keyt} and \ref{t18}.\footnote{The choice of $C_5$ is further clarified in the proof of Proposition \ref{tt30}.} 

For $k\in\ZZ$, $r\in \ZZ\setminus 0$, $\beta\in\Upsilon^+_0$, and $(\zeta,\eps)\in \mathrm{supp}\;\chi_b(\eps,k;\beta) \times (0,\eps_0]$, we define:

$\bullet$ $D(\eps,k,k-r;\beta)(\zeta)=C_5|r|$ in case $(III)$

$\bullet$ If $\beta\neq \beta_l$, then $D(\eps,k,k-r;\beta)(\zeta)=\begin{cases}C_5r^2\text{ in case }(II)\text{ when }\eqref{t11za}\text{ holds }\\\frac{C_5r^2}{\eps\gamma}\text{ in case }(II)\text{ when }\eqref{t11za}\text{ fails }\end{cases}$

 $\bullet$ $D(\eps,k,k-r;\beta_l)(\zeta)=C_5|r|$ in case $(II)$.
 
 \noindent If $\beta=\beta_l$ and case $(I)$ obtains, then 
 
 $\bullet$ $D(\eps,k,k-r;\beta_l)(\zeta)=\begin{cases} C_5|r|\text{ when for every }(i,j)\in \cO\times\cI, \text{ case }(Ia)\text{ holds}\\\frac{C_5|r|}{\eps\gamma}\text{ when for some }(i,j)\in \cO\times\cI, \text{ case }(Ib)\text{ holds}\end{cases}$.

\noindent For any $\beta$ if $\zeta\notin \mathrm{supp}\;\chi_b(\eps,k,\beta)$, define $D(\eps,k,k-r;\beta)(\zeta)=0$.
\end{defn}

\begin{defn}\label{t29}[Global amplification factors]
Let $\chi_{\CalB}(\zeta)$ be the characteristic function of $\cB:=\cup_{\beta\in\Upsilon^+_0}\Gamma_\delta(\beta)$.
For $k\in\ZZ$, $r\in\ZZ\setminus 0$,  and $(\zeta,\eps)\in \mathrm{supp}\;\chi_\CalB(\eps,k) \times (0,\eps_0]$, we define
\begin{align}\label{ty29}
\begin{split}
&\DD(\eps,k,k-r)(\zeta)=\begin{cases}\frac{C_5r^2}{\eps\gamma},\text{ if }D(\eps,k,k-r;\beta)(\zeta)=\frac{C_5|r|}{\eps\gamma}\text{ for some }\beta\in\Upsilon^+_0\\C_5r^2, \; otherwise\end{cases}.
\end{split}
\end{align}
\noindent If $\zeta\notin \mathrm{supp}\;\chi_\CalB(\eps,k)$  we set $\DD(\eps,k,k-r)(\zeta)=1$.

\end{defn}
Observe that 
\begin{align}\label{t30}
D(\eps,k,k-r;\beta)(\zeta)\leq \DD(\eps,k,k-r)\text{ for all }\zeta\in\Xi, \;\beta\in\Upsilon^+_0.
\end{align}

\begin{rem}\label{ty30}
The definition of $D(\eps,k,k-r;\beta)$ in case $(II)$ when $\beta\neq \beta_l$ is the only reason we need $r^2$ instead of $|r|$ in the definition of $\DD(\eps,k,k-r)$.  In any problem 
where $\Upsilon^+_0=\{\beta_l\}$, we replace $r^2$ by $|r|$ on the right in \eqref{ty29}.  This remark is used in the proof of Theorem \ref{tvv29}.

\end{rem}

\subsection{Iteration estimate}\label{iterationestimate}

\qquad In this section we prove the iteration estimate for the transformed singular problem \eqref{i9}, that is, 
\begin{align}\label{tz30}
\begin{split}
&(a) D_{x_2}V_k-\mathcal{A}(X_k)V_k=i\sum_{r\in \ZZ\setminus 0}  \ar e^{ir\frac{\omega_N(\beta_l)}{\eps}x_2}B_2^{-1}MV_{k-r}+\widehat{F^\eps_k}(x_2,\zeta)\\
&(b) BV_k=\widehat{G_k}(\zeta)\text{ on }x_2=0.
\end{split}
\end{align}
This is the transform of the singular problem \eqref{i6} in the case where 
we take 
\begin{align}\label{tz31}
\cD(\theta_{in})=d(\theta_{in})M \text{ for }d(\theta_{in})=\sum_{r\in\ZZ\setminus 0}\alpha_re^{ir\theta_{in}}.  
\end{align}
We justify the reduction to this case in Remark \ref{reduction}.

 We begin by defining  the objects that appear in the estimate.   Let us set 
\begin{align}\label{tu28z}
\mathcal{B}:=\cup_{\beta\in\Upsilon^+_0}\Gamma_\delta(\beta)\subset \Xi.
\end{align}
Using Notations \ref{Xk} we write the solution $V_k$ of the Fourier-Laplace transformed singular system \eqref{tz30} as 
\begin{align}
V_k=\sum_{\beta\in\Upsilon^+_0}\chi_b(\eps,k;\beta)V_k+\chi_g(\eps,k)V_k,
\end{align}
where $\chi_g(\zeta)$ is the characteristic function of $\mathcal{B}^c$ (complement in $\Xi$). 
Define $w_k=(w^+_k,w^-_k)$  for \emph{all} $\zeta\in\Xi$ by\footnote{It is important now to keep track of the dependence of  the various objects on $\beta\in\Upsilon^+_0$.} 
\begin{align}
V_k(x_2,\zeta)=S(\eps,k;\beta)w_k(x_2,\zeta;\beta),
\end{align}
 and observe that $w_k$ is for $\zeta\in\mathrm{supp}\;\chi_b(\eps,k;\beta)$ a solution of the diagonalized system\footnote{Recall $\zeta\in\mathrm{supp}\;\chi_b(\eps,k;\beta)$ if and only if $X_k:=\zeta+k\frac{\beta_l}{\eps}\in\Gamma_\delta(\beta)$.} 
     \begin{align}\label{a3a}
\begin{split}
&D_{x_2}w_k-\begin{pmatrix}\xi_+(\eps,k;\beta)&0\\0&\xi_-(\eps,k;\beta)\end{pmatrix}w_k=\\
&\qquad i\sum_{r\in\ZZ\setminus 0}\ar e^{ir\frac{\omega_N(\beta_l)}{\eps}x_2}S^{-1}(\eps,k;\beta)B_2^{-1}M S(\eps,k-r;\beta)w_{k-r}+S^{-1}(\eps,k;\beta)\widehat{F^\eps_k}(x_2,\zeta),\\
&BS(\eps,k;\beta)w_k= \hat G_k\text{ on }x_2=0.
\end{split}
\end{align}
 Here we have set\footnote{In the notation used in \eqref{a3b}, the frequency $\omega_N(\beta_l)$ that appears in \eqref{a3a} is $\omega_N(\zeta;\beta_l)|_{\zeta=\beta_l}.$} 
 \begin{align}\label{a3b}
 \begin{split}
 &\xi_+(\eps,k;\beta)=\mathrm{diag}\;(\omega_1(\eps,k;\beta),\dots,\omega_{N-p}(\eps,k;\beta))\\
 &\xi_-(\eps,k;\beta)=\mathrm{diag}\;(\omega_{N-p+1}(\eps,k;\beta),\dots,\omega_{N}(\eps,k;\beta)).
\end{split}
\end{align}

Solutions to the diagonalized system in the case $F=0$ are 
\begin{align}\label{a5}
w^+_k(x_2,\zeta)=\sum_{r\in\ZZ\setminus 0}\int^\infty_{x_2}e^{i\xi_+(\eps,k)(x_2-s)+ir\frac{\omega_N(\beta_l)}{\eps}s}\ar[a(\eps,k,k-r)w^+_{k-r}(s,\zeta)+b(\eps,k,k-r)w^-_{k-r}(s,\zeta)]ds,
\end{align}
\begin{align}\label{a6}
\begin{split}
&w^-_k(x_2,\zeta)=-\sum_{r\in\ZZ\setminus 0}\int^{x_2}_0e^{i\xi_-(\eps,k)(x_2-s)+ir\frac{\omega_N(\beta_l)}{\eps}s}\ar[c(\eps,k,k-r)w^+_{k-r}(s,\zeta)+d(\eps,k,k-r)w^-_{k-r}(s,\zeta)]ds-\\
&e^{i\xi_-(\eps,k)x_2}[Br_-(\eps,k)]^{-1}Br_+(\eps,k)\sum_{r\in\ZZ\setminus 0}\int^\infty_{0}e^{i\xi_+(\eps,k)(-s)+ir\frac{\omega_N(\beta_l)}{\eps}s}\ar[a(\eps,k,k-r)w^+_{k-r}(s,\zeta)+\\
&\qquad\qquad\qquad b(\eps,k,k-r)w^-_{k-r}(s,\zeta)]ds+e^{i\xi_-(\eps,k)x_2}[Br_-(\eps,k)]^{-1}\hat G_k(\zeta).
\end{split}
\end{align}
The matrices $a,b,c,d$ in \eqref{a5}, \eqref{a6} can be read off from \eqref{a3a} and we have
\begin{align}\label{a7}
|a(\eps,k,k-r)|\lesssim 1, |b(\eps,k,k-r)|\lesssim 1, |c(\eps,k,k-r)|\lesssim 1, |d(\eps,k,k-r)|\lesssim 1
\end{align}
uniformly with respect to $(\eps,k,r)$. 

We will also need the following expressions for $w^\pm_k$:
\begin{align}\label{ab7}
w^+_k(x_2,\zeta)=\sum_{r\in\ZZ\setminus 0}\int^\infty_{x_2}e^{i\xi_+(\eps,k)(x_2-s)+ir\frac{\omega_N(\beta_l)}{\eps}s}\ar M^+(\eps,k,k-r)V_{k-r}(s,\zeta)ds
\end{align}
  \begin{align}\label{ab8}
\begin{split}
&w^-_k(x_2,\zeta)=-\sum_{r\in\ZZ\setminus 0}\int^{x_2}_0e^{i\xi_-(\eps,k)(x_2-s)+ir\frac{\omega_N(\beta_l)}{\eps}s}\ar M^-(\eps,k,k-r)V_{k-r}(s,\zeta)ds-\\
&e^{i\xi_-(\eps,k)x_2}[Br_-(\eps,k)]^{-1}Br_+(\eps,k)\sum_{r\in\ZZ\setminus 0}\int^\infty_{0}e^{i\xi_+(\eps,k)(-s)+ir\frac{\omega_N(\beta_l)}{\eps}s}\ar M^+(\eps,k,k-r)V_{k-r}(s,\zeta)ds+\\
&\qquad \qquad e^{i\xi_-(\eps,k)x_2}[Br_-(\eps,k)]^{-1}\hat G_k(\zeta),
\end{split}
\end{align}
  where the matrices $M^\pm$ are defined in the obvious way and satisfy $|M^\pm(\eps,k,k-r)|\lesssim 1$.

Next define 
\begin{align}\label{tu29}
\mathcal{W}_k(x_2,\zeta;\beta)=(\tilde w^+_k(x_2,\zeta;\beta),w^-_k(x_2,\zeta;\beta),\frac{1}{\sqrt{\gamma}}\tilde w^+_k(0,\zeta;\beta),\frac{1}{\sqrt{\gamma}}w^-_k(0,\zeta;\beta)),
\end{align}
where we have set  
\begin{align}
\tilde w^+_k(x_2,\zeta;\beta)=\Delta^{-1}(\eps,k;\beta)w^+_k(x_2,\zeta;\beta).
\end{align}
For each $k$ we  define a modified $L^2$ norm of $V_k$ 
 by\footnote{In \eqref{tu30} the notation $|\cdot|_{L^2}$ means $|\cdot|_{L^2(x_2,\sigma,\eta)}$ for components that depend on $x_2$ and  $|\cdot|_{L^2(\sigma,\eta)}$ for components that do not.}
\begin{align}\label{tu30}
\|V_k\|_k=\sum_{\beta\in\Upsilon^+_0}|\chi_b(\eps,k;\beta)\CalW_k(x_2,\zeta;\beta)|_{L^2}+\left|\chi_g(\eps,k)\left(V_k(x_2,\zeta),\frac{1}{\sqrt{\gamma}}V_k(0,\zeta)\right)\right|_{L^2}, 
\end{align}
This ``partial norm"   depends on $k$ (through  $\Delta^{-1}$ and the cutoffs $\chi_b$, $\chi_g$).    It should be viewed as a piece of the ``full norm" $|(\|V_k\|)|_{\ell^2(k)}$ of $(V_k)_{k\in\ZZ}$ that is estimated in Proposition  \ref{bb6}.  We usually suppress the outer subscript $k$ of $\|V_k\|_k$.    If $f(\zeta)$ is any function of $\zeta$ then
\begin{align}\label{tuu30}
\|f(\zeta)V_k\|_k:=\sum_{\beta\in\Upsilon^+_0}|\chi_b(\eps,k;\beta)f(\zeta)\CalW_k(x_2,\zeta;\beta)|_{L^2}+\left|\chi_g(\eps,k)f(\zeta)\left(V_k(x_2,\zeta),\frac{1}{\sqrt{\gamma}}V_k(0,\zeta)\right)\right|_{L^2}.
\end{align}

Note that for $U^\gamma$ as in Theorem \ref{tt26}
\begin{align}\label{tu31}
|(\|V_k\|)|_{\ell^2(k)}\gtrsim |U^\gamma|_{L^2(t,x,\theta)}+\left|\frac{U^\gamma(0)}{\sqrt{\gamma}}\right|_{L^2(t,x_1,\theta)}.
\end{align}


\begin{prop}[Iteration estimate]\label{tt30}
Under assumptions \ref{assumption1}, \ref{assumption2}, \ref{assumption3} 
consider the transformed singular problem \eqref{tz30}.   
There exist positive constants $C$, $\gamma_0$ such that for $\gamma\geq \gamma_0$ the solution $V_k$ of \eqref{tt26a} satisfies for $k\in\ZZ$,
\begin{align}\label{t30c}
\|V_k\|\leq  \frac{C}{\gamma}\sum_{r\in\ZZ\setminus 0}\sum_{t\in\ZZ}\|\ar\at \DD(\eps,k,k-r)V_{k-r-t}\|+\frac{C}{\gamma^2}\left|\widehat{F_k}|X_k|\right|_{L^2}+\frac{C}{\gamma^{3/2}}\left|\widehat{G_k}|X_k|\right|_{L^2(\sigma,\eta)}.
\end{align}
Here we redefine $\alpha_0$ to be $1$.\footnote{In \eqref{t30c} $\|V_k\|=\|V_k\|_k$ and  $\|\ar\at \DD(\eps,k,k-r)V_{k-r-t}\|=\|\ar\at \DD(\eps,k,k-r)V_{k-r-t}\|_{k-r-t}$.}
\end{prop}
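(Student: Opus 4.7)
The strategy is to decompose
\[
V_k = \chi_g(\eps,k)\, V_k + \sum_{\beta \in \Upsilon_0^+} \chi_b(\eps,k;\beta)\, V_k
\]
and bound each piece in the norm \eqref{tu30} separately. On $\mathrm{supp}\,\chi_g(\eps,k)$, where $|\Delta(\eps,k)|$ is bounded below by \eqref{s7f} and Lemma \ref{t6}(c), a standard Kreiss-type argument applied to \eqref{tz30} (treating the oscillatory sum $\sum_r \alpha_r e^{ir\omega_N(\beta_l) x_2/\eps} B_2^{-1} M V_{k-r}$ as an $L^2$ forcing) yields the desired bound with amplification factor $1$, matching Definition \ref{t29}.

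For each $\beta \in \Upsilon_0^+$ I would introduce $w_k = S^{-1}(\eps,k;\beta) V_k$ and work from the explicit formulas \eqref{a5}, \eqref{ab7} for $w^+_k$ and \eqref{a6}, \eqref{ab8} for $w^-_k$. Each $x_2$-integral there decays at rate $c\gamma$ by \eqref{s7fi}, so applying Proposition \ref{g1z}(a)--(b) to the interior pieces and \ref{g1z}(c)--(d) to the traces on $x_2 = 0$ produces precisely the $1/\gamma$ interior and $1/\sqrt{\gamma}$ boundary factors of \eqref{t30c}. The data contributions are handled the same way: the source $S^{-1}(\eps,k)\widehat{F_k}$ in \eqref{a3a}, combined with the bound $|\Delta^{-1}(\eps,k)| \lesssim |X_k|/\gamma$ from \eqref{ta28}(a), yields the term $|X_k|\widehat{F_k}/\gamma^2$, and $[Br_-(\eps,k)]^{-1}\widehat{G_k}$ yields $|X_k|\widehat{G_k}/\gamma^{3/2}$.

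The delicate part is controlling the factor $\Delta^{-1}(\eps,k)$ that appears in $\tilde w^+_k = \Delta^{-1}(\eps,k) w^+_k$ and in the $[Br_-(\eps,k)]^{-1}Br_+(\eps,k)$ term of \eqref{a6}. I would distinguish the three regimes \eqref{cases}. In case~(III), the bound \eqref{tb28} directly furnishes the factor $C_5|r|$ of Definition \ref{t28}. In case~(II) I would write
\[
\frac{1}{\Delta(\eps,k)} = \frac{\Delta(\eps,k-r)}{\Delta(\eps,k)} \cdot \frac{1}{\Delta(\eps,k-r)},
\]
bound the ratio by Proposition \ref{keyt} (alternatives $C_1|r|$ or $C_2|r|/(\eps\gamma)$ at the single exceptional index), and control $\Delta^{-1}(\eps,k-r)$ by Lemma \ref{t6}(g), whose hypothesis holds in case~(II) by definition. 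This reproduces the $C_5 r^2$ or $C_5 r^2/(\eps\gamma)$ factors of Definition \ref{t28}. Case~(I), which by Proposition \ref{t18} only occurs when $\beta = \beta_l$, is treated by integration by parts in oscillatory integrals of the shape \eqref{i19}: setting $w^{*,-}_{k-r,j} = e^{-i\omega_j(X_{k-r}) x_2} w^-_{k-r,j}$ as in \eqref{i20a} and integrating by parts in $s$ gains a factor $|E_{i,j}(\eps,k,k-r)|^{-1}$. In sub-case $(Ia)$, \eqref{t18w} is exactly what is needed to absorb the dangerous $|X_k|/\gamma$ coming from $[Br_-]^{-1}$, leaving only a harmless $C_5|r|$; in sub-case $(Ib)$, no IBP gain is available and the bound $|X_k| \lesssim |r|/\eps$ from \eqref{t18z} yields $C_5|r|/(\eps\gamma)$, again consistent with Definition \ref{t28}. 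Crucially, the $s$-derivative produced by IBP, rewritten via the $(k-r)$-equation of \eqref{a3a} and the convention $\alpha_0 = 1$, gives the inner sum $\sum_{t \in \ZZ} \alpha_t V_{k-r-t}$ appearing in \eqref{t30c}.

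The main obstacle will be the IBP step: verifying that both alternatives in \eqref{t18w} convert cleanly into bounds expressed purely through $\DD(\eps,k,k-r)$ and $|\alpha_r\alpha_t V_{k-r-t}|$, uniformly in $(\eps,\zeta,k,r)$ and without creating stray $|X_k|$ factors or losing the decay of $\alpha_r,\alpha_t$. The second alternative $|E_{i,j}| \geq C_3|X_{k-r}|$ is the trickier of the two, as it forces one to compare $|X_k|$ and $|X_{k-r}|$ via \eqref{cases}(I) and the same bookkeeping that underlies Lemma \ref{c9a}. Once this is carried out, taking the maximum over $\beta \in \Upsilon_0^+$ of the microlocal factors and invoking \eqref{ty29} assembles the microlocal estimates into the single global bound \eqref{t30c}, with $C_5$ and $\gamma_0$ chosen large enough to dominate the implicit constants from Lemma \ref{t6}, Propositions \ref{keyt} and \ref{t18}, and the normalizations \eqref{a7}.
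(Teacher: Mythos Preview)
Your plan follows the paper's proof closely: the good-region Kreiss estimate, the diagonalization and explicit solution formulas \eqref{a5}--\eqref{ab8}, the three-case split \eqref{cases}, and the integration-by-parts mechanism in case~(I) are exactly what the paper does. The identification of the inner sum $\sum_t \alpha_t V_{k-r-t}$ as arising from the $s$-derivative of $w^{*,-}_{k-r,j}$ via the $(k{-}r)$-equation is also correct.

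There is however one genuine gap. You treat case~(II) uniformly via Proposition~\ref{keyt}, but the paper distinguishes $\beta = \beta_l$ from $\beta \neq \beta_l$ here. For $\beta \neq \beta_l$ your argument is the paper's (its step~5). For $\beta = \beta_l$, however, Definition~\ref{t28} assigns $D(\eps,k,k-r;\beta_l) = C_5|r|$ in case~(II), and Proposition~\ref{keyt} does \emph{not} deliver this: its exceptional alternative $C_2|r|/(\eps\gamma)$ can occur for $\beta = \beta_l$ in case~(II), and combined with the $C(\delta)|r|$ from Lemma~\ref{t6}(g) you would get $C_5 r^2/(\eps\gamma)$ at a point where $\DD$ may equal only $C_5 r^2$. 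Your bound would then exceed the right side of \eqref{t30c}. The paper avoids this (its step~6) by an IBP based on the full matrix $E_i = (\omega_i(X_k) - r\omega_N(\beta_l)/\eps)I_N - \cA(X_{k-r})$: Lemma~\ref{c9a} gives $|X_{k-r}| \lesssim |X_k|/N_1$ in case~(II) with $\beta = \beta_l$, so the first two terms of $E_i$ dominate and $|\Delta^{-1}(\eps,k)\, E_i^{-1}| \lesssim 1/\gamma$ \emph{unconditionally}. This IBP, applied to $V^*_{k-r} = e^{-i\cA(X_{k-r})x_2} V_{k-r}$, produces the inner $\sum_t$ with the correct factor $C_5|r|$.

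A smaller omission: the $w^+_{k-r}$ contributions in \eqref{a5}--\eqref{a6} also carry $\Delta^{-1}(\eps,k)$ and are not covered by your $w^{*,-}_{k-r,j}$ IBP. The paper (its step~8) handles them by a separate split on $|X_{k-r}|$ versus $|X_k|/N_3$: when $|X_{k-r}| \gtrsim |X_k|$, insert $\Delta(\eps,k-r)/\Delta(\eps,k-r)$ and use that $\tilde w^+_{k-r}$ already sits in $\|V_{k-r}\|$; when $|X_{k-r}| \ll |X_k|$, an IBP with $F_{+,+} = \xi_+(\eps,k) - r\omega_N(\beta_l)/\eps\, I_{|\cO|} - \xi_+(\eps,k-r)$ works as in step~6.
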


\begin{proof}

\textbf{1. Kreiss-type estimate. } On the support of $\chi_g(\zeta)$ the problem $(L(\partial),B)$ satisfies the uniform Lopatinski condition, so by using a singular Kreiss symmetrizer as in \cite{W1}, we obtain the Kreiss-type estimate
\begin{align}\label{t31}
\left|\chi_g(\eps,k)\left(V_k,\frac{1}{\sqrt{\gamma}}V_k(0)\right)\right|_{L^2}\leq C\left(\frac{1}{\gamma}\left(\sum_{r\in\ZZ\setminus 0}|\ar V_{k-r}|_{L^2}+|\widehat{F_k}|_{L^2}\right)+\frac{1}{\sqrt{\gamma}}|\widehat{G_k}|_{L^2(\sigma,\eta)}\right).
\end{align}
Observe that the right side of \eqref{t31} is dominated by the right side of \eqref{t30c}.

\textbf{2. Strategy. }To prove \eqref{t30c} it will suffice to show for each $\beta\in\Upsilon^+_0$ that 
\begin{align}\label{t32}
\begin{split}
&|\chi_b(\eps,k;\beta)\mathcal{W}_k|_{L^2}\leq \\
&\quad \frac{C}{\gamma}\sum_{r\in\ZZ\setminus 0}\sum_{t\in\ZZ}\left|\ar\at  D(\eps,k,k-r;\beta)\mathcal{W}_{k-r-t}\right|_{L^2}+\frac{C}{\gamma^2}\left|\widehat{F_k}|X_k|\right|_{L^2}+\frac{C}{\gamma^{3/2}}\left|\widehat{G_k}|X_k|\right|_{L^2(\sigma,\eta)},
\end{split}
\end{align}
where the $\mathcal{W}_j$ are all evaluated at $(x_2,\zeta;\beta)$.   We can then deduce \eqref{t30c} using \eqref{t30} and \eqref{t31}.

\textbf{3. }We now fix $\beta\in\Upsilon^+_0$ and proceed to prove \eqref{t32}.   
We will first treat the case $F=0$, $G=\sum_{k\in\ZZ}G_k(t,x_1)e^{ik\theta}$. 
A crude estimate  based just on applying Proposition \ref{g1z} to ``$\frac{1}{\Delta(\eps,k)}$\eqref{a5}" and  \eqref{a6} yields\footnote{In \eqref{t33} the cutoff $\chi_b(\eps,k)=\chi_b(\eps,k;\beta)$;  similarly,  we often suppress  the ``;$\beta$"  in other functions when a given $\beta\in \Upsilon^+_0$ is fixed by the context.}
 \begin{align}\label{t33}
\begin{split}
&(a) |\chi_b(\eps,k)\tilde w^+_k|_{L^2}\leq \frac{C}{\gamma}\sum_{r\in\ZZ\setminus 0}\left(\left|\frac{\chi_b(\eps,k)\ar w^+_{k-r}}{\Delta(\eps,k)}\right|_{L^2}+\left|\frac{\chi_b(\eps,k)\ar w^-_{k-r}}{\Delta(\eps,k)}\right|_{L^2}\right),\\
&(b) |\chi_b(\eps,k)w^-_k|_{L^2}\leq \frac{C}{\gamma}\sum_{r\in\ZZ\setminus 0}\left(\left|\frac{\chi_b(\eps,k)\ar w^+_{k-r}}{\Delta(\eps,k)}\right|_{L^2}+\left|\frac{\chi_b(\eps,k)\ar w^-_{k-r}}{\Delta(\eps,k)}\right|_{L^2}\right)+\frac{C}{\gamma^{3/2}}\left|\widehat{G_k}|X_k|\right|_{L^2(\sigma,\eta)}.
\end{split}
 \end{align}
Observe that for each $r$ we have
\begin{align}\label{twa33}
\left|\frac{\chi_b(\eps,k)\ar w^+_{k-r}}{\Delta(\eps,k)}\right|_{L^2}+\left|\frac{\chi_b(\eps,k)\ar w^-_{k-r}}{\Delta(\eps,k)}\right|_{L^2}\sim \left|\frac{\chi_b(\eps,k)\ar V_{k-r}}{\Delta(\eps,k)}\right|_{L^2},
\end{align}
so we can (and sometimes will) write \eqref{t33} using $V_{k-r}$ in place of $(w^+_{k-r},w^-_{k-r})$ on the right.
We proceed to improve the estimate \eqref{t33}.

\textbf{4. Decomposition of $\Gamma_\delta(\beta)$; the $\chi^3_b(\eps,k,k-r)$ pieces. } 
Considering  the three cases listed in \eqref{cases}, 
for a given $r\in\ZZ\setminus 0$ we write
\begin{align}
\chi_b(\eps,k)=\chi^1_b(\eps,k,k-r)+\chi^2_b(\eps,k,k-r)+\chi^3_b(\eps,k,k-r),
\end{align}
where $\chi^i_b(\eps,k,k-r)$, $i=1,2,3$ are respectively the characteristic functions of 
\begin{align}
\begin{split}
&\mathcal{A}_1(\eps,k,k-r):=\{\zeta\in\Xi: X_k\in\Gamma_{\frac{\delta}{N_1|r|}}(\beta), X_{k-r}\in\Gamma_{\frac{\delta}{|r|}}(\beta)\}\\
&\mathcal{A}_2(\eps,k,k-r):=\{\zeta\in\Xi: X_k\in\Gamma_{\frac{\delta}{N_1|r|}}(\beta), X_{k-r}\notin\Gamma_{\frac{\delta}{|r|}}(\beta)\}\\
&\mathcal{A}_3(\eps,k,k-r):=\{\zeta\in\Xi: X_k\in \Gamma_\delta(\beta)\setminus \Gamma_{\frac{\delta}{N_1|r|}}(\beta_l)\}.
\end{split}
\end{align}
Here $N_1\in\NN$ is chosen as in Proposition \ref{c9a} and $\delta\in (0,\delta_0]$.    By Proposition \ref{t18}(1) $\mathcal{A}_1(\eps,k,k-r)$ is empty when $\beta\neq \beta_l$.

Since $|\Delta(\eps,k)|^{-1}\leq C(\delta)N_1 |r|$ on $\mathrm{supp}\;\chi^3_b(\eps,k,k-r)$, we obtain:
\begin{align}\label{tw34}
\frac{C}{\gamma}\left|\frac{\chi^3_b(\eps,k,k-r)w^\pm_{k-r}}{\Delta(\eps,k)}\right|_{L^2}\leq \frac{C(\delta)N_1|r|}{{\gamma}}|w^\pm_{k-r}|_{L^2}\leq \frac{C}{{\gamma}}|D(\eps,k,k-r;\beta)w^\pm_{k-r}|_{L^2}.
\end{align}



\textbf{5. The $\chi^2_b(\eps,k,k-r)$ pieces when $\beta\neq \beta_l$. }
Consider now the piece of the second term on the right of \eqref{t33}(a) given by 
$\frac{C}{\gamma}\left|\frac{\chi^2_b(\eps,k,k-r)\ar w^-_{k-r}}{\Delta(\eps,k)}\right|_{L^2}$, 
 which arises from the following part of $\frac{\chi^2_b(\eps,k,k-r)}{\Delta(\eps,k)}$\eqref{a5}:
\begin{align}\label{t42}
\begin{split}
&A_2:=\frac{\chi^2_b(\eps,k,k-r)}{\Delta(\eps,k)}\int^\infty_{x_2}e^{i\xi_+(\eps,k)(x_2-s)+i\frac{\omega_N(\beta_l)}{\eps}s} \ar b(\eps,k,k-r)w^-_{k-r}(s,\zeta)ds,
\end{split}
\end{align}
Using \eqref{tc28} and Definition \ref{t28} we obtain
\begin{align}
 \left|\frac{\chi^2_b(\eps,k,k-r)}{\Delta(\eps,k)}\right|\lesssim D(\eps,k,k-r;\beta).
\end{align}
This yields
\begin{align}\label{t44}
|A_2|_{L^2}\leq \frac{C}{\gamma} |\ar D(\eps,k,k-r;\beta)w^-_{k-r}|_{L^2}.
\end{align}
The $\chi^2_b(\eps,k,k-r)$ pieces of the other terms on the right in \eqref{t33} are estimated in the same way.
 

\textbf{6. The $\chi^2_b(\eps,k,k-r)$ pieces when $\beta = \beta_l$. }For $i\in\cO$ we define the $N\times N$ matrix
\begin{align}\label{h5}
E_i(\eps,k,k-r)=\left(\omega_i(X_k)-r\omega_N(\beta_l/\eps)\right)I_N-\cA(X_{k-r}).
\end{align}
Lemma \ref{c9a} implies that on the support of $\chi^2_b(\eps,k,k-r;\beta_l))$, the first two terms on the right of \eqref{h5} dominate the third.  More precisely,  we have
\begin{align}\label{h4}
|X_{k-r}|=\left|X_k-\frac{r\beta_l}{\eps}\right|\leq \frac{1}{N_1}|X_k|,
\end{align}
so, after enlarging $N_1$ if necessary, we have\footnote{Use \eqref{h4} and \eqref{t6a}(e) for the last $\sim$ of \eqref{h6}.}
\begin{align}\label{h6}
|\cA(X_{k-r})|\lesssim |X_{k-r}|\leq \frac{1}{N_1}|X_k|\sim \frac{1}{N_1} |\omega_i(X_k)-\omega_N(r\beta_l/\eps)|.
\end{align}
This implies that the $N\times N$ matrix $E_i$ is invertible with 
\begin{align}\label{h6a}
|E_i^{-1}|\lesssim \frac{1}{|X_k|},  \text{ and hence }\left|\frac{1}{\Delta(\eps,k)}E_i^{-1}(\eps,k,k-r)\right|\lesssim \frac{1}{\gamma}, 
\end{align}
 since  $|\Delta(\eps,k)^{-1}|\lesssim |X_k|/\gamma$ on $\mathrm{supp}\;\chi^2_b(\eps,k,k-r)$.


For a given $r$ the main contribution to the term (in parentheses) on the right in \eqref{t33}(b) arises from the part of \eqref{ab8} given by\footnote{The ``smaller" contribution from the integrals $\int^{x_2}_0$ in \eqref{ab8} (or \eqref{a6}) is easily estimated by direct application of Proposition \ref{g1z}, so we ignore it here.}
  \begin{align}\label{h7}
 \begin{split}
 &A:=e^{i\xi_-(\eps,k)x_2} [Br_-(\eps,k)]^{-1}Br_+(\eps,k)\int^\infty_0 e^{i\xi_+(\eps,k)(-s)+i\frac{r\omega_N(\beta_l)}{\eps}s} \ar M^+(\eps,k,k-r)V_{k-r}(s,\zeta)ds.
\end{split}
\end{align}
Recalling the definitions  of $\xi_\pm$ \eqref{a3b}  and ignoring some factors in the integrand that are $O(1)$ and independent of $s$, we see that  the $q-$component of $A$ is a sum of terms of the form
\begin{align}\label{h8}
A_{q,i}:=e^{i\omega_q(\eps,k)x_2}\frac{1}{\Delta(\eps,k)}\int^\infty_0 e^{i\omega_i(\eps,k)(-s)+i\frac{r\omega_N(\beta_l)}{\eps}s} \ar M^+_i(\eps,k,k-r)V_{k-r}(s,\zeta)ds
\end{align}
 where $q\in\cI$,  $i\in \mathcal{O}$, and $M^+_i$ is the $i-$th row of the $(N-p)\times N$ matrix $M^+$. 

We  now improve the estimate of $\chi^2_{b}(\eps,k,k-r)A_{q,i}$ by an integration by parts.  
Setting 
\begin{align}\label{h8a}
V^*_{k-r}(x_2,\zeta):=e^{-i\cA(X_{k-r})x_2}V_{k-r},
\end{align}
we may rewrite $\chi^2_{b}(\eps,k,k-r)A_{q,i}$ as 
\begin{align}\label{h9}
e^{i\omega_q(\eps,k)x_2}\frac{M^+_i(\eps,k,k-r)}{\Delta(\eps,k)}\int^\infty_0 e^{-iE_{i}(\eps,k,k-r)s} \chi^2_{b}(\eps,k,k-r) \ar V^*_{k-r}(s,\zeta)ds,  
\end{align}
where $E_{i}(\eps,k,k-r)$ is given by \eqref{h5}.   From \eqref{tz30}$_{k-r}$ we obtain
\begin{align}\label{h9a}
\partial_{x_2}V^*_{k-r}=e^{-i\cA(X_{k-r})x_2}ih_{k-r},
\end{align}
where $h_{k-r}$ is the right side of \eqref{tz30}$_{k-r}(a)$.   Using \eqref{h9a},  the equation
\begin{align}
\frac{1}{-i}E_i^{-1}\frac{d}{ds}e^{-iE_is}=e^{-iE_is}, 
\end{align}
 and  integrating by parts in \eqref{h9} gives
\begin{align}\label{h10}
\begin{split}
&e^{i\omega_q(\eps,k)x_2} \frac{M^+_i(\eps,k,k-r)}{i\Delta(\eps,k)}\chi^2_{b}(\eps,k,k-r) E^{-1}_{i}(\eps,k,k-r)\cdot\\
&\qquad\qquad\left[\ar V_{k-r}(0,\zeta)+ \int^\infty_0 e^{-iE_{i}(\eps,k,k-r)s} e^{-i\cA(X_{k-r})s}\ar ih_{k-r}(s,\zeta)ds\right].
\end{split}
\end{align}
 We now obtain from \eqref{h6a} and the definition of $\cD(\eps,k,k-r;\beta_l)$  (in case $(II)$ of \eqref{cases}):
\begin{align}\label{h11}
\begin{split}
&|\chi^2_{b}(\eps,k,k-r)A_{q,i}|_{L^2}\lesssim \\
&\frac{C}{\gamma^{3/2}}\left[|\ar D(\eps,k,k-r;\beta_l) V_{k-r}(0,\zeta)|_{L^2(\sigma,\eta)}+\frac{C}{\sqrt{\gamma}}|\ar D(\eps,k,k-r;\beta_l) h_{k-r}|_{L^2}  \right]\leq \\
&\quad \frac{C}{\gamma^{3/2}}\left[|\ar D(\eps,k,k-r;\beta_l)V_{k-r}(0,\zeta)|_{L^2(\sigma,\eta)}+\frac{C}{\sqrt{\gamma}}\sum_{t\in\ZZ\setminus 0}|\ar\at D(\eps,k,k-r;\beta_l)V_{k-r-t}|_{L^2}  \right].
\end{split}
\end{align}


The estimate \eqref{h11} shows  that  the piece of the term on the right of \eqref{t33}(b) given by 
$\frac{C}{\gamma}\left|\frac{\chi^2_b(\eps,k,k-r)\ar V_{k-r}}{\Delta(\eps,k)}\right|_{L^2}$ can be \emph{replaced} by (not dominated by) 
\begin{align}\label{h12}
\frac{C}{\gamma}\sum_{t\in\ZZ}\left|\ar\at D(\eps,k,k-r;\beta_l)\CalW_{k-r-t}\right|_{L^2}.
\end{align}

The piece of the term on the right of \eqref{t33}(a) given by 
$\frac{C}{\gamma}\left|\frac{\chi^2_b(\eps,k,k-r)\ar V_{k-r}}{\Delta(\eps,k)}\right|_{L^2}$ 
 arises from the following part of $``\frac{\chi^2_b(\eps,k,k-r)}{\Delta(\eps,k)}$\eqref{ab7}":
\begin{align}\label{h13}
\begin{split}
&\frac{\chi^2_b(\eps,k,k-r)}{\Delta(\eps,k)}\int^\infty_{x_2}e^{i\xi_+(\eps,k)(x_2-s)+i\frac{r\omega_N(\beta_l)}{\eps}s} \ar M^+(\eps,k,k-r)V_{k-r}(s,\zeta)ds,
\end{split}
\end{align}
The replacement, determined by essentially the same argument as given above, is  again \eqref{h12}.

\textbf{7. Improving the $\left|\frac{\chi^1_b(\eps,k,k-r)\ar w^-_{k-r}}{\Delta(\eps,k)}\right|_{L^2}$ pieces of \eqref{t33}. }
Recall that $\chi^1_b(\eps,k,k-r;\beta)$ is nonzero only for $\beta=\beta_l$, so we assume that in this step and  the next. 
For a given $r$ the main contribution to the second term (in parentheses) on the right in \eqref{t33}(b) arises from the part of \eqref{a6} given by 
 \begin{align}\label{h14}
 \begin{split}
 &A:=e^{i\xi_-(\eps,k)x_2} [Br_-(\eps,k)]^{-1}Br_+(\eps,k)\int^\infty_0 e^{i\xi_+(\eps,k)(-s)+i\frac{r\omega_N(\beta_l)}{\eps}s} \ar b(\eps,k,k-r)w^-_{k-r}(s,\zeta)]ds.
\end{split}
\end{align}
 Ignoring some factors in the integrand that are $O(1)$ and independent of $s$, we see that the $p-$component of $A$ is a sum of terms of the form
 \begin{align}\label{h15}
A_{p,i,j}:=e^{i\omega_p(\eps,k)x_2}\frac{1}{\Delta(\eps,k)}\int^\infty_0 e^{i\omega_i(\eps,k)(-s)+i\frac{r\omega_N(\beta_l)}{\eps}s} \ar w^-_{k-r,j}(s,\zeta)ds
\end{align}
where $p\in \cI$, $i\in\cO$, and $w^-_{k-r,j}$, $j\in\mathcal{I}$ denotes a component of $w^-_{k-r}$.

 For any $(i,j)\in\cO\times \cI$ recall  the condition \eqref{t18w} involving the function 
$E_{i,j}(\eps,k,k-r):=\omega_i(\eps,k)-r\omega_N(\beta_l/\eps)-\omega_j(\eps,k-r)$:
\begin{align}\label{h16}
|E_{i,j}(\eps,k,k-r)|\geq C_3\frac{|X_k|}{|r|} \text{ or }|E_{i,j}(\eps,k,k-r)|\geq C_3|X_{k-r}|\text{ on } \mathrm{supp}\;\chi^1_b(\eps,k,k-r).
\end{align}
To estimate $\chi^1_b(\eps,k,k-r)A_{p,i,j}$ we first decompose\footnote{The cutoffs on the right in \eqref{h16a} have additional dependence on $(i,j)$, which we suppress in the notation.}
\begin{align}\label{h16a}
\chi^1_b(\eps,k,k-r)=\chi^1_{G}(\eps,k,k-r)+\chi^1_{B}(\eps,k,k-r)
\end{align}
into ``good" and ``bad" pieces supported respectively where 
the condition \eqref{h16} holds,  does not hold.


We  proceed to improve the estimate of $\chi^1_{G}(\eps,k,k-r)A_{p,i,j}$.  
We  decompose $\chi^1_G(\eps,k,k-r)$,
\begin{align}
\chi^1_G(\eps,k,k-r)=\chi^1_{G,I}(\eps,k,k-r)+\chi^1_{G,II}(\eps,k,k-r)+\chi^1_{G,III}(\eps,k,k-r),
\end{align}
into pieces where respectively, (I) the first alternative in \eqref{h16} holds,  (II) the first alternative fails and $|X_{k-r}|\geq \frac{|X_k|}{N_2}$, 
 (III) the first alternative fails and $|X_{k-r}|< \frac{|X_k|}{N_2}$;  $N_2\in\NN$ is chosen large enough below.

In case (I) we have
\begin{align}\label{h17}
\left|\frac{1}{\Delta(\eps,k)E_{i,j}}\right|\leq \frac{|X_k|}{\gamma}\frac{|r|}{C_3|X_k|}\sim \frac{|r|}{\gamma},
\end{align}
so an integration by parts just like that in step \textbf{6} gives\footnote{In this integration by parts $w^{*,-}_{k-r,j}(x_2,\zeta):=e^{-i\omega_j(\eps,k-r)x_2}w^-_{k-r,j}$  replaces $V^*_{k-r}$ as in \eqref{h8a}, and $h^-_{k-r,j}$, the $j$-component of the right side of \eqref{a3a}$_{k-r}$, replaces $h_{k-r}$ as in \eqref{h9a}.}
\begin{align}\label{h19}
\begin{split}
&|\chi^1_{G,I}(\eps,k,k-r)A_{p,i,j}|_{L^2}\lesssim \\
&\quad \frac{C}{\gamma^{3/2}}\left[|\ar D(\eps,k,k-r;\beta_l)w^-_{k-r,j}(0,\zeta)|_{L^2(\sigma,\eta)}+\frac{C}{\sqrt{\gamma}}\sum_{t\in\ZZ\setminus 0}|\ar\at D(\eps,k,k-r;\beta_l)w_{k-r-t}|_{L^2}  \right].
\end{split}
\end{align}

In case (II)  we find
\begin{align}\label{h20}
\left|\frac{1}{\Delta(\eps,k)E_{i,j}}\right|\leq \frac{|X_k|}{\gamma}\frac{1}{C_3|X_{k-r}|}\sim \frac{N_2}{\gamma},
\end{align}
so a similar integration by parts yields an estimate just like \eqref{h19} for $|\chi^1_{G,II}(\eps,k,k-r)A_{p,i,j}|_{L^2}$.

In case (III) the first two terms in the expression for $E_{i,j}$ are dominant, so we can use the argument of step \text{6} (recall \eqref{h6a}) to show 
\begin{align}\label{h21}
|E_{i,j}^{-1}|\leq \frac{C(N_2)}{|X_k|},  \text{ and hence }\left|\frac{1}{\Delta(\eps,k)E_{i,j}}\right|\leq\frac{C(N_2)}{\gamma}, 
\end{align}
provided $N_2$ is large enough. Thus, we get the estimate \eqref{h19} for $|\chi^1_{G,III}(\eps,k,k-r)A_{p,i,j}|_{L^2}$,

To estimate $\chi^1_{B}(\eps,k,k-r)A_{p,i,j}$ we do a direct estimate using \eqref{h15}, and the fact that\footnote{Use  Lemma \ref{t6} here.}
\begin{align}\label{t39}
|\Delta(\eps,k|^{-1}\lesssim \frac{|X_k|}{\gamma}\leq \frac{C_4|r|}{\eps\gamma}\text{ on }\mathrm{supp}\;\chi^1_{B}(\eps,k,k-r), \;\text {for }C_4\text{ as in }\eqref{t18z}.
\end{align}
Thus, we obtain
\begin{align}\label{t40}
|\chi^1_{B}(\eps,k,k-r)A_{p,i,j}|_{L^2}\leq \frac{C}{\gamma} \frac{C_4|r|}{\eps\gamma}|w^-_{k-r}|_{L^2}.
\end{align}


These estimates show  that  the piece of the second term on the right of \eqref{t33}(b) given by 
$\frac{C}{\gamma}\left|\frac{\chi^1_b(\eps,k,k-r)\ar w^-_{k-r}}{\Delta(\eps,k)}\right|_{L^2}$ can be \emph{replaced} by (not dominated by) \eqref{h12}. 

The piece of the second term on the right of \eqref{t33}(a) given by 
$\frac{C}{\gamma}\left|\frac{\chi^1_b(\eps,k,k-r)\ar w^-_{k-r}}{\Delta(\eps,k)}\right|_{L^2}$ 
 arises from the following part of $``\frac{\chi^1_b(\eps,k,k-r)}{\Delta(\eps,k)}$\eqref{a5}":
\begin{align}\label{h22}
\begin{split}
&\frac{\chi^1_b(\eps,k,k-r)}{\Delta(\eps,k)}\int^\infty_{x_2}e^{i\xi_+(\eps,k)(x_2-s)+i\frac{\omega_N(\beta_l)}{\eps}s} \ar b(\eps,k,k-r)w^-_{k-r}(s,\zeta)ds,
\end{split}
\end{align}
The replacement, determined by essentially the same argument as given above, is  again \eqref{h12}. 

\begin{rem}\label{gc}
Proposition \ref{goodcase} below  implies that  $\chi^1_B(\eps,k,k-r)$ can be nonzero \emph{only} in the cases $i\in\cO$, $j\in\cI\setminus\{N\}$.   Equivalently, the function $E_{i,N}$ satisfies condition \eqref{h16} for all $i\in\cO$.   This observation is used in the proof of 
Proposition \ref{large}. 

\end{rem}

\textbf{8. Improving the $\left|\frac{\chi^1_b(\eps,k,k-r)\ar w^+_{k-r}}{\Delta(\eps,k)}\right|_{L^2}$ pieces of \eqref{t33}. }
To treat the pieces involving $w^+_{k-r}$ on the right in \eqref{t33}, we first decompose $\chi^1_b(\eps,k,k-r)$,
\begin{align}
\chi^1_b(\eps,k,k-r)=\chi^1_{b,\II}(\eps,k,k-r)+\chi^1_{b,\II\II}(\eps,k,k-r),
\end{align}
into pieces where respectively, ($\II$) $|X_{k-r}|\geq \frac{|X_k|}{N_3}$, 
 ($\II\II$) $|X_{k-r}|< \frac{|X_k|}{N_3}$, for $N_3\in\NN$ to be chosen large below.
  In case ($\II$) we insert $\frac{\Delta(\eps,k-r)}{\Delta(\eps,k-r)}$ and use  (recall \eqref{t6a}(b))
 \begin{align}
 \left|\frac{\Delta(\eps,k-r)}{\Delta(\eps,k)}\right|\lesssim \frac{|X_k|}{|X_{k-r}|}\leq N_3\text{ on }\mathrm{supp}\;\chi^1_b(\eps,k,k-r)
 \end{align}
 to obtain
 \begin{align}
 \frac{C}{\gamma}\left|\frac{\chi^1_{b,\II}(\eps,k)\ar w^+_{k-r}}{\Delta(\eps,k)}\right|_{L^2}\leq \frac{C}{\gamma}|\ar D(\eps,k,k-r;\beta_l)\tilde w^+_{k-r}|_{L^2}. 
\end{align}

In case ($\II\II$) consider the contribution to the term involving $w^+_{k-r}$ on the right in \eqref{t33}(a) given by
\begin{align}\label{h23}
\begin{split}
&\frac{\chi^1_{b,\II\II}(\eps,k,k-r)}{\Delta(\eps,k)}\int^\infty_{x_2}e^{i\xi_+(\eps,k)(x_2-s)+i\frac{r\omega_N(\beta_l)}{\eps}s} \ar a(\eps,k,k-r)w^+_{k-r}(s,\zeta)ds.
\end{split}
\end{align}
If we define the $|\cO|\times |\cO|$ matrix
\begin{align}\label{h23a}
F_{+,+}(\eps,k,k-r)=\xi_+(\eps,k)-r\omega_N(\beta_l/\eps)I_{|\cO|}-\xi_+(\eps,k-r),  
\end{align}
we see by an argument parallel to  \eqref{h5}-\eqref{h6a} that the first two terms on the right of  \eqref{h23a} are dominant  for large $N_1$, so
\begin{align}\label{h24}
\left|\frac{1}{\Delta(\eps,k)}F_{+,+}^{-1}(\eps,k,k-r)\right|\lesssim \frac{1}{\gamma} \text{ on }\mathrm{supp}\;\chi^1_{b,\II\II}(\eps,k,k-r).
\end{align}
Thus, an integration by parts similar to that in step \textbf{6} shows that    the piece of the term on the right of \eqref{t33}(a) given by 
$\frac{C}{\gamma}\left|\frac{\chi^1_{b,\II\II}(\eps,k,k-r)\ar w^+_{k-r}}{\Delta(\eps,k)}\right|_{L^2}$ can be \emph{replaced} by (not dominated by) 
\begin{align}\label{h25}
\frac{C}{\gamma}\sum_{t\in\ZZ}\left|\ar\at D(\eps,k,k-r;\beta_l)\CalW_{k-r-t}\right|_{L^2}.
\end{align}

Finally, we must consider the  contribution in case ($\II\II$) to the term involving $w^+_{k-r}$ on the right in \eqref{t33}(b)  given by
\begin{align}
e^{i\xi_-(\eps,k)x_2}\chi^1_{b,\II\II}(\eps,k,k-r)[Br_-(\eps,k)]^{-1}Br_+(\eps,k)\int^\infty_{0}e^{i\xi_+(\eps,k)(-s)+ir\frac{\omega_N(\beta_l)}{\eps}s}\ar a(\eps,k,k-r)w^+_{k-r}(s,\zeta)ds.
\end{align}
The replacement, determined by essentially the same argument involving $F_{+,+}$, is  again \eqref{h25}.

\textbf{9. }Using Proposition \ref{g1z} and the formulas \eqref{a5}, \eqref{a6}, it is easy to see that $\frac{1}{\sqrt{\gamma}}|\tilde w^+_k(0,\zeta;\beta)|_{L^2(\sigma,\eta)}$ and  $\frac{1}{\sqrt{\gamma}}|w^-_k(0,\zeta;\beta))|_{L^2(\sigma,\eta)}$ satisfy, respectively, the same estimates as $|\tilde w^+_k(x_2,\zeta;\beta)|_{L^2}$, $|w^-_k(x_2,\zeta,\beta)|_{L^2}$.  
Combining the estimates of steps \textbf{4}-\textbf{8} establishes for all $\beta\in\Upsilon^+_0$ the estimate \eqref{t32} for the case $F=0$.


\textbf{10. }If $F(t,x,\theta)=\sum_{k\in\ZZ}F_k(t,x)e^{ik\theta}$,   the only modification to \eqref{a5}, \eqref{a6} is to add 
\begin{align}\label{t50}
-i\int^\infty_{x_2}e^{i\xi_+(\eps,k)(x_2-s)}\ell_+(\eps,k)\widehat{F_k}(s,\tau,\eta)ds,
\end{align}
to the right side of \eqref{a5}, and to add the terms
\begin{align}\label{t51}
\begin{split}
&i\int^{x_2}_0e^{i\xi_-(\eps,k)(x_2-s)}\ell_-(\eps,k) \widehat{F_k}(s,\tau,\eta)ds+\\
&\qquad ie^{i\xi_-(\eps,k)x_2}[Br_-(\eps,k)]^{-1}Br_+(\eps,k)\int^\infty_0e^{i\xi_+(\eps,k)(-s)}\ell_+(\eps,k) \widehat{F_k}(s,\tau,\eta)ds
\end{split}
\end{align}
to the right side of \eqref{a6}.   Using step \textbf{9} and applying Proposition \ref{g1z} again, we obtain the estimate  \eqref{t30c}.  

\end{proof}

\begin{rem}\label{t50a}In the case where the oscillatory coefficient in \eqref{tz30} has only positive spectrum, if one takes  $F(t,x,\theta)=\sum^\infty_{k=1}F_k(t,x)e^{ik\theta}$
and $G(t,x_1,\theta)=\sum^\infty_{k=1}G_k(t,x_1)e^{ik\theta}$, then the solution to \eqref{tz30} satisfies $V_k=0$ for $k<1$.    The iteration estimate \eqref{t30c} then reduces to
\begin{align}\label{tt30c}
\|V_k\|\leq  \frac{C}{\gamma}\sum^{k-1}_{r=1}\sum_{t=0}^{k-r-1}\|\ar\at \DD(\eps,k,k-r)V_{k-r-t}\|+\frac{C}{\gamma^2}\left|\widehat{F_k}|X_k|\right|_{L^2}+\frac{C}{\gamma^{3/2}}\left|\widehat{G_k}|X_k|\right|_{L^2(\sigma,\eta)}.
\end{align}
The obvious analogue of the estimate \eqref{tt30c} holds in the case where $F(t,x,\theta)=\sum^\infty_{k=N^*}F_k(t,x)e^{ik\theta}$
and $G(t,x_1,\theta)=\sum^\infty_{k=N^*}G_k(t,x_1)e^{ik\theta}$ for any $N^*\in\ZZ$.   Moreover,  the proof of Proposition \ref{tt30} shows that the constants $C$, $\gamma_0$ appearing there are independent of $N^*$.  This remark is used in the cascade estimates of section \ref{2s}. 
\end{rem}

\subsection{Control of amplification factors}\label{ptools}

\begin{proof}[\textbf{Proof of Proposition \ref{keyt}}]  We carry out the proof for a strictly increasing sequence $(k_j)$; the decreasing case is similar.   

\textbf{1. }Recall that $X_k=\zeta+\frac{k\beta_l}{\eps}$.  Since $\beta, \beta_l\in \Upsilon^+_0$, either the counterclockwise angle $\alpha_{cc}$ from $\beta_l$ to $\beta$ is between $0$ and $\frac{\pi}{2}$, or the clockwise angle $\alpha_c$ from $\beta_l$ to $\beta$ is between $0$ and $\frac{\pi}{2}$. By symmetry of $\Gamma_\delta:=\Gamma_\delta(\beta)$ about the line $\mathcal{L(\beta)}:=\{t\beta:t\in \mathbb{R}\}$, it will suffice to consider the case where the angle $\alpha:=\alpha_{cc}\in [0,\frac{\pi}{2}]$. 

Let us define the positive ``vertical" direction to be the $\beta$ direction, and the positive ``horizontal" direction to be the direction of a vector obtained by rotating $\beta$ by $\frac{\pi}{2}$ clockwise.

\textbf{2. }For each $X_k\in \Xi$ there is a point $\tilde X_k$ closest to it on $\mathcal{L}(\beta)$.  We have 
\begin{align}\label{t11a}
\begin{split}
& X_{k_j}=X_{k_{j-1}}+\frac{r_j\beta_l}{\eps},\quad \tilde X_{k_j}=\tilde X_{k_{j-1}}+\frac{(r_j\cos\alpha)\beta}{\eps} \text{ for all }j. \\
\end{split}
\end{align}
For example if $\beta \perp \beta_l$, we have $\tilde X_{k_j}=\tilde X_{k_{j-1}}$. 
In general the vertical displacement in passing from $X_{k_j}$ to $X_{k_{j-1}}$ is $-\frac{r_j\cos \alpha}{\eps}$, while the horizontal displacement is $-\frac{r_j\sin \alpha}{\eps}$ (recall $|\beta_l|=1$).\footnote{More precisely, this is the horizontal displacement of the projection of $X_{k_j}$ into the $(\sigma,\eta)$ plane.}

Moreover,  for opening angle  $\delta>0$ small we have
\begin{align}\label{t11ya}
|X_k-\tilde X_k|\leq 2\delta |\tilde X_k|\Rightarrow (1-2\delta)|\tilde X_k|\leq |X_k|\leq (1+2\delta)|\tilde X_k|\text{ for }X_k\in\Gamma_\delta.
\end{align}



\textbf{3. }Suppose $X_{k_j}\in \Gamma_\delta$, $X_{k_{j-1}}\in\Gamma_\delta$. Then by \eqref{t6a}(b) we have for some $c_0>0$
\begin{align}\label{t12}
\left|\frac{\Delta(\eps,k_{j-1})}{\Delta(\eps,k_j)}\right|\leq c_0 \frac{|X_{k_j}|}{|X_{k_{j-1}}|}.
\end{align}
Moreover, if $|\tilde X_{k_{j-1}}|\geq \frac{1}{8\eps}$, we have by \eqref{t11a} 
\begin{align}\label{t12a}
\frac{|\tilde X_{k_j}|}{|\tilde X_{k_{j-1}}|}\leq 1+\frac{r_j\cos \alpha}{1/8}\leq 9r_j.
\end{align}
In this case by \eqref{t11ya} it follows that for $\delta>0$ small enough we have
\begin{align}\label{t13}
\left|\frac{\Delta(\eps,k_{j-1})}{\Delta(\eps,k_j)}\right|\leq c_1r_j,\text{ where }c_1=c_1(c_0,9).
\end{align}

\textbf{4. }For $N_0\geq 2$ to be chosen, suppose $X_{k_j}\notin \Gamma_{\delta/N_0}$.\footnote{We allow the freedom to take $N_0$ ``large enough" and independent of $(\zeta,\eps,j)$;  the argument below shows we could in fact take $N_0=4$.   }   Then by \eqref{t6a}(c) we have $|\Delta(\eps,k_j)|^{-1}\leq c_2=c_2(\delta,N_0)$.   Together with \eqref{t6a}(a) this implies
\begin{align}\label{t14}
\left|\frac{\Delta(\eps,k_{j-1})}{\Delta(\eps,k_j)}\right|\leq c_3=c_3(\delta,N_0).
\end{align}
Thus, the estimate \eqref{t14}  can  fail only if $X_{k_j}\in\Gamma_{\delta/N_0}$.   Now let
\begin{align}\label{t15a}
C_1=\max\{c_3(\delta,N_0), c_1(c_0,9)\} \;(\text{ recall }\eqref{t13}).
\end{align}
We will say that for a given $(\zeta,\eps)$, the transition $X_{k_j}\to X_{k_{j-1}}$ is a ``bad transition" for a particular $j$ if and only if the estimate \eqref{t11za} fails for that $j$ for $C_1$ as in \eqref{t15a}.
We will  show that for a given $(\zeta,\eps)$ and $N_0$ chosen large enough, a bad transition can occur for only \emph{one} choice of $j$ (denoted $m_1(\zeta,\eps;\beta)$), and that for $j=m_1(\zeta,\eps;\beta)$ we have \eqref{t11zb}.   
 Observe that for a fixed $(\zeta,\eps)$ the points $X_{k_j}$ as $j$ varies all lie on a line $L(\zeta,\eps,\beta_l)$ parallel to $\beta_l$, and that the transition $X_{k_j}\to X_{k_{j-1}}$ can be bad only if $X_{k_j}\in\Gamma_{\delta/N_0}$.


\textbf{5. Bad transition where $X_{k_{j-1}}\notin\Gamma_{\delta}$.} Suppose the transition $X_{k_j} \to X_{k_{j-1}}$ is bad for a given $j=m$, and that $X_{k_{m-1}}\notin\Gamma_\delta$. 
Since $X_{k_m}\in\Gamma_{\delta/N_0}$ and $X_{k_{m-1}}\notin\Gamma_\delta$, 
in view of \eqref{t11a} this can happen only if
\begin{align}\label{t15aa}
|X_{k_m}|\leq \frac{c_4(\delta,\beta)r_m}{\eps},
\end{align}
where $c_4(\delta,\beta)$ is independent of $(\zeta,\eps,j)$.\footnote{To see this consider first the case $r_m=1$, then rescale.}  From this and \eqref{t6a}(a),(b) we obtain the estimate
\begin{align}\label{t15}
\left|\frac{\Delta(\eps,k_{m-1})}{\Delta(\eps,k_m)}\right|\leq \frac{c_5(\delta,\beta)r_m}{\eps\gamma}.
\end{align}

To see that the transition $X_{k_j}\to X_{k_{j-1}}$ is not bad if $j\neq m$,  suppose first that $\alpha\geq \delta$.   Since $X_{k_{m-1}}\notin\Gamma_\delta$, we have $X_{k_{m-l}}\notin \Gamma_\delta$ for all $l\geq 1$, so there can be no bad transitions for $j<m$. 
The transition $X_{k_{m+1}}\to X_{k_m}$ can be bad only if $X_{k_{m+1}}\in \Gamma_{\delta/N_0}$ and $|\tilde X_{k_m}|\leq \frac{1}{8\eps}$ (recall step \textbf{3}). But the horizontal displacement in passing from $X_{k_m}$ to $X_{k_{m+1}}$ is $\frac{r_{m+1}\sin\alpha}{\eps}\geq \frac{r_{m+1}\sin\delta}{\eps}\sim\frac{r_{m+1}\delta}{\eps}$, so if $N_0\geq 4$ one cannot have $X_{k_{m+1}}\in\Gamma_{\delta/N_0}$ and $|\tilde X_{k_m}|\leq \frac{1}{8\eps}$.\footnote{The width of $\Gamma_{\delta/N_0}$ at a vertical height of $\frac{1}{8\eps}$ is $\sim 2\cdot\frac{1}{8\eps}\cdot \frac{\delta}{N_0}$; at a vertical height of $\frac{9r_{m+1}}{8\eps}$ it is $\sim 2\cdot\frac{9r_{m+1}}{8\eps}\cdot \frac{\delta}{N_0}<\frac{2r_{m+1}\delta}{3\eps}$ for $N_0\geq 4$.}  Similarly, there are no bad  transitions $X_{k_j}$ to $X_{k_{j-1}}$ for $j>m+1$ when $\alpha\geq \delta$. 

Now suppose $\alpha<\delta$.  A transition starting from  $X_{k_{m-l}}$ for $l>1$ can be bad only if $X_{k_{m-l}}\in\Gamma_{\delta/N_0}$, and  since $X_{k_{m-1}}\notin\Gamma_\delta$, that can happen for some $l_0>1$ only  if $\alpha<\delta/N_0$.   But then $X_{k_{m-l}}\in \Gamma^-_{\delta/N_0}$ for all $l\geq l_0$ and $|X_{k_{m-l-1}}|> |X_{k_{m-l}}|$, so \eqref{t12} shows the transition $X_{k_{m-l}}\to X_{k_{m-l-1}}$ cannot be bad.
Next suppose the transition $X_{k_{m+1}}\to X_{k_m}$ is bad when $\alpha<\delta$.  The vertical and horizontal displacements are $-\frac{r_{m+1}\cos\alpha}{\eps}\sim-\frac{r_{m+1}}{\eps}$ and $-\frac{r_{m+1}\sin\alpha}{\eps}$, respectively.   Since both  $X_{k_{m+1}}, X_{k_m}\in \Gamma_{\delta/N_0}$, it follows by step \textbf{3} that $|\tilde X_{k_m}|<\frac{1}{8\eps}$. 
But then a vertical displacement of $\sim \frac{r_{m+1}}{\eps}$ starting from $X_{k_m}\in\Gamma_{\delta/N_0}$ would land $X_{k_{m-1}}\in\Gamma_\delta$.  Step \textbf{3} implies there are no bad  transitions $X_{k_j}\to X_{k_{j-1}}$ for $j>m+1$ when $\alpha <\delta$.

\textbf{6. Bad transition where $X_{k_{j-1}}\in\Gamma_{\delta}$.}  Suppose the transition $X_{k_j} \to X_{k_{j-1}}$ is bad for a given $j=m$, and that $X_{k_{m-1}}\in\Gamma_\delta$. 
Since $X_{k_m}\in\Gamma_{\delta/N_0}$ and $X_{k_{m-1}}\in\Gamma_\delta$, by step \textbf{3} this can happen only if $|\tilde X_{k_{m-1}}|\leq \frac{1}{8\eps}$, so by \eqref{t11ya} and \eqref{t12} we have
\begin{align}\label{t16}
\left|\frac{\Delta(\eps,k_{m-1})}{\Delta(\eps,k_m)}\right|\leq \frac{c_6(c_0)r_m}{\eps\gamma}
\end{align}
 since $|\tilde X_{k_m}|\leq \frac{9r_m}{8\eps}$.

 If $X_{k_{m-1}}\notin\Gamma_{\delta/N_0}$, the transition $X_{k_{m-1}}\to X_{k_{m-2}}$ is not bad, and in fact no transition $X_{k_{m-l}}\to X_{k_{m-l-1}}$ is bad for $l>1$.\footnote{Consider the cases $\alpha\geq \delta$, $\alpha<\delta$.}  The transition $X_{k_{m+1}}\to X_{k_m}$ can be bad only if $X_{k_{m+1}}\in\Gamma_{\delta/N_0}$.  But then, as we saw above, $\alpha<\delta$ and the vertical displacement for $\tilde X_{k_{m-1}}\to \tilde X_{k_m}$ is close to $\frac{r_{m}}{\eps}$.  By \eqref{t12a} we must have
\begin{align}
\frac{|\tilde X_{k_{m+1}}|}{|\tilde X_{k_m}|}\leq 9r_{m+1}, \text{ since }|\tilde X_{k_m}|\geq \frac{1}{8\eps},
\end{align}
so the transition $X_{k_{m+1}}\to X_{k_m}$ cannot be bad.  Similarly, no transition $X_{k_{m+l}}\to X_{k_{m+l-1}}$ can be bad for $l>1$.

If $X_{k_{m-1}}\in \Gamma_{\delta/N_0}$, then $\alpha < \delta$ and  the vertical displacement for $X_{k_{m-1}}\to X_{k_{m-2}}$ is close to $\frac{r_{m-1}}{\eps}$,  so $X_{k_{m-2}}\in \Gamma_\delta$ (since $|\tilde X_{k_{m-1}}|\leq \frac{1}{8\eps}$).   Moreover, $|\tilde X_{k_{m-2}}|\geq |\tilde X_{k_{m-1}}|$, so by \eqref{t12}, \eqref{t12a} the transition $X_{k_{m-1}}\to X_{k_{m-2}}$ cannot be bad.  Similarly, no transition $X_{k_{m-l}}\to X_{k_{m-l-1}}$ is bad for $l>1$, and no transition $X_{k_{m+l}}\to X_{k_{m+l-1}}$ is bad for $l\geq 1$.

\textbf{7. } We now take $C_2:=\max\{c_5(\delta,\beta),c_6(c_0)\}$ in \eqref{t11zb} to finish  the proof. 


\end{proof}





In preparation for the proof of Proposition \ref{t18}, we recall that for an admissible sequence $(k_p)_{p\in\ZZ}$ (Definition \ref{admissible}),    
for $(\zeta,\eps)\in\Xi\times (0,\eps_0]$ we define the function of $\zeta$:
\begin{align}\label{t17a}
\begin{split}
&E_{i,j}(\eps,k_p,k_{p-1}):=\omega_i(\eps,k_p;\beta_l)-\frac{r_p\omega_N(\beta_l)}{\eps}-\omega_j(\eps,k_{p-1};\beta_l), \text{ where }i\in\mathcal{O}, \;j\in\mathcal{I}.\\
\end{split}
\end{align}
Here we have $\omega_i(\eps,k;\beta_l)(\zeta)=\omega_i(\zeta;\beta_l)|_{\zeta=X_k}$, and, moreover, $\omega_N(\beta_l)=\omega_N(\zeta;\beta_l)|_{\zeta=\beta_l}$.



\begin{proof}[\textbf{Proof of Proposition \ref{t18}}]
We will carry out the proof for a strictly increasing sequence $(k_p)$; the decreasing case is similar.  We fix $(\eps,\zeta)$ throughout the proof and fix $p\in \ZZ$ in steps \textbf{1-5}. 

\textbf{1. }  For any given $\beta\in \Upsilon^+_0\setminus \{\beta_l\}$, let $\alpha\in (0,\frac{\pi}{2}]$ be the angle it makes with $\beta_l$, and let $\alpha_1>0$ be the smallest such angle.
Let $\tilde \Gamma(p)$ be the portion of $\Gamma_{\frac{\delta}{r_p}}(\beta)$ bounded ``above" by the plane orthogonal to $\beta$ containing $X_{k_p}$ and ``below" by the  plane orthogonal to $\beta$ containing $X_{k_{p-1}}$.    Also, let $\tilde X_{k_{p-1}}=t(p)\frac{\beta}{\eps}$ be the orthogonal projection of $X_{k_{p-1}}$ on $\beta$.  Then the maximum width of 
 $\tilde{\Gamma}(p)$ is $\lesssim 2\frac{\delta}{r_p}(|t(p)|+r_p)/\eps$, while the ``horizontal" displacement in passing from $X_{k_p}$ to $X_{k_{p-1}}$ is $\frac{r_p\sin\alpha}{\eps} \geq \frac{r_p\sin\alpha_1}{\eps}$.     Thus, $X_{k_p}$ and $X_{k_{p-1}}$ cannot both lie in $\Gamma_{\frac{\delta}{r_p}}(\beta)$ if $\delta=\delta(\alpha_1)$ is small enough. 

\textbf{2. }Now let $\beta=\beta_l$ and, with $\omega_i(\zeta)=\omega_i(\zeta;\beta_l)$, write $E_{i,j}$ as in \eqref{t17a} as 
\begin{align}\label{t18a}
\begin{split}
&E_{i,j}(\eps,k_p,k_{p-1})=\omega_i(X_{k_p})-\frac{r_p\omega_N(\beta_l)}{\eps}-\omega_j(X_{k_{p-1}})\\
&\tilde E_{i,j}(\eps,k_p,k_{p-1}):=\omega_i(\tilde X_{k_p})-\frac{r_p\omega_N(\beta_l)}{\eps}-\omega_j(\tilde X_{k_{p-1}}),
\end{split}
\end{align}
for $\tilde X_{k_p}$ as in \eqref{t11a}.        We also have
\begin{align}\label{t18y}
\begin{split}
&\tilde X_{k_{p-1}}=t\frac{\beta_l}{\eps}\text{ and }\tilde X_{k_p}=\tilde X_{k_{p-1}}+\frac{r_p\beta_l}{\eps}=(t+r_p)\frac{\beta_l}{\eps} \text{ for some }t=t(p)\in \mathbb{R},\\
&X_{k_p}=X_{k_{p-1}}+\frac{r_p\beta_l}{\eps}.
\end{split}
\end{align}

\textbf{3. } Since  $\omega_j(\zeta)$ is positively homogeneous of degree one, it follows from the definition of $\omega_j(\zeta)$ on $\Gamma_\delta(\beta_l)$ that in fact
$$
\omega_j(s\beta_l)=s\omega_j(\beta_l) \text{ for all }s\in\mathbb{R}.
$$
We compute
\begin{align}\label{t19}
\begin{split}
&\tilde E_{i,j}(\eps,k_p,k_{p-1})=\omega_i\left((t+r_p)\frac{\beta_l}{\eps}\right)-\frac{r_p\omega_N(\beta_l)}{\eps}-\omega_j\left(t\frac{\beta_l}{\eps}\right)=\\
&\qquad \frac{t}{\eps}(\omega_i(\beta_l)-\omega_j(\beta))+\frac{r_p\omega_i(\beta_l)-r_p\omega_N(\beta_l)}{\eps}.
\end{split}
\end{align}
Since $\omega_i(\beta_l)-\omega_j(\beta_l)\neq 0$, we see that there exists a $t_p\in \mathbb{R}$ such that the right side of \eqref{t19} vanishes at $t=t_p$.  Namely, 
\begin{align}\label{t19a}
t_p=r_p\; \Omega_{i,j}, \text{ where }\Omega_{i,j}:=\frac{\;\omega_i(\beta_l)-\omega_N(\beta_l)}{\omega_j(\beta_l)-\omega_i(\beta_l)}.     
\end{align}

Writing $t=(t-t_p)+t_p$ we obtain
\begin{align}\label{t20}
\tilde E_{i,j}(\eps,k_p,k_{p-1})=\frac{(t-t_p)}{\eps}(\omega_i(\beta_l)-\omega_j(\beta_l)):=\frac{(t-t_p)}{\eps}C(\beta_l).
\end{align}

\textbf{4. }Fix $\lambda>0$ and suppose $|t-t_p|\geq \lambda$.
\; Then \eqref{t20}  implies
\begin{align}\label{t21}
|\tilde E_{i,j}(\eps,k_p,k_{p-1})|\geq \lambda \frac{C(\beta_l)}{\eps}.
\end{align}
If the  condition $|t|\geq 2|t_p|+2$ holds, then we have
\begin{align}\label{t21a}
|\tilde E_{i,j}(\eps,k_p,k_{p-1})|\geq  \frac{|t|}{2}\frac{C(\beta_l)}{\eps}.
\end{align}

\textbf{5. }Using \eqref{t18a} and \eqref{t11ya} we obtain
\begin{align}\label{t21b}
\begin{split}
&|E_{i,j}(\eps,k_p,k_{p-1})-\tilde E_{i,j}(\eps,k_p,k_{p-1})|\leq |\omega_i(X_{k_p})-\omega_i(\tilde X_{k_p})|+|\omega_j(X_{k_{p-1}})-\omega_j(\tilde X_{k_{p-1}})|\\
&\qquad\qquad\qquad \lesssim |X_{k_p}-\tilde X_{k_p}|+|X_{k_{p-1}}-\tilde X_{k_{p-1}}|,
\end{split}
\end{align}
and thus
\begin{align}\label{t22}
|E_{i,j}(\eps,k_p,k_{p-1})|\geq |\tilde E_{i,j}(\eps,k_p,k_{p-1})|-C(|X_{k_p}-\tilde X_{k_p}|+|X_{k_{p-1}}-\tilde X_{k_{p-1}}|).
\end{align}
 Since $X_{k_p}, X_{k_{p-1}}\in \Gamma_{\frac{\delta}{r_p}}$ we have
 \begin{align}\label{t22a}
 |X_{k_p}-\tilde X_{k_p}|\lesssim \frac{\delta}{r_p}|\tilde X_{k_p}|    \text{ and } |X_{k_{p-1}}-\tilde X_{k_{p-1}}|\lesssim \frac{\delta}{r_p}|\tilde X_{k_{p-1}}|.
\end{align}
If $|\tilde X_{k_{p-1}}|\leq \frac{2|t_p|+2}{\eps}$, we deduce from \eqref{t21}, \eqref{t22},\eqref{t22a} that for $\delta=\delta(\lambda)$  small and $|t-t_p|\geq \lambda$:
\begin{align}\label{t23}
|E_{i,j}(\eps,k_p,k_{p-1})|\geq \frac{\lambda}{2}\frac{C(\beta_l)}{\eps}\geq C_3(\lambda) \frac{|X_{k_p}|}{r_p}.
\end{align}
If $|\tilde X_{k_{p-1}}|\geq \frac{2|t_p|+2}{\eps}$, then $|t|\geq 2|t_p|+2$, and we deduce  from \eqref{t21a}, \eqref{t22}, \eqref{t22a} that for $\delta$  small:
\begin{align}\label{t24}
|E_{i,j}(\eps,k_p,k_{p-1})|\geq \frac{|t|C(\beta_l)}{3\eps}\geq C_3(\lambda)  |X_{k_{p-1}}|, 
\end{align}
after reducing $C_3(\lambda)$ if necessary.

\textbf{6. }Finally, 
for any given $(\zeta,\eps)$ and $\delta$, $C_3$ as in step \textbf{5}, we define $M_{i,j}(\zeta,\eps,\delta;C_3)$  by
\begin{align}\label{t25}
M_{i,j}(\zeta,\eps,\delta;C_3):=\{p\in\mathbb{Z}:X_{k_p}\in \Gamma_{\frac{\delta}{r_p}}(\beta_l), X_{k_{p-1}}\in \Gamma_{\frac{\delta}{r_p}}(\beta_l), \text{ and both \eqref{t23}, \eqref{t24} fail.} \}
\end{align}
The above estimates show that 
\begin{align}\label{t25a}
M_{i,j}(\zeta,\eps,\delta;C_3)\subset \cM_{i,j}(\zeta,\eps,\delta):=\{p\in\mathbb{Z}:X_{k_p}\in \Gamma_{\frac{\delta}{r_p}}(\beta_l), X_{k_{p-1}}\in \Gamma_{\frac{\delta}{r_p}}(\beta_l), |t-t_p|<\lambda\}.
\end{align}
 Since $|t_p|\lesssim r_p$, the estimate \eqref{t18z} now 
follows directly from the definition of $\cM_{i,j}$ and  \eqref{t18y}.

\end{proof}


The next Proposition is used in the proof of part (b) of Theorem \ref{tt26}.

\begin{prop}\label{t18aa}
Suppose  $i\in\mathcal{O}$, $j\in\mathcal{I}\setminus \{N\}$ and assume that 
\begin{align}\label{tt1a}
\Omega_{i,j}:= \frac{\omega_i(\beta_l)-\omega_N(\beta_l)}{\omega_j(\beta_l)-\omega_i(\beta_l)}\in (-1,0).
\end{align}
There exist positive constants $\eps_0$, $\delta_0$ and positive constants $C_3$, $C_4$ \emph{independent of} $(\zeta,\eps,p)\in \Xi\times (0,\eps_0]\times  \ZZ$ such that the following situation holds:

Let $(k_p)$ be an admissible sequence.   For any given $(\zeta,\eps,\delta)\in\Xi\times(0,\eps_0]\times (0,\delta_0]$  there exists at most one exceptional element  $m=m^{i,j}(\zeta,\eps,\delta)\in\mathbb{Z}$,  such that if $p\neq m$ and if 
 $X_{k_p}\in\Gamma_{\frac{\delta}{|r_p|}}(\beta_l)$, $X_{k_{p-1}}\in\Gamma_\frac{\delta}{|r_p|}(\beta_l)$,  then we have either 
\begin{align}\label{t18wa}
|E_{i,j}(\eps,k_p,k_{p-1}|\geq C_3\frac{|X_{k_p}|}{|r_p|} \text{ or }|E_{i,j}(\eps,k_p,k_{p-1};\beta)|\geq C_3|X_{k_{p-1}}|.
\end{align}
Moreover, the exceptional value $m$ satisfies
\begin{align}\label{t18za}
|X_{k_{m}}|\leq \frac{C_4|r_{{m}}|}{\eps}.
\end{align}
\end{prop}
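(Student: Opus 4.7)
The plan is to reduce the statement to a cardinality bound on the set $\cM_{i,j}(\zeta,\eps,\delta)$ already introduced in the proof of Proposition~\ref{t18}, and then use the hypothesis $\Omega_{i,j}\in(-1,0)$ to show that this set contains at most one element. Concretely, I would rerun the proof of Proposition~\ref{t18} verbatim with the specific choice
\[
\lambda=\lambda_{i,j}:=\tfrac13\min\{1+\Omega_{i,j},\,-\Omega_{i,j}\}>0.
\]
That proof produces a constant $C_3=C_3(\lambda_{i,j})$ and shows that whenever $X_{k_p},X_{k_{p-1}}\in\Gamma_{\delta/|r_p|}(\beta_l)$ and \emph{neither} alternative in \eqref{t18wa} holds, necessarily
\[
p\in\cM_{i,j}(\zeta,\eps,\delta)=\bigl\{p\in\ZZ : X_{k_p},X_{k_{p-1}}\in\Gamma_{\delta/|r_p|}(\beta_l),\ |t(p)-r_p\Omega_{i,j}|<\lambda_{i,j}\bigr\},
\]
where $\tilde X_{k_{p-1}}=t(p)\beta_l/\eps$. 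Hence it suffices to show $|\cM_{i,j}|\leq 1$.

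I treat the increasing case explicitly; the decreasing case follows identically after passing to $|r_p|$. Recall from the proof of Proposition~\ref{t18} the identity $t(p+1)=t(p)+r_p$, so if $p_1<p_2$ both lie in $\cM_{i,j}$, then subtracting their defining inequalities yields
\[
\bigl|\,S-(r_{p_2}-r_{p_1})\Omega_{i,j}\,\bigr|<2\lambda_{i,j},\qquad S:=t(p_2)-t(p_1)=\sum_{q=p_1}^{p_2-1}r_q.
\]
In the consecutive case $p_2=p_1+1$, one has $S=r_{p_1}$, so the left-hand side equals
\[
\bigl|r_{p_1}(1+\Omega_{i,j})-r_{p_1+1}\Omega_{i,j}\bigr|=r_{p_1}(1+\Omega_{i,j})+r_{p_1+1}(-\Omega_{i,j}),
\]
which is a sum of two positive terms (since $\Omega_{i,j}\in(-1,0)$) bounded below by
\[
\min\{1+\Omega_{i,j},-\Omega_{i,j}\}\,(r_{p_1}+r_{p_1+1})\geq 2\min\{1+\Omega_{i,j},-\Omega_{i,j}\}=6\lambda_{i,j},
\]
a contradiction. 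In the non-consecutive case $p_2\geq p_1+2$, one has $S\geq r_{p_1}+1$. If $r_{p_2}\geq r_{p_1}$ then $(r_{p_2}-r_{p_1})\Omega_{i,j}\leq 0$, so the left-hand side is at least $S\geq 2>2\lambda_{i,j}$. If instead $r_{p_2}<r_{p_1}$, then $|r_{p_2}-r_{p_1}|\leq r_{p_1}-1$ (since $r_{p_2}\geq 1$), $(r_{p_2}-r_{p_1})\Omega_{i,j}>0$, and hence
\[
\bigl|S-(r_{p_2}-r_{p_1})\Omega_{i,j}\bigr|\geq (r_{p_1}+1)-(r_{p_1}-1)|\Omega_{i,j}|\geq 1+r_{p_1}(1-|\Omega_{i,j}|)\geq 1>2\lambda_{i,j},
\]
using $|\Omega_{i,j}|<1$ and $2\lambda_{i,j}\leq \tfrac{2}{3}\cdot\tfrac12=\tfrac13$; again a contradiction. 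Thus $|\cM_{i,j}|\leq 1$, establishing the existence of at most one exceptional index $m=m^{i,j}(\zeta,\eps,\delta)$.

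Finally, the bound \eqref{t18za} on this exceptional $m$ is inherited directly from the proof of Proposition~\ref{t18}: membership in $\cM_{i,j}$ gives $|t(m)-r_m\Omega_{i,j}|<\lambda_{i,j}$, hence $|t(m)|\lesssim|r_m|$, from which \eqref{t11a} and \eqref{t11ya} yield $|X_{k_m}|\leq C_4|r_m|/\eps$ with $C_4$ depending only on $\delta_0$ and $\Omega_{i,j}$. I do not expect any serious obstacle: the only point of genuine care is the arithmetic of the non-consecutive subcase, and the single quantitative input making everything work is that $\Omega_{i,j}\in(-1,0)$ keeps both $1+\Omega_{i,j}$ and $-\Omega_{i,j}$ bounded away from zero, which is precisely what the choice of $\lambda_{i,j}$ exploits to rule out two distinct ``resonant'' indices $p$.
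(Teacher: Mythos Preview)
Your proposal is correct and follows essentially the same approach as the paper: reduce to showing $|\cM_{i,j}(\zeta,\eps,\delta)|\leq 1$ with the choice $\lambda_{i,j}=\tfrac13\min\{1+\Omega_{i,j},-\Omega_{i,j}\}$, then assume two indices $p_1<p_2$ in $\cM_{i,j}$ and derive a contradiction from the subtracted inequality $|S-(r_{p_2}-r_{p_1})\Omega_{i,j}|<2\lambda_{i,j}$. The only organizational difference is that the paper, after first ruling out $r_n=r_m$, divides through by $r_n-r_m$ and argues on the sign of the quotient $\frac{k_n-k_m}{r_n-r_m}-1$ (showing it is $>0$ if $r_n>r_m$ and $<-1$ if $r_n<r_m$), thereby avoiding your consecutive/non-consecutive split; your direct-inequality case analysis is a valid alternative.
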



\begin{proof}  We carry out the proof for a strictly increasing sequence $(k_p)$; the decreasing case is similar. 

\textbf{1. } Consider  $\cM_{i,j}(\zeta,\eps,\delta)$ as in step \textbf{6} of the previous proof and $\delta(\lambda)$, $C_3(\lambda)$ as in step \textbf{5} there.  To prove the proposition, it is enough to make a choice of $\lambda>0$ such that for any fixed $(\zeta,\eps)$ the set
\begin{align}\label{t25b}
\cM_{i,j}(\zeta,\eps,\delta):=\{p\in\mathbb{Z}:X_{k_p}\in \Gamma_{\frac{\delta}{r_p}}(\beta_l), X_{k_{p-1}}\in \Gamma_{\frac{\delta}{r_p}}(\beta_l), |t-t_p|<\lambda\}
\end{align}
has cardinality $|\cM_{i,j}(\zeta,\eps,\delta)|\leq 1$.\footnote{Recall that for a given $(\zeta,\eps,p)$, $t=t(p)$ was defined in \eqref{t18y} by $\tilde X_{k_{p-1}}=t\frac{\beta_l}{\eps}$.  Also, from \eqref{t19a} we have $t_p:=r_p\Omega_{i,j}.$}  For $\Omega:=\Omega_{i,j}$ as in \eqref{tt1a} we take
\begin{align}\label{tt2}
\lambda=\lambda_{i,j}=\frac{1}{3}\min\{|\Omega-(-1)|, |\Omega-0|\}>0.
\end{align}

Suppose $n>m$ are elements of $\cM_{i,j}(\zeta,\eps,\delta)$.   We have 
\begin{align}
\begin{split}
&X_{k_{n-1}}=\zeta+k_{n-1}\frac{\beta_l}{\eps}=\zeta+(k_{n}-r_n)\frac{\beta_l}{\eps},  \\
&\tilde X_{k_{n-1}}=s\frac{\beta_l}{\eps}+(k_n-r_n)\frac{\beta_l}{\eps},
\end{split}
\end{align}
where $s\frac{\beta_l}{\eps}$ is the  orthogonal projection of $\zeta$ on $\beta_l$ for some $s\in\RR$.
\;Thus, we can write the ``$t-$values" determined by  $\tilde X_{k_{n-1}}$, $\tilde X_{k_{m-1}}$ as 
\begin{align}
t(n)=s+(k_n-r_n),\;\; t(m)=s+(k_m-r_m).
\end{align}
The assumption that $n,m\in \cM_{i,j}$ means
\begin{align}\label{tt3}
\begin{split}
&(a) |s+(k_n-r_n)-r_n\Omega|<\lambda \text{ and }|s+(k_m-r_m)-r_m\Omega|<\lambda, \text{ so }\\
&(b) |(k_n-k_m)-(r_n-r_m)-(r_n-r_m)\Omega|<2\lambda.
\end{split}
\end{align}

\textbf{2. }
The inequality \eqref{tt3}(b) shows that $r_n \neq r_m$, since otherwise $1<\frac{2\lambda}{k_n-k_m}$, which is untrue.  Thus, \eqref{tt3}(b) implies
\begin{align}\label{tt4}
\left|\left(\frac{k_n-k_m}{r_n-r_m}-1\right)-\Omega\right|< \frac{2\lambda}{|r_n-r_m|}\leq 2\lambda.
\end{align}
If $r_n>r_m$, the quantity $\left(\frac{k_n-k_m}{r_n-r_m}-1\right)>0$ since $\frac{k_n-k_m}{r_n}\geq 1$, and so \eqref{tt4} is impossible since $|\Omega|\geq 3\lambda$. 
If $r_n<r_m$, the quantity $\left(\frac{k_n-k_m}{r_n-r_m}-1\right)<-1$, and so \eqref{tt4} contradicts the fact that $|-1-\Omega|\geq 3\lambda$.
Thus, $|\cM_{i,j}(\zeta,\eps,\delta)|\leq 1$.

\textbf{3. }
If $\cM_{i,j}(\zeta,\eps,\delta)$  is nonempty, we denote its single element by $m=m^{i,j}(\zeta,\eps)$.    Since $|t_p|\lesssim r_p$, the estimate \eqref{t18za}
follows directly from \eqref{t25a} and \eqref{t18y}.


\end{proof}




Observe that $\Omega_{i,N}=-1$.  This case, which is always a good one, is treated in the next proposition.     This proposition is useful for counting large amplification factors (Proposition \ref{large}) and is also needed in the proof of Theorem \ref{tvv29}.

\begin{prop}\label{goodcase}
Suppose  $i\in\cO$, $j=N\in\cI$, so that $\Omega_{i,N}=-1$.   There exist positive constants $\eps_0$, $\delta_0$ and a positive constant $C_3$ independent of $(\zeta,\eps,p)\in \Xi\times (0,\eps_0]\times  \ZZ$ such that  for any given $(\zeta,\eps,\delta)\in\Xi\times(0,\eps_0]\times (0,\delta_0]$,  if 
 $X_{k}\in\Gamma_{\frac{\delta}{|r|}}(\beta_l)$ and  $X_{k-r}\in\Gamma_\frac{\delta}{|r|}(\beta_l)$,  then   
\begin{align}\label{tt5}
|E_{i,N}(\eps,k,k-r)|\geq C_3\frac{|X_{k}|}{|r|} \text{ or }|E_{i,N}(\eps,k,k-r)|\geq C_3|X_{k-r}|.
\end{align}

\end{prop}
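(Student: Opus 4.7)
The plan is to adapt the proof of Proposition~\ref{t18} to the case $j=N$, taking advantage of two simplifications specific to $\Omega_{i,N}=-1$. First, setting $\tilde X_{k-r}=t\beta_l/\eps$ and $\tilde X_k=(t+r)\beta_l/\eps$ as in \eqref{t18y}, and using the identity $\omega_j(s\beta_l)=s\omega_j(\beta_l)$ for all $s\in\RR$ (valid by the odd extensions \eqref{s7ff}, \eqref{s7fh}), a direct computation yields
\begin{equation*}
\tilde E_{i,N}:=\omega_i(\tilde X_k)-\frac{r\omega_N(\beta_l)}{\eps}-\omega_N(\tilde X_{k-r})=\frac{t+r}{\eps}\bigl(\omega_i(\beta_l)-\omega_N(\beta_l)\bigr),
\end{equation*}
so that $|\tilde E_{i,N}|=|\omega_i(\beta_l)-\omega_N(\beta_l)|\cdot|\tilde X_k|$ unconditionally; here $\omega_i(\beta_l)\neq\omega_N(\beta_l)$ since $i\in\cO$ and $N\in\cI$ are distinct. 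Unlike the general case, no smallness hypothesis on $|t-t_p|$ is needed --- the vanishing locus $t_p=-r$ of $\tilde E_{i,N}$ coincides with $\tilde X_k=0$, and the bound is proportional directly to $|\tilde X_k|$.

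Second, I would exploit the fact that $X_k-X_{k-r}=r\beta_l/\eps$ lies along $\beta_l$, so the components of $X_k$ and $X_{k-r}$ orthogonal to the $\beta_l$-axis are \emph{equal}:
\begin{equation*}
w:=X_k-\tilde X_k=X_{k-r}-\tilde X_{k-r}.
\end{equation*}
The two cone conditions $X_k,X_{k-r}\in\Gamma_{\delta/|r|}(\beta_l)$ therefore impose on the \emph{same} vector $w$ the bound $|w|\leq 2(\delta/|r|)\min\{|\tilde X_k|,|\tilde X_{k-r}|\}\leq 2(\delta/|r|)|\tilde X_k|$. Combining this with the uniform Lipschitz estimate $|\omega_j(\zeta_1)-\omega_j(\zeta_2)|\lesssim|\zeta_1-\zeta_2|$ on $\Gamma_\delta(\beta_l)$ (valid because $\nabla\omega_j$ is homogeneous of degree $0$ and bounded on the unit sphere) yields
\begin{equation*}
|E_{i,N}-\tilde E_{i,N}|\leq|\omega_i(X_k)-\omega_i(\tilde X_k)|+|\omega_N(X_{k-r})-\omega_N(\tilde X_{k-r})|\leq C|w|\leq\frac{C\delta}{|r|}|\tilde X_k|.
\end{equation*}

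Combining the two steps gives $|E_{i,N}|\geq\bigl(|\omega_i(\beta_l)-\omega_N(\beta_l)|-C\delta/|r|\bigr)|\tilde X_k|$. Choosing $\delta_0>0$ small enough, uniformly in $\eps\in(0,\eps_0]$, $|r|\geq 1$, and $\zeta$, so that the bracketed coefficient is at least $c_0:=|\omega_i(\beta_l)-\omega_N(\beta_l)|/2>0$, and invoking $|\tilde X_k|\geq|X_k|/2$ from \eqref{t11ya}, I conclude $|E_{i,N}|\geq(c_0/2)|X_k|\geq(c_0/2)|X_k|/|r|$, establishing the first alternative of \eqref{tt5} with $C_3=c_0/2$. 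In particular, no exceptional element $m$ arises, in contrast to Propositions~\ref{t18} and \ref{t18aa}. I do not anticipate a serious obstacle: the whole argument rests on the two observations --- the clean closed form of $\tilde E_{i,N}$ when $\Omega_{i,N}=-1$, and the equality of the orthogonal pieces $X_k-\tilde X_k$ and $X_{k-r}-\tilde X_{k-r}$ --- with the remaining estimates being routine.
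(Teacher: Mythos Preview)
Your proof is correct. The paper's argument splits into two cases: for $|t+r|\geq\lambda$ it defers to steps 3--5 of the proof of Proposition~\ref{t18}, while for $|t+r|<\lambda$ it carries out essentially the same direct computation you give, showing $|\tilde E_{i,N}|\sim|\tilde X_k|$ and bounding the error by $(\delta/|r|)|\tilde X_k|$. Your key observation --- that the identity $\tilde E_{i,N}=\frac{t+r}{\eps}(\omega_i(\beta_l)-\omega_N(\beta_l))$ already yields $|\tilde E_{i,N}|=|\omega_i(\beta_l)-\omega_N(\beta_l)|\,|\tilde X_k|$ for \emph{all} $t$ --- makes the case split redundant, so you never need to invoke the general Proposition~\ref{t18} machinery. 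This buys a slightly stronger conclusion: the first alternative in \eqref{tt5} always holds, and in fact with the sharper bound $|E_{i,N}|\gtrsim|X_k|$ rather than merely $|X_k|/|r|$. The equality of orthogonal components $X_k-\tilde X_k=X_{k-r}-\tilde X_{k-r}$ that you highlight is implicitly used in the paper's error bound as well (see the second displayed estimate in step~\textbf{2} of the paper's proof).
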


\begin{proof}
\textbf{1. }Using notation similar to that in the proof of Proposition \ref{t18}, we define $t$ by $\tilde X_{k-r}=t\frac{\beta_l}{\eps}$. 
Fix $\lambda>0$ and suppose $|t-r\Omega_{i,N}|\geq \lambda$.  Steps \textbf{3-5}  of the proof of Proposition \ref{t18} imply \eqref{tt5} in this case.

\textbf{2. }Now suppose $|t-r\Omega_{i,N}|< \lambda$, that is,  $|t+r|<\lambda$.  Since $\tilde X_{k}=(t+r)\frac{\beta_l}{\eps}$, we obtain
\begin{align}\label{tt6}
\begin{split}
&|\tilde E_{i,N}(\eps,k,k-r)|=\left|\omega_i\left((t+r)\frac{\beta_l}{\eps}\right)-r\omega_N\left(\frac{\beta_l}{\eps}\right)-\omega_N\left( t\frac{\beta_l}{\eps}\right)\right|=\\
&\qquad \left|\omega_i\left((t+r)\frac{\beta_l}{\eps}\right)-\omega_N\left( (t+r)\frac{\beta_l}{\eps}\right)\right|\sim \left|(t+r)\frac{\beta_l}{\eps}\right|\sim |\tilde X_{k}|.
\end{split}
\end{align}
Since $|X_{k}|\sim |\tilde X_{k}|$ and 
\begin{align}
\begin{split}
&|\omega_i(X_{k})-\omega_i(\tilde X_{k})|\lesssim \frac{\delta}{|r|}|\tilde X_{k}|,\\
&\left|\omega_N\left(X_{k}-r\frac{\beta_l}{\eps}\right)-\omega_N\left(\tilde X_{k}-r\frac{\beta_l}{\eps}\right)\right|\lesssim |X_k-\tilde X_k|\lesssim \frac{\delta}{|r|}|\tilde X_{k}|,
\end{split}
\end{align}
the first alternative in \eqref{tt5} follows from \eqref{tt6} for $\delta$ small.

\end{proof}

\section{Cascade estimates}\label{2s}
  \qquad Our main concern in this section is to finish the proofs of Theorems \ref{tt26} and \ref{tvv29}.   For this we must show how the iteration estimate of Proposition \ref{tt30} can be used to prove useful energy estimates for the singular system \eqref{i6}.  The first task is to develop an efficient procedure for managing the proliferation of terms that arise when the iteration estimate is iterated in the one-sided case.

  
  \subsection{One-sided cascade estimates and proof of Theorem \ref{tt26}}

\quad As in the Introduction we first consider the transformed singular problem \eqref{i9} in the case where $F=0$ and $G_k=0$ for $k<1$.

A more efficient way to obtain the essential information in \eqref{b001}, \eqref{b002} or \eqref{b01} is to consider the following ``$\CG_j$-cascades" corresponding to the cascades \eqref{b002}, \eqref{b01}:
\begin{align}\label{b02}
\begin{split}
&(a) [(\cG_3)]\to [(\cG_2,\cG_1,\cG_1)]\to [(\cG_1)]\\
&(b) [(\cG_5)]\to [(\cG_4,\cG_3,\cG_2,\cG_1,\cG_3,\cG_2,\cG_1,\cG_2,\cG_1,\cG_1)]\to \\
&\quad [(\cG_3,\cG_2,\cG_1,\cG_2,\cG_1,\cG_1),(\cG_2,\cG_1,\cG_1), (\cG_1),(\cG_2,\cG_1,\cG_1),(\cG_1),(\cG_1)]\to\\
&\qquad [(\cG_2,\cG_1,\cG_1),(\cG_1),(\cG_1),(\cG_1),(\cG_1)]\to [(\cG_1)]
\end{split}
\end{align}
The principle is slightly different here; for example, the $j-$th stage of \eqref{b02}(b)  records only the \emph{new} $\cG_p$ that appear in the $(j+1)$-st stage of \eqref{b01}.  Consequently, the thirty-four $\cG_p$ terms that appear in \eqref{b02}(b) are the same as the thirty-four $\cG_p$ terms that appear in the last stage of \eqref{b01}.     

The ``rule" for constructing a $\cG_j$-cascade is that a term $\cG_p$ in a given stage should give rise to the terms 
\begin{align}
\cG_p\to (\cG_{p-1},\cG_{p-2},\dots,\cG_1,\cG_{p-2},\dots,\cG_1,\dots,\cG_{3},\cG_{2},\cG_{1},\cG_{2},\cG_{1},\cG_1).
\end{align}
in the next stage.  Terms with indices $j\leq 0$ are omitted, since we are assuming for now that $G_j=0$ for $j\leq 0$.   Thus, for example, 
$\cG_5\to(\cG_4,\cG_3,\cG_2,\cG_1,\cG_3,\cG_2,\cG_1,\cG_2,\cG_1,\cG_1)$, while $\cG_3\to (\cG_2,\cG_1,\cG_1)$ and  $\cG_2\to(\cG_1)$.

The parentheses in \eqref{b02} allow us to track the ``genealogy" of each $\cG_j$.   For example, we regard each of the $\cG_j$ in  the second group $(\cG_2,\cG_1,\cG_1)$ that appears in the third stage of \eqref{b02}(b) as a ``descendant" of the second $\cG_3$ that appears  in the second stage.  Similarly, the $\cG_1$ that appears in the final stage of \eqref{b02}(a) is a descendant of the $\cG_2$ in the second stage.    The number of arrows that precede the stage in which a given $\cG_j$ lies tells us the number of factors of the form $\frac{C}{\gamma}\ar\DD(\eps,p,p-r)$  that should multiply that  $\CalG_j$ in the final estimate; the particular choices of $p$ and $r$ appearing in those factors are determined by the 
genealogy of the given $\cG_j$. 
For example, the term $[(\cG_1)]$ in the final stage of \eqref{b02}(a) should have two such factors attached, and indeed this term corresponds to the first term inside the brackets in the last line of \eqref{b001}.   Keeping in mind the iteration estimate \eqref{i13},  one can easily reconstruct the complete estimate of $V_3$ starting just  from \eqref{b02}(a), and the same applies to any $V_k$.  For example, the last $(\cG_1)$ term appearing in the fourth stage of \eqref{b02}(b) corresponds to the following term on the right in the final estimate of $V_5$:
\begin{align}\label{b02a}
\left|\left(\frac{C}{\gamma}\right)^3\alpha_2\DD(\eps,5,3)\alpha_1\DD(\eps,3,2)\alpha_1\DD(\eps,2,1)\cG_1\right|_{L^2(\sigma,\eta)}.
\end{align}

\begin{rem}\label{b03}

1) In the estimate of $V_k$, terms appear that involve a product of up to $k-1$ factors of the form $\frac{C}{\gamma}\ar\DD(\eps,p,p-r)$, where $2\leq p\leq k$.  A term involving three such factors in the estimate of $V_5$ is given by \eqref{b02a}.    
Observe that the  product of $\DD(\eps,p,p-r)$ factors in \eqref{b02a} is a special case of a finite product of the form
\begin{align}\label{b04a}
\DD(\eps,k_j,k_{j-1})\DD(\eps,k_{j-1},k_{j-2})\DD(\eps,k_{j-2},k_{j-3})\cdots
\end{align}
where the $k_p$ that appear are elements of an admissible sequence $(k_j)_{j\in\ZZ}$.  
The results proved in  section \ref{ptools} will allow us to control these products by using the fact that 
they are
\emph{always} either of the form \eqref{b04a}, or can be embedded in products of that form.  For example, the product $\DD(\eps,8,7)\DD(\eps,7,5)\DD(\eps,3,2)$ does not have this form, but can be embedded in $$\DD(\eps,8,7)\DD(\eps,7,5)\DD(\eps,5,3)\DD(\eps,3,2),$$ which has the right form.  The large factors  are counted in Proposition \ref{large} below.

2)  We will see  that it is not necessary to keep track of the exact indices $(p,p-r)$ that appear in the individual factors, but only to keep track of the ``step sizes", where $r$ is the step size of the pair $(p,p-r)$, and of the various factors $\at$ contributed by the inner sum in the iteration estimate (see \eqref{b1} and \eqref{b05d}).

\end{rem}

\begin{prop}[Counting the large amplification factors]\label{large}
Let $(k_p)_{p\in\ZZ}$ be an admissible sequence and consider any finite product of the form \eqref{b04a}, where the factors $\DD(\eps,k_p,k_{p-1})(\zeta)$ are defined in Definition \ref{t29}. 
Then for any given $(\eps,\zeta)\in (0,\eps_0]\times \Xi$,   at most $$\EE=(|\Upsilon^+_0|-1)+\sum_{i\in\cO,j\in\cI\setminus \{N\}}\MM_{i,j}$$ of the factors in that product are ``large", that is, equal to $\frac{C_5r^2}{\eps\gamma}$. Here the $\MM_{i,j}$ are as in Theorem \ref{tt26}(a).    As $(\eps,\zeta)$ varies,  the particular indices $p$ for which  $\DD(\eps,k_p,k_{p-1})(\zeta)$ is large can vary.  

\end{prop}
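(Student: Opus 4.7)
The plan is to unwind Definition \ref{t29} so as to reduce the counting of large global amplification factors to counting, separately for each $\beta \in \Upsilon_0^+$, the indices $p$ at which the microlocal factor $D(\eps,k_p,k_{p-1};\beta)$ picks up a factor of $\frac{1}{\eps\gamma}$. The per-$\beta$ counts will then be supplied by the propositions of Section \ref{ptools}. My first step is to note, using Definitions \ref{t28}--\ref{t29} together with Remark \ref{ty30}, that $\DD(\eps,k_p,k_{p-1})(\zeta)$ achieves its large value $\frac{C_5 r_p^2}{\eps\gamma}$ exactly when, for at least one $\beta \in \Upsilon_0^+$, the corresponding microlocal factor $D(\eps,k_p,k_{p-1};\beta)(\zeta)$ already carries a $\frac{1}{\eps\gamma}$. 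A union bound over $\beta$ then lets me treat each bad direction in isolation.

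The second step handles the non-$\beta_l$ directions: for each $\beta \in \Upsilon_0^+ \setminus \{\beta_l\}$, Definition \ref{t28} shows that microlocal largeness requires case $(II)$ of \eqref{cases} together with failure of \eqref{t11za}. I will invoke Proposition \ref{keyt} to obtain at most one exceptional index per such $\beta$, using Remark \ref{uniform}(1) to guarantee that the constants $C_1, C_2$ involved do not depend on the admissible sequence $(k_p)$. Summing over the $|\Upsilon_0^+| - 1$ non-$\beta_l$ directions in $\Upsilon_0^+$ produces the first piece of $\EE$.

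The third step treats $\beta = \beta_l$: Definition \ref{t28} now shows that microlocal largeness forces case $(I)$ together with failure of condition \eqref{t18w} for some pair $(i,j) \in \cO \times \cI$. I will first apply Proposition \ref{goodcase} (see Remark \ref{gc}) to rule out the pair $(i,N)$, then for each remaining pair $(i,j) \in \cO \times (\cI \setminus \{N\})$ apply Proposition \ref{t18} to place the offending indices inside $\cM_{i,j}(\zeta,\eps,\delta)$, whose cardinality is at most $\MM_{i,j}$ by the standing hypothesis of Theorem \ref{tt26}(a) — again appealing to Remark \ref{uniform}(1) for uniformity. Summing over such pairs, and tolerating the possible overcounting of indices that are bad for several pairs (this only weakens the upper bound), yields the second piece $\sum_{i,j} \MM_{i,j}$. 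Adding the two pieces completes the argument.

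I do not anticipate a genuine technical obstacle: the substantive content has been concentrated in Propositions \ref{keyt}, \ref{t18}, and \ref{goodcase}, and the proof is essentially a careful bookkeeping of which $\beta$ and which pair $(i,j)$ can force a given factor to be large. The one point requiring real attention is the appeal to Remark \ref{uniform}(1), which ensures that the constants produced by the per-$\beta$ counts are independent of the sequence $(k_p)$; without this uniformity the additive bound $(|\Upsilon_0^+| - 1) + \sum \MM_{i,j}$ would not transfer uniformly to all admissible sequences, rendering the estimate useless for the cascade iteration to come.
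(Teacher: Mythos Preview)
Your proposal is correct and follows essentially the same approach as the paper: reduce via Definition \ref{t29} to counting large microlocal factors per $\beta$, use Proposition \ref{keyt} for $\beta\neq\beta_l$ (at most one bad index each) and Propositions \ref{t18} and \ref{goodcase} for $\beta=\beta_l$ (at most $\MM_{i,j}$ bad indices per pair $(i,j)\in\cO\times(\cI\setminus\{N\})$), then sum. The paper's proof is slightly more terse but the logical structure, the propositions invoked, and the union-bound bookkeeping are the same.
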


\begin{proof}
\textbf{1. } We will refer to cases $(I)-(III)$ as in \eqref{cases} and Definition \ref{t28}.   For each $\beta\neq \beta_l$ the microlocal factor $D(\eps,k_p,k_{p-1};\beta)(\zeta)$ is large only in case $(II)$ when \eqref{t11za} fails.  Proposition \ref{keyt} shows that this can happen for at most one choice of  $p\in\ZZ$.  Thus, at most $|\Upsilon^+_0|-1$ of the factors $\DD(\eps,k_p,k_{p-1})$ can be large due to largeness of $D(\eps,k_p,k_{p-1};\beta)(\zeta)$ for some $\beta\neq \beta_l$.

\textbf{2. }A factor $D(\eps,k_p,k_{p-1};\beta_l)(\zeta)$ can be large only if 
  case $(Ib)$ holds for some $(i,j)\in\cO\times \cI$.  Step \textbf{6} of the proof of Proposition \ref{t18} shows that for a given  pair $(i,j)\in\cO\times (\cI\setminus \{N\})$, there can be at most $\MM_{i,j}$ indices $p$ for which this happens.    In addition, Proposition \ref{goodcase}  (or Remark \ref{gc}) shows that case $(Ib)$ never holds for pairs $(i,N)$, $i\in\cO$.  Thus, at most $\sum_{i\in\cO,j\in\cI\setminus \{N\}}\MM_{i,j}$  distinct factors $\DD(\eps,k_p,k_{p-1})(\zeta)$  can be large due to largeness of $D(\eps,k_p,k_{p-1};\beta_l)(\zeta)$ for this reason.

\textbf{3. }Thus,  at most $(|\Upsilon^+_0|-1)+\sum_{i\in\cO,j\in\cI\setminus\{N\}}\MM_{i,j}=\EE$ factors $\DD(\eps,k_p,k_{p-1})(\zeta)$ in the given product can be large.


\end{proof}

\subsubsection{Schematic representation of the $V_k$ estimates}

\qquad Observe that by \eqref{b00} and Definition \ref{t29} we have for each $(\zeta,\eps)$:\footnote{In \eqref{b05a} we are asserting that there is a constant $C$ such that either the first condition holds, \emph{or} the first condition fails and the second condition holds.}
\begin{align}\label{b05a}
\frac{C}{\gamma}|\ar\DD(\eps,p,p-r)(\zeta)|\lesssim  \frac{C}{\gamma}\frac{1}{r^{M}}\text{ or }\frac{C}{\gamma}|\ar\DD(\eps,p,p-r)(\zeta)|\lesssim \frac{C}{\gamma} \frac{1}{\eps\gamma} \frac{1}{r^{M}}.
\end{align}
Let us define $\CalD_r(\zeta)$ to be the function of $\zeta$:
\begin{align}\label{b05}
\CalD_r(\zeta)=\begin{cases}\frac{C}{\gamma}\frac{1}{r^{M}}, \text{ if }\frac{C}{\gamma}|\ar\DD(\eps,p,p-r)(\zeta)|\lesssim  \frac{C}{\gamma}\frac{1}{r^{M}}\\  \frac{C}{\gamma} \frac{1}{\eps\gamma}\frac{1}{r^{M}}, \text{ if not}\end{cases}.
\end{align}

 We claim that we can represent the essential aspects of the estimate \eqref{b001} of $V_3$ schematically by $V_3\leq \cG^T_3$ and, more generally, represent the estimate of $V_k$ by 
\begin{align}\label{b0}
V_k\leq \CG_k^T,
\end{align}
where the $\cG_k^T$ are defined recursively by \footnote{The superscript $T$ in $\cG^T_k$ is meant to indicate the ``tree-like object" generated by $\cG_k$.}
\begin{align}\label{b1}
\begin{split}
&\CG_1^T=\CG_1\\
&\CG_2^T=\CG_2+\CalD_1\CG_1\\
&\CG_3^T=\CG_3+\CalD_1\CG_2^T+(\CalD_1+\CalD_2)\CG_1\\
&\CG_4^T=\CG_4+\CalD_1\CG_3^T+(\CalD_1+\CalD_2)\CG^T_2+(\alpha_2\CalD_1+\CalD_2+\cD_3)\CG_1\\
&\CG_5^T=\CG_5+\CalD_1\CG^T_4+(\CalD_1+\CalD_2)\CG^T_3+(\alpha_2\CalD_1+\CalD_2+\cD_3)\CG^T_2+(\alpha_3
\cD_1+\alpha_2\cD_2+\CalD_3+\cD_4)\CG_1\\
&\CG_6^T=\CG_6+\CalD_1\CG^T_5+(\CalD_1+\CalD_2)\CG^T_4+(\alpha_2\CalD_1+\CalD_2+\cD_3)\CG^T_3+\\
&\qquad (\alpha_3
\cD_1+\alpha_2\cD_2+\CalD_3+\cD_4)\CG^T_2+(\alpha_4
\cD_1+\alpha_3\cD_2+\alpha_2\CalD_3+\cD_4+\cD_5)\CG_1\\
&\dots
\end{split}
\end{align}
The factors $\alpha_t$ that appear in \eqref{b1} come from the inner sum in the interaction estimate \eqref{t30c}.
Writing out $\CalG^T_3$ we obtain:
\begin{align}\label{bb1}
\cG^T_3=\cG_3+\cD_1\cG_2+\cD^2_1\cG_1+\cD_1\cG_1+\cD_2\cG_1.
\end{align}
The term $\cD^2_1\cG_1$, for example, ``represents" the term $\left(\frac{C}{\gamma}\right)^2|\alpha_1\DD(\eps,3,2)\alpha_1\DD(\eps,2,1)\CalG_1|$ in \eqref{b001}, while $\cD_2\cG_1$ represents the term $\frac{C}{\gamma}|\alpha_2\DD(\eps,3,1)\CalG_1|$.   We will refer to \eqref{b001} as ``the proper estimate of $V_3$" and to $V_3\leq \cG^T_3$ as ``the schematic estimate of $V_3$".   For every $k$ there is a one-to-one correspondence between the terms of the proper estimate of $V_k$ and those of the schematic estimate of $V_k$ (after the $\cD_r$ have been distributed as in \eqref{bb1}).  We explain below how to transform schematic estimates, which can be stated with great concision, into (proper) estimates.

 Consider a term on the right in the (proper) estimate of $V_k$, call it $T$,  that  consists of exactly $k_0$ factors of the form $\frac{C}{\gamma}\ar\at \DD(\eps,p,p-r)$ multiplying $\cG_l$, for some $k_0\leq k-1$.   
 Proposition \ref{large} shows that for any fixed $(\eps,\zeta)$  at most $\EE$ of those factors fail to satisfy the first possibility in \eqref{b05a}.  Thus, 
 \begin{align}\label{b05b}
 T\leq \left(\frac{1}{\eps\gamma}\right)^\EE \left(\frac{C}{\gamma}\right)^{k_0}\frac{|\alpha_{t_1}|}{r_1^M}\frac{|\alpha_{t_2}|}{r_2^M}\cdots\frac{|\alpha_{t_{k_0}}|}{r_{k_0}^M}|\cG_l|,
 \end{align}
where $r_i$ is the step size of the $i-$th factor.   The term $T$  would be represented in the schematic estimate of $V_k$ by 
\begin{align}\label{b05cc}
T=\alpha_{t_1}\cD_{r_1}\alpha_{t_2}\cD_{r_2}\cdots \alpha_{t_{k_0}}\cD_{r_{k_0}}\cG_l,
\end{align}
and we may represent \eqref{b05b} by\footnote{Unlike \eqref{b05cc} or $V_k\leq \cG^T_k$, the schematic estimate \eqref{b05c} is very close to a proper estimate.   To obtain a proper estimate we just replace $\cG_l$ on the right by $|\cG_l|_{L^2(\sigma,\eta)}$ and replace each $\alpha_{t_i}$ by $|\alpha_{t_i}|$.}
\begin{align}\label{b05c}
T\leq \left(\frac{1}{\eps\gamma}\right)^\EE  \alpha_{t_1}\cE_{r_1}\alpha_{t_2}\cE_{r_2}\cdots\alpha_{t_{k_0}}\cE_{r_{k_0}}\CG_l, \text{ where }\cE_r:=\frac{C}{\gamma}\frac{1}{r^M}.
\end{align}
Moreover, \eqref{b05c} implies the schematic estimate
\begin{align}\label{b05cd}
\cG^T_k\leq \left(\frac{1}{\eps\gamma}\right)^\EE \cH^T_k,
\end{align}
where $\cH^T_k$ is defined inductively by
\begin{align}\label{b05d}
\begin{split}
&\CH_1^T=\CG_1\\
&\CH_2^T=\CG_2+\CalE_1\CG_1\\
&\CH_3^T=\CG_3+\CalE_1\CH_2^T+(\CalE_1+\CalE_2)\CG_1\\
&\CH_4^T=\CG_4+\CalE_1\CH_3^T+(\CalE_1+\CalE_2)\CH^T_2+(\alpha_2\CalE_1+\CalE_2+\cE_3)\CG_1\\
&\CH_5^T=\CG_5+\CalE_1\CH^T_4+(\CalE_1+\CalE_2)\CH^T_3+(\alpha_2\CalE_1+\CalE_2+\cE_3)\CH^T_2+(\alpha_3
\cE_1+\alpha_2\cE_2+\CalE_3+\cE_4)\CG_1\\
&\CH_6^T=\CG_6+\CalE_1\CH^T_5+(\CalE_1+\CalE_2)\CH^T_4+(\alpha_2\CalE_1+\CalE_2+\cE_3)\CH^T_3+\\
&\quad \qquad (\alpha_3
\cE_1+\alpha_2\cE_2+\CalE_3+\cE_4)\CH^T_2+(\alpha_4
\cE_1+\alpha_3\cE_2+\alpha_2\CalE_3+\cE_4+\cE_5)\CG_1\\
&\dots
\end{split}
\end{align}
For any $k$ we have
\begin{align}\label{b6ab}
\begin{split}
&\cH^T_k=\cG_k+\cE_1\cH^T_{k-1}+(\cE_1+\cE_2)\cH^T_{k-2}+(\alpha_2\cE_1+\cE_2+\cE_3)\cH^T_{k-3}+\\
&\qquad(\alpha_3\cE_1+\alpha_2\cE_2+\cE_3+\cE_4)\cH^T_{k-4}+\dots+
(\alpha_{k-2}\cE_1+\alpha_{k-3}\cE_2+\dots++\alpha_2\cE_{k-3}+\cE_{k-2}+\cE_{k-1})\cG_1.
\end{split}
\end{align}




\begin{rem}\label{b1a}

 For $j\leq k$ the coefficient of $\CG_j$ in $\CH^T_k$ equals the coefficient of $\CG_{j+1}$ in $\CH^T_{k+1}$.   To see this look, for example, at the coefficients of $\CG_1$ on the outermost diagonal of \eqref{b05d} ending say, at row 5.\footnote{These coefficients are $1,\CalE_1,\CalE_1+\CalE_2,\alpha_2\CalE_1+\CalE_2+\cE_3,\alpha_3\CalE_1+\alpha_2\cE_2+\cE_3+\cE_4.$}   These are the same as the coefficients of $\CG_2$ or $\CH_2^T$ on the first  subdiagonal starting at row 2 and ending at row 6, and these are the same as the coefficients of $\CG_3$ or $\CH_3^T$ on the second subdiagonal starting at row 3 and ending at row 7, etc..  All this remains true if the $\alpha_i$ in \eqref{b05d} are replaced by $|\alpha_i|$.

  Letting $g_{j,p}$ denote the coefficient of $\cG_j$ in $\CH^T_p$ \emph{after} the replacement of  all $\alpha_i$ by $|\alpha_i|$ , we have, consequently, the relation
 \begin{align}\label{b6aa}
 g_{j,p}=g_{1,p-(j-1)},
 \end{align}
 which in view of \eqref{b05cd} implies the (proper) estimate
\begin{align}\label{b6}
\|V_k\|\leq \left(\frac{1}{\eps\gamma}\right)^\EE\sum^k_{j=1}|\CG_j|_{L^2(\sigma,\eta)} g_{1,k-(j-1)}.
\end{align}

\end{rem}

%

\begin{prop}[Estimate of $|(\|V_k\|)|_{\ell^2}$]\label{bb6}
Consider the transformed singular problem \eqref{tz30} under the hypotheses of Theorem \ref{tt26}, but assume $F=0$ and $G_k=0$ for $k<1$.  
In particular,  we assume  the coefficients $\alpha_r$ in \eqref{tz30} satisfy $|\alpha_r|\lesssim |r|^{-(M+2)}$ for some $M\geq 2$.   Let $\EE$ be as in Proposition \ref{large}. 
There exist   positive constants $K$, $\gamma_0$ such that for $\gamma > \gamma_0$ we have
\begin{align}\label{bb7}
|(\|V_k\|)|_{\ell^2}\leq \frac{K}{(\eps\gamma)^{\EE}}|(|\CG_k|_{L^2(\sigma,\eta)})|_{\ell^2}.
\end{align}
We can take $\gamma_0=CC_MD_M$, where $C_M=2+\sum^\infty_{i=2}|\alpha_i|$,  $D_M=\sum^\infty_{r=1}\frac{1}{r^M}$, and $C$ is as in \eqref{b05}.
\end{prop}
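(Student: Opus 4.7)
The plan is to exploit the convolution/translation-invariant structure already observed in Remark \ref{b1a}, and then reduce the $\ell^2$ estimate to a one-dimensional summability question that is controlled by taking $\gamma$ large.

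First I would combine the schematic bound $V_k\leq \mathcal{G}_k^T$ from \eqref{b0} with \eqref{b05cd}, and then convert back to a proper estimate via the identity \eqref{b6aa}. This yields
\begin{align*}
\|V_k\|\ \leq\ \frac{1}{(\eps\gamma)^{\mathbb{E}}}\sum_{j=1}^{k}g_{1,k-(j-1)}\,|\mathcal{G}_j|_{L^2(\sigma,\eta)},
\end{align*}
which has the form of a discrete one-sided convolution $(f*b)_k$, where $f_k:=g_{1,k}$ and $b_j:=|\mathcal{G}_j|_{L^2(\sigma,\eta)}$ are both supported on $\{k\geq 1\}$. Then by Young's inequality
\begin{align*}
\bigl|(\|V_k\|)\bigr|_{\ell^2(k)}\ \leq\ \frac{1}{(\eps\gamma)^{\mathbb{E}}}\,|f|_{\ell^1}\,|b|_{\ell^2},
\end{align*}
so the entire proposition reduces to showing $|f|_{\ell^1}\leq K$ for some $K$ independent of $\eps$, $\gamma$, and the data, provided $\gamma\geq\gamma_0$.

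To control $|f|_{\ell^1}$ I would read off a recursion for $f_k$ directly from \eqref{b6ab} by specializing the data to $\mathcal{G}_1=1$ and $\mathcal{G}_j=0$ for $j\geq 2$: in that case $\mathcal{H}_k^T=f_k$, so $f_1=1$ and for $k\geq 2$,
\begin{align*}
f_k\ =\ \sum_{p=1}^{k-1}c_p\,f_{k-p},\qquad c_p\ =\ \sum_{r=1}^{p}|\alpha_{p-r}|\,\mathcal{E}_r\quad (|\alpha_0|:=1).
\end{align*}
Summing this identity over $k\geq 2$ and interchanging the order of summation gives $|f|_{\ell^1}-1=\bigl(\sum_p c_p\bigr)|f|_{\ell^1}$, hence $|f|_{\ell^1}=\bigl(1-\sum_p c_p\bigr)^{-1}$, provided the denominator is positive. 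The key estimate is
\begin{align*}
\sum_{p\geq 1}c_p\ =\ \Bigl(\sum_{r\geq 1}\mathcal{E}_r\Bigr)\Bigl(\sum_{t\geq 0}|\alpha_t|\Bigr)\ \leq\ \frac{C\,D_M}{\gamma}\cdot C_M,
\end{align*}
using the definitions $\mathcal{E}_r=\tfrac{C}{\gamma r^M}$ from \eqref{b05c}, the decay hypothesis $|\alpha_r|\lesssim r^{-(M+2)}$, and the definitions of $C_M$ and $D_M$. Choosing $\gamma_0=CC_MD_M$ (adjusting $C$ if needed so that $\sum_p c_p\leq 1/2$ for $\gamma\geq \gamma_0$) yields $|f|_{\ell^1}\leq 2$, which completes the proof with $K=2$.

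The main obstacle is less analytical than bookkeeping: one must verify rigorously the translation-invariance identity \eqref{b6aa}, which is only argued informally in Remark \ref{b1a}. I would do this by induction on $k$, comparing the coefficient of $\mathcal{G}_j$ in $\mathcal{H}_k^T$ against the coefficient of $\mathcal{G}_{j+1}$ in $\mathcal{H}_{k+1}^T$ through the shared recursion \eqref{b6ab}; both satisfy the same recursion with the same boundary value $1$ (when $j=k$, resp.\ $j+1=k+1$), forcing them to agree. Once this identity is secured the convolution structure is genuine, Young's inequality applies cleanly, and the generating-function bound above finishes the proof.
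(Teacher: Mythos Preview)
Your proposal is correct and follows essentially the same route as the paper: apply Young's inequality to the convolution estimate \eqref{b6}, derive the recursion $g_{1,k}=\sum_{p=1}^{k-1}d_p\,g_{1,k-p}$ from \eqref{b6ab}, sum it to obtain $S=1+\mathbb{E}_M S$ with $\mathbb{E}_M=\sum_p d_p=\tfrac{C}{\gamma}C_MD_M$, and take $\gamma$ large. Your remark that the translation-invariance identity \eqref{b6aa} deserves an inductive verification is a fair point; the paper treats it as observed from the tableau \eqref{b05d}.
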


\begin{proof}

\textbf{1. }From \eqref{b6} and 
Young's inequality we obtain
\begin{align}\label{b6a}
|(\|V_k\|)|_{\ell^2}\leq \left(\frac{1}{\eps\gamma}\right)^\EE |(|\CG_k|_{L^2(\sigma,\eta)})|_{\ell^2}|(g_{1,k})|_{\ell^1}.
\end{align}

\textbf{2. } From \eqref{b05d} for $k\geq 2$ we clearly have 
\begin{align}
\begin{split}
&g_{1,k}=\cE_1g_{1,k-1}+(\cE_1+\cE_2)g_{1,k-2}+(|\alpha_2|\cE_1+\cE_2+\cE_3)g_{1,k-3}+\\
&\quad\qquad (|\alpha_3|\cE_1+|\alpha_2|\cE_2+\cE_3+\cE_4)g_{1,k-4}+\dots+\\
&\qquad \qquad (|\alpha_{k-2}|\cE_1+|\alpha_{k-3}|\cE_2+\dots+|\alpha_2|\cE_{k-3}+\cE_{k-2}+\cE_{k-1})g_{1,1}.
\end{split}
\end{align}

\textbf{3. } To sum the $g_{1,k}$, we  write:
\begin{align}\label{b6b}
\begin{split}
&g_{1,1}=1\\
&g_{1,2}=0\quad +\quad\cE_1g_{1,1}\\
&g_{1,3}=0\quad+\quad(\cE_1+\cE_2)g_{1,1}\;\;\qquad\;+\qquad\;\; \;\cE_1g_{1,2}\\
&g_{1,4}=0+(|\alpha_2|\cE_1+\cE_2+\cE_3)g_{1,1}\qquad+\qquad (\cE_1+\cE_2)g_{1,2}\qquad+\quad \cE_1g_{1,3}\\
&g_{1,5}=0+(|\alpha_3|\cE_1+|\alpha_2|\cE_2+\cE_3+\cE_4)g_{1,1}+(|\alpha_2|\cE_1+\cE_2+\cE_3)g_{1,2}+(\cE_1+\cE_2)g_{1,3}+\cE_1g_{1,4}\\
&\cdots
\end{split}
\end{align}
Letting
\begin{align}\label{b6c}
\begin{split}
&\EE_M:=\cE_1+(\cE_1+\cE_2)+(|\alpha_2|\cE_1+\cE_2+\cE_3)+(|\alpha_3|\cE_1+|\alpha_2|\cE_2+\cE_3+\cE_4)+\\
&\qquad \qquad(|\alpha_4|\cE_1+|\alpha_3|\cE_2+|\alpha_2|\cE_3+\cE_4+\cE_5)+\dots
\end{split}
\end{align}
and summing  \eqref{b6b} ``by columns", we obtain
\begin{align}\label{b8}
S:=\sum^\infty_{k=1}g_{1,k}=1+g_{1,1}\EE_M+g_{1,2}\EE_M+g_{1,3}\EE_M+\dots=1+\EE_MS.
\end{align}
Resumming \eqref{b6c} we obtain
\begin{align}
\EE_M=\cE_1(1+1+|\alpha_2|+|\alpha_3|+|\alpha_4|+\dots)+\cE_2(1+1+|\alpha_2|+|\alpha_3|+|\alpha_4|+\dots)+\cdots.
\end{align}
Recalling that  $\cE_r=\frac{C}{\gamma}\frac{1}{r^M}$ and setting $C_M=2+\sum^\infty_{i=2}|\alpha_i|$,  $D_M=\sum^\infty_{r=1}\frac{1}{r^M}$, we have
\begin{align}
\EE_M=\frac{C}{\gamma}C_MD_M.
\end{align}
Thus, for $\gamma>CC_MD_M$ the sum $S$ is finite and 
\begin{align}\label{b9}
S=\frac{1}{1-\EE_M}=1+\EE_M+\EE_M^2+\dots.
\end{align}

\end{proof}

We can now complete the proof of Theorem \ref{tt26} in the case where $\cD(\theta_{in})=d(\theta_{in})M$; for the general case see Remark \ref{reduction}.

\begin{proof}[\textbf{Conclusion of the proof of Theorem \ref{tt26}}]
\textbf{ Part (a).}    In  the case where  $F_k$ and $G_k$ vanish for $k<1$ we have the iteration estimate \eqref{tt30c}.   Iterating this estimate leads to a proliferation of both $F_l$ and $G_l$ terms, but the new $F_l$ terms can be managed just like the $G_l$ terms in the proof of Proposition \ref{bb6}.   In place of \eqref{bb7} we have for some $K$ and $\gamma\geq \gamma_0$:
\begin{align}\label{b10}
|(\|V_k\|)|_{\ell^2}\leq \frac{K}{(\eps\gamma)^{\EE}}\left[|(|\cF_k|_{L^2(x_2,\sigma,\eta)})|_{\ell^2}+  |(|\CG_k|_{L^2(\sigma,\eta)})|_{\ell^2}\right],
\end{align}
where $\cF_k:=\frac{\widehat F_k|X_k|}{\gamma^2}$,  $\cG_k=\frac{\widehat G_k|X_k|}{\gamma^{3/2}}$, and $\EE$ is as in Proposition \ref{large}.

The estimate \eqref{b10} clearly holds with the same proof when the forcing terms are 
$$
F^{N^*}:=\sum_{k=N^*}^\infty F_k(t,x)e^{ik\theta} \text{ and  }
G^{N^*}:=\sum_{k=N^*}^\infty G_k(t,x_1)e^{ik\theta}\text{ for }N^*\in\mathbb{Z}. 
$$
Given general periodic functions $F(t,x,\theta)\in H^1(t,x,\theta)$ and $G(t,x_1,\theta)\in H^1(t,x_1,\theta)$,   we define $F^{N^*}$ and  $G^{N^*}$ by truncation, and obtain \eqref{tt28} in the limit as $N^*\to -\infty$.  Here we have used the fact that the constants $\gamma_0$, $K$ appearing in \eqref{b10} are independent of $\eps$ and $N^*$; recall Remark \ref{t50a}.    The estimate \eqref{tt28} then follows from \eqref{tu31}.  

\textbf{ Part (b).}  Let $\cM_{i,j}(\zeta,\eps,\delta)$ be as in \eqref{tt26b}, where the $\lambda_{i,j}$ are defined as in \eqref{tt26c}.  Proposition \ref{t18aa} and its proof show that 
\begin{align}\label{b11}
|\cM_{i,j}(\zeta,\eps,\delta)|\leq 1 \text{ for all }(i,j)\in \cO\times (\cI\setminus\{N\}).
\end{align}
Thus, we can choose the numbers $\MM_{i,j}=1$, and $\EE$ has the value \eqref{tt26d}. 

\textbf{ Part (c).} Let $\cP=\{r\in\NN: |\alpha_r|\neq 0\}$; so by assumption we have $|\cP|=P$.   For a given $(\eps,\zeta)$ consider any finite product of amplification factors 
\begin{align}\label{b12}
\DD(\eps,k_{p_1},k_{p_1-1})(\zeta)\cdot\DD(k_{p_2},k_{p_2-1})(\zeta)\cdot \dots \cdot \DD(\eps,k_{p_{N^*}}, k_{p_{N^*}-1})(\zeta),\;\;N^*\in\NN
\end{align}
 that might now appear in (a term on the right side of) the estimate of some $V_k$.   Observe that for every $l\in\{1,\dots,N^*\}$  
the step size\footnote{As always it can happen that $r_p=r_q$ for some $p\neq q$  in such a product.} 
\begin{align}
r_{p_l}=k_{p_l}-k_{p_l-1}\in\cP.
\end{align}
For $(i,j)\in\cO\times (\cI\setminus \{N\})$ define the set 
\begin{align}
\cM_{i,j}(\zeta,\eps,\delta;N^*)=\left\{l\in\{1,\dots,N^*\}:X_{k_{p_l}}\in\Gamma_{\frac{\delta}{r_{p_l}}}, X_{k_{p_l-1}}\in\Gamma_{\frac{\delta}{r_{p_l}}}, |t(p_l)-r_{p_l}\Omega_{i,j}|<\frac{1}{2}\right\}, 
\end{align}
where $t(p_l)$ is given by $\tilde X_{k_{p_l-1}}=t(p_l)\frac{\beta_l}{\eps}$ (recall \eqref{t18y}).    Since  we have $|t(p_l)-t(p_m)|\geq 1$ if $l\neq m$, it follows that 
\begin{align}\label{b13}
|\cM_{i,j}(\zeta,\eps,\delta;N^*)|\leq P.
\end{align}
One can now repeat the proof of Proposition \ref{large} with $\MM_{i,j}=P$ for $i\in \cO, j\in \cI\setminus \{N\}$ to obtain the upper bound
 $\EE=P|\cO|(|\cI|-1)+(|\Upsilon^+_0|-1)$ for the number of large factors in the product \eqref{b12}.\footnote{Here it does not help to apply Proposition \ref{large} as stated, since we must now take advantage of the fact that the number of distinct possible step sizes is $\leq P$.}

\end{proof}

\subsection{An effect of resonances}
\quad Consider again the system \eqref{s1} under assumptions \ref{assumption1}, \ref{assumption2}, \ref{assumption3}.  For $N$ and $\beta_l\in\Upsilon^0_+$ as in \eqref{s1a}, 
suppose $j,N\in \cI$ with $j\neq N$ and  $i\in \cO$.    We say that the associated characteristic phases $(\phi_j,\phi_N,\phi_i)$ exhibit a \emph{resonance} if there exist $p,q\in \ZZ\setminus 0$ such that\footnote{Here, recall $\phi_j(t,x)=\beta_l\cdot (t,x_1)+\omega_j(\beta_l)x_2$.}    
\begin{align}\label{tt7}
p\phi_j+q\phi_N=(p+q)\phi_i \Leftrightarrow p\omega_j(\beta_l)+q\omega_N(\beta_l)=(p+q)\omega_i(\beta_l)\Leftrightarrow \frac{p}{q}=\frac{\omega_i(\beta_l)-\omega_N(\beta_l)}{\omega_j(\beta_l)-\omega_i(\beta_l)}=\Omega_{i,j},
\end{align}
for $\Omega_{i,j}$ as in \eqref{b000}.

For a given $C_3>0$ and a given admissible sequence $(k_p)$, we recall the definition of the  bad set $M_{i,j}(\eps,\zeta,\delta;C_3)$ from Proposition \ref{t18}.  The integer $p\in M_{i,j}(\eps,\zeta,\delta;C_3)$ if and only if both $X_{k_p}$, $X_{k_{p-1}}$ lie in $\Gamma_{\frac{\delta}{|r_p|}}(\beta_l)$ and 
\begin{align}\label{tt7a}
\quad |E_{i,j}(\eps,k_p,k_{p-1})|\geq C_3\frac{|X_{k_p}|}{|r_p|} \text{ or }|E_{i,j}(\eps,k_p,k_{p-1})|\geq C_3|X_{k_{p-1}}|
\end{align}
\emph{fails} to hold.  Propositions \ref{t18} and \ref{t18aa} showed that when $\Omega_{i,j}\in (-1,0)$, one can choose $C_3$ so that $|M_{i,j}(\eps,\zeta,\delta;C_3)|\leq 1$, and this was an essential step in the proof of Theorem \ref{tt26}(b).    The next proposition shows that for certain  resonances, there exist admissible sequences $(k_p)$ and sets of $\zeta$ of large measure for which the set $M_{i,j}(\eps,\zeta,\delta;C_3)$ is infinite no matter how small $C_3>0$ is taken.

\begin{prop}\label{tt8}
a) For    any admissible sequence $(k_p)$, let $\tilde E_{i,j}(\eps,k_p,k_{p-1})$ be given by \eqref{t18a}.  
Let  $0<\alpha<1$,  and suppose there is a resonance such that  $\Omega_{i,j}$  as in \eqref{tt7} satisfies
\begin{align}\label{tt9}
\Omega_{i,j}=\frac{p}{q}\in \QQ\cap (-\infty,-1) \text{ or }\Omega_{i,j}\in \QQ\cap (0,\infty).
\end{align} 
Then one can construct admissible sequences $(k_p)$ such that for all $\zeta\in \Xi$ with $|\zeta|\leq \eps^{\alpha-1}$    we have
\begin{align}\label{tt10}
|\tilde E_{i,j}(\eps,k_p,k_{p-1})|\leq C(\beta_l)\eps^{\alpha-1}\text{ for infinitely many }p.
\end{align}

b) For each admissible sequence $(k_p)$ constructed in part (a) and for $\eps>0$ and $\delta>0$ small enough, there are subsets of $\Xi$ of large measure ( $|\zeta|\leq \eps^{\alpha-1}$) for which it is impossible to choose a constant $C_3>0$ independent of $(\eps,\zeta,p)$ such that $M_{i,j}(\eps,\zeta,\delta;C_3)$ is finite.

c)  If  \;$\Omega_{i,j}\in \QQ\cap (-1,0)$, then for any admissible sequence $(k_p)$ and for $C_3>0$ as chosen in Proposition \ref{t18aa}, we have 
$|M_{i,j}(\eps,\zeta,\delta;C_3)|\leq 1$ for $\eps>0$ and $\delta>0$ both small enough.

\end{prop}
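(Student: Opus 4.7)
The plan is to extract an ``exact resonance'' condition on integer sequences and then construct such sequences explicitly. The starting point is formula \eqref{t20} from the proof of Proposition \ref{t18}, which gives
\begin{equation*}
\tilde E_{i,j}(\eps,k_p,k_{p-1}) \;=\; \frac{t(p)-r_p\Omega_{i,j}}{\eps}\,C(\beta_l),
\end{equation*}
with $t(p)=s\eps+k_{p-1}$, where $s$ denotes the component of $\zeta$ along $\beta_l$ (so $|s|\le|\zeta|$). Writing $\Omega_{i,j}=\pm A/B$ (with $A,B\in\NN$, $\gcd(A,B)=1$, and $A>B$ in the negative case), the target estimate \eqref{tt10} becomes $|s\eps+k_{p-1}\mp r_p A/B|\le\eps^\alpha$. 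Since $|s\eps|\le\eps^\alpha$ is automatic when $|\zeta|\le\eps^{\alpha-1}$, the task reduces, uniformly in $\zeta$ in that ball, to producing infinitely many indices $p$ satisfying the exact resonance $Bk_{p-1}=\mp Ar_p$; any nonzero integer value of $Bk_{p-1}\pm Ar_p$ would force $|s|\gtrsim 1/(B\eps)$, which is incompatible with $|\zeta|\le\eps^{\alpha-1}$ for $\eps$ small.

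For part (a), the construction I have in mind exploits the fact that exact resonances can be \emph{chained}. If $Bk_{p-1}=\mp Ar_p$, then $k_p=k_{p-1}(A\pm B)/A$, so starting from $k_0=\pm A^N n$ and iterating produces the chain
\begin{equation*}
k_q \;=\; \pm A^{N-q}(A\pm B)^q\,n,\qquad q=0,1,\dots,N,
\end{equation*}
consisting of $N$ successive good transitions and strictly increasing in both sign cases (in the positive case because $A+B>A$; in the negative case the $k_q$ are negative integers with $|k_q|/|k_{q-1}|=(A-B)/A<1$, so $k_q>k_{q-1}$). To obtain infinitely many good transitions in a single admissible sequence, I would concatenate countably many such chains of fixed length $N$ using inductively chosen scale parameters $M_i$ so that the chains occupy disjoint ranges on the $k$-axis; in the negative case it suffices to impose $(A-B)^N M_{i+1}>A^N M_i$, which is achievable by taking $M_{i+1}>M_i(A/(A-B))^N$. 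Filling the gaps between chains by any strictly monotone block of integers completes the admissible sequence.

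For part (b), I would verify that every good $p$ produced in part (a) lies in $M_{i,j}(\zeta,\eps,\delta;C_3)$ for any fixed $C_3>0$, provided $|\zeta|\le\eps^{\alpha-1}$ and $\eps$ is sufficiently small. Two things must be checked. First, conic containment: at each good transition within a chain, the ratios $|k_{p-1}|/|r_p|$ and $|k_p|/|r_p|$ are constants (e.g.\ $A/B$ and $(A-B)/B$ in the negative case), so $|X_{k_p}|\sim|k_p|/\eps$ and the angular distance of $X_{k_p}$ from the $\beta_l$-axis is $\lesssim\eps^\alpha/|k_p|\le\delta/|r_p|$ as soon as $\eps^\alpha$ is less than a fixed multiple of $\delta$; the same holds for $X_{k_{p-1}}$. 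Second, failure of \eqref{t18w}: the Lipschitz bound $|E_{i,j}-\tilde E_{i,j}|\lesssim|\zeta|$ combined with part (a) gives $|E_{i,j}|\lesssim\eps^{\alpha-1}$, whereas $C_3|X_{k_p}|/|r_p|\sim C_3/\eps$ and $C_3|X_{k_{p-1}}|\gtrsim C_3/\eps$; since $\eps^{\alpha-1}\ll C_3/\eps$ for $\alpha>0$ and $\eps\to 0$, neither alternative in \eqref{t18w} can hold. Thus $|M_{i,j}(\zeta,\eps,\delta;C_3)|=\infty$ on the large set $\{|\zeta|\le\eps^{\alpha-1}\}$, and no uniform choice of $C_3$ can remedy this.

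Part (c) should be essentially immediate from Proposition \ref{t18aa}: its proof produces a constant $C_3=C_3(\lambda_{i,j})$ for which $M_{i,j}(\zeta,\eps,\delta;C_3)\subset\cM_{i,j}(\zeta,\eps,\delta)$, together with $|\cM_{i,j}|\le 1$ whenever $\Omega_{i,j}\in(-1,0)$, and rationality of $\Omega_{i,j}$ plays no distinguished role in that argument. The hardest point in the whole proposition is the construction in part (a) for the negative case $\Omega<-1$: a naive listing of resonant pairs $(-An,-(A-B)n)$ inside a single strictly monotonic integer sequence yields only finitely many entries because the forced geometric factor $(A-B)/A<1$ would drive $n$ below $1$; recognising that successive resonances can be stacked contiguously along a single chain of length $N$ without any such contraction, and then translating infinitely many disjoint copies of such chains to distinct integer scales via the inductive choice of $M_i$, is what makes the infinite count achievable.
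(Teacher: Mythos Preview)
Your proof is correct, but your part (a) construction is considerably more elaborate than the paper's. The paper simply picks isolated resonant \emph{pairs}: in the case $\Omega_{i,j}=p/q>0$ it sets $k(n)=n(p+q)$, $r(n)=nq$, so that $k(n)-r(n)-r(n)\Omega_{i,j}=0$ exactly, then chooses $n_1<n_2<\dots$ growing fast enough (namely $n_{m+1}>n_m(p+q)/p$) that the pairs $(n_mp,\,n_m(p+q))$ are disjoint, and relabels them into a strictly increasing sequence; the good transitions sit at every second index $p=2m$. Your chain device, producing $N$ \emph{consecutive} resonant transitions per block, is valid and rather elegant, but for this proposition it is not needed: the statement only asks for infinitely many good $p$, not for arbitrarily long runs of consecutive good $p$. (Consecutive resonances would become relevant if one wanted products of amplification factors of the special form \eqref{b04a} to be large at many consecutive indices, but that is not what is claimed here.)

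One remark in your final paragraph is off. You assert that the naive listing of resonant pairs fails in the negative case $\Omega_{i,j}=-A/B<-1$ because the scale $n$ is forced to contract geometrically. In fact the naive approach works just as well: admissible sequences are indexed by all of $\ZZ$, so one may take $n_m\to\infty$ with $n_{m+1}>n_mA/(A-B)$ and place the disjoint pairs $(-An_m,-(A-B)n_m)$ at indices $p\to -\infty$; ``infinitely many $p$'' is then satisfied. Your own chain construction faces the \emph{same} constraint and resolves it the \emph{same} way: under your condition $(A-B)^N M_{i+1}>A^N M_i$ the chains march to $-\infty$ on the $k$-axis, so in the increasing enumeration the good transitions also occur as $p\to -\infty$. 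The chaining therefore does not circumvent the phenomenon you identify; both constructions simply exploit the bi-infinite indexing. Parts (b) and (c) of your argument match the paper's, including the Lipschitz comparison $|E_{i,j}-\tilde E_{i,j}|\lesssim|\zeta|$ and the appeal to Proposition~\ref{t18aa}.
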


\begin{proof}
\textbf{1.  Part a.}  Consider the case $\Omega_{i,j}=\frac{p}{q}\in \QQ\cap (0,\infty)$; the other case of \eqref{tt9} is treated similarly.  We may take $p,q\in\NN$.  Let $n\in\NN$ and set 
\begin{align}\label{tt11}
k(n)=n(p+q), r(n):=nq, \text{ so }k(n)-r(n)=np.
\end{align}
We have $X_{k(n)-r(n)}=\zeta+(k(n)-r(n))\frac{\beta_l}{\eps}$, so we may write $\tilde X_{k(n)-r(n)}=s\frac{\beta_l}{\eps}+(k(n)-r(n))\frac{\beta_l}{\eps}$, where $s\frac{\beta_l}{\eps}$ is the orthogonal projection of $\zeta$ on $\beta_l$.   Setting $t=s+k(n)-r(n)$, we obtain as in \eqref{t20}:
\begin{align}\label{tt12}
\begin{split}
&\tilde E_{i,j}(\eps,k(n),k(n)-r(n))(\zeta)=\frac{t-r(n)\Omega_{i,j}}{\eps}\;C(\beta_l)=\\
&\qquad \qquad \frac{s+k(n)-r(n)-r(n)\Omega_{i,j}}{\eps}\;C(\beta_l)=C(\beta_l) \frac{s}{\eps}.
\end{split}
\end{align}
So if $|\zeta|\leq \eps^{\alpha-1}$, it follows that $\frac{|s|}{\eps}\leq \eps^{\alpha-1}$, and thus
\begin{align}\label{tt13}
|\tilde E_{i,j}(\eps,k(n),k(n)-r(n))(\zeta)|\leq C(\beta_l)\eps^{\alpha-1}.
\end{align}
Now choose $0<n_1<n_2<n_3<\dots$ such that 
\begin{align}\label{tt14}
k(n_1)-r(n_1)<k(n_1)<k(n_2)-r(n_2)<k(n_2)<k(n_3)-r(n_3)<k(n_3)<\dots,
\end{align}
and relabel the respective elements of  \eqref{tt14} as $k_1<k_2<k_3<k_4<\dots$.    Then \eqref{tt13} implies that if $|\zeta|\leq \eps^{\alpha-1}$, we have
\begin{align}\label{tt14a}
|\tilde E_{i,j}(\eps,k_{2m},k_{2m-1})(\zeta)|\leq C(\beta_l) \eps^{\alpha-1}\text{ for all }m\in \NN.
\end{align}

\textbf{Part b.} Let $(k_p)$ be the admissible sequence constructed at the end of step \textbf{1}, so 
\begin{align}
k_{2m}=k(n_m)=n_m(p+q)\text{ and }r_{2m}=n_mq.
\end{align}  
Suppose $|\zeta|\leq \eps^{\alpha-1}$ and 
fix \emph{any} $C_3>0$ independent of $(\eps,\zeta,m)$.  We claim that for $m\geq 2$, the index $2m\in M_{i,j}(\eps,\zeta,\delta;C_3)$ for $\delta$  small enough and $0<\eps\leq \eps_0(\delta)$, provided $\eps_0(\delta)$ is small enough.  

Observe that for a given $\delta>0$ and $\eps_0(\delta)$ small enough, the vectors $X_{k_{2m}}=\zeta+n_m(p+q)\frac{\beta_l}{\eps}$ and $X_{k_{2m-1}}=\zeta+n_mp\frac{\beta_l}{\eps}$ both lie in $\Gamma_{\frac{\delta}{r_{2m}}}(\beta_l)=\Gamma_{\frac{\delta}{n_mq}}(\beta_l)$ for $0<\eps\leq\eps_0(\delta)$, and we have \footnote{We remark that $\eps_0(\delta)$ does not depend on $r_{2m}=n_mq$, because of the factor $n_m$ multiplying $\frac{\beta_l}{\eps}$ in the expressions for $X_{k_{2m}}$ and $X_{k_{2m-1}}$ .} 
\begin{align}\label{tt15}
\frac{p+q}{q\eps}\sim \frac{|X_{k_{2m}}|}{r_{2m}}\leq |X_{k_{2m-1}}|\sim \frac{n_mp}{\eps}\text{ for }m \geq 2.
\end{align}
Let $m\geq 2$ and suppose that 
\begin{align}\label{tt15a}
|E_{i,j}(\eps,k_{2m},k_{2m-1})|\geq C_3 \frac{|X_{k_{2m}}|}{r_{2m}}.
\end{align}
From \eqref{t22} and \eqref{t22a} we have
\begin{align}\label{tt16}
\begin{split}
&|E_{i,j}(\eps,k_{2m},k_{2m-1})-\tilde E_{i,j}(\eps,k_{2m},k_{2m-1})|\lesssim \frac{\delta}{r_{2m}}|X_{k_{2m}}|+\frac{\delta}{r_{2m}}|X_{k_{2m-1}}|\lesssim \frac{\delta}{r_{2m}}|X_{k_{2m}}|.
\end{split}
\end{align}
Then \eqref{tt15a} and \eqref{tt16} imply that for $\delta$ small enough and $0<\eps\leq \eps_0(\delta)$, 
\begin{align}
|\tilde E_{i,j}(\eps,k_{2m},k_{2m-1})|\geq \frac{C_3}{2} \frac{|X_{k_{2m}}|}{r_{2m}}\gtrsim \frac{1}{\eps},
\end{align}
but this contradicts \eqref{tt14a}, and so \eqref{tt15a} fails.  From \eqref{tt15} we see then that \eqref{tt7a} fails for $p=2m$, $m=2,3,\dots$, establishing the claim.

\textbf{3. }Part (c) follows immediately from Proposition \ref{t18aa}.

\end{proof}

  \subsection{The two-sided case and proof of Theorem \ref{tvv29}}
 
 \qquad We  first state a simple general result for problems with two-sided cascades under a boundedness assumption on the factors $\DD(\eps,p,p-r)$.  
Recall the iteration estimate \eqref{t30c} for the singular transformed problem \eqref{tz30}:
\begin{align}\label{cc8}
\|V_k\|\leq \frac{C}{\gamma}\sum_{r\in\ZZ\setminus 0}\sum_{t\in\ZZ}\|\ar\alpha_t\DD(\eps,k,k-r)V_{k-r-t}\|+C|\cF_k|_{L^2(x_2,\sigma,\eta)}+C|\CalG_k|_{L^2(\sigma,\eta)}, \;k\in\ZZ,
\end{align}
where we have set $\cF_k:=\frac{\widehat F_k|X_k|}{\gamma^2}$,  $\cG_k=\frac{\widehat G_k|X_k|}{\gamma^{3/2}}$.

 \begin{prop}\label{2sided}
Assume the coefficients $\alpha_r$ in \eqref{tz30} satisfy $|\alpha_r|\lesssim |r|^{-(M+1)}$ for some $M\geq 2$.
 Suppose there exist positive constants $\eps_0$ and $C$ such that for all $(\eps,\zeta,k,r)\in (0,\eps_0]\times \Xi\times \ZZ \times (\ZZ\setminus 0)$ we have
 \begin{align}\label{bdd}
 |\DD(\eps,k,k-r)(\zeta)|\leq C|r|.
 \end{align}
 Then there exist positive constants $K$, $\gamma_0$ such that for $\gamma>\gamma_0$ we have
 \begin{align}
|(\|V_k\|)|_{\ell^2}\leq K\left[|(|\cF_k|_{L^2(x_2,\sigma,\eta)})|_{\ell^2}+  |(|\CG_k|_{L^2(\sigma,\eta)})|_{\ell^2}\right].
\end{align}

 \end{prop}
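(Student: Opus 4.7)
The plan is to square the iteration estimate, absorb the uniform bound $|\DD(\eps,k,k-r)(\zeta)|\leq C|r|$ directly into the kernel, and then reduce to a straightforward discrete-convolution argument in the index $k$. No iteration of the estimate is needed here, which is exactly what makes two-sided cascades tractable under the hypothesis \eqref{bdd}.

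First, because \eqref{bdd} holds pointwise in $\zeta$ and the modified norm $\|f(\zeta)V_{k-r-t}\|_{k-r-t}$ defined in \eqref{tuu30} satisfies $\|f(\zeta)V_j\|_j\leq (\sup_\zeta|f(\zeta)|)\,\|V_j\|_j$ for any bounded function $f$, I can replace each factor $\DD(\eps,k,k-r)$ in \eqref{cc8} by $C|r|$ to obtain
\begin{equation*}
\|V_k\|\leq \frac{C'}{\gamma}\sum_{r\in\ZZ\setminus 0}\sum_{t\in\ZZ}|r|\,|\alpha_r|\,|\alpha_t|\,\|V_{k-r-t}\|+C|\cF_k|_{L^2}+C|\cG_k|_{L^2(\sigma,\eta)},
\end{equation*}
recalling the convention $\alpha_0=1$. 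Setting $s:=r+t$ and $a_s:=\sum_{r+t=s,\,r\neq 0}|r|\,|\alpha_r|\,|\alpha_t|$, the double sum becomes $\sum_{s\in\ZZ}a_s\|V_{k-s}\|$, i.e.\ a discrete convolution in $k$.

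Next, I take the $\ell^2(k)$ norm of both sides and apply Young's inequality for sequences, which yields
\begin{equation*}
|(\|V_k\|)|_{\ell^2}\leq \frac{C'\|a\|_{\ell^1}}{\gamma}\,|(\|V_k\|)|_{\ell^2}+C\,|(|\cF_k|_{L^2})|_{\ell^2}+C\,|(|\cG_k|_{L^2(\sigma,\eta)})|_{\ell^2}.
\end{equation*}
To bound $\|a\|_{\ell^1}$, note $\|a\|_{\ell^1}\leq \|b\|_{\ell^1}\|c\|_{\ell^1}$ where $b_r:=|r|\,|\alpha_r|$ ($b_0:=0$) and $c_t:=|\alpha_t|$ ($c_0:=1$); the decay hypothesis $|\alpha_r|\lesssim |r|^{-(M+1)}$ with $M\geq 2$ gives $|r|\,|\alpha_r|\lesssim |r|^{-M}$, which is summable, and similarly $\sum_t|\alpha_t|<\infty$. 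Hence $\|a\|_{\ell^1}\leq A$ for some finite constant $A$ independent of $\eps$ and $\zeta$.

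Finally, I choose $\gamma_0:=2C'A$, so that for $\gamma>\gamma_0$ the first term on the right can be absorbed into the left, giving the desired estimate with $K=2C$. The only mildly delicate point is verifying that the pointwise bound on $\DD$ translates cleanly into an operator-type bound on the $k$-dependent norm $\|\cdot\|_k$, but this follows directly from \eqref{tuu30} since $\DD(\eps,k,k-r)(\zeta)$ acts as multiplication by a bounded function of $\zeta$ and so commutes with the cutoffs $\chi_b(\eps,k-r-t;\beta)$ and factors $\Delta^{-1}(\eps,k-r-t;\beta)$ appearing in the definition of $\|V_{k-r-t}\|_{k-r-t}$. Thus the entire argument avoids iterating \eqref{cc8} and thereby sidesteps the infinite two-sided cascades discussed in Section~\ref{2sc}.
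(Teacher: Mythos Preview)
Your proof is correct and follows essentially the same approach as the paper's: absorb the bound \eqref{bdd} into the coefficients, rewrite the double sum as a convolution in $k$, apply Young's inequality (twice), and choose $\gamma_0$ large enough to absorb the resulting term. Your additional remark justifying why the pointwise bound on $\DD$ passes through the $k$-dependent norm $\|\cdot\|_{k-r-t}$ is a nice clarification that the paper leaves implicit.
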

 
\begin{proof}
Letting $\beta_r:=\alpha_r C|r|$, we have
\begin{align}
\begin{split}
&\sum_{r\in\ZZ\setminus 0}\sum_{t\in\ZZ}\|\ar\alpha_t\DD(\eps,k,k-r)V_{k-r-t}\|\leq \sum_{r\in\ZZ\setminus 0}\sum_{t\in\ZZ}\|\beta_r\alpha_tV_{k-r-t}\|=\\
&\qquad \qquad \sum_s\left(\sum_{r+t=s}|\beta_r\alpha_t|\right)\|V_{k-s}\|:= \sum_s\gamma_s\|V_{k-s}\|.
\end{split}
\end{align}
Applying Young's inequality gives
\begin{align}
 \left|\left(\sum_s\gamma_s\|V_{k-s}\|\right)\right|_{\ell^2(k)}\leq |(\|V_k\|)|_{\ell^2}|(\gamma_s)|_{\ell^1}. 
 \end{align}
Since $\gamma_s=\sum_r|\beta_r||\alpha_{s-r}|$, applying Young's inequality again we obtain
\begin{align}
|(\gamma_s)|_{\ell^1}\leq |(\beta_r)|_{\ell^1}|(\alpha_t)|_{\ell^1}:=K_1.
\end{align}
Thus, the $\ell^2$ norm of the right side of \eqref{cc8} is $\lesssim \frac{K_1}{\gamma} |(\|V_k\|)|_{\ell^2}+|(|\cF_k|_{L^2(x_2,\sigma,\eta)})|_{\ell^2}+|(|\cG_k|_{L^2(\sigma,\eta)})|_{\ell^2}$, and the result follows by taking $\gamma_0$ large enough.

\end{proof}

We can now finish the proof of Theorem \ref{tvv29} for the case where $\cD(\theta_{in})=d(\theta_{in})M$; for the general case see Remark \ref{reduction}. 

\begin{proof}[\textbf{Conclusion of the proof of Theorem \ref{tvv29}}]
Since $\Upsilon^+_0=\{\beta_l\}$ now, Proposition \ref{tt30} and Remark \ref{ty30} show that the estimate \eqref{cc8} holds with the definition of $\DD(\eps,k,k-r)$ modified as in that remark (put $|r|$ in place of $r^2$ in \eqref{ty29}). Using Definition \ref{t29}, we see that a factor $\DD(\eps,k,k-r)(\zeta)$ can take the value $\frac{C_5|r|}{\eps\gamma}$ only if $D(\eps,k,k-r;\beta_l)(\zeta)=\frac{C_5|r|}{\eps\gamma}$, but Proposition \ref{goodcase} implies that this cannot happen since now $\cI=\{N\}$.  Thus, all factors occurring in the iteration estimate satisfy \eqref{bdd} with $C=C_5$.   Application of Proposition \ref{2sided} then yields the result. 
\end{proof}

\begin{rem}\label{reduction}[Reduction to the case $\cD(\theta_{in})=d(\theta_{in})M$.]
Consider first the reduction in the case of Theorem \ref{tt26}.  Writing as in \eqref{i7}
\begin{align}\label{r1}
\cD(\theta_{in})=\sum^N_{i,j=1}d_{i,j}(\theta_{in})M_{i,j}, \text{ where }d_{i,j}(\theta_{in})=\sum_{r\in\ZZ\setminus 0}\alpha^{i,j}_re^{ir\theta_{in}},
\end{align}
we see that the transform of the singular problem \eqref{i6} is just like \eqref{i9}, except that the sum on the right is replaced by 
\begin{align}\label{r2}
\sum_{i,j=1}^N\sum_{r\in\ZZ\setminus 0}  \alpha^{i,j}_r e^{ir\frac{\omega_N(\beta_l)}{\eps}x_2}B_2^{-1}M_{i,j}V_{k-r}.
\end{align}
The assumption \eqref{b00} implies 
\begin{align}
\alpha^{i,j}_r=0 \text{ for } r\leq 0,   \; |\alpha^{i,j}_r|\leq Ar^{-(M+2)}  \text{ for }r\geq 1
\end{align}
for  some $M\geq 2$ and $A>0$  (both) independent of $(i,j)$.
The solution formulas \eqref{a5}, \eqref{a6} and \eqref{ab7}, \eqref{ab8} change in the obvious way when the replacement \eqref{r2} is made. 
The proof of the iteration estimate \eqref{t30c} was based on an analysis of the individual terms (each associated to a particular choice of $r$) in the solution formulas.  That analysis can be repeated for the new solution formulas to yield an iteration estimate of exactly the same form \eqref{t30c}, but  with $C$ replaced by $N^2C$ and $\alpha_r$ redefined as 
\begin{align}
\alpha_r=0 \text{ for }r < 0,\;\;\alpha_0=1,  \;\;  \alpha_r =Ar^{-(M+2)} \text{ for }r\geq 1.
\end{align}
The cascade estimates leading to the proof of Theorem \ref{tt26} for general $\cD$ can be carried out exactly as before using the new iteration estimate.

   The reduction in the case of Theorem \ref{tvv29} is carried out in the same way.
\end{rem}






\section{Multiple amplification and optimality of the estimates}\label{multiple}

\qquad In this section we prove Theorem \ref{multiamp}; that is, we construct and rigorously justify geometric optics solutions to the  $3\times 3$, strictly hyperbolic WR problem \eqref{d1} on $\Omega_T=(-\infty,T]\times \{(x_1,x_2):x_2\geq 0\}$:
 \begin{align}\label{d1z}
 \begin{split}
 &\partial_t u+B_1\partial_{x_1}u+B_2\partial_{x_2}u+e^{i\frac{\phi_{3}}{\eps}}Mu=0\text{ in }x_2>0\\
 &Bu=\eps G(t,x_1,\frac{\phi_0}{\eps}):=\eps g_1(t,x_1) e^{i\frac{\phi_0}{\eps}}\text{ on }x_2=0\\
 &u=0 \text{ in }t<0,
 \end{split}
 \end{align}
which exhibit instantaneous double amplification.   
The main new element in the proof of Theorem \ref{multiamp} is the construction of the approximate solution, which occupies most of the rest of this section.   The proof is concluded in section \ref{last}.

\begin{nota}
In previous sections we used $\theta\in\RR$ as a place holder for $\frac{\phi_0}{\eps}$.  In this section we use $\theta=(\theta_1,\theta_2,\theta_3)\in\RR^3$ as a placeholder for $\frac{\Phi}{\eps}$, $\Phi=(\phi_1,\phi_2,\phi_3)$, and we use $\theta_0$ as a placeholder for $\frac{\phi_0}{\eps}$.

\end{nota}

\subsection{Tools for constructing approximate solutions}  \label{tools2}

\qquad We recall here some useful results and introduce some notation.  We set
\begin{align}
\begin{split}
&L(\partial)=\partial_t +B_1\partial_{x_1}+B_2\partial_{x_2},\;\;\;L(\sigma,\xi)=\sigma I+B_1\xi_1+B_2\xi_2\\
&\cL(\partial_\theta)=\sum^3_{m=1}L(d\phi_m)\partial_{\theta_m}, \;\;\;\phi_m(t,x)=\beta_l\cdot (t,x_1)+\omega_m(\beta_l)x_2.
\end{split}
\end{align}

Let $\cA(\beta_l)$ be the matrix
\begin{align}
\cA(\beta_l)=-(A_0\sigma_l+A_1\eta_l), \text{ where }A_0=B_2^{-1}, A_1=B_2^{-1}B_1.
\end{align}
The matrix 
${\mathcal A}(\beta_l)$ is diagonalizable with eigenvalues $\omega_m(\beta_l)=\omega_m$, $m=1,\dots,3$, and 
the eigenspace of $\cA(\beta_l)$ for $\omega_m$  coincides with the kernel 
of $L(d\phi_m)$.

\begin{lem}\cite{CG}
\label{lem1}
The (extended) stable subspace $\E^s(\beta_l)$ (recall Prop. \ref{s7a}) admits the decomposition
\begin{equation}
\label{decomposition1}
\E^s(\beta_l) = \oplus_{m \in {\mathcal I}} \, \text{\rm Ker } L(d\phi_m) \, ,
\end{equation}
and each vector space in the decomposition \eqref{decomposition1} is of real type (that is, it admits a basis 
of real vectors).
\end{lem}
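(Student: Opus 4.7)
The plan is to derive this lemma directly from Proposition \ref{s7a} together with the identification of the eigenspaces of $\mathcal{A}(\beta_l)$ with the kernels $\mathrm{Ker}\, L(d\phi_m)$; essentially all the content is already present in earlier results, so only bookkeeping is required.

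First, I would apply Proposition \ref{s7a} with $\beta = \beta_l$, which is legitimate since $\beta_l \in \Upsilon_0 \subset \mathcal{H}$. This gives immediately
\begin{equation*}
\E^s(\beta_l) = \bigoplus_{j \in \mathcal{I}} \mathrm{span}\, r_j(\beta_l),
\end{equation*}
and asserts that the vectors $r_j(\beta_l)$ may be taken real.

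Second, I would identify the summands with the asserted kernels. Since $\phi_m(t,x) = \beta_l\cdot(t,x_1) + \omega_m x_2$, we have $d\phi_m = (\sigma_l,\eta_l,\omega_m)$, so
\begin{equation*}
L(d\phi_m) = \sigma_l I + B_1\eta_l + B_2\omega_m = B_2\bigl(\omega_m I - \mathcal{A}(\beta_l)\bigr),
\end{equation*}
using $A_0 = B_2^{-1}$, $A_1 = B_2^{-1}B_1$. Since $B_2$ is invertible, $\mathrm{Ker}\, L(d\phi_m)$ coincides with the eigenspace of $\mathcal{A}(\beta_l)$ associated with $\omega_m$. By strict hyperbolicity (Assumption \ref{assumption1}) these eigenvalues are simple, so each such eigenspace is one-dimensional, and the defining relation $\mathcal{A}(\beta_l) r_m(\beta_l) = \omega_m r_m(\beta_l)$ forces $\mathrm{span}\, r_m(\beta_l) = \mathrm{Ker}\, L(d\phi_m)$.

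Substituting this identification into the first displayed equation yields the desired decomposition \eqref{decomposition1}. Since each summand is one-dimensional and spanned by the real vector $r_j(\beta_l)$ supplied by Proposition \ref{s7a}, each summand is of real type, completing the proof. There is no real obstacle here: the only substantive input is strict hyperbolicity (to ensure the eigenspaces are one-dimensional), and the realness claim is a direct consequence of the realness assertion already built into Proposition \ref{s7a}.
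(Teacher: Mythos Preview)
Your proof is correct. The paper does not supply its own proof of this lemma, citing \cite{CG} instead; your argument recovers it directly from Proposition~\ref{s7a} together with the identification (stated just above the lemma in the paper) of $\mathrm{Ker}\,L(d\phi_m)$ with the $\omega_m$-eigenspace of $\mathcal{A}(\beta_l)$, which is exactly the intended route.
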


\begin{lem}\cite{CG}
\label{lem2}
The following decompositions hold
\begin{equation}
\label{decomposition2}
\C^3 = \oplus_{m=1}^3 \, \text{\rm Ker } L( d \phi_m) 
= \oplus_{m=1}^3 \, B_2 \, \text{\rm Ker } L(d \phi_m) \, ,
\end{equation}
and each vector space in the decompositions \eqref{decomposition2} is of real type.

We let $P_m$, respectively, $Q_m$, $m=1,2,3$, denote the projectors associated with the first, respectively,  second
decomposition in \eqref{decomposition2}.  For each $m$ there holds $\text{\rm Im } L( d \phi_m) 
= \text{\rm Ker } Q_m$.
\end{lem}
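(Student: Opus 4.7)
The plan is to deduce everything from strict hyperbolicity (Assumption 2.1) together with the invertibility of $B_2$ (Assumption 2.2). First, I would observe that the three eigenvalues $\omega_1(\beta_l),\omega_2(\beta_l),\omega_3(\beta_l)$ of $\mathcal{A}(\beta_l)=-B_2^{-1}(\sigma_l I+B_1\eta_l)$ are real and pairwise distinct (by the strict hyperbolicity of $(B_1,B_2)$), so $\mathcal{A}(\beta_l)$ is diagonalizable over $\RR$ with one-dimensional eigenspaces spanned by \emph{real} vectors. Since $\text{Ker }L(d\phi_m)$ is exactly the $\omega_m$-eigenspace of $\mathcal{A}(\beta_l)$ (as was recalled at the top of section 5.1), this immediately gives the first decomposition and shows each summand is of real type.

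For the second decomposition, I would use the key algebraic identity
\begin{equation*}
L(d\phi_m)=\sigma_l I+B_1\eta_l+B_2\omega_m=B_2\bigl(\omega_m I-\mathcal{A}(\beta_l)\bigr),
\end{equation*}
together with the fact that $B_2$ is a real invertible matrix. Applying $B_2$ to the first decomposition produces a direct-sum decomposition of $\CC^3$ into three one-dimensional subspaces, each the image under a real map of a real-type line, hence itself of real type. This yields the second decomposition in \eqref{decomposition2}.

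Finally, for the identity $\text{Im }L(d\phi_m)=\text{Ker }Q_m$, I would combine the displayed factorization with the spectral structure of $\mathcal{A}(\beta_l)$: since $\omega_m I-\mathcal{A}(\beta_l)$ is invertible on each $\text{Ker }L(d\phi_k)$ for $k\neq m$ and vanishes on $\text{Ker }L(d\phi_m)$, we get
\begin{equation*}
\text{Im }L(d\phi_m)=B_2\,\text{Im}\bigl(\omega_m I-\mathcal{A}(\beta_l)\bigr)=B_2\!\!\bigoplus_{k\neq m}\!\text{Ker }L(d\phi_k)=\!\!\bigoplus_{k\neq m}\!B_2\,\text{Ker }L(d\phi_k),
\end{equation*}
which is precisely $\text{Ker }Q_m$ by the definition of the projector associated to the second decomposition.

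There isn't really a main obstacle here—the statement is a routine consequence of strict hyperbolicity and the invertibility/realness of $B_2$. The only point that requires a moment of care is passing the real-type property through $B_2$, but since $B_2$ has real entries, it sends real spanning vectors to real spanning vectors, so this is immediate.
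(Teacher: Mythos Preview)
Your proof is correct. The paper itself does not prove this lemma (it is cited from \cite{CG}), but the key factorization $L(d\phi_m)=B_2(\omega_m I-\mathcal{A}(\beta_l))=\sum_{k\neq m}(\omega_m-\omega_k)B_2P_k$ that you use is exactly the identity the paper records in the footnote following the lemma, so your argument is the intended one.
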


\noindent Using Lemma \ref{lem2}, we may introduce the partial inverse $R_m$ of $L( d\phi_m)$, 
which is uniquely determined by the relations
\begin{equation*}
\forall \, m=1,2,3\, ,\quad R_m \, L( d\phi_m) =I-P_m \, ,\quad  L( d\phi_m)R_m=I-Q_m,\quad P_m \, R_m=0 \, ,\quad R_m \, Q_m =0 \, .
\end{equation*}

In the case of our strictly hyperbolic system \eqref{d1},  we choose for each $m$ a real vector $r_m$ that 
spans $\text{\rm Ker } L( d \phi_m)$. We also choose real row vectors $\ell_m$, that satisfy
\begin{equation*}
\forall \, m=1,\dots,3 \, ,\quad \ell_m \, L( d\phi_m) =0 \, ,
\end{equation*}
together with the normalization $\ell_m \, B_2 \, r_{m'} =\delta_{mm'}$. With this choice, the partial inverse 
$R_m$ and the projectors $P_m$, $Q_m$ are given by\footnote{To see this write $\cA(\beta_l)=\sum_{m}\omega_mP_m$, which implies $L(d\phi_m)=\sum_{k\neq m}(\omega_m-\omega_k)B_2P_k$, and observe that $R_m=\sum_{k\neq m}\frac{P_kB_2^{-1}}{\omega_m-\omega_k}$.}
\begin{equation*}
\forall \, X \in \C^3 \, ,\quad R_m \, X=\sum_{m' \neq m} 
\dfrac{\ell_{m'} \, X}{{\omega}_m -{\omega}_{m'}} \, r_{m'} \,\quad P_mX=(\ell_m B_2X)r_m,\quad Q_mX=(\ell_m X)B_2r_m.
\end{equation*}

\quad 
In the analysis of the profile equations we use projection operators $E_Q$, $E_P$ defined on $H^\infty$:=$H^\infty(\Omega_T\times \TT^3)$ and a partial inverse $R$ of $\cL(\partial_\theta)$ defined on functions in a certain subspace of $H^\infty$.\footnote{A small divisor condition is  needed to define $R$ on functions  that have infinitely many noncharacteristic modes.}    
 We have
\begin{align}\label{d3a}
E_P=E_0+\sum^3_{m=1}E_{P_m},  \;E_{P_{in}}=E_{P_2}+E_{P_3}, \;\;\;  E_{P_{out}}=E_{P_1},
\end{align}
and similarly expand $E_Q$, replacing $P_m$ by $Q_m$. 
In  \eqref{d3a} $E_{P_0}$ picks out the mean and $E_{P_m}$ picks out pure $\theta_m$ modes and projects with $P_m$.  More precisely, 
writing
\begin{align}\label{d3b}
U(t,x,\theta)=\underline U(t,x)+U^*(t,x,\theta)=\underline{U}+ \sum^3_{m=1}U^m(t,x,\theta_m)+U^{nc}(t,x,\theta_1,\theta_2,\theta_3),
\end{align}
where  each $U^m$ has pure $\theta_m$ oscillations and mean zero and $U^{nc}$ is obtained by retaining only noncharacteristic modes in the Fourier series of $U$, we have\footnote{We refer to $\underline{U}$ and the terms appearing in the Fourier series of each $U_m$, $m=1,2,3$, as \emph{characteristic} modes.}
   \begin{align}\label{d3c}
   \begin{split}
   &E_0 U=\underline U, \;\; E_{P_m}U=P_m U^m(t,x,\theta_m),\; m=1,2,3\\
   &(I-E_P)U=\sum^3_{m=1}(I-P_m)U^m+U^{nc}.
   \end{split}
   \end{align}
Thus, along with \eqref{d3b} we can decompose $U$ as 
\begin{align}\label{d3d}
U(t,x,\theta)=\underline{U}+E_{P_1}U+E_{P_{in}}U+(I-E_P)U,
\end{align}    
and we have the obvious analogue of \eqref{d3d} for $E_Q$.

We make the following small divisor assumption.
\begin{ass}\label{sd}
There exist constants $C>0$ and $a\in\RR$ such that for all $(k,l)\in\NN\times\NN$ with $k\neq l$ we have
\begin{align}
|\det L(kd\phi_2+ld\phi_3)|\geq C|(k,l)|^{a}.
\end{align}
\end{ass}

\begin{rem}
It follows from a result of \cite{JMR2} that this assumption is ``generically valid"; that is, it is satisfied for almost all pairs of $\QQ-independent$ characteristic phases $\phi_2$, $\phi_3$.
\end{rem}

Next we define a subspace of $H^\infty$ on which the operator $R$ is well-defined.

\begin{defn}\label{subspace}
\begin{align}\label{d3e}
\begin{split}
&\cH^\infty:=
\{U\in H^\infty: U^{nc}=\sum_{(k,l)\in \NN\times \NN, k\neq l}c_{k,l}(t,x)e^{i(k\theta_2+l\theta_3)}\}.
\end{split} 
\end{align}

\end{defn}

Writing $U\in \cH^\infty$ as in \eqref{d3b} with $U^{nc}$ as in \eqref{d3e}, we define
\begin{align}\label{R}
\begin{split}
&R(\underline U)=0\\
&R(U^m)=\partial_{\theta_m}^{-1}R_mU^m\\
&R(U^{nc})=\mathcal{L}(\partial_\theta)^{-1}U^{nc}.
\end{split}
\end{align}
Here $\partial_{\theta_m}^{-1}R_mU^m$ denotes the unique mean zero primitive in $\theta_m$ of $R_mU^m$ and
\begin{align}\label{R1}
\mathcal{L}(\partial_\theta)^{-1}U^{nc}=\sum_{(k,l)\in \NN\times\NN, k\neq l}L(ikd\phi_2+ild\phi_3)^{-1}c_{k,l}(t,x)e^{i(k\theta_2+l\theta_3)}.
\end{align}
It is clear that as a consequence of the small divisor  assumption, we have $R:\cH^\infty\to\cH^\infty$.


As operators on $\cH^\infty$, the operators $\cL(\partial_\theta)$, $E_P$, $E_Q$, and $R$  are  easily seen to satisfy
\begin{align}\label{d6}
\begin{split}
& a) E_Q\cL(\partial_\theta)=\cL(\partial_\theta)E_P=0,\;\;   \\
& b) R\cL(\partial_\theta)=I-E_P,\;\;\cL(\partial_\theta)R=I-E_Q \\
& c) E_PR=RE_Q=0.
\end{split}
\end{align}

\subsection{Profile equations}

\qquad We construct approximate solutions to the system \eqref{d1z} of the form
\begin{align}\label{d3z}
u^\eps_a(t,x)=\sum_{k=-1}^J\eps^k U_k\left(t,x,\frac{\Phi}{\eps}\right),
\end{align}
where the profiles $U_k$ lie in  $\cH^\infty$.  Plugging the ansatz \eqref{d3z} into the system \eqref{d1z} and setting coefficients of different powers of $\eps$ equal to zero, 
we obtain interior equations
\begin{align}\label{d4}
\begin{split}
& a) \cL(\partial_\theta)U_{-1}=0\\
& b)\cL(\partial_\theta)U_0+L(\partial)U_{-1}+e^{i\theta_3}MU_{-1}=0\\
& c)\cL(\partial_\theta)U_1+L(\partial)U_{0}+e^{i\theta_3}MU_{0}=0\\
& d)\cL(\partial_\theta)U_j+L(\partial)U_{j-1}+e^{i\theta_3}MU_{j-1}=0, j\geq 2
\end{split}
\end{align}
and boundary equations
\begin{align}\label{d5}
\begin{split}
& a) BU_{-1}=0\\
& b) BU_0=0\\
& c) BU_{1}=G(t,x_1,\theta_0)=g_1(t,x_1)e^{i\theta_0}\\
& d) BU_j=0, j\geq 2.
\end{split}
\end{align}

\begin{rem}\label{d7a}
The left and right sides of boundary equations are always evaluated at $x_2=0$, $\theta_1=\theta_2=\theta_3=\theta_0$. 
\end{rem}

  The vector space $E^s(\beta_l)$ is spanned by $\{r_2,r_3\}$.   The subspace $\ker B\cap E^s(\beta_l)$ is one-dimensional and is therefore spanned by some vector 
\begin{align}\label{d7}
e=e_2+e_3, \;e_m\in\mathrm{ span }\;r_m.
\end{align}
The vector space $BE^s(\beta_l)$ is one-dimensional and of real type; thus, we can write it as 
\begin{align}\label{d8}
BE^s(\beta_l)=\{X\in \CC^2: b\cdot X=0\},
\end{align}
where $b$ is a suitable real nonzero row vector.  We also choose a supplementary space of $\mathrm{span}\; e$ in $E^s(\beta_l)$:
\begin{align}\label{d9}
E^s(\beta_l)=\check E(\beta_l)\oplus \mathrm{span}\;e,\;\; \check E(\beta_l)=\mathrm{span}\;\check e,\text{ where }\check e=\check e_2+\check e_3, \;\check e_m\in\mathrm{ span }\;r_m.
\end{align}
Thus, we have an isomorphism
\begin{align}\label{d10}
B:\check E^s(\beta_l)\to BE^s(\beta_l).
\end{align}

   For any $k$ we write
   \begin{align}
   \begin{split}
   &E_{P_1}U_k=\sigma_k^1(t,x,\theta_1)r_1\\
   &E_{P_{in}}U_k=\sigma_k^2(t,x,\theta_2)r_2+\sigma_k^3(t,x,\theta_3)r_3,
   \end{split}
   \end{align}
    for scalar profiles $\sigma_k^m$.    On the boundary we have using \eqref{d9}
    \begin{align}\label{d10a}
    E_{P_{in}}U_k=\sigma_k^2(t,x_1,0,\theta_0)r_2+\sigma_k^3(t,x_1,0,\theta_0)r_3=a_k(t,x_1,\theta_0)e+\check a_k(t,x_1,\theta_0)\check e
    \end{align}
    for some scalar profiles $a$, $\check a$ with mean zero.   We set
    \begin{align}
    \check U_k = \check a_k \check e.  
    \end{align}
    For a given $H$ the boundary equation $BU_k=H(t,x_1,\theta_0)$ can now be rewritten
    \begin{align}\label{d13}
   \begin{split}
   & a) B\underline{U_k}=\underline H\\
   & b) B\check U_k=H^*-BE_{P_1}U_k-B[(I-E_P)U_k]^*.
   \end{split}
    \end{align}
    By \eqref{d8} a solution $\check U_k$ to \eqref{d13}(b)  exists if and only if 
    \begin{align}\label{d14}
    b\cdot \left(H^*-BE_{P_1}U_k-B[(I-E_P)U_k]^*\right) =0.
    \end{align}

    We  write down now the  equations that will be used to determine $U_{-1}$, $U_0$, and $E_{P_1}U_1$.   The higher order equations follow the pattern that will be apparent.    All profiles are required to be zero in $t<0$. 
    
    First we have  \emph{interior} equations obtained by applying $E_Q$ to 
     equations in \eqref{d4}
\begin{align}\label{d15}
\begin{split}
&a) E_Q[L(\partial)U_{-1}+e^{i\theta_3}MU_{-1}]=0\\
&b) E_Q[L(\partial)U_{0}+e^{i\theta_3}MU_{0}]=0\\
&c) E_Q[L(\partial)U_{1}+e^{i\theta_3}MU_{1}]=0,
\end{split}
\end{align}    
and equations obtained by applying $R$ to equations in \eqref{d4}
\begin{align}\label{d16}
\begin{split}
&a)  (I-E_P)U_{-1}=0\\
&b)  (I-E_P)U_0=-R[L(\partial)U_{-1}+e^{i\theta_3}MU_{-1}]\\
&c)  (I-E_P)U_1=-R[L(\partial)U_0+e^{i\theta_3}MU_{0}].
\end{split}
\end{align}
Each of the equations in \eqref{d15} gives rise to four equations, one for each of  $E_{Q_i}$, $i=0,1,2,3$.

We also have boundary equations 
\begin{align}\label{d16a}
B\underline{U_k}=0,  \;\;k=-1,0,1, 
\end{align}
\begin{align}\label{d17}
\begin{split}
&a)  b\cdot [-BE_{P_1}U_0+B[R(L(\partial)U_{-1}+e^{i\theta_3}MU_{-1})]]=0\\
&b)  b\cdot [G-BE_{P_1}U_1+B[R(L(\partial)U_{0}+e^{i\theta_3}MU_{0})]]=0\\
&c)  b\cdot [-BE_{P_1}U_2+B[R(L(\partial)U_{1}+e^{i\theta_3}MU_{1})]]=0
\end{split}
\end{align}
and 
\begin{align}\label{d18}
\begin{split}
&a) B\check U_{-1}=-BE_{P_1}U_{-1}\\
&b)  B\check U_0=-BE_{P_1}U_0+B[R(L(\partial)U_{-1}+e^{i\theta_3}MU_{-1})]\\
&c)  B\check U_1=G-BE_{P_1}U_1+B[R(L(\partial)U_{0}+e^{i\theta_3}MU_{0})]\\
&d)  B\check U_2= -BE_{P_1}U_2+B[R(L(\partial)U_{1}+e^{i\theta_3}MU_{1})].
\end{split}
\end{align}
The equations \eqref{d17}, \eqref{d18} are derived in the obvious way from \eqref{d13}, \eqref{d14} using \eqref{d16}. 
The equations \eqref{d17} are the respective solvability conditions for the equations \eqref{d18}(b)-(d). 

\begin{rem}\label{eval}

The operator $R$ acts on functions of $\theta=(\theta_1,\theta_2,\theta_3)$, so it must be understood that the restriction to $\theta_1=\theta_2=\theta_3=\theta_0$ in equations \eqref{d17}, \eqref{d18} is done \emph{after} the action of $R$.  Moreover,   for $V\in\cH^\infty$  (Definition \ref{subspace}) the mean of $RV|_{\theta_1=\theta_2=\theta_3=\theta_0}$ is always zero.

\end{rem}

We will use the following proposition, which slightly modifies a classical result \cite{L}, to simplify the terms $E_{Q_m}L(\partial)E_{P_m}U$ that arise in the analysis of the profile equations.
\begin{prop}\label{transport}
For $U\in\cH^\infty$ let $E_{P_m}U=\sigma(t,x,\theta_m)r_m$ and let  $X_{\phi_m}=\partial_{x_2}-\partial_{\tau}\omega_m(\beta_l)\partial_{t}-\partial_{\eta}\omega_m(\beta_l)\partial_{x_1}$
be the transport vector field associated to the phase $\phi_m$.\footnote{The vector field $X_{\phi_m}$ satisfies $X_{\phi_m}=(\partial_t+\underline{v}_m\cdot\partial_x) c_m$,   where 
$c_m=(\partial_{\xi_2}\lambda_{k_m})^{-1}$ and $\underline v_m$ is the group velocity defined in \eqref{phases}.  One can see this by differentiating $\tau+\lambda_{k_m}(\eta,\omega_m(\tau,\eta))=0.$}

    We have
\begin{align}
E_{Q_m}L(\partial)E_{P_m}U=Q_m(L(\partial)\sigma(t,x,\theta_m)r_m)=(X_{\phi_m}\sigma) B_2r_m
\end{align}
\end{prop}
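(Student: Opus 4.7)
The plan is to unwind the definitions of $E_{P_m}$, $E_{Q_m}$, $Q_m$, and the transport field $X_{\phi_m}$ and verify the identity by direct computation, the only nontrivial ingredient being a pair of ``dual eigenvector'' identities that follow by differentiating the characteristic relation $L(d\phi_m)r_m=0$.

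First I would observe that since $E_{P_m}U=\sigma(t,x,\theta_m)r_m$ involves only pure $\theta_m$ modes and $r_m$ is a constant vector in $\CC^3$, the function
\[
L(\partial)(\sigma(t,x,\theta_m)\,r_m)=(\partial_t\sigma)\,r_m+(\partial_{x_1}\sigma)\,B_1r_m+(\partial_{x_2}\sigma)\,B_2r_m
\]
likewise has only pure $\theta_m$ modes. Therefore the action of $E_{Q_m}$ on it reduces to the pointwise projection $Q_m$, so that $E_{Q_m}L(\partial)E_{P_m}U=Q_m(L(\partial)(\sigma r_m))$. Using the explicit formula $Q_mX=(\ell_m X)B_2r_m$ from Lemma~\ref{lem2}, this becomes
\[
Q_m(L(\partial)(\sigma r_m))=\bigl[(\partial_t\sigma)\,\ell_mr_m+(\partial_{x_1}\sigma)\,\ell_mB_1r_m+(\partial_{x_2}\sigma)\,\ell_mB_2r_m\bigr]B_2r_m.
\]

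The key step is to identify the three scalars in the bracket. The normalization $\ell_mB_2r_{m'}=\delta_{mm'}$ immediately gives $\ell_mB_2r_m=1$. For the other two, I would differentiate the eigenvalue identity $(\tau\,I+\eta\,B_1+\omega_m(\tau,\eta)B_2)R_m(\tau,\eta)=0$ (equivalent to $\mathcal{A}(\tau,\eta)R_m=\omega_m R_m$) with respect to $\tau$ and $\eta$, then pair with the left eigenvector $\ell_m$ satisfying $\ell_m(\tau I+\eta B_1+\omega_m B_2)=0$ at $(\sigma_l,\eta_l)$. The contribution from the derivative of $R_m$ is annihilated by $\ell_m$, leaving
\[
\ell_mR_m+(\partial_\tau\omega_m)\,\ell_mB_2R_m=0,\qquad \ell_mB_1R_m+(\partial_\eta\omega_m)\,\ell_mB_2R_m=0,
\]
which, after rescaling to $r_m=R_m/|R_m|$ and using $\ell_mB_2r_m=1$, yield $\ell_mr_m=-\partial_\tau\omega_m(\beta_l)$ and $\ell_mB_1r_m=-\partial_\eta\omega_m(\beta_l)$.

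Substituting these three identities into the bracketed expression collapses it to
\[
\bigl[-(\partial_\tau\omega_m)\partial_t\sigma-(\partial_\eta\omega_m)\partial_{x_1}\sigma+\partial_{x_2}\sigma\bigr]B_2r_m=(X_{\phi_m}\sigma)\,B_2r_m,
\]
which is the desired formula. The main (mild) obstacle is bookkeeping in the normalization passage between $R_m$ and $r_m=R_m/|R_m|$: since both sides of each dual identity scale the same way in $|R_m|$, the normalization factor cancels, so the formulas transfer unchanged from $R_m$ to $r_m$. No further analytic input is needed.
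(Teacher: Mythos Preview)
Your proof is correct and follows essentially the same approach as the paper: both arguments differentiate the eigenvector relation $(\tau I+\eta B_1+\omega_m B_2)r_m=0$ in $\tau$ and $\eta$, then pair with $\ell_m$ (using $\ell_m L(d\phi_m)=0$ and $\ell_m B_2 r_m=1$) to obtain $\ell_m r_m=-\partial_\tau\omega_m$ and $\ell_m B_1 r_m=-\partial_\eta\omega_m$. Your write-up is a bit more explicit about the reduction of $E_{Q_m}$ to $Q_m$ and the $R_m\to r_m$ normalization, but the substance is identical.
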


\begin{proof}
For $\beta=(\tau,\eta)$  near $\beta_l$ let $r_m(\beta)$ satisfy
\begin{align}\label{d19}
(\tau I +B_1\eta+B_2\omega_m(\beta))r_m(\beta)=0,  \;r_m(\beta_l)=r_m.
\end{align}
Differentiating \eqref{d19} with respect to $\tau$ and $\eta$, evaluating at $\beta_l$, and applying $\ell_m$ on the left we obtain:
\begin{align}
-\partial_\tau\omega_m(\beta_l)=\ell_m r_m\text{ and }-\partial_\eta\omega_m(\beta_l)=\ell_mB_1r_m.
\end{align}
Thus, 
\begin{align}
\ell_m(\partial_t+B_1\partial_{x_1}+B_2\partial_{x_2})r_m=X_{\phi_m}.
\end{align}

\end{proof}


\subsection{Table of modes}\label{table}

\quad One of the main challenges in constructing approximate solutions that exhibit double amplification is the high degree of coupling among the equations \eqref{d15}-\eqref{d18}.  By making an assumption about the various nonzero modes that appear in the profiles, it will turn out that we are able to decouple the equations.

 For each $m$, we now list  the characteristic and noncharacteristic modes that we \emph{expect} to find in $U_m$.  
The characteristic  mode $4\theta_1$, for example, will appear if the Fourier series of 
$U_m$ can possibly contain a term $c(t,x)e^{i4\theta_1}$ for some nonzero $c$. Similarly,  the noncharacteristic mode $2\theta_3+3\theta_2$ will appear if
the Fourier series of $U_m$ can possibly contain a term $c(t,x)e^{i(2\theta_3+3\theta_2)}$ for some nonzero $c$.   If for a given $U_m$ the mode $2\theta_3+3\theta_2$ does not appear in the list, this means  a term of the form $c(t,x)e^{i(2\theta_3+3\theta_2)}$ with $c\neq 0$  cannot possibly appear in the Fourier series of $U_m$.

The list is produced by first making a reasonable guess that takes into account the boundary data in \eqref{d1}, the single resonance \eqref{d2}, the profile equations, and the nature of the exact solution.   For example, the expectation  that no modes
$k_1\theta_1+k_2\theta_2+k_3\theta_3$  with $k_i<1$ should appear  is suggested by the fact that in the exact solution to the singular system corresponding to the problem \eqref{d1}, all terms $V_k(x_2,\zeta)$  as in \eqref{tz30} with $k<1$ are zero.
 In section \ref{construction} the list will be fully justified by the actual construction of profiles  satisfying the profile equations whose nonzero Fourier modes lie in this list.\\

\textbf{Characteristic modes:}
\begin{align}\label{cm}
\begin{split}
&a) U_{-1}:  k\theta_2,k\theta_3, k\geq 2\\
&b) U_0: k\theta_2,k\theta_3, k\geq 1;  2\theta_1,4\theta_1\\
&c) U_1: k\theta_2,k\theta_3, k\geq 1;  2\theta_1,4\theta_1,6\theta_1\\
&d) U_2: k\theta_2,k\theta_3, k\geq 1;  2\theta_1,4\theta_1,6\theta_1,8\theta_1,
\end{split}
\end{align}
and the pattern continues.\\

\textbf{Noncharacteristic modes:}\footnote{The term  ``add"  used in \eqref{nm}  for  $U_m$, $m\geq 1$ means that the noncharacteristic modes of $U_m$ are obtained by taking the modes listed after $U_m$ together with all the noncharacteristic modes of $U_{m-1}$.    Thus, for example, the full set of noncharacteristic modes of $U_2$ consists of all the modes appearing in  lines a-d of \eqref{nm}. }  
\begin{align}\label{nm}
\begin{split}
&a) U_{-1}: \mathrm{none}\\
&b) U_0: \theta_3+k\theta_2, k\geq 2\\
&c) U_1: \mathrm{add}\;(I)\; 2\theta_3+k\theta_2, k\geq 3 \;\mathrm{and}\;  (II)\; 2\theta_3+\theta_2, 3\theta_3+2\theta_2\\
&d) U_2: \mathrm{add}\;(I)\;3\theta_3+k\theta_2,k\geq 4\; \mathrm{and}\; (II)\; 3\theta_3+\theta_2, 4\theta_3+2\theta_2, 4\theta_3+3\theta_2\\
&e) U_3: \mathrm{add}\;(I)\;4\theta_3+k\theta_2, k\geq 5 \;\mathrm{and}\;  (II)\;4\theta_3+\theta_2, 5\theta_3+2\theta_2, 5\theta_3+3\theta_2, 5\theta_3+4\theta_2,
\end{split}
\end{align}
and the pattern continues.  The pattern for group (I) is clear.  For each $m\geq 1$ there is a one-to-one correspondence between the added noncharacteristic modes of $U_m$ in group II and the characteristic modes $k\theta_1$ of $U_{m-1}$.  The first $m$ modes in group (II) for $U_m$ are obtained by increasing by one the coefficient on $\theta_3$ in each group (II) mode for $U_{m-1}$ (for example, $3\theta_3+\theta_2\to 4\theta_3+\theta_2$).  The $(m+1)$st (and last) mode in group (II) for $U_m$ is obtained by computing the sum of $\theta_3$ and the last $k\theta_1$ mode of $U_{m-1}$.  For example, the mode $5\theta_3+4\theta_2$ in group (II) for $U_3$  arises in the ``interaction term" $-Re^{i\theta_3}MU_2$ of $(I-E_P)U_3$ as:\footnote{The first  equation in  \eqref{nm2} represents the relation holding between the phases $\phi_m$ associated to the $\theta_m$. }
\begin{align}\label{nm2}
\theta_3+8\theta_1=\theta_3+(4\theta_3+4\theta_2)=5\theta_3+4\theta_2.
\end{align}

\subsection{Construction of the profiles}\label{construction}
\quad We begin with a remark on notation and terminology.  
\begin{rem}
Recall that any function $U(t,x,\theta)\in\cH^\infty$ can be written as in \eqref{d3b} as 
\begin{align}
U(t,x,\theta)=\underline U(t,x)+\sum^3_{m=1} U^m(t,x,\theta_m)+U^{nc}(t,x,\theta_1,\theta_2,\theta_3).
\end{align}
Writing $U^m(t,x,\theta_m)=\sum_{p\in \ZZ}c^m_p(t,x)e^{ip\theta_m}$, we refer to $c^m_pe^{ip\theta_m}$ as the ``$p$-mode" of $U^m$ and to $\{c^m_pe^{ip\theta_m}, m=1,2,3\}$ as the ``$p$-modes of $U$".    We will apply this terminology to each profile $U_k$ in the expansion \eqref{d3} of $u_a(t,x)$.  In such a case we write
\begin{align}
U^m_k(t,x,\theta_m)=\sum_{p\in \ZZ}c^m_{k,p}(t,x)e^{ip\theta_m} \text{ and }U^m_{k,p}(t,x,\theta_m)=c^m_{k,p}(t,x)e^{ip\theta_m}.
\end{align}
When we speak, for example, of the ``\;$2\theta_3+3\theta_2$-mode of $U_k$", we are referring to the term $c(t,x)e^{i(2\theta_3+3\theta_2)}$ in the Fourier series of $U^{nc}$.

\end{rem}

\textbf{1.  Assumptions. } In order to construct profiles satisfying the profile equations, we \emph{assume} that profiles in $\cH^\infty$ satisfying the profile equations to any order  exist, and we \emph{assume} that the nonzero modes of those profiles appear in the above table.   As long as we can \emph{construct} explicit profiles in $\cH^\infty$  satisfying the profile equations which have only those modes,  it won't matter at all what assumptions we made to construct them.\footnote{In particular, there is no problem of circularity.} The process of construction will verify the non-obvious fact that our two assumptions are \emph{consistent} with each other.

We are not concerned about the issue of uniqueness of profiles, because we have a procedure for showing that sufficiently high order approximate  solutions are close in a precise sense to the exact solution (section \ref{last}), and we know the exact solution is unique.   Recall that all profiles are required to be zero in $t<0$. 

\textbf{2. The mean $\underline{U_k}=0$ for all $k$. }    Taking the mean of the equations in \eqref{d15} and using \eqref{d16a}, we obtain for any $k\geq -1$ 
\begin{align}
\begin{split}
&E_{Q_0}[L(\partial)U_k+e^{i\theta_3}MU_k]=L(\partial)\underline{U_k}=0\\
&B\underline{U_k}=0,
\end{split}
\end{align}
and thus $\underline{U_k}=0$.

\textbf{3. Determination of $E_{P_1}U_{-1}$ and $(I-E_P)U_{-1}$, $\check U_{-1}$.  }From \eqref{d16} we have $(I-E_P)U_{-1}=0$, so 
\begin{align}
U_{-1}=\sigma_{-1}^1(t,x,\theta_1)r_1+\sigma_{-1}^2(t,x,\theta_2)r_2+\sigma^3_{-1}(t,x,\theta_3)r_3,
\end{align}
for some scalar profiles $\sigma_{-1}^m$.  Using assumption \eqref{cm}(a)  we see there is no resonance in the interaction term $e^{i\theta_3}MU_{-1}$, so we obtain
from \eqref{d15}, Proposition \ref{transport},  and the definition of $E_{Q_1}$
\begin{align}
E_{Q_1}[L(\partial)U_{-1}+e^{i\theta_3}MU_{-1}]=(X_{\phi_1}\sigma_{-1}^1) B_2r_1=0.
\end{align}
Thus, $E_{P_1}U_{-1}=0$, since the vector field $X_{\phi_1}$ is outgoing and $\sigma_{-1}^1=0$ in $t<0$. 

Similarly, we obtain
\begin{align}\label{e0}
\begin{split}
&E_{Q_2}[L(\partial)U_{-1}+e^{i\theta_3}MU_{-1}]=X_{\phi_2}\sigma_{-1}^2 B_2r_2=0\\
&E_{Q_3}[L(\partial)U_{-1}+e^{i\theta_3}MU_{-1}]=X_{\phi_3}\sigma_{-1}^3 B_2r_3+e^{i\theta_3}\sigma_{-1}^3Q_3(Mr_3)=(X_{\phi_3}\sigma_{-1}^3 +c_{-1}^3e^{i\theta_3}\sigma_{-1}^3)B_2r_3=0,
\end{split}
\end{align}
where the constant $c_{-1}^3=\ell_3Mr_3$.  

 It remains to determine the traces of $\sigma_{-1}^m$, $m=2,3$ on $x_2=0$.   From \eqref{d18}(a) we obtain $\check U_{-1}=0$.   Summarizing, we have so far:
 \begin{align}
 U_{-1}=\sigma_{-1}^2(t,x,\theta_2)r_2+\sigma^3_{-1}(t,x,\theta_3)r_3,\;\;\check U_{-1}=0, \;\;\sigma_{-1}^m, m=2,3 \text{ undetermined}.
\end{align}

\textbf{4. Determination of the 1-modes of $U_0$ and the 2-mode of $E_{P_1}U_0$. } The traces of $\sigma_{-1}^m, m=2,3$ are coupled to $E_{P_1}U_0$ by \eqref{d17}(a). Writing
\begin{align}
\begin{split}
&E_PU_0=\sigma_{0}^1(t,x,\theta_1)r_1+\sigma_{0}^2(t,x,\theta_2)r_2+\sigma^3_{0}(t,x,\theta_3)r_3, \text{ where }\\
&\quad \sigma_{0}^m(t,x,\theta_m)=\sum^\infty_{p=1}\sigma^m_{0,p}(t,x)e^{ip\theta_m},\; m=1,2,3,
\end{split}
\end{align}  
we now determine the 1-modes of $E_{P_{in}}U_0$ and the 2-mode of $E_{P_1}U_0$, that is, $\sigma_{0,1}^2e^{i\theta_2}$, $\sigma_{0,1}^3e^{i\theta_3}$, and $\sigma_{0,2}^1e^{i2\theta_1}$.   

Using \eqref{d7}, \eqref{d9}, and \eqref{d10a} we obtain
\begin{align}\label{e1}
\begin{split}
&\sigma^2_0(t,x_1,0,\theta_0)r_2=a_0(t,x_1,\theta_0)e_2+\check a_0(t,x_1,\theta_0)\check e_2\\
&\sigma^3_{0}(t,x_1,0,\theta_0)r_3=a_0(t,x_1,\theta_0)e_3+\check a_0(t,x_1,\theta_0)\check e_3.
\end{split}
\end{align}
From \eqref{d18}(b) we see that $\check U_0$ has no 1-mode; thus $\check a_{0,1}(t,x_1)=0$.  

Next we determine the 1-mode of $a_0(t,x_1,\theta_0)$, that is, $a_{0,1}(t,x_1)e^{i\theta_0}$.  The terms of \eqref{d17}(b) involving $E_{P_1}$ and $M$ have no 1-modes.       Thus, if we differentiate \eqref{d17}(b) with respect to $\theta_0$ and consider the 1-mode of the resulting equation,  this reduces to the 1-mode of 
\begin{align}\label{e1a}
-b\cdot\partial_{\theta_0}G=b\cdot \partial_{\theta_0} BRL(\partial)U_0. 
\end{align}
By \eqref{d16}(b) the only 1-modes of $U_0$ are in $E_{P}U_0$ and thus in $E_{P_{in}}U_0$ , so we obtain\footnote{The antiderivative in $R$ is ``undone" by $\partial_{\theta_0}$.}
\begin{align}\label{e1aa}
(X_{Lop}a_{0,1})e^{i\theta_0}=-b\cdot \partial_{\theta_0}G=-ib\cdot g_1(t,x_1)e^{i\theta_0},
\end{align}
which determines $a_{0,1}$.  Here 
\begin{align}
X_{Lop}=c_0\partial_t+c_1\partial_{x_1}, \; c_0\neq 0,\;\;c_j\in\RR
\end{align}
is a characteristic vector field of the Lopatinski determinant.   The derivation of \eqref{e1aa} from \eqref{e1a} as well as the derivations (and solvability) of other equations involving $X_{Lop}$ occurring below 
are discussed in section \ref{be}.

We now know the traces of $\sigma_{0,1}^m$, $m=2,3$. 
Using again the observation that the only 1-modes of $U_0$ are in $E_{P_{in}}U_0$,  we can  determine  $\sigma_{0,1}^m$, $m=2,3$ from the 1-modes of 
\begin{align}
\begin{split}
&E_{Q_2}L(\partial)E_{P_2}U_0=0\\
&E_{Q_3}L(\partial)E_{P_3}U_0=0.
\end{split}
\end{align}
That is, we determine  $\sigma_{0,1}^m$, $m=2,3$ by solving:
\begin{align}\label{ee1}
X_{\phi_m}\sigma_{0,1}^m=0, \;\;(\sigma^m_{0,1}|_{x_2=0}) r_m=a_{0,1}e_m.
\end{align}
By \eqref{d16}(b) $(I-P_1)U_0^1=0$, so we may determine the 2-mode of $E_{P_1}U_0$ from the 2-mode of 
\begin{align}
E_{Q_1}[L(\partial)E_{P_1}U_0+e^{i\theta_3}MU_0]=0.  
\end{align}
Taking account of the resonance, this gives
\begin{align}\label{ee2}
(X_{\phi_1}\sigma_{0,2}^1 \;e^{i2\theta_1} +c_{0}^1e^{i2\theta_1}\sigma_{0,1}^2 )B_2r_1=0, \text{ where }c_0^1=\ell_1Mr_2,
\end{align}
which yields $\sigma_{0,2}^1$.  

\textbf{5. Determination of $E_{P_1}U_0$, $U_{-1}$, and $(I-E_P)U_0$, $\check U_0$.} Parallel to \eqref{e1} we have
\begin{align}\label{e2}
\sigma_{-1}^m(t,x_1,0,\theta_0)r_m=a_{-1}(t,x_1,\theta_0)e_m, \;m=2,3.
\end{align}
Using \eqref{d17}(a), we determine the 2-mode of $a_{-1}$, $a_{-1,2}e^{i2\theta_0}$, from the 2-mode of\footnote{The right side of \eqref{e2aa} has no 1-mode, so $a_{-1,1}=0$.}
\begin{align}\label{e2aa}
\partial_{\theta_0} (b\cdot BRL(\partial)U_{-1})=\partial_{\theta_0} (b\cdot BE_{P_1}U_0), 
\end{align}
that is,
\begin{align}\label{e2a}
(X_{Lop}a_{-1,2}) e^{i2\theta_0}=\partial_{\theta_0}(b\cdot B e^{i2\theta_0}\sigma_{0,2}^1 r_1).
\end{align}
  In view of  \eqref{e2} we now know $\sigma_{-1,2}^m(t,x_1,0)e^{i2\theta_m}$, $m=2,3$, so we can determine $\sigma_{-1,2}^m(t,x)e^{i2\theta_m}$, $m=2,3$,  using the 2-modes of the equations \eqref{e0}. 

Having $\sigma_{-1,2}^2(t,x)e^{i2\theta_2}$, we can determine the $\theta_3+2\theta_2$ mode of $(I-E_P)U_0$, $c(t,x)e^{i\theta_3+2\theta_2}$,  from \eqref{d16}(b).
With this we determine the 4-mode of $E_{P_1}{U_0}$, $\sigma^1_{0,4}e^{i4\theta_1}$ using the resonance in 
\begin{align}
E_{Q_1}[L(\partial)E_{P_1}U_0+e^{i\theta_3}MU_0]=0.
\end{align}
Thus, we now have $E_{P_1}U_0$.    Note that to produce a 6-mode in $E_{P_1}U_0$ we would need $U_0$ to have a $2\theta_3+3\theta_2$ mode, but \eqref{d16}(b) shows it does not.

Knowing $E_{P_1}U_0$, we proceed to determine the higher ($k\geq 3$) modes of $a_{-1}(t,x_1,\theta_0)$ using ``$\partial_{\theta_0}$\eqref{d17}(a)":\footnote{Since $U_{-1}^2$ has a possibly nonzero 2-mode, the interaction term in \eqref{d17}(a) contributes a 3-mode, so $a_{-1}$ (and hence $U_{-1}$)  may have a nonzero 3-mode.  But then the interaction term may contribute a nonzero 4-mode, etc...}  
\begin{align}
X_{Lop}a_{-1}=\partial_{\theta_0}[b\cdot (BE_{P_1}{U_0}-BRe^{i\theta_3}MU_{-1})].
\end{align}
This equation turns out (see section \ref{be}) to have the form 
\begin{align}\label{e3b}
 X_{Lop}a_{-1}+e^{i\theta_0}m(D_{\theta_0})a_{-1}=g(t,x_1,\theta_0),
 \end{align}
 where $g$ is known  and $m(D_{\theta_0})$ is a \emph{bounded} Fourier multiplier,

Having $a_{-1}$  we can complete the determination of $U_{-1}$ using the interior equations \eqref{e0}.   We note that the nonzero modes of  $U_{-1}$ lie in the list of   section \ref{table}.   Next we use \eqref{d16}(b) to determine $(I-E_P)U_0$ from $U_{-1}$.  From this we see that $(I-E_P)U_0$ has characteristic (respectively, noncharacteristic) modes only of the forms
\begin{align}\label{e3}
k\theta_3,k\theta_2, k\geq 2 \text{ and } \theta_3+k\theta_2, k\geq 2
\end{align}
respectively.  Knowing $E_{P_1}U_0$ and $U_{-1}$, we determine $\check U_0$ from \eqref{d18}(b).  It remains to determine the higher ($k\geq 2$) modes of $E_{P_{in}}U_0$.

\textbf{6. Determination of the 1-modes of $U_1$, the 2-mode of $E_{P_1}U_1$, and  the 2-modes of $E_{P_{in}}U_0$}. Using \eqref{d16}(c) we determine the 1-modes of $(I-E_P)U_1$, that is, the 1-modes of 
$(I-P_m)U_1^m(t,x,\theta_m)$, $m=2,3$. 

Parallel to \eqref{e1} we have
\begin{align}
\sigma^m_1(t,x_1,0,\theta_0)r_m=a_1(t,x_1,\theta_0)e_m+\check a_1(t,x_1,\theta_0)\check e_m, \;m=2,3.
\end{align}
From \eqref{d18}(c) we can determine the 1-mode of $\check a_1$, that is $\check a_{1,1}(t,x_1)e^{i\theta_0}$.  We determine the 1-mode of $a_1$, that is $a_{1,1}(t,x_1)e^{i\theta_0}$, from  the 1-mode of ``$\partial_{\theta_0}$\eqref{d17}(c)".   This equation reduces to the 1-mode of 
\begin{align}\label{e3a}
\partial_{\theta_0}[b\cdot BRL(\partial)(E_{P_{in}}U_1+(I-E_P)U_1)]=0,
\end{align}
where  the 1-mode of the $(I-E_P)U_1$ term is known.   

Having the traces of $\sigma_{1,1}^m$, $m=2,3$, we now use the 1-modes of \eqref{d15}(c), which reduce to the 1-modes of
\begin{align}
E_{Q_{in}}[L(\partial)(E_{P_{in}}U_1+(I-E_P)U_1)]=0,
\end{align}
to finish the determination of $\sigma_{1,1}^m$, $m=2,3$.  We now have the 1-modes of $U_1$.

Letting $U^2_{1,1}(t,x,\theta_2)$ denote the 1-mode of $U_1^2(t,x,\theta_2)$ and taking account of the resonance, we can now determine the 2-mode of $E_{P_1}U_1$, that is, 
$\sigma^1_{1,2}(t,x)e^{i2\theta_1}$ from the 2-mode of 
\begin{align}
E_{Q_1}[L(\partial)(E_{P_1}U_1+(I-P_1)U^1_1)+e^{i\theta_3}MU^2_{1,1}]=0.
\end{align}
Note that the 2-mode of $(I-P_1)U^1_1$ is known from \eqref{d16}(c) and the result of step \textbf{4}.

Having the 2-mode of $E_{P_1}U_1$ we proceed to determine the 2-modes of $E_{P_{in}}U_0$, starting with their traces (recall \eqref{e1}).  For this we consider the 2-mode of ``$\partial_{\theta_0}$\eqref{d17}(b)", which reduces to the 2-mode of 
\begin{align}\label{e4}
\partial_{\theta_0} \{b\cdot [-BE_{P_1}U_1+B[R(L(\partial)U_0+e^{i\theta_3}M(U^2_{0,1}+U^3_{0,1}))]]\}=0.
\end{align}
We can write 
\begin{align}\label{e4a}
U_0=E_{P_{in}}U_0+E_{P_1}U_0+(I-E_P)U_0,
\end{align}
where the last two terms on the right are known.   Since $\check U_0$ is known, the 2-mode of equation \eqref{e4} reduces to a propagation equation involving $X_{Lop}$  for  the only unknown, 
the 2-mode of $a_0(t,x_1,\theta_0)$. 

We now have the traces of $\sigma_{0,2}^me^{i2\theta_m}$, $m=2,3$.  To determine these modes we consider the 2-modes of \eqref{d15}(b).  The relevant equations reduce to the 2-modes of the incoming equations
\begin{align}
\begin{split}
&E_{Q_2}[L(\partial)(E_{P_2}U_0+(I-E_P)U_0)]=0\\
&E_{Q_3}[L(\partial)(E_{P_3}U_0+(I-E_P)U_0)+e^{i\theta_3}MU^3_{0,1}]=0,
\end{split}
\end{align}
where the only unknowns are the 2-modes of $E_{P_m}U_0$, $m=2,3$.   These are now determined.

\textbf{7. Determination of $E_{P_1}U_1$, $U_0$ and $(I-E_P)U_1$, $\check U_1$.} To complete the determination of $E_{P_1}U_1$ we use the equations
\begin{align}\label{e5}
\begin{split}
&(a) (I-E_{P})U_1=-R[L(\partial)U_0+e^{i\theta_3}MU_0]\\
&(b) E_{Q_1}[L(\partial)(E_{P_1}U_1+(I-P_1)U^1_1)+e^{i\theta_3}MU_1]=0.
\end{split}
\end{align}
We get the 4-mode of $E_{P_1}U_1$ from the 4-mode of \eqref{e5}(b), after first determining the $e^{i4\theta_1}$ modes  of $(I-P_1)U^1_1$ and $e^{i\theta_3}MU_1$.   To get the 4-mode of $(I-P_1)U^1_1$ we  use equation \eqref{e5}(a), the known 4-mode of  $E_{P_1}U_0$,  and the known $\theta _3+2\theta_2$ mode of $U_0$.  To get the $e^{i4\theta_1}$ mode of $e^{i\theta_3}MU_1$, we need the $\theta_3+2\theta_2$ mode of $U_1$, and we can obtain the latter from \eqref{e5}(a) since we know both the $\theta_3+2\theta_2$ mode of $U_0$ and the $2$ mode of $U^2_0$. 

Similarly, we  get the 6-mode of $E_{P_1}U_1$ from the 6-mode of \eqref{e5}(b), after first determining the $e^{i6\theta_1}$ modes  of $(I-P_1)U^1_1$ and $e^{i\theta_3}MU_1$.    The 6-mode of $(I-P_1)U^1_1$ is zero by equation \eqref{e5}(a).     For  the $e^{i6\theta_1}$ mode of $e^{i\theta_3}MU_1$, we need the $2\theta_3+3\theta_2$ mode of $U_1$, and we can obtain the latter from \eqref{e5}(a) since we know the $\theta_3+3\theta_2$ mode of $U_0$.   Thus, we now have $E_{P_1}U_1$.\footnote{To produce an 8-mode in $E_{P_1}U_1$ we would need $U_1$ to have a $3\theta_3+4\theta_2$ mode, but \eqref{e5}(a) shows it does not.} 

We now complete the determination of $U_0$, by first determining $a_0(t,x_1,\theta_0)$ (as in \eqref{e1}). For this we use the boundary equation
\begin{align}\label{e6}
\partial_{\theta_0}\{b\cdot[G-BE_{P_1}U_1+BR(L(\partial)U_0+e^{i\theta_3}MU_0)]\}=0.
\end{align}
Since the only unknown piece of $U_0$ is $E_{P_{in}}U_0$ and $\check U_0$ is known, the only unknown in \eqref{e6} is $a_0$, which we determine by again solving an equation of the form \eqref{e3b}. 

In view of \eqref{e1} we now have the trace of $E_{P_{in}}U_0$,  so we can determine $E_{P_{in}}U_0$ by solving the incoming equations
\begin{align}
\begin{split}
&E_{Q_2}[L(\partial)(E_{P_2}U_0+(I-E_P)U_0)]=0\\
&E_{Q_3}[L(\partial)(E_{P_3}U_0+(I-E_P)U_0)+e^{i\theta_3}M(E_{P_3}U_0+(I-E_P)U_0)]=0.
\end{split}
\end{align}
 This gives  $U_0$.    We determine $(I-E_P)U_1$ and $\check U_1$ from \eqref{d16}(c) and \eqref{d18}(c), respectively.  Observe that the nonzero modes of  $U_0$ lie in the list of  section \ref{table}.    Although $U_1$ is not yet completely determined, \eqref{d16}(c) implies that the nonzero noncharacteristic modes of $U_1$ must lie in that list.

\textbf{8. Determination of the 1-modes of $U_2$, the 2-mode of $E_{P_1}U_2$, and  the 2-modes of $E_{P_{in}}U_1$.}    Except for obvious small changes, the determination of these modes is by an almost verbatim repetition of step \textbf{6. }  For example, instead of considering the 1-mode of \eqref{e3a}, one now considers the 1-mode of 
\begin{align}\label{e7}
\partial_{\theta_0}[b\cdot BRL(\partial)(E_{P_{in}}U_2+(I-E_P)U_2)]=0.
\end{align}

\textbf{9. Determination of $E_{P_1}U_2$, $U_1$ and $(I-E_P)U_2$, $\check U_2$.}    This step is a near repetition of step \textbf{7. } In place of \eqref{e5} we now use
\begin{align}\label{e8}
\begin{split}
&(a) (I-E_{P})U_2=-R[L(\partial)U_1+e^{i\theta_3}MU_1]\\
&(b) E_{Q_1}[L(\partial)(E_{P_1}U_2+(I-P_1)U^1_2)+e^{i\theta_3}MU_2]=0.
\end{split}
\end{align}
At this point we know the 2-mode of $E_{P_1}U_2$ and we need to determine its 4, 6, and 8-modes.  Just as in step \textbf{7} we used \eqref{e5}(b) and our knowledge of the 2-mode of $U_0^2$ to get the 4-mode of $E_{P_1}U_1$, in this step we use \eqref{e8}(b) and our knowledge of the 2-mode of $U^2_1$ to get the 4-mode of $E_{P_1}U_2$.   Just as in step \textbf{7} no higher modes of $U^2_0$ were needed to determine the 6-mode of $E_{P_1}U_1$, in this step no higher modes of $U^2_1$ are needed to get the 6 and 8 modes of $E_{P_1}U_2$. 

   With $E_{P_1}U_2$ in hand the construction of $U_1$ is completed by repeating the argument of step \textbf{7} to first construct $a_1(t,x_1,\theta_0)$, and then to construct $E_{P_{in}}U_1$.   One then obtains $(I-E_P)U_2$ and $\check U_2$ in the usual way.
   
 \textbf{10. Conclusion. }The inductive pattern is now clear.  For any $M$ this argument constructs profiles $U_{-1}$, $U_0$, ..., $U_M$ in $\cH^\infty$ satisfying the profile equations \eqref{d4}, \eqref{d5}.  Moreover, the nonzero modes of these profiles lie in the list of section \ref{table}.\\

 \subsection{Analysis of the boundary amplitude equations}\label{be}

 \emph{\quad}      In this section we show that for each $j\geq -1$ the boundary equation for the amplitude $a_j(t,x_1,\theta_0)$ in the construction of section \ref{construction}
 is an equation of the form
 \begin{align}\label{f00}
 X_{Lop}a_{j}+e^{i\theta_0}m_j(D_{\theta_0})a_{j}=g_j(t,x_1,\theta_0),\;\;g_j=0\text{ in }t<0,
 \end{align}
 where $g_j(t,x_1,\theta_0)=\sum_{k\geq 1}g_{j,k}(t,x_1)e^{ik\theta_0}$ is known, $m_j(D_{\theta_0})$ is a \emph{bounded} Fourier multiplier, and \footnote{In fact, $X_{Lop}$ is a characteristic vector field of the Lopatinski determinant.}
 \begin{align}\label{f0a}
 X_{Lop}=c_0\partial_t+c_1\partial_{x_1} \text{ with }c_j\in\RR  \text{ and }c_0\neq 0.
\end{align}

  The equation for $a_j$  derives from an equation of the form\footnote{As usual, the bracketed term on the left in \eqref{f0} is restricted to $\theta_2=\theta_3=\theta_0$ after the action of $R$.}
  \begin{align}\label{f0}
\partial_{\theta_0}\{ b\cdot B[R(L(\partial)U_j+e^{i\theta_3}MU_j)]\}=h_j(t,x_1,\theta_0),
 \end{align}
 where $h_j$ is known from previous steps, and $U_j\in\cH^\infty$ has mean zero:
 \begin{align}\label{f1}
 U_j=E_{P_{in}}U_j+E_{P_1}U_j+(I-E_P)U_j.
 \end{align}
 The  terms $E_{P_1}U_j+(I-E_P)U_j$ are also known from previous steps, and on $x_2=0$ we may write (recall \eqref{d10a}) 
 \begin{align}\label{f2}
    \begin{split}
    &E_{P_{in}}U_j(t,x_1,0,\theta)=\sigma_j^2(t,x_1,0,\theta_2)r_2+\sigma_j^3(t,x_1,0,\theta_3)r_3=\\
    &\qquad (a_j(t,x_1,\theta_2)e_2+\check a_j(t,x_1,\theta_2)\check e_2)+(a_j(t,x_1,\theta_3)e_3+\check a_j(t,x_1,\theta_3)\check e_3),
    \end{split}
    \end{align}
   where $\check a_j$, too, is known from previous steps.     Dropping all subscripts $j$ and using that fact that $R_mB_2P_m=0$ for $m=2,3$, we see then that \eqref{f0} reduces to an equation of the form
   \begin{align}\label{f3}
 \begin{split}
  &\partial_{\theta_0}\left[ b\cdot BR\left(L'(\partial)(a(t,x_1,\theta_2)e_2+a(t,x_1,\theta_3) e_3)+e^{i\theta_3}M(a(t,x_1,\theta_2)e_2+a(t,x_1,\theta_3) e_3)\right)\right]=\\
  &\qquad\qquad h(t,x_1,\theta_0), \;\;h=0\text{ in }t\leq 0,
\end{split}
\end{align}
where $h$ is known and $L'(\partial)=\partial_t+B_1\partial_{x_1}$.     
 
 It is shown in [CG] that 
 \begin{align}\label{f4}
   \partial_{\theta_0}[b\cdot BRL'(\partial)(a(t,x_1,\theta_2)e_2+a(t,x_1,\theta_3) e_3)]=X_{Lop}a(t,x_1,\theta_0)
   \end{align}
for $X_{Lop}$ as in \eqref{f0a}.
Clearly,
\begin{align}\label{f4a}
\partial_{\theta_0}[ b\cdot BR(e^{i\theta_3}Ma(t,x_1,\theta_3) e_3)]=\alpha_1e^{i\theta_0}a(t,x_1,\theta_0)\text{ for } \alpha_1=b\cdot BR_3Me_3,
\end{align}
so it remains to analyze the term $\partial_{\theta_0} [b\cdot BR\left(e^{i\theta_3}Ma(t,x_1,\theta_2)e_2\right)]$.
 
 Writing $a(t,x_1,\theta_0)=\sum_{k\geq 1}a_k(t,x_1)e^{ik\theta_0}$, we have (with $\beta_1=b\cdot BR_1Me_2$)
 \begin{align}\label{f4b}
 \begin{split}
& \partial_{\theta_0} [b\cdot BR\left(e^{i\theta_3}Ma(t,x_1,\theta_2)e_2\right)]=\partial_{\theta_0} [b\cdot BR(Me_2\sum_{k\geq 1}a_ke^{ik\theta_2+i\theta_3})]=\\
&\quad \beta_1e^{i2\theta_0}a_1(t,x_1)+ \partial_{\theta_0} [b\cdot B\sum_{k\geq 2}(L^{-1}(ikd\phi_2+id\phi_3)Me_2)a_ke^{i(k+1)\theta_0}]=\\
&\quad \beta_1e^{i\theta_0}(a_1(t,x_1)e^{i\theta_0})+ b\cdot B\sum_{k\geq 2}(L^{-1}(kd\phi_2+d\phi_3)Me_2)(k+1)a_{k}e^{i(k+1)\theta_0}.
\end{split}
\end{align}
 We can simplify the second term in the last line using
 \begin{lem}
  For each $k\in \{ 2,3,\dots\}$ the number $k(\omega_2-\omega_1)+(\omega_3-\omega_1)$ is nonzero, and for any $X\in\CC^3$
 \begin{align}\label{f4bb}
 L^{-1}(kd\phi_2+d\phi_3)X=c_1r_1+c_2r_2+c_3r_3, \text{ where }c_1=\frac{\ell_1 X}{k(\omega_2-\omega_1)+(\omega_3-\omega_1)}.
 \end{align}
 \end{lem}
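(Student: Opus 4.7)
The plan is to exhibit the $r_j$ as ``skew-eigenvectors'' of $L(kd\phi_2+d\phi_3)$, so that both invertibility and the inversion formula follow from a one-line diagonalization.

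First, I would unpack $kd\phi_2+d\phi_3$. Since $\phi_m(t,x)=\beta_l\cdot(t,x_1)+\omega_m x_2$, we have $d\phi_m=(\sigma_l,\eta_l,\omega_m)$, and so
\begin{equation*}
L(kd\phi_2+d\phi_3)=(k+1)(\sigma_l I+B_1\eta_l)+(k\omega_2+\omega_3)B_2 .
\end{equation*}
The defining relation $L(d\phi_j)r_j=0$ reads $(\sigma_l I+B_1\eta_l)r_j=-\omega_j B_2 r_j$, which substituted above gives
\begin{equation*}
L(kd\phi_2+d\phi_3)\,r_j \;=\; \mu_j(k)\,B_2 r_j,\qquad \mu_j(k):=k\omega_2+\omega_3-(k+1)\omega_j.
\end{equation*}

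Second, I would check that $\mu_j(k)\neq 0$ for all $j\in\{1,2,3\}$ and $k\geq 2$. The cases $j=2,3$ are immediate from strict hyperbolicity:
$\mu_2(k)=\omega_3-\omega_2\neq 0$ and $\mu_3(k)=k(\omega_2-\omega_3)\neq 0$. The case $j=1$ is the content of the first claim in the lemma; here the resonance hypothesis $\omega_2+\omega_3=2\omega_1$ is essential. It gives $\omega_3-\omega_1=-(\omega_2-\omega_1)$, so
\begin{equation*}
\mu_1(k)=k(\omega_2-\omega_1)+(\omega_3-\omega_1)=(k-1)(\omega_2-\omega_1),
\end{equation*}
which is nonzero precisely for $k\geq 2$ (the resonance is exactly the vanishing at $k=1$).

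Finally, since $\{r_j\}_{j=1,2,3}$ and $\{B_2 r_j\}_{j=1,2,3}$ are both bases of $\mathbb{C}^3$ by Lemma \ref{lem2}, the identity $L(kd\phi_2+d\phi_3)r_j=\mu_j(k)B_2 r_j$ shows that $L(kd\phi_2+d\phi_3)$ is invertible for $k\geq 2$. To derive the explicit formula, expand $X=\sum_j x_j\,B_2 r_j$; the normalization $\ell_m B_2 r_{m'}=\delta_{mm'}$ yields $x_j=\ell_j X$. Writing $L^{-1}(kd\phi_2+d\phi_3)X=\sum_j c_j r_j$ and applying $L(kd\phi_2+d\phi_3)$ one gets $c_j\mu_j(k)=x_j$, hence
\begin{equation*}
c_j=\frac{\ell_j X}{\mu_j(k)},\qquad \text{and in particular}\quad c_1=\frac{\ell_1 X}{k(\omega_2-\omega_1)+(\omega_3-\omega_1)},
\end{equation*}
which is the stated formula. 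There is no real obstacle: the only conceptual point is recognizing that the resonance condition is exactly the obstruction to invertibility at $k=1$, while being harmless for $k\geq 2$.
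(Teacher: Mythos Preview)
Your proof is correct and rests on the same key identity as the paper's: $L(kd\phi_2+d\phi_3)\,r_j=\mu_j(k)\,B_2 r_j$, together with the normalization $\ell_m B_2 r_{m'}=\delta_{mm'}$, from which the formula for $c_1$ (indeed all $c_j$) drops out.

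The one substantive difference is how nonvanishing of $\mu_1(k)=k(\omega_2-\omega_1)+(\omega_3-\omega_1)$ is established. The paper argues indirectly: it takes invertibility of $L(kd\phi_2+d\phi_3)$ for $k\geq 2$ as known (this is supplied by the small divisor Assumption~\ref{sd}, which covers the pairs $(k,1)$ with $k\neq 1$) and concludes that the kernel is trivial, hence $\mu_1(k)\neq 0$. You instead plug in the resonance $\omega_2+\omega_3=2\omega_1$ to compute $\mu_1(k)=(k-1)(\omega_2-\omega_1)$ directly. Your route is more elementary and self-contained---it yields invertibility as an output rather than an input, and makes transparent why $k=1$ is the excluded case---at the price of being tied to this particular resonance, whereas the paper's argument works verbatim for any noncharacteristic mode covered by Assumption~\ref{sd}.
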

 
 \begin{proof}
 We can write $L(d\phi_m)=\sum_{m\neq m'}(\omega_m-\omega_m')B_2P_{m'}$, so $L(d\phi_m)r_p=(\omega_m-\omega_p)B_2r_p$ and 
 \begin{align}
 L(kd\phi_2+d\phi_3)r_1=(k(\omega_2-\omega_1)+(\omega_3-\omega_1))B_2r_1.
 \end{align}
 For $k\in\{2,3,\dots\}$  it follows that $k(\omega_2-\omega_1)+(\omega_3-\omega_1)\neq 0$, since otherwise $L(kd\phi_2+d\phi_3)$ would have a nontrivial kernel.

 Thus, \eqref{f4bb} follows by computing
 \begin{align}
 \ell_1X=\ell_1 L(kd\phi_2+d\phi_3)(c_1r_1+c_2r_2+c_3r_3)=c_1 [k(\omega_2-\omega_1)+(\omega_3-\omega_1)].
 \end{align}
 \end{proof}
 
 Using the lemma and the fact that $b\cdot Br_p=0$, $p=2,3$, we obtain
 \begin{align}\label{f4c}
 \begin{split}
& b\cdot B\sum_{k\geq 2}(L^{-1}(kd\phi_2+d\phi_3)Me_2)(k+1)a_{k}e^{i(k+1)\theta_0}=\\
&\qquad (b\cdot Br_1)(\ell_1Me_2)e^{i\theta_0}\sum_{k\geq 2}\frac{(k+1)}{k(\omega_2-\omega_1)+(\omega_3-\omega_1)}a_{k}e^{ik\theta_0}.
 \end{split}
 \end{align}
The set of numbers $\{\frac{(k+1)}{k(\omega_2-\omega_1)+(\omega_3-\omega_1)}, k\in \{ 2,3,\dots\}\}$ is clearly bounded.     Thus, the equation \eqref{f3} takes the form
\begin{align}\label{f4d}
X_{Lop}a+e^{i\theta_0}m(D_{\theta_0})a=h(t,x_1,\theta_0), \; h=0\text{ in }t<0,
\end{align}
where the components $m(k)$ defining the bounded Fourier multiplier $m(D_{\theta_0})$ can be read off from \eqref{f4a}, \eqref{f4b}, and  \eqref{f4c}.   A standard argument based on a simple energy estimate yields a unique solution $a(t,x_1,\theta_0)\in H^\infty((-\infty,T]\times \RR\times\TT)$ satisfying $a=0$ in $t<0$. 

\begin{rem}
In section \ref{construction} the modes $a_{j,1}e^{i\theta_0}$, $a_{j,2}e^{i2\theta_0}$ were determined for each $a_j$ by simple equations like \eqref{e1aa}, \eqref{e2a} \emph{before} the full profile $a_j(t,x_1,\theta_0)$ was determined by an equation like \eqref{f00}.\footnote{We found $a_{-1,1}=0$ in step \textbf{5} of the construction of section \ref{construction}.}  It is easy to check that the 1 and 2-modes of the solution to \eqref{f00} agree with the previously determined modes.  
\end{rem}

\subsection{Proof of Theorem \ref{multiamp}.}\label{last}
\textbf{1. Part (a).}  The existence  and uniqueness of an exact solution $u^\eps(t,x)\in H^\infty(\Omega_T)$ to \eqref{d1}  follows  by applying  the results of \cite{C} (for WR problems without highly oscillatory coefficients) to the problem obtained from \eqref{d1}  by  \emph{fixing} any particular $\eps\in (0,\eps_0]$. 

Theorem \ref{tt26}(c) yields an exact solution $U^\eps(t,x,\theta_0)\in L^2$ to the  singular problem \eqref{i6} corresponding to \eqref{d1}.\footnote{To apply Theorem \ref{tt26}(c) we must take an extension of $g(t,x_1)$ to $t>T$.  This standard maneuver is used also in step \textbf{2} below; for more detail, see p. 586 of \cite{CGW3}, for example.}   One can prove higher regularity of $U^\eps$ (in fact, $U^\eps\in H^\infty(\Omega_T\times \TT)$) 
by differentiating the singular problem and repeating the argument of section 6 of \cite{W3}.  Since $v^\eps:=U^\eps(t,x,\theta_0)|_{\theta_0=\frac{\phi_0}{\eps}}$ is a solution of \eqref{d1} on $(-\infty,T]$, we conclude $u^\eps=v^\eps$. 

\textbf{2.  Part (b).} The construction carried out in sections \ref{tools2}-\ref{be} yields profiles $U_k(t,x,\theta_1,\theta_2,\theta_3)$ as in \eqref{d3} satisfying the profile equations \eqref{d4}, \eqref{d5} for $j=1,\dots, J$, where $J$ is as large as desired.  With $\theta=(\theta_1,\theta_2,\theta_3)$ define
\begin{align}\label{f5}
\begin{split}
&\cU(t,x,\theta)=\sum^J_{k=-1}\eps^k U_k(t,x,\theta),\\
&U^\eps_a(t,x,\theta_0)=\cU\left(t,x,\theta_0+\frac{\omega_1 x_2}{\eps},\theta_0+\frac{\omega_2 x_2}{\eps},\theta_0+\frac{\omega_3 x_2}{\eps}\right),
\end{split}
\end{align}
and observe that for any given $M>0$ one can choose $J=J(M)$ so that  $U^\eps_a$ satisfies
\begin{align}\label{f6}
\begin{split}
&D_{x_2}U^\eps_a+A_0(D_t+\frac{\sigma_l}{\eps}D_{\theta_0})U^\eps_a+A_1(D_{x_1}+\frac{\eta_l}{\eps}D_{\theta_0})U^\eps_a-ie^{i\left(\frac{\omega_3(\beta_l)}{\eps}x_2+\theta_0\right)}B_2^{-1}MU^\eps_a=\eps^MR^\eps_M(t,x,\theta_0)\\
&BU^\eps_a= \eps g(t,x_1)e^{i\theta_0}+\eps^Mr^\eps_M(t,x_1,\theta_0)\text{ on }x_2=0\\
&U_a=0\text{ in }t<0.
\end{split}
\end{align}
Here (with obvious notation) the error terms satisfy for any $\alpha$:
\begin{align}
|(\eps\partial_{x_2},\partial_{t,x_1,\theta_0})^\alpha R_M|_{L^2_T(t,x,\theta_0)}\leq C_\alpha,\;\;|(\partial_{t,x_1,\theta_0})^\alpha r_M|_{L^2_T(t,x_1,\theta_0)}\leq C_\alpha.
\end{align}

Next introduce the higher norm:
\begin{align}
|V|_{0,m}^2:=\int^\infty_0|V(t,x_1,x_2,\theta_0)|^2_{H^m(t,x_1,\theta_0)}dx_2 \text{ for }m\in\NN.
\end{align}
We showed in section 6 of \cite{W3} that the estimate \eqref{tt28a} of Theorem \ref{tt26} can be upgraded to a higher derivative estimate:    there exist constants $K$, $\gamma_0$ independent of $\eps$ such that for $\gamma\geq \gamma_0$ and $U$ as in Theorem \ref{tt26}:
\begin{align}\label{f7}
|U^\gamma|_{0,m}+|\partial_{x_2}U^\gamma|_{0,m}\leq \frac{K}{(\eps\gamma)^\EE}\left[\frac{1}{\eps\gamma^2}|\Lambda_D F^\gamma|_{0,m+1}+\frac{1}{\eps\gamma^{3/2}}\langle\Lambda_D G^\gamma\rangle_{m+1}\right],
\end{align}
where $\EE$ is now computed from \eqref{E} to be $\EE=1$.\footnote{In \eqref{E} we now have $P=1$, $|\cI|=2$, $|\cO|=1$, $|\Upsilon_0|=2$.}
For $m>\frac{3}{2}$ the left side of \eqref{f7} dominates $|U^\gamma|_{L^\infty(\Omega_T\times\TT)}$, so
the estimate \eqref{f7} can be applied directly to the error problem satisfied by $E^\eps=U^\eps-U^\eps_a$ to show that $E^\eps$ is $O(\eps^Q)$ in $L^\infty(\Omega_T\times \TT)$, provided $M$ in \eqref{f6} is large enough (see Remark \ref{opt}, (3)).   This implies the estimate \eqref{Q}.

\textbf{3. Part (c). }This part follows from an examination of steps \textbf{3-5} of the profile construction in section \ref{construction}.  Equation \eqref{e1aa} implies $a_{0,1}\neq 0$ in $t>0$ as long as $b\cdot g_1\neq 0$ in $t>0$.\footnote{When we say here  that  ``$a_{0,1} \neq 0$ in $t>0$", we mean that $a_{0,1}$ takes nonzero values for arbitrarily small $t>0$; the same applies to other functions.} Then \eqref{ee1} implies $\sigma^2_{0,1}\neq 0$ in $t>0$. Next \eqref{ee2} implies $\sigma^1_{0,2}\neq 0$ in $t>0$   as long as $c^1_0=\ell_1Mr_2\neq 0$.   Then \eqref{e2a} implies $a_{-1,2}\neq 0$ in $t>0$, and finally \eqref{e0} implies $\sigma^m_{-1,2}\neq 0$ in $t>0$  for $m=2,3$.

\begin{rem}\label{opt}
 \textbf{1. Optimality of estimates.} Estimates for which $\EE=0$ (for example, when $|\cI|=1$,  $|\Upsilon^0|=2$ as in Theorem \ref{tvv29}) are clearly optimal.    Simple examples show that amplification due to the factors $|X_k|$ is unavoidable in WR problems \cite{CG}. 
 
 The optimality of the estimate \eqref{tt28}  for the singular problem corresponding to  \eqref{d1} (with $\EE=1$ given by \eqref{E}) is confirmed by the observation that $U^\eps_a$ as in \eqref{f5} satisfies 
 \begin{align}\label{f8}
 |U^\eps_a(t,x,\theta_0)|_{L^2(\Omega_T\times\TT)}=O\left(\frac{1}{\eps}\right),
 \end{align}
  and so $U^\eps_a$ is amplified by the factor $\frac{1}{\eps^2}$ relative to the $O(\eps)$ boundary data of \eqref{d1}.   In \eqref{tt28} one factor of $\frac{1}{\eps}$ is contributed by the factor $|X_1|$ on $\widehat{\eps g_1}$, and a second factor is contributed by $\frac{1}{(\eps\gamma)^\EE}$, so the estimate ``predicts" exactly the order of amplification exhibited by $U^\eps_a$, and hence also by the exact solution.   

 \textbf{2. Triple and higher amplification}. Step \textbf{3} of the above proof shows instantaneous production of a (nonzero) incoming 2-mode $\frac{1}{\eps}\sigma^2_{-1,2}(t,x)e^{i2\theta_2}$ in the leading term, $U_{-1},$  of the approximate solution.  In order to achieve another amplification one could modify the oscillatory coefficient in \eqref{d1} to be $(e^{i\theta_3}+e^{i2\theta_3})|_{\theta_3=\phi_3/\eps}$.  The resonance $2\phi_2+2\phi_3=4\phi_1$ in the interaction of the modified coefficient with the above 2-mode would then be expected, by arguments parallel to steps \textbf{3-5} in the above proof, to produce an outgoing $\frac{1}{\eps}\sigma^1_{-1,4}e^{i4\theta_1}$ mode, and then a ``reflected" incoming $\frac{1}{\eps^2}\sigma^2_{-2,4}e^{i4\theta_2}$ mode in $U_{-2}$.   Similarly, adding the term $e^{i4\theta_3}$ to the oscillatory coefficient should result in production of an incoming $\frac{1}{\eps^3}\sigma^2_{-3,8}$ mode in $U_{-3}$, and so on.  With each added term in the oscillatory coefficient, the formula \eqref{E} shows that $\EE$ increases by one.  Thus, these higher interactions would  exhibit the optimality of \eqref{tt28} with $\EE$  given by \eqref{E} for larger values of $P$, at least in problems where $|\cO|=1$.

Finally, note that since $\omega_2+\omega_3=2\omega_1$,  we have $\Omega_{1,2}=\frac{\omega_1-\omega_3}{\omega_2-\omega_1}=1$, so the second possibility in the hypothesis \eqref{tt9} of Proposition \ref{tt8} holds here.    That proposition, taken together with this discussion of multiple amplification, indicates that when $\Omega_{i,j}$ is rational and lies in $(0,\infty)$ or $(-\infty,-1)$, there is no hope of proving an estimate like \eqref{tt28} with finite $\EE$ for problems where the spectrum of $d(\theta_{in})$ is an arbitrary infinite subset of $\NN$.

\textbf{3. $\gamma_0$ independent of $\eps$. }  In step \textbf{2} of the above proof, to conclude $|E^\eps|_{L^\infty(\Omega_T\times\TT)}=O(\eps^Q)$ we apply \eqref{f7} with $\gamma=\gamma_0$ to the problem satisfied by $E^\eps$, use
\begin{align}
 e^{-\gamma_0T}|E^\eps|_{L^\infty(\Omega_T\times \TT)}\leq |e^{-\gamma_0 t} E^\eps|_{0,m}+ |e^{-\gamma_0 t} \partial_{x_2}E^\eps|_{0,m}
\end{align}
and then multiply both sides of the resulting estimate by $e^{\gamma_0T}$. Here we see  it is crucial that $\gamma_0\sim 1$.  If one tried to use the methods of \cite{CGW3} to estimate $E^\eps$, one would have to take $\gamma_0\sim \frac{1}{\eps}$,  and the factor $e^{\gamma_0T}$ would then overwhelm the terms $\eps^MR^\eps_M$ and $\eps^M r^\eps_M$ coming from the right side of \eqref{f7}.  A similar problem arises if one tries to use the estimates of \cite{C} to estimate $|u^\eps-u^\eps_a|_{L^\infty(\Omega_T)}$ for each fixed $\eps$.

\end{rem}

\section{Discussion} \label{disc}\qquad Let us  assess what our results suggest about the prospects of proving uniform estimates (Remark \ref{unif}) for problems like \eqref{i3} obtained as linearizations of quasilinear problems.    This paper has dealt only with wavetrain (as opposed to pulse) solutions,  so we restrict our comments to such solutions.

 Whenever the oscillatory function $v$ in the coefficients of  \eqref{i3} is real-valued,  one cannot avoid two-sided cascades, since such functions have both positive and negative Fourier spectrum.   Our estimate for problem \eqref{i1} in the two-sided case, Theorem \ref{tvv29}, requires that 
$\Upsilon_0=\{\pm\beta_l\}$ and that there is only one incoming phase, $\cI=\{N\}$.    
This result offers some hope of proving a similar estimate for (linearizations of) quasilinear problems that satisfy these two conditions.

       There are at least two reasons to study one-sided cascades.    First, this is the simplest context in which to observe the interesting phenomenon of multiple amplification.   A second, related, reason is that the occurrence of multiple amplification allows us to \emph{confirm} that there are situations in which some of the global amplification factors  $\DD(\eps,k,k-r)(\zeta)$ are indeed large, that is, equal to $\frac{C_5r^2}{\eps\gamma}$, on $\zeta-$sets of positive measure.   The confirmation is provided by having explicit, multiply amplified, approximate solutions that we \emph{know} are close in the sense of Theorem \ref{multiamp} to exact solutions $u^\eps(t,x)=U^\eps(t,x,\frac{\phi_0}{\eps})$.   Since $U^\eps$ satisfies an estimate of the form \eqref{tt28}, we conclude that the exponent $\EE$ in that estimate must be $\geq 1$, and thus some factors $\DD(\eps,k,k-r)$ must be large.  This information has a direct bearing on problems with two-sided cascades, since the \emph{same} amplification factors occur in those problems.  Indeed, the results of section \ref{1s} show that the factors $\DD(\eps,k,k-r)$ are determined just by our assumptions on $(L(\partial),B)$; they are independent of the choice of the oscillatory factor $\cD(\theta_{in})$.    We saw in section \ref{2sc} that the presence of even a single large amplification factor in a problem with two-sided cascades is expected to rule out any estimate of the form \eqref{tt28} with finite $\EE$.  

Finally, recall that our examples of multiple amplification assumed the existence of a resonance for which $\Omega_{i,j}\in ((-\infty,-1)\cup (0,\infty))\cap \QQ$.   When such resonances are absent, our results leave open the possibility of proving estimates like that of Theorem \ref{tvv29} for more general problems  with two-sided cascades (including, for starters,  the problem  \eqref{i1} when $\Upsilon=\{\beta_l,-\beta_l\}$, but  $|\cI|\geq 2$).\footnote{The linearized vortex sheet problem satisfies part of the assumption \ref{tv30}, namely $|\cI|=1$, but $|\Upsilon^0|>2$.}  But to do that by the methods of this paper,  one must show that \emph{no} amplification factor $\DD(\eps,k,k-r)$ is large.

\bibliographystyle{alpha}
\bibliography{cascades}

\begin{thebibliography}{BGRSZ02}

\bibitem[AM87]{AM}
M.~Artola and A.~Majda.
\newblock Nonlinear development of instabilities in supersonic vortex sheets.
  {I}. {T}he basic kink modes.
\newblock {\em Phys. D}, 28(3):253--281, 1987.

\bibitem[BGRSZ02]{BRSZ}
S.~Benzoni-Gavage, F.~Rousset, D.~Serre, and K.~Zumbrun.
\newblock Generic types and transitions in hyperbolic initial-boundary-value
  problems.
\newblock {\em Proc. Roy. Soc. Edinburgh Sect. A}, 132(5):1073--1104, 2002.

\bibitem[BGS07]{BS}
S.~Benzoni-Gavage and D.~Serre.
\newblock {\em Multidimensional hyperbolic partial differential equations}.
\newblock Oxford Mathematical Monographs. Oxford University Press, 2007.

\bibitem[CG10]{CG}
J.-F. Coulombel and O.~Gu{\`e}s.
\newblock Geometric optics expansions with amplification for hyperbolic
  boundary value problems: linear problems.
\newblock {\em Ann. Inst. Fourier (Grenoble)}, 60(6):2183--2233, 2010.

\bibitem[CGW11]{CGW1}
J.-F. Coulombel, O.~Gu{\`e}s, and M.~Williams.
\newblock Resonant leading order geometric optics expansions for quasilinear
  hyperbolic fixed and free boundary problems.
\newblock {\em Comm. Partial Differential Equations}, 36(10):1797--1859, 2011.

\bibitem[CGW14]{CGW3}
J.-F. Coulombel, O.~Gu{\`e}s, and M.~Williams.
\newblock Semilinear geometric optics with boundary amplification.
\newblock {\em Anal. PDE}, 7(3):551--625, 2014.

\bibitem[Cou04]{C1}
J.-F. Coulombel.
\newblock Weakly stable multidimensional shocks.
\newblock {\em Ann. Inst. H. Poincar\'e Anal. Non Lin\'eaire}, 21(4):401--443,
  2004.

\bibitem[Cou05]{C}
J.-F. Coulombel.
\newblock Well-posedness of hyperbolic initial boundary value problems.
\newblock {\em J. Math. Pures Appl. (9)}, 84(6):786--818, 2005.

\bibitem[CS04]{CS}
J.-F. Coulombel and P.~Secchi.
\newblock The stability of compressible vortex sheets in two space dimensions.
\newblock {\em Indiana Univ. Math. J.}, 53(4):941--1012, 2004.

\bibitem[CW17]{CW6}
J.-F. Coulombel and M.~Williams.
\newblock The {M}ach stem equation and amplification in strongly nonlinear
  geometric optics.
\newblock {\em Amer. J. Math.}, 139(4):967--1046, 2017.

\bibitem[JMR93]{JMR2}
J.-L. Joly, G.~M{\'e}tivier, and J.~Rauch.
\newblock Generic rigorous asymptotic expansions for weakly nonlinear
  multidimensional oscillatory waves.
\newblock {\em Duke Math. J.}, 70(2):373--404, 1993.

\bibitem[JMR95]{JMR1}
J.-L. Joly, G.~M{\'e}tivier, and J.~Rauch.
\newblock Coherent and focusing multidimensional nonlinear geometric optics.
\newblock {\em Ann. Sci. \'Ecole Norm. Sup. (4)}, 28(1):51--113, 1995.

\bibitem[Kre70]{K}
H.-O. Kreiss.
\newblock Initial boundary value problems for hyperbolic systems.
\newblock {\em Comm. Pure Appl. Math.}, 23:277--298, 1970.

\bibitem[Lax57]{L}
P.~D. Lax.
\newblock Asymptotic solutions of oscillatory initial value problems.
\newblock {\em Duke Math. J.}, 24:627--646, 1957.

\bibitem[MA88]{MA}
A.~Majda and M.~Artola.
\newblock Nonlinear geometric optics for hyperbolic mixed problems.
\newblock In {\em Analyse math\'ematique et applications}, pages 319--356.
  Gauthier-Villars, 1988.

\bibitem[M{\'e}t00]{Met}
G.~M{\'e}tivier.
\newblock The block structure condition for symmetric hyperbolic systems.
\newblock {\em Bull. London Math. Soc.}, 32(6):689--702, 2000.

\bibitem[MR83]{MR}
A.~Majda and R.~Rosales.
\newblock A theory for spontaneous {M}ach stem formation in reacting shock
  fronts. {I}. {T}he basic perturbation analysis.
\newblock {\em SIAM J. Appl. Math.}, 43(6):1310--1334, 1983.

\bibitem[Wil02]{W1}
M.~Williams.
\newblock Singular pseudodifferential operators, symmetrizers, and oscillatory
  multidimensional shocks.
\newblock {\em J. Functional Analysis}, 191(1):132--209, 2002.

\bibitem[Wil18]{W3}
M.~Williams.
\newblock Weakly stable hyperbolic boundary problems with large oscillatory
  coefficients: simple cascades.
\newblock {\em Submitted; available at http://markwilliams.web.unc.edu}, 2018.

\end{thebibliography}

\end{document}